\documentclass[3p]{elsarticle}

\makeatletter
\def\ps@pprintTitle{%
    \let\@oddhead\@empty
    \let\@evenhead\@empty
    \let\@evenfoot\@oddfoot 
    }
\makeatother
\usepackage{tabularx}
\usepackage{amsmath}
\usepackage{amssymb}
\usepackage{amsthm}
\usepackage{bbm}
\usepackage{mathrsfs}
\usepackage{bm}
\usepackage{graphicx}
\usepackage[final]{hyperref}
\usepackage[ruled,vlined]{algorithm2e}
\usepackage{enumitem}
\usepackage{array}
\usepackage{multirow}
\usepackage{makecell}
\usepackage[title]{appendix}
\usepackage{empheq}
\usepackage{cleveref}
\usepackage{cases}
\usepackage{mathrsfs}
\usepackage{textcomp}
\usepackage{gensymb}
\usepackage{subcaption}
\hypersetup{
	colorlinks=true,       
	linkcolor=blue,        
	citecolor=blue,        
	filecolor=magenta,     
	urlcolor=blue         
}
\usepackage{float}
\usepackage{multirow}
\usepackage{multicol}
\usepackage{booktabs}  

\usepackage[colorinlistoftodos,prependcaption,textsize=normalsize]{todonotes}
\usepackage{regexpatch}

\usepackage{tcolorbox} 
\usepackage{xcolor} 

\makeatletter
\xpatchcmd{\@todo}{\setkeys{todonotes}{#1}}{\setkeys{todonotes}{inline,#1}}{}{}
\makeatother

\usepackage{tikz}
\usepackage{tikz-3dplot}
\usetikzlibrary{calc,patterns,decorations.pathreplacing,calligraphy}
\tikzstyle{bag} = [align=center]
\usetikzlibrary {arrows.meta}
\usepackage{float}

\newtheorem{theorem}{Theorem}[section]
\newtheorem{lemma}{Lemma}[section]
\newtheorem{proposition}{Proposition}[section]

\newtheorem{definition}{Definition}[section]
\newtheorem{assumption}{Assumption}[section]
\newtheorem{corollary}{Corollary}[section]
\newtheorem{remark}{Remark}[section]

\newcommand{\HS}{\mathrm{HS}}
\newcommand{\const}{C}
\newcommand{\ftimes}{f_{\times}}

\newcommand{\fnorm}{f_{\ell_1}}
\newcommand{\fabs}{f_{\mathrm{abs}}}
\newcommand{\fcutoff}{\tilde{f}_{\mathrm{cut}}}
\newcommand{\funcut}{f_{\mathrm{cut}}}
\newcommand{\ones}{\mathbf{1}}
\newcommand{\GELU}{\sigma_\mathrm{GELU}}
\newcommand{\cutoff}{\mathrm{cut}}

\newcommand{\cB}{\mathcal{B}}
\newcommand{\cC}{\mathcal{C}}
\newcommand{\cD}{\mathcal{D}}
\newcommand{\cE}{\mathcal{E}}

\newcommand{\cG}{\mathcal{G}}
\newcommand{\cH}{\mathcal{H}}

\newcommand{\cJ}{\mathcal{J}}

\newcommand{\cL}{\mathcal{L}}
\newcommand{\cM}{\mathcal{M}}
\newcommand{\cN}{\mathcal{N}}

\newcommand{\cP}{\mathcal{P}}

\newcommand{\cU}{\mathcal{U}}
\newcommand{\cV}{\mathcal{V}}
\newcommand{\cW}{\mathcal{W}}
\newcommand{\cX}{\mathcal{X}}
\newcommand{\cY}{\mathcal{Y}}
\newcommand{\cZ}{\mathcal{Z}}

\DeclareMathOperator*{\esssup}{ess\,sup}

\newcommand{\Op}{\mathcal{L}}
\newcommand{\bE}{\mathbb{E}}
\newcommand{\bR}{\mathbb{R}}
\newcommand{\bN}{\mathbb{N}}
\newcommand{\tr}{\mathrm{Tr}}

\newcommand{\openset}{\mathcal{B}}

\newcommand{\lip}{\kappa}
\newcommand{\ub}{M}

\newcommand{\ubu}{\ub_{u}}
\newcommand{\ubDzu}{\ub_{u_z}}

\newcommand{\ubQ}{\ub_{Q}}
\newcommand{\ubDzQ}{\ub_{Q_z}}
\newcommand{\ubDuQ}{\ub_{Q_u}}
\newcommand{\ubDuzQ}{\ub_{Q_{uz}}}

\newcommand{\flux}{\tau}
\newcommand{\fun}{f}
\newcommand{\salg}{\cG}

\newcommand{\diffeo}{\chi}
\newcommand{\displace}{\mathfrak{d}}
\newcommand{\Omegaref}{\Omega_0}
\newcommand{\Omegaz}{\Omega_z}

\newcommand{\latent}{\mathcal{W}}

\newcommand{\cUz}{\cU(\Omegaz)}
\newcommand{\cMz}{\cM(\Omegaz)}
\newcommand{\cVz}{\cV(\Omegaz)}

\newcommand{\cUref}{\cU(\Omegaref)}
\newcommand{\cVref}{\cV(\Omegaref)}
\newcommand{\cMref}{\cM(\Omegaref)}

\newcommand{\utestz}{\widetilde{y}}
\newcommand{\utest}{y}

\newcommand{\cUuz}{\cU(\Omegaz^s)}
\newcommand{\cVuz}{\cV(\Omegaz^s)}
\newcommand{\cUuref}{\cU(\Omegaref)}
\newcommand{\cVuref}{\cV(\Omegaref)}
\newcommand{\cUvref}{\cU_v(\Omegaref)}
\newcommand{\cUpref}{\cU_p(\Omegaref)}

\newcommand{\cUvz}{\cU_v(\Omegaz^s)}
\newcommand{\cUpz}{\cU_p(\Omegaz^s)}

\newcommand{\uref}{u}
\newcommand{\mref}{m}
\newcommand{\vref}{v}
\newcommand{\pref}{p}

\newcommand{\uz}{\widetilde{u}}
\newcommand{\vz}{\widetilde{v}}
\newcommand{\pz}{\widetilde{p}}
\newcommand{\mz}{\widetilde{m}}
\newcommand{\wz}{\widetilde{w}}
\newcommand{\qz}{\widetilde{q}}

\newcommand{\vol}{\mathrm{Vol}}
\newcommand{\Rz}{\mathcal{R}_z}
\newcommand{\Rref}{\mathcal{R}}

\newcommand{\POD}{\mathrm{POD}}
\newcommand{\AS}{\mathrm{AS}}

\newcommand{\mean}{\mathrm{mean}}
\newcommand{\CVaR}{\mathrm{CVaR}}
\newcommand{\entropic}{\mathrm{entropic}}
\newcommand{\unif}{\mathrm{Unif}}

\newcommand{\dm}{d_m}
\newcommand{\dz}{d_z}
\newcommand{\du}{d_u}
\newcommand{\rru}{r_u}
\newcommand{\rrm}{r_m}
\newcommand{\target}{\mathrm{target}}

\newcommand{\stress}{\mathbf{T}}

\newcommand{\reflect}{\mathrm{Reflect}}
\newcommand{\channel}{\mathrm{channel}}
\newcommand{\object}{\mathrm{object}}
\newcommand{\control}{\mathrm{control}}

\newcommand{\flow}{\mathrm{flow}}
\newcommand{\nitsche}{\mathrm{Nitsche}}

\newcommand{\symm}{\mathrm{sym}}
\newcommand{\differ}{\mathsf{D}}
\newcommand{\anoperator}{\mathsf{A}}
\newcommand{\scalerad}{5}

\newcommand{\inflow}{\text{in}}
\newcommand{\Gammain}{\Gamma_{\inflow}}

\newcommand{\stressz}{\widetilde{\mathbf{T}}}
\newcommand{\strain}{\varepsilon}
\newcommand{\strainz}{\widetilde{\varepsilon}}
\newcommand{\bend}{\mathrm{bend}}
\newcommand{\force}{\mathrm{force}}
\newcommand{\grav}{\mathrm{grav}}

\newcommand{\eastward}{\mathrm{East}}
\newcommand{\northward}{\mathrm{North}}
\newcommand{\SAA}{\mathrm{SAA}}

\newcommand{\projrm}[1]{P_{\cM, #1}}
\newcommand{\projru}[1]{P_{\cU, #1}}
\newcommand{\condexp}[2]{\bE_{\projrm{#1}}\left[#2\right]}

\newcommand{\defgradz}{F_{\diffeo_z}}
\newcommand{\defgrad}{F_{\diffeo}}
\newcommand{\Omegau}{\cD}
\newcommand{\mur}{\mu_{r}}

\newcommand{\shiftu}{b_u}
\newcommand{\shiftm}{b_m}

\newcommand{\closure}[1]{\mathrm{cl}\left(#1\right)}

\crefname{assumption}{Assumption}{Assumptions}

\begin{document}
\begin{frontmatter}
\title{Shape Derivative-Informed Neural Operators\\
with Application to Risk-Averse Shape Optimization\\
}
\author[1]{Xindi Gong\corref{cor1}}
\ead{xindi.gong@utexas.edu}
\author[2,3]{Dingcheng Luo}
\ead{dingcheng.luo@qut.edu.au}
\author[4]{Thomas O'Leary-Roseberry}
\ead{oleary-roseberry.1@osu.edu}
\author[5]{Ruanui Nicholson}
\ead{ruanui.nicholson@auckland.ac.nz}
\author[1,6]{Omar Ghattas}
\ead{omar@oden.utexas.edu}
\address[1]{Oden Institute for Computational Engineering and Sciences, The University of Texas at Austin, Austin, Texas, USA}
\address[2]{School of Mathematical Sciences, Queensland University of Technology, Brisbane, Australia}
\address[3]{Centre for Data Science, Queensland University of Technology, Brisbane, Australia}
\address[4]{Department of Mathematics, The Ohio State University, Columbus, Ohio, USA}
\address[5]{Department of Engineering Science and Biomedical Engineering, University of Auckland, Auckland, New Zealand}
\address[6]{Walker Department of Mechanical Engineering, The University of Texas at Austin,  Austin, Texas, USA.}

\cortext[cor1]{Corresponding authors}

\begin{abstract}
Shape optimization under uncertainty (OUU) is computationally intensive for classical PDE-based methods due to the high cost of repeated sampling-based risk evaluation across many uncertainty realizations and varying geometries, while standard neural surrogates often fail to provide accurate and efficient sensitivities for optimization. We introduce Shape-DINO, a derivative-informed neural operator framework for learning PDE solution operators on families of varying geometries, with a particular focus on accelerating PDE-constrained shape OUU. Shape-DINOs encode geometric variability through diffeomorphic mappings to a fixed reference domain and employ a derivative-informed operator learning objective that jointly learns the PDE solution and its Fr\'echet derivatives with respect to design variables and uncertain parameters, enabling accurate state predictions and reliable gradients for large-scale OUU.

We establish a priori error bounds linking surrogate accuracy to optimization error and prove universal approximation results for multi-input reduced basis neural operators in suitable $C^1$ norms. We demonstrate efficiency and scalability on three representative shape OUU problems, including boundary design for a Poisson equation and shape design governed by steady-state Navier--Stokes exterior flows in two and three dimensions. Across these examples, Shape-DINOs produce more reliable optimization results than operator surrogates trained without derivative information. In our examples, Shape-DINOs achieve 3--8 orders-of-magnitude speedups in state and gradient evaluations. Counting training data generation, Shape-DINOs reduce necessary PDE solves by 1--2 orders-of-magnitude compared to a strictly PDE-based approach for a single OUU problem. Moreover, Shape-DINO construction costs can be amortized across many objectives and risk measures, enabling large-scale shape OUU for complex systems.

\end{abstract}

\begin{keyword}
Scientific machine learning, operator learning, shape optimization, optimization under uncertainty, universal approximation
\end{keyword}

\end{frontmatter}

\tableofcontents

\section{Introduction}
Computational models of physical systems often must account for geometry that is either variable or uncertain. An example of the former is shape optimization, where many iterations over a changing geometry are required during optimization to maximize given performance objectives, while an example of the latter is uncertainty quantification, where geometric features such as bathymetry or as-built geometry are uncertain and many geometries must be sampled to propagate that uncertainty into quantities of interest. In many settings, the geometric variability is encoded using problem-specific parameterizations that should be accounted for simultaneously with additional sources of uncertainty, such as spatially varying material properties or forcing fields, further complicating the modeling and analysis of these systems.
We are particularly interested in systems governed by partial differential equations (PDEs). Throughout, we will denote the geometric (shape) parameters by $z$, uncertain parameters by $m$, and the PDE state by $u$, and write the implicitly defined solution map as
\begin{equation*}
    (m,z) \mapsto u(m,z)
    \quad \text{subject to} \quad
    \mathcal{R}(u,m,z) = 0,
\end{equation*}
where $\mathcal{R}$ denotes an abstract residual form of the governing PDE.

In this work, we develop efficient neural operator learning formulations for the parametric solution map
\[
(m,z) \mapsto u(m,z),
\]
with the goal of accelerating outer-loop tasks involving simultaneous variations in the model parameters and domains.
While the proposed operator learning framework is broadly applicable to a range such tasks, we restrict our attention here to problems of shape optimization under uncertainty (OUU), 
which arise naturally when geometric design decisions must be made in the presence of uncertain model parameters.

We treat the uncertainty in the parameters by modeling $m$ as a random variable distributed according to a reference probability measure $\mu_m$, which may be informed, for example, by prior beliefs or the solution of a Bayesian inverse problem. 
As a consequence, any quantities of interest that depend on the PDE solution, i.e., $Q = Q(u(m,z), m, z)$ ,
are random variables whose distribution reflects the combined effects of parameter uncertainty and geometric variability. 

Rather than optimizing a single realization of $Q$, one instead seeks to optimize a scalar \emph{risk measure} $\rho$ that encodes the designer’s tolerance to uncertainty \cite{shapiro2021lectures}. This leads to the shape OUU problem
\begin{equation}\label{eq:risk_averse_shape_opt}
    \min_z \mathcal{J}(z)
    = \rho_{m \sim \mu_m}\!\left(Q(u(m,z),m,z)\right) + \mathcal{P}(z),
\end{equation}
where $\mathcal{P}(z)$ is a regularization or cost term on the geometry, enforcing design constraints or prior geometric preferences. Common choices of $\rho$ include the expectation, mean--variance functionals, conditional value-at-risk (CVaR), worst-case risk, and entropic risk measures \cite{shapiro2021lectures,rockafellar2000optimization,kouri2016risk}. These allow practitioners to interpolate between risk-neutral designs and highly conservative designs that emphasize tail events or worst-case performance \cite{conti2011risk,conti_rumpf_schultz_tölkes_2018,geihe_lenz_rumpf_schultz_2012, schillings2011efficient}.

\paragraph{Computational challenges}
Shape OUU presents several interrelated computational challenges:
\begin{enumerate}
    \item \emph{Geometric variability.} Modeling changes in geometry typically requires re-meshing or mesh deformation, leading to substantial computational overhead for each variation of the design variable $z$.

    \item \emph{Sampling complexity.} The estimation of risk measures requires repeated evaluations of the forward model across many realizations of the uncertain parameters $m$, resulting in prohibitively large sampling costs.

    \item \emph{Fast online evaluation.} Efficient (online) query capabilities are essential to mitigate the high sample complexity inherent in optimization under uncertainty.

    \item \emph{Derivative accuracy.} Nonlinear optimization algorithms require accurate gradients and, for fast convergence, Hessian information. 
    In the context of risk-averse shape optimization, this necessitates the ability to differentiate the PDE solution with respect to the design variables $z$ (i.e., the shape sensitivities).  
\end{enumerate}

Because these difficulties are tightly coupled, shape OUU quickly becomes computationally intractable when the cost of a single high-fidelity PDE solve is large, as is typical in complex three-dimensional simulations. In such settings, the combined demands of sampling, geometric variation, and derivative evaluation overwhelm traditional optimization workflows.

This computational bottleneck has driven the development of advanced surrogate modeling techniques, such as neural operators, that explicitly account for geometric variability and thereby enable tasks such as shape optimization. In this context, neural operators \cite{kovachki2023neural} approximate the parametric solution map $(m,z)\mapsto u(m,z)$ by a nonlinear surrogate $u_\theta(m,z)$, parameterized by weights $\theta$, which encodes discretization-invariant structure in the inputs and outputs to provide efficient approximations.

However, existing neural operator approaches typically have not addressed the simultaneous parametrization of auxiliary spatial fields $m$ along with the shape parameters $z$, nor do they explicitly control errors in the surrogate approximations of the
Fr\'echet derivatives of PDE states $u$ with respect to the design and uncertain parameters, $D_z u$ and $D_m u$. As a consequence, their deployment in optimization settings lacks guarantees on the accuracy of the resulting gradients and, ultimately, on the quality of the computed optimal designs. This gap motivates the approach developed in this work.

\subsection{Contributions}

In this work, we develop \emph{shape derivative-informed} neural operators to address the aforementioned challenges in a unified manner. Our approach accommodates geometric variability by representing families of PDEs through diffeomorphic mappings to a common reference domain. This formulation enables the entire learning problem to be posed on a fixed reference mesh, while the trained operator remains capable of making predictions across a broad range of geometric transformations. 

The central contribution of this work is the extension of derivative-informed operator learning to this geometrically complex setting. We construct a unified surrogate that is jointly parameterized by the shape variables $z$ and auxiliary spatial fields $m$, and is explicitly trained to accurately approximate both the PDE solution $u$ and its Fr\'echet derivatives with respect to each set of parameters.

Our main contributions are as follows.
\begin{itemize}
    \item[(C1)] \emph{A priori error bounds for surrogate-driven optimization and inversion.}
    We analyze optimization problems of the form \eqref{eq:risk_averse_shape_opt} where the objective $\mathcal{J}(z)$ is approximated by a surrogate objective $\mathcal{J}_\theta(z) := \rho_{m \sim \mu}(Q(u_{\theta}(m,z),m,z)) + \cP(z)$, 
    using an approximate solution operator $u_{\theta}(m,z)$.
    In particular, standard techniques from optimization theory are used to show that when $\cJ$ is strongly convex,
    the optimization errors of first-order stationary points of the surrogate objective $\cJ_{\theta}$ 
    can be bounded in terms of errors in the objective's derivatives, $D_z \cJ - D_z \cJ_{\theta}$ (\Cref{theorem:optimization_error}).
    Moreover, in the risk neutral setting where $\rho(\cdot) = \bE[\cdot]$,
    we show that this derivative error of the objective is bounded by a combination of the $L^2_{\mu}$ errors of the output $\|u(m,z) - u_{\theta}(m,z)\|_{\cU}$, and the $z$-derivative $\|D_z u(m,z) - D_z u_{\theta}(m,z))\|_{\HS(\cZ,\cU)}$. These results provide theoretical justification for derivative-informed operator learning in the context of risk-neutral optimization, and can be easily extended to other settings such as risk averse optimization and inverse problems.

    \item[(C2)] \emph{Approximation theory for multi-input neural operators.}
    We establish universal approximation results for neural operators demonstrating that multi-input neural operators $u_{\theta}(m,z)$
    can approximate both the solution operator $u(m,z)$ and its $z$-derivatives $D_z u(m,z)$ arbitrarily well over compact domains for $z$ but possibly unbounded domains in $m$ (\Cref{theorem:rbno_ua_semibounded}). Moreover, in the risk neutral setting, this implies that the surrogate objective yields stationary points with arbitrarily small optimization error (\Cref{theorem:rbno_ouu_approx}). These results justify the use of neural operators for shape optimization under uncertainty as characterized by first-order stationary points of the corresponding optimization problems.

    \item[(C3)] \emph{Efficient algorithms for shape derivative-informed operator learning.}
    We develop practical algorithms for learning operators over families of geometries by representing arbitrary diffeomorphic transformations $\diffeo_z$ of a reference mesh via extensions. 
    We present efficient formulations of the governing PDEs posed either on the deformed domain or, equivalently, on the reference domain using the inverse diffeomorphism $\diffeo_z^{-1}$. 
    We further delineate conditions for existence and efficient computation of derivatives of the solution with respect to both $m$ and $z$, leveraging the implicit function theorem together with adjoint and forward sensitivity methods. Building on these components, we extend multi-input reduced-basis derivative-informed neural operators \cite{luo2023efficient} to the shape-parametric setting.

    We demonstrate the effectiveness of the proposed framework through a collection of challenging operator learning problems involving geometric variability, culminating in risk-averse PDE-constrained shape optimization problems. Empirically, the proposed approach yields two key benefits: 
    (i) improved optimization accuracy, consistent with the bounds on the optimization error developed for the risk neutral setting, 
    and (ii) improved generalization accuracy per unit of computational effort.
\end{itemize}

\subsection{Related Work}

\paragraph{Diffeomorphic mappings of geometries}
Diffeomorphisms have been widely used to parameterize transformations of computational domains across a range of settings. In the context of PDEs, such approaches play a central role in shape calculus and PDE-constrained shape optimization. 
Of particular interest are diffeomorphisms constructed from basis expansions, such as Fourier bases, B-splines, and NURBS. These are commonly used in shape optimization and inference (e.g., in the method of free-form deformations) 
due to their expressive power and close integration with computer aided design \cite{Samareh04,rozza2013free, Bui-ThanhGhattas12a, AkcelikBirosGhattasEtAl05, salmoiraghi_scardigli_telib_rozza_2018,he2019robust}.
Other classical developments include velocity-based formulations of diffeomorphic shape calculus \cite{delfour2011shapes,henry2005perturbation,SokolowskiZolesio92,haslinger2003introduction,henrot2018shape, pironneau1984optimal, 
YangStadlerMoserEtAl11}, the method of mappings \cite{hiptmair2015shape,paganini2018higher}, and approaches that construct diffeomorphic domain transformations via PDE-based extension operators, such as elastic deformations \cite{schulz2016efficient,geiersbach2023pde,onyshkevych2021mesh,GhattasLi98b}.
Closely related diffeomorphism-based formulations also arise in medical imaging, particularly in large-deformation registration and computational anatomy \cite{beg_miller_trouvé_younes_2005,miller2002metrics,trouve1998diffeomorphisms,mang2015inexact,mang2017semi,mang2019claire}.

\paragraph{Risk-averse shape optimization}

There has been substantial work done on risk-averse shape optimization under uncertainty \cite{conti2011risk,conti_rumpf_schultz_tölkes_2018,geihe_lenz_rumpf_schultz_2012,kodakkal2022risk,conti_held_pach_rumpf_schultz_2011,kodakkal_keith_khristenko_apostolatos_bletzinger_wohlmuth_wüchner_2022,matthias_heinkenschloss_kouri_2025}. These approaches typically rely on high-fidelity numerical discretizations, such as the finite element method, which restricts their applicability in high-dimensional stochastic settings due to the coupled computational costs of sampling and repeated PDE solves. Similar methods have also been employed in risk-averse topology optimization \cite{doi:10.1137/090754315, eigel_neumann_schneider_wolf_2018, martínez-frutos_herrero-pérez_kessler_periago_2018}.

\paragraph{Machine learning methods for shape optimization}
Motivated by the computational cost of shape optimization using traditional high-fidelity solvers, recent years have seen growing interest in machine learning–based approaches for learning PDE solution operators over varying geometries. Representative examples include geometry-aware variants of the Fourier Neural Operator (FNO) architecture \cite{li2021fourier,li_huang_anandkumar_liu_li_2023,zhao_liu_li_chen_liu_2025}, as well as DeepONet-based models and their multi-input extensions (MIONet) \cite{LuJinKarniadakis2019,jin2022mionet,yin_charon_brody_lu_trayanova_maggioni_2024}.

A variety of strategies have been proposed to incorporate geometric variability into these frameworks. These include learned deformation maps that transform irregular domains to regular latent grids compatible with FNOs \cite{li_huang_anandkumar_liu_li_2023}, low-dimensional geometric parameterizations such as control points or NURBS representations for applications including NACA airfoil optimization \cite{shukla_oommen_peyvan_penwarden_plewacki_bravo_ghoshal_kirby_karniadakis_2024}, and operator learning over diffeomorphic families of domains \cite{zhao_liu_li_chen_liu_2025,yin_charon_brody_lu_trayanova_maggioni_2024,cheng_hao_wang_huang_wu_liu_zhao_liu_su_2024}. Related neural operator–based approaches have also been proposed for topology optimization problems \cite{kou_yin_zhu_jia_luo_yuan_lu_2025}.

While these approaches demonstrate strong performance for deterministic geometric variability, they do not address auxiliary uncertain parameters or risk-averse optimization objectives. Moreover, all of the aforementioned works rely on standard $L^2_\mu$ training formulations and do not incorporate Fr\'echet derivative information into the learning process.

\paragraph{Derivative-informed operator learning}

This work extends derivative-informed operator learning \cite{o2024derivative} to reduced basis neural operator architectures designed to simultaneously accommodate uncertain parameters and varying geometries, with their associated derivatives. It also builds on \cite{luo2023efficient}, which developed a multi-input reduced-basis derivative-informed neural operator framework for risk-averse PDE-constrained optimization. 
In the present work, we substantially expand the approximation theory and focus on risk-averse shape optimization, which introduces substantial additional challenges compared to previous work. Derivative-informed neural operators (DINOs) have also shown substantial improvements in other optimization-related settings such as inverse problems \cite{cao2024lazydino,cao2025derivative}, and have been extended to various architectures such as DeepONet \cite{qiu2024derivative} and FNOs \cite{yao2025derivative}.

The approximation capabilities of neural operators have been thoroughly explored in existing literature for a range of architectures. Most relevant to the present work are results developed for reduced basis architectures, e.g., \cite{bhattacharya2021pca,Lanthaler23, HerrmannSchwabZech24, AdcockDexterMoraga24}, which establish universal approximation properties along with parametric complexity estimates under suitable regularity assumptions of the operator. 
Typically, such results focus on the error in the outputs of the operator but not its derivatives, the latter of which is important to the stability of the optimizers obtained when using the neural operator to approximate the objective function of optimization problems.
In this regard, universal approximation results that additionally include the derivative accuracy have been developed
for the reduced basis architecture in \cite{luo2025dis} and for FNOs in \cite{yao2025derivative}.
We build on these works to develop a universal approximation result for the reduced basis architecture tailored to the OUU setting.

\subsection{Layout of the Paper}
In Section~\ref{sec:formulation}, we introduce the class of PDE problems considered in this work and formally state the target learning and optimization tasks. This section provides the necessary mathematical background, including properties of the diffeomorphic mappings between the reference and deformed domains, and establishes error bounds for optimization problems in which the true PDE solution operator is replaced by a surrogate model. Section~\ref{sec:shape_dino} formulates the derivative-informed operator learning problem and presents universal approximation results 
for a class of reduced-basis neural operator architectures. In Section~\ref{sec:implementation_details} describes practical implementation aspects of the proposed framework, including mesh deformation via basis expansions and elastic extensions along with the construction of the reduced-basis neural operators. Finally, Section~\ref{sec:numerical_results} presents numerical experiments that demonstrate the effectiveness of the proposed methods on several challenging shape optimization problems, including two- and three-dimensional flow optimization under uncertainty with risk-averse design objectives. The results show that, for a fixed amount of training data, the shape derivative-informed approach yields substantially higher optimization accuracy than neural operators trained without derivative information. Moreover, when applied to risk-averse shape optimization, Shape-DINO can achieve greater accuracy in producing optimal designs than conventional PDE-based sample-average approximation methods while requiring 1–2 orders of magnitude fewer state PDE solves (including the cost of generating training data). 
We further demonstrate that the construction cost of Shape-DINO can be amortized across many design objectives and risk measures.
In terms online costs, Shape-DINO enables 3–8 orders-of-magnitude speedups for state evaluations and gradient computations.

\section{Formulation and Motivation}\label{sec:formulation}
\subsection{Notation for Operator Learning on Hilbert Spaces}

We begin by introducing some common notation used for operator learning on Hilbert spaces.
Additional notation will be introduced where they are used.
In this work, we will consider state variables, model parameters, and design variables that belong to (finite- or infinite-dimensional) separable Hilbert spaces over $\bR$. 
Suppose $\cX$ is such a Hilbert space, we will use $\langle v_1, v_2 \rangle_{\cX}$ to denote the inner product on $\cX$, which generates the norm $\|v\|_{\cX} = \sqrt{ \langle v, v \rangle_{\cX}}$.
We will use $\cX'$ to denote its topological dual and $\langle l, v \rangle_{\cX' \times \cX}$ to denote the duality pairing between $l \in \cX'$ and $v \in \cX$.
For Hilbert spaces, $\cX'$ can be identified with $\cX$ by the Riesz map, $\mathsf{R}_{\cX} : \cX' \rightarrow \cX$, i.e., for $l \in \cX'$, 
$\langle \mathsf{R}_{\cX} l, v \rangle_{\cX} = \langle l, v \rangle_{\cX' \times \cX}.$

We use $\Op(\cX, \cY)$ to denote the space of bounded linear operators between the two Hilbert spaces $\cX$ and $\cY$.
This is a Banach space with the operator norm $\|A\|_{\Op(\cX,\cY)} = \sup_{\|v\|_{\cX} = 1} \|A v\|_{\cY}$.
For any $A \in \cL(\cX, \cY)$, we let $A^* \in \cL(\cY, \cX)$ denote the adjoint of $A$
and $A^T \in \cL(\cY', \cX')$ denote the transpose of $A$, 
such that $\langle w, Av \rangle_{\cY} = \langle A^* w, v \rangle_{\cX}$ and
$\langle l, Av \rangle_{\cY' \times \cY} = \langle A^T l, v \rangle_{\cX' \times \cX}$
for any $v \in \cX, w \in \cY$, and $l \in \cY'$.
In particular, when $v \in \cX$ is a vector, we have $v^T = \mathsf{R}_{\cX}^{-1} v$. 
We use this notation to define the outer product of two vectors $v_1$ and $v_2$, 
denoted $v_1 v_2^T$, 
which is the linear operator given by $(v_1 v_2^T)v_3 = v_1 \langle v_2, v_3 \rangle_{\cX}$.

We will also consider the space of Hilbert--Schmidt operators $\HS(\cX,\cY)$, which include continuous linear operators that have finite Hilbert--Schmidt norm, defined as 
\[
    \|A \|_{\HS(\cX,\cY)}^2 = \sum_{i = 1}^{\infty} \| A v_i \|_{\cY}^2,
\]
where $\{v_i\}_{i=1}^{\infty}$ is any orthonormal basis of $\cX$.
We note that when either $\cX$ or $\cY$ is finite dimensional, the Hilbert--Schmidt norm is equivalent to the operator norm. 

We will represent uncertainty in the model parameters through Borel probability measures $\mu$ on Hilbert spaces.
For Hilbert spaces $\cX$ and $\cY$, 
we let $L^p_{\mu}(\cX; \cY)$ 
denote the Bochner space of operators $F : \cX \rightarrow \cY$ that are Bochner integrable with finite norm, i.e.,
\[
    \|F\|_{L^p_{\mu}(\cX;\cY)}^p := \bE_{x \sim \mu}[ \| F(x) \|_{\cY}^p] = \int \|F(x)\|^p_{\cY}  d\mu(x) < \infty
\]
when $p \in [1, \infty)$, and
\[
    \|F\|_{L^{\infty}_{\mu}(\cX;\cY)} = \esssup_{x} \|F(x)\|_{\cY}.
\]
when $p = \infty$. Note that we will occasionally use the shorthand $L^p_{\mu}$ instead of $L^p_{\mu}(\cX;\cY)$ when the input and output spaces are clear.

Additionally, we will consider mappings that are Fr\'echet differentiable. 
For such a mapping, $F : \cX \rightarrow \cY$, we denote its derivative at $x \in \cX$ as $D_x F(x) \in \Op(\cX, \cY)$, such that 
\begin{equation*}
    \lim_{\|h\|_{\cX} \rightarrow 0} \frac{\|F(x + h) - F(x) - D_x F(x) h \|_{\cY}}{\|h\|_{\cX}} = 0.
\end{equation*}
Moreover, we will write the space of continuously Fr\'echet differentiable mappings as 
$C^1(\cX;\cY)$ or $C^1(\cB; \cY)$ 
if we are considering a subset of the input space $\cB \subset \cX$.

\subsection{Partial Differential Equations on Parametric Domains}
For shape optimization, we consider a family of spatial domains $\Omegaz \subset \bR^{d}$, with $d = 2,3$, parametrized by finite-dimensional shape variables $z \in \cZ = \bR^{\dz}$.
Specifically, we assume each $\Omegaz$ is coupled to a reference domain $\Omegaref $ through a 
diffeomorphism $\diffeo_{z} : \Omegau \rightarrow \bR^{d}$ defined by the shape parameters, where $\Omegau \subset \bR^{d}$ is a hold-all domain containing the closure of $\Omegaref$.
Moreover, we assume that the reference domain $\Omegaref$ is bounded and Lipschitz, such that for each $z$ within some closed admissible set $\cZ_{ad} \subset \cZ$, the resulting spatial domain $\Omegaz = \diffeo_z(\Omegaref)$ is also bounded and Lipschitz.

We are concerned with PDE problems defined on the family of domains $\Omegaz$.
Such PDE problems relate the shape variable $z$, along with any additional model parameters $\mz \in \cMz$ such as spatially varying coefficient fields or forcing terms, 
to the state variable $\uz \in \cUz$.
Here, $\cMz$ and $\cUz$ are separable Hilbert spaces for the model parameters and the state, respectively, and are defined over the spatial domain $\Omegaz$.
These are typically given as Lebesgue spaces $L^2(\Omegaz)$ or Sobolev spaces $H^k(\Omegaz)$. Then, for each instance of $\Omega_z$, 
the PDE problem can be stated abstractly in residual form as 
\begin{equation}\label{eq:res_form_Omegaz}
    \text{given } \Omegaz \text{ and } \mz \in \cMz, \quad \text{find } \uz \in \cUz \quad \text{such that} \quad \Rz(\uz, \mz) = 0,
\end{equation}
where $\Rz : \cUz \times \cMz \rightarrow \cVz'$ is a differential operator whose output lies in the topological dual of some test space $\cVz'$.

Alternatively, we can state the PDE on the reference domain $\Omegaref$ using the pullback through $\diffeo_z$. 
To this end, let $\cMref $ and $\cUref $ be the corresponding function spaces defined on the reference domain $\Omegaref$. 
Then, \eqref{eq:res_form_Omegaz} can be equivalently formulated on $\Omegaref$ as 
\begin{equation} \label{eq:res_form_Omega0}
    \text{given }z\in \mathcal{Z} \text{ and } \mref \in \cMref,  \quad \text{find } \uref \in \cUref 
    \quad \text{such that} \quad \Rref(\uref,\mref,z) = 0,
\end{equation}
where \eqref{eq:res_form_Omega0} is derived from \eqref{eq:res_form_Omegaz} 
through the fact that $\uref = \uz \circ \diffeo_z$, $\mref = \mz \circ \diffeo_z$, 
and $\Rref : \cUref  \times \cMref \times \cZ \rightarrow \cVref'$ 
is given by the appropriate change of variable of the associated operators.
We list the coordinate transformations for some commonly used differential and integral operations in \Cref{appendix:coordinate_transforms}. Note that in \eqref{eq:res_form_Omega0}, the shape variables appear explicitly in the form of the residual.
Under the assumption that the PDE admits a unique solution for every $z \in \cZ_{ad}$ and $\mref \in \cMref$ 
such that we can abstractly write 
$\uref = \uref(\mref, z)$
where we refer to $\uref : \cMref \times \cZ_{ad} \rightarrow \cUref$ as the solution operator.

\begin{remark}[On notation]\label{remark:geometry_notation}
For the remainder of our exposition, we will adopt the following notational conventions. We will use $\widetilde{\cdot}$ over a symbol to indicate that this quantity is defined in terms of the spatial domain $\Omegaz$, while quantities without $\widetilde{\cdot}$ will generally be used for their reference domain counterparts, such as in $\uz$ and $\uref$ above.
For geometric quantities, we will use $n, dx, ds$ to denote unit normal vectors, volume elements, and surface elements on the spatial domain and $N, dX, dS$ to that on the reference domain.
Moreover, when writing $\cM$, $\cU$, and $\cV$ without the associated domain, it is assumed that these refer to the reference domain function spaces $\cM := \cMref$, $\cU := \cUref$, and $\cV := \cVref$.
\end{remark}

Before proceeding, we will introduce the notions of \textit{pushforward} and $\textit{pullback}$ operators that will be of use in subsequent sections. 
For any diffeomorphism $\diffeo : \Omega_0 \rightarrow \Omega_1$, 
we let $T_{\diffeo}$ denote the composition operator, $T_{\diffeo} u = u \circ \diffeo$.
We refer to $T_{\diffeo}$ as the \textit{pullback operator} and refer to $T_{\diffeo}^{-1} = T_{\diffeo^{-1}}$ as the \textit{pushforward operator} 
(i.e., $T_{\diffeo}^{-1}u = u \circ \diffeo^{-1}$).
Under these definitions, the changes of variables used in \eqref{eq:res_form_Omegaz} and \eqref{eq:res_form_Omega0} 
can be expressed as 
\[ 
    \uref = T_{\diffeo_z} \uz, \quad \mref = T_{\diffeo_z} \mz
    \quad \text{or} \quad 
    \uz = T_{\diffeo_z}^{-1} \uref, \quad \mz= T_{\diffeo_z}^{-1} \mref,
\]
when $m$ and $u$ are scalar fields. 

When $\diffeo_z \in W^{1,\infty}(\Omegaref)$ and $\diffeo_z^{-1} \in W^{1, \infty}(\Omega_z)$, the resulting composition operators are bounded linear operators 
between low order Sobolev spaces on the reference/spatial domains.
\begin{proposition}\label{prop:pullback_pushforward_operators}
    Suppose $\diffeo : \Omega_0 \rightarrow \Omega_1$ is a diffeomorphism such that 
    $\diffeo \in W^{1,\infty}(\Omega_0)$ and $ \diffeo^{-1} \in W^{1,\infty}(\Omega_1)$.
    We have the following:
    \begin{enumerate}
        \item The pushforward and pullback operators are bounded linear operators between $L^2(\Omega_0)$ and $L^2(\Omega_1)$, 
        i.e., $T_{\diffeo}^{-1} \in \Op(L^2(\Omega_0), L^2(\Omega_1))$
        and
        $T_{\diffeo} \in \Op (L^2(\Omega_1), L^2(\Omega_0) )$.
        Moreover, their norms are bounded by
        \begin{subequations}
        \begin{align}
            \|  T_{\diffeo}^{-1} \|_{\Op(L^2(\Omega_0),L^2(\Omega_1))}^2
            &\leq 
                \|\diffeo\|_{W^{1,\infty}(\Omega_0)}^{d},
            \\ 
            \|  T_{\diffeo} \|_{\Op(L^2(\Omega_1),L^2(\Omega_0))}^2
            &\leq 
                \|\diffeo^{-1}\|_{W^{1,\infty}(\Omega_1)}^{d}.
        \end{align}
        \end{subequations}

        \item The pushforward and pullback operators are bounded linear operators between $H^1(\Omega_0)$ and $H^1(\Omega_1)$, 
        i.e., $T_{\diffeo}^{-1} \in \Op(H^1(\Omega_0), H^1(\Omega_1))$
        and
        $T_{\diffeo} \in \Op (H^1(\Omega_1), H^1(\Omega_0) )$.
        Moreover, their norms are bounded by
        \begin{subequations}
        \begin{align}
            \|  T_{\diffeo}^{-1} \|_{\Op(H^1(\Omega_0),H^1(\Omega_1))}^2
            &\leq 
                \left(
                    1 + \|\diffeo^{-1}\|_{W^{1,\infty}(\Omega_1)}^2
                \right)
                \|\diffeo\|_{W^{1,\infty}(\Omega_0)}^{d}
            ,
            \\ 
            \|  T_{\diffeo} \|_{\Op(H^1(\Omega_1),H^1(\Omega_0))}^2
            &\leq 
                \left(
                    1 + \|\diffeo \|_{W^{1,\infty}(\Omega_0)}^2
                \right)
                \|\diffeo^{-1}\|_{W^{1,\infty}(\Omega_1)}^{d}
            .
        \end{align}
        \end{subequations}
    \end{enumerate}
\end{proposition}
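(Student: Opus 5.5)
The plan is a change of variables in the Lebesgue integral, combined with the chain rule for Sobolev functions composed with bi-Lipschitz maps. The convention used is that $\|\diffeo\|_{W^{1,\infty}(\Omega_0)}$ bounds the pointwise operator (or Frobenius) norm of the Jacobian $F_{\diffeo} = \nabla \diffeo$ almost everywhere. Hadamard's inequality then gives $|\det F_{\diffeo}| \le \|F_{\diffeo}\|^d \le \|\diffeo\|_{W^{1,\infty}(\Omega_0)}^d$ a.e., and analogously for $\diffeo^{-1}$.

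\emph{Step 1 ($L^2$ bounds).} Take $u \in L^2(\Omega_0)$ and substitute $x = \diffeo(X)$, so $dx = |\det F_{\diffeo}(X)|\, dX$. This gives
\[
\|T_{\diffeo}^{-1}u\|_{L^2(\Omega_1)}^2 = \int_{\Omega_1} |u(\diffeo^{-1}(x))|^2\,dx = \int_{\Omega_0} |u(X)|^2 |\det F_{\diffeo}(X)|\,dX \le \|\diffeo\|_{W^{1,\infty}(\Omega_0)}^d \|u\|_{L^2(\Omega_0)}^2 .
\]
The bound for $T_{\diffeo}$ is symmetric: swap the roles of $\diffeo$ and $\diffeo^{-1}$. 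Linearity of both operators is immediate.

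One technical point must be checked first: the change-of-variables formula has to hold for a bi-Lipschitz (rather than $C^1$) map. This is the area formula for Lipschitz maps. Also, because $\diffeo$ is Lipschitz, null sets map to null sets, so $u\circ\diffeo^{-1}$ is well defined on a.e.-equivalence classes.

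\emph{Step 2 ($H^1$ bounds).} For $u \in H^1(\Omega_0)$, the chain rule gives
\[
\nabla (u\circ \diffeo^{-1})(x) = F_{\diffeo^{-1}}(x)^{T} (\nabla u)(\diffeo^{-1}(x)).
\]
Hence $|\nabla(T_{\diffeo}^{-1}u)| \le \|\diffeo^{-1}\|_{W^{1,\infty}(\Omega_1)}\, (T_{\diffeo}^{-1}|\nabla u|)$ pointwise a.e. Applying Step 1 to $|\nabla u|$ gives
\[
\|\nabla T_{\diffeo}^{-1}u\|_{L^2(\Omega_1)}^2 \le \|\diffeo^{-1}\|_{W^{1,\infty}(\Omega_1)}^2 \|\diffeo\|_{W^{1,\infty}(\Omega_0)}^d \|\nabla u\|_{L^2(\Omega_0)}^2 .
\]
Adding the $L^2$ part from Step 1 and bounding both $\|u\|^2_{L^2}$ and $\|\nabla u\|^2_{L^2}$ by $\|u\|_{H^1}^2$ yields the stated factor $\bigl(1+\|\diffeo^{-1}\|_{W^{1,\infty}(\Omega_1)}^2\bigr)\|\diffeo\|_{W^{1,\infty}(\Omega_0)}^d$. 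The bound for $T_{\diffeo}$ follows by exchanging roles again.

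\emph{Main obstacle.} The delicate step is justifying the chain rule for $u \in H^1$ composed with a map that is only bi-Lipschitz. I would first prove it for $u \in C^\infty(\Omega_0)\cap H^1(\Omega_0)$. For such $u$, $u\circ\diffeo^{-1}$ is Lipschitz on compact subsets, so Rademacher's theorem gives a.e. differentiability, and the classical chain rule holds a.e. Next I would invoke Meyers--Serrin density to approximate a general $u\in H^1(\Omega_0)$ by smooth $u_k$ in $H^1$. The $L^2$ bound of Step 1, applied to both $u_k$ and $\nabla u_k$, shows that $T_{\diffeo}^{-1}u_k$ is Cauchy in $H^1(\Omega_1)$. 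Its limit must coincide with $T_{\diffeo}^{-1}u$, since convergence in $L^2$ identifies it. This establishes that $T_{\diffeo}^{-1}u \in H^1(\Omega_1)$ with the chain-rule gradient, and the norm bound then passes to the limit.
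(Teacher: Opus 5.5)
Your proof is correct and follows essentially the same argument as the paper. For the $L^2$ bound you use the change of variables together with $|\det F_{\chi}| \le \|\chi\|_{W^{1,\infty}(\Omega_0)}^{d}$. For the $H^1$ bound you use the chain rule $\nabla_x(u\circ\chi^{-1}) = F_{\chi}^{-T}\nabla_X u$ and apply the $L^2$ bound to $|\nabla u|$, then exchange the roles of $\chi$ and $\chi^{-1}$ to get the pullback bounds. The only difference is that you explicitly justify the change of variables and the Sobolev chain rule for bi-Lipschitz maps (area formula, Rademacher, Meyers--Serrin density). The paper takes these for granted; your extra care is relevant since its own remark notes the elastic-extension maps are only $W^{1,\infty}$.
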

\begin{proof}
    See \Cref{appendix:proof_pushforward}.
\end{proof}

The fact that problems of the form \eqref{eq:res_form_Omegaz} can be equivalently formulated as \eqref{eq:res_form_Omega0} is of essential importance to this work. 
Due to this fact, we can formulate both the shape optimization problem and the operator learning task for all $\Omega_z$ on $\Omegaref$ via the parametrization with shape parameter $z$, which now explicitly appears in the residual operator $\Rref$ in \eqref{eq:res_form_Omega0}.

\subsection{Derivatives of the Solution Operator}
In this work, we assume that that solution operator is continuously differentiable with respect to $m$ and $z$.
Generally, this allows us to solve shape optimization problems involving the PDE by derivative-based optimization algorithms.
More specific assumptions will be made in subsequent sections when deriving formal theoretical results.

Since we are working directly in the reference domain, the Fr\'echet derivative operators $D_m u(m,z)$ and $D_z u(m,z)$ can be computed through implicit differentiation of the PDE $\eqref{eq:res_form_Omega0}$ as in standard forward and adjoint sensitivity methods, and does not need to explicitly involve shape calculus.
That is, at any $m, z$ and $u(m,z)$ and for any direction of the input space $(h_{m}, h_{z}) \in \cM \times \cZ$, 
the derivative action, or the Jacobian-vector product (JVP), $h_{u} := D_m u(m,z) h_{m} + D_z u(m,z) h_{z}$ is given by 
the solution of the linearized state equation
\begin{equation}\label{eq:linearized_forward}
    D_u \Rref(u,m,z)h_{u} = - \left( D_m \Rref(u,m,z) h_{m} + D_z \Rref(u,m,z) h_{z} \right).
\end{equation}
On the other hand, to compute the action of the derivative's adjoint, or the vector-Jacobian product (VJP), in some direction $v \in \cU$,
we solve the adjoint equation 
\begin{equation}\label{eq:linearized_adjoint}
    D_u \Rref(u,m,z)^T p = -v^T,
\end{equation}
for the adjoint variable $p \in \cU$, such that 
\begin{equation}
   v^T D_m u(m,z) = D_m \Rref(u,m,z)^T p, \qquad 
   v^T D_z u(m,z) = D_z \Rref(u,m,z)^T p.
\end{equation}
The forms above imply that having already computed the solution $u = u(m,z)$, the cost of subsequently computing a single VJP or JVP  amounts to solving a single linear PDE with the operator $D_u \Rref$ or its adjoint (transpose).
Moreover, at a single point $(m,z,u(m,z))$, the computation of multiple derivative actions always shares the same linear operator $D_u \Rref$, with the only change being the right hand side of \eqref{eq:linearized_forward} or \eqref{eq:linearized_adjoint}; this often leads to significant computational savings when linear algebraic operations are amortizable.

We note that although we will frequently refer to derivatives with respect to $z$ as \textit{shape derivatives}, 
this type of differentiation aligns more closely with the notion of \textit{material derivatives} in shape optimization, 
since the derivatives are not only affected by the motion of the domain boundary, $\partial \Omega_z$, 
but also by the motion of the interior through the definition of $\diffeo_z$ (for example, see \cite[Chapter 11]{ManzoniQuarteroniSalsa21}).

\subsection{Shape Optimization under Uncertainty}
Let $Q_z : \cUz \times \cMz \rightarrow \bR$
define a scalar quantity of interest (QoI) that depends on the model parameters, state variables, and indexed by the shape variables $z$ that define the domain $\Omega_z$. 
Commonly, these are formulated as integral quantities over the domain $\Omega_z$ or its subsets, such as in tracking objectives, forces, energies, etc.
In shape optimization problems, we are interested in minimizing $Q_z(\uz, \mz)$ with respect to $z \in \cZ_{ad}$ subject to the constraint that $(\uz, \mz)$ solves the PDE \eqref{eq:res_form_Omegaz}.
As in the case of the PDE residual, 
we can reformulate the QoI by writing it in terms of reference-domain quantities, such that 
\begin{equation}
    Q(\uref, \mref, z) = Q_z (\uz, \mz),
\end{equation}
with $\uref = T_{\diffeo} \uz$ and $\mref = T_{\diffeo} \mz$.
One then aims to minimize $Q( \uref, \mref, z)$ where $\uref = \uref(\mref, z)$ solves \eqref{eq:res_form_Omega0}.

Often, the model parameters $\mref$ are uncertain, 
which implies that for each fixed $z$, the resulting value of the QoI is also uncertain.
In this work, we model the uncertainty in $\mref$ through its probability distribution $\mu_m$, defined as a Borel probability measure over $\cM = \cMref$.
Subsequently, for every $z \in \cZ_{ad}$, the quantity $Q(\uref(\cdot, z), \cdot, z)$ is a random variable whose value depends on the realization of $m$.
Thus, under appropriate moment assumptions on $Q(\uref(\cdot, z), \cdot)$ 
(e.g., it belongs to $L^p_{\mu_{m}}(\cM;\bR)$)
the shape optimization problem can be formulated using a risk measure $\rho : L^p_{\mu_m} \rightarrow \bR$; a functional that provides a statistical summary of the distribution and chosen to express a particular risk preference. 
The shape OUU problem is then given by \begin{equation}\label{eq:reference_domain_ouu} 
    \min_{z \in \cZ_{ad}} \cJ(z) = \rho_{m \sim \mu_m}( Q(\uref(m, z), m, z)) + \cP(z)
\end{equation}
where $\cP : \cZ \rightarrow \bR$ is a penalization/regularization term that accounts for any additional cost of the shape associated with $z$.

In this work, we consider the following risk measures in our numerical examples, which we introduce below in terms of a generic random variable $\xi \sim \mu_{\xi}$.

\paragraph{Mean}
The canonical example of a risk measure is the mean (or expectation) of the random variable, 
\begin{equation}\label{eq:mean_risk}
    \rho_{\xi \sim \mu_{\xi}}^{\mean}(\xi) = \bE_{\xi \sim \mu_{\xi}}[\xi].
\end{equation}
The mean is risk neutral and does not explicitly account for events in the upper tails of the distributions, which often represent catastrophic failures in engineering systems.
For this reason, we also consider two risk-averse functionals as examples.

\paragraph{Conditional value-at-risk}
The conditional value-at-risk (CVaR), which is defined for a quantile $\beta \in [0,1]$ 
\begin{equation}
    \rho_{\xi \sim \mu_\xi}^{\CVaR,\beta}(\xi) = \min_{t \in \bR}{t + \frac{1}{1-\beta}\bE[(\xi - t)^{+}]},
\end{equation}
where $(x)^{+} := \max(x,0)$. 
When the underlying random variable $\xi$ is continuously distributed, the CVaR can be interpreted as the conditional expectation of $\xi$ given that it exceeds its $\beta$ quantile.
As a result, the CVaR can be used to explicitly control the upper tail of the distributions and has been explored for risk-averse design in engineering \cite{kouri2016risk,ChaudhuriKramerNortonEtAl22}

\paragraph{Entropic risk measure} 
The entropic risk measure is given by 
\begin{equation}
    \rho_{\xi \sim \mu_\xi}^{\entropic, \beta}(\xi) = \frac{1}{\beta} \log \left( \bE [\exp(\beta \xi)] \right),
\end{equation}
where the parameter $\beta > 0$ specifies the degree of risk aversion. 
The entropic risk measure has also been considered for risk-averse optimization of physical systems, such as in \cite{guth_kaarnioja_kuo_schillings_sloan_2024}.

\begin{remark}[On the distribution of the model parameters]
In this work, we will focus on the case where the distribution of the model parameter $\mu_m$ is defined directly on the reference domain.
In this case, the uncertain parameter field in the spatial domain $\Omega_z$ for any $z$ is the pushforward (in terms of the spatial variables) by $\diffeo_z$ 
from a random field defined on the reference domain $\Omegaref$, i.e., $\mz = T_{\diffeo_z}^{-1} \mref$ and $\mref \sim \mu_m$.
This choice is made to simplify the notation and the required variable transformations.
Nevertheless, our formulations and procedures can be extended to cases where the uncertain parameter field is prescribed on the spatial domain via more complicated transformations of some underlying random element.
\end{remark}

\subsection{Optimization via Sample Average Approximation}
One conventional approach to solving the OUU problem \eqref{eq:reference_domain_ouu}
is by replacing expectations with sample average approximations (SAA) taken from i.i.d.~samples $\mref^{(i)} \sim \mu_m, i = 1, \dots, n_{\SAA}$.
This defines a new objective function, in which the risk measure component of \eqref{eq:reference_domain_ouu} is replaced by its SAA. 

For example, when using $\rho = \rho^{\mean}$, 
the SAA of the objective function becomes
\begin{equation}\label{eq:mean_saa}
    \widehat{\cJ}^{\mean}(z) := \frac{1}{n_{\SAA}}\sum_{i=1}^{n_{\SAA}} Q(\uref(\mref^{(i)}, z), \mref^{(i)}, z).
\end{equation}
Similarly, for the entropic risk measure, we take 
\begin{equation}\label{eq:entropic_saa}
    \widehat{\cJ}^{\entropic}(z) := \frac{1}{\beta}
        \log \left( \frac{1}{n_{\SAA}}\sum_{i=1}^{n_{\SAA}} \exp( \beta Q(\uref(\mref^{(i)}, z), \mref^{(i)}, z)) \right).
\end{equation}
In the case of the CVaR, in addition to the SAA, we will also take a smooth approximation to the maximum function, $(x)_{\varepsilon}^{+} \approx (x)^{+}$,
\begin{equation}
  (x)^+_\epsilon =
    \begin{cases}
      0, & \text{if $x < 0$}\\
      \frac{x^3}{\epsilon^2} - \frac{x^4}{2\epsilon^3}, & \text{if $ 0 \leq x \leq \epsilon$}\\
      x - \frac{\epsilon}{2}, & \text{$x \geq \epsilon$},
    \end{cases}       
\end{equation}
with a small smoothing parameter $\epsilon \ll 1$ \cite{kouri2016risk}. 
This smoothing preserves convexity and ensures uniform convergence $(x)^+_\epsilon \rightarrow (x)^+$ as $\epsilon \rightarrow 0$. 
The resulting approximation to the CVaR objective function is then given by
\begin{equation}\label{eq:smoothed_cvar_saa}
    \widehat{\cJ}^{\CVaR, \beta, \varepsilon}(z,t)
    := t + \frac{1}{n_{\SAA}(1 - \beta)}\sum_{i=1}^{n_{\SAA}} 
    (Q(\uref(\mref^{(i)}, z), \mref^{(i)}, z) - t)_{\varepsilon}^{+},
\end{equation}
which preserves differentiability with respect to $(z,t)$.
Finally, the SAA for the objective function is a combination of the risk measure 
and the penalization, which we minimize with respect to $z \in \cZ_{ad}$
\begin{equation}
    \min_{z \in \cZ_{ad}} \widehat{\cJ}(z) := \widehat{\cJ}^{\mathrm{risk}}(z) + \cP(z) 
\end{equation}
or in the case of CVaR, additionally over $t \in \bR$.

The resulting SAA objective function can then be minimized by conventional derivative-based optimization algorithms, e.g., quasi Newton methods (or Newton if the problem has sufficient smoothness), 
where the derivatives can be efficiently computed by the adjoint method.
That is, in order to compute the derivative of 
the mapping 
$
    z \mapsto Q(u(m^{(i)}, z), m^{(i)}, z)
$
with respect to $z$, we can proceed by the chain rule, 
\[
    D_z (Q(u(m^{(i)}, z), m^{(i)}, z))
    =D_u Q(u(m^{(i)},z), m^{(i)}, z)) D_z u(m^{(i)}, z) + D_z Q(u(m^{(i)}, z), m^{(i)}, z).
\]
The first term is simply a VJP of the solution operator $(v^{(i)})^T D_z u(m^{(i)}, z)$ in the direction 
\[ v^{(i)} = \mathsf{R}_{\cV} D_u Q(u(m^{(i)},z), m^{(i)}, z)\]
and can be evaluated by solving the adjoint PDE \eqref{eq:linearized_adjoint} once for each $m^{(i)}$,
while the latter term typically has an analytically available form and does not require solving additional PDEs.

Evidently, the per-iteration computational cost of shape optimization under uncertainty 
tends to be dominated by (1) evaluating the objective function via $n_{\SAA}$ solutions of the state PDE (i.e., evaluating $u(m^{(i)},z)$)
and (2) subsequently evaluating its gradient through $n_{\SAA}$ solutions of the adjoint equation.
For this reason, accurately solving PDE-constrained OUU problems can often lead to prohibitive computational costs.
This is especially true when the risk measure requires a large number of samples to accurately estimate and when the underlying PDE is expensive to solve.

\subsection{Optimization using Operator Surrogates}
In order to alleviate the computational cost associated with PDE-constrained OUU, 
one can replace the high-fidelity PDE model with a surrogate model that is of lower fidelity but is fast to evaluate.
In this work, we are interested in operator surrogates, i.e., 
approximations to the solution operator $u_{\theta}(m,z) \approx u(m,z)$.
This defines an approximation to the optimization problem \eqref{eq:reference_domain_ouu}, i.e.,
\begin{equation}\label{eq:surrogate_ouu_objective}
    \min_{z \in \cZ_{ad}} \cJ_{\theta}(z) := \rho_{m \sim \mu_m}(Q(u_{\theta}(m,z),m,z)) + \cP(z)
\end{equation}
which uses the surrogate objective function $\cJ_{\theta}(z)$ as an approximation to the true objective function $\cJ(z)$.

Since the surrogate model is fast to evaluate, \eqref{eq:surrogate_ouu_objective} can be rapidly solved using an SAA with large number of samples, essentially eliminating the sampling error.
Instead, the approximation error of the surrogate model becomes the dominant source of error in solving the OUU problem. 
Thus, the tradeoff in surrogate-based optimization becomes one of balancing the approximation accuracy of the surrogate with its construction cost
measured in the size of the training dataset. Conventionally, the training dataset consists of input-output pairs, $(m^{(i)}, z^{(i)}) \mapsto u^{(i)} = u(m^{(i)}, z^{(i)})$ over the input space,
where each data tuple is generated by solving the underlying PDE.

Moreover, as noted in \cite{luo2023efficient}, when $\cZ$ is high dimensional, 
derivative-based algorithms are still required to effectively solve \eqref{eq:surrogate_ouu_objective} (or its SAA). 
Thus, the errors introduced by the surrogate in approximating $\cJ(z)$
and its derivatives $D_z \cJ(z)$ (or equivalently, its gradients $\mathsf{R}_{\cZ} D_z \cJ(z)$) 
are both important.
In particular, mismatch of the derivatives ($D_z \cJ_{\theta}(z)$ compared to $D_z \cJ(z)$) can introduce spurious local minima in $\cJ_{\theta}(z)$ 
even when the original PDE-constrained OUU problem is well-posed.

For example, when the underlying objective function is strongly convex and differentiable, the error in the approximate stationary points are controlled explicitly by their gradient errors.
We summarize this in the following theorem, the proof of which is based on standard variational analysis and is presented in \Cref{appendix:opt_error}.
\begin{theorem}\label{theorem:optimization_error}
    Suppose $\openset \subset \cZ$ is an open set and $\cJ \in C^1(\openset; \bR)$ is a $\lambda$-strongly convex objective function, i.e., given $\lambda > 0$, 
    \begin{equation}
        \cJ(z_2) \geq \cJ(z_1) + \langle D_z \cJ(z_1), z_2 - z_1 \rangle_{\cZ' \times \cZ} + \frac{\lambda}{2} \|z_2 - z_1\|_{\cZ}^2
    \end{equation}
    for any $z_1, z_2 \in \openset$.
    Additionally, suppose $\cZ_{ad} \subset \openset$ is a closed, convex, and nonempty set, and let $z^{\star}$ be the unique minimizer of $\cJ$ over $\cZ_{ad}$.
    Furthermore, let $\cJ_{\theta} \in C^1(\openset; \bR)$ be an approximation of $\cJ$ and let $z^{\dagger} \in \cZ_{ad}$ be any point satisfying the first order optimality conditions, i.e.,
    \begin{equation}
        \langle D_z \cJ_\theta(z^{\dagger}), z - z^{\dagger} \rangle_{\cZ' \times \cZ} \geq 0 \qquad \forall z \in \cZ_{ad}. 
    \end{equation}
    Then, the optimization error is bounded by 
    \begin{equation}
        \| z^{\star} - z^{\dagger} \|_{\cZ}  \leq \frac{1}{\lambda} \| D_z \cJ(z^{\dagger}) - D_z \cJ_{\theta} (z^{\dagger}) \|_{\cZ'}
    \end{equation}
    and the optimality gap is bounded by 
    \begin{equation}
        \cJ(z^{\dagger}) - \cJ(z^{\star}) \leq \frac{1}{2 \lambda}\| D_z \cJ(z^{\dagger}) - D_z {\cJ}_{\theta}(z^{\dagger}) \|_{\cZ'}^2.
    \end{equation}
\end{theorem}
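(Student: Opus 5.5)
The plan rests on two consequences of $\lambda$-strong convexity, both applied at the surrogate stationary point $z^{\dagger}$. Write $e := D_z \cJ(z^{\dagger}) - D_z \cJ_{\theta}(z^{\dagger}) \in \cZ'$.

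\textbf{Step 1 (strong monotonicity).} Write the strong convexity inequality twice, once with $(z_1,z_2) = (z^{\star}, z^{\dagger})$ and once with $(z_1,z_2) = (z^{\dagger}, z^{\star})$. Adding the two gives
\[
\langle D_z\cJ(z^{\dagger}) - D_z \cJ(z^{\star}), z^{\dagger} - z^{\star}\rangle_{\cZ'\times\cZ} \geq \lambda \|z^{\dagger}-z^{\star}\|_{\cZ}^2 .
\]

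\textbf{Step 2 (using both optimality conditions).} Since $z^{\star}$ minimizes the convex $C^1$ function $\cJ$ over the convex set $\cZ_{ad}$, it satisfies the variational inequality $\langle D_z\cJ(z^{\star}), z - z^{\star}\rangle \geq 0$ for all $z\in\cZ_{ad}$. This follows from the one-sided directional derivative along the segment $z^{\star} + t(z - z^{\star})$, which stays in $\cZ_{ad}\subset\openset$ by convexity.
\begin{itemize}
\item Take $z = z^{\dagger}$ in the condition for $z^{\star}$: this gives $-\langle D_z\cJ(z^{\star}), z^{\dagger}-z^{\star}\rangle \le 0$.
\item Take $z = z^{\star}$ in the hypothesis on $z^{\dagger}$: this gives $\langle D_z\cJ_\theta(z^{\dagger}), z^{\dagger}-z^{\star}\rangle \le 0$.
\end{itemize}
Split $D_z\cJ(z^{\dagger}) = D_z\cJ_\theta(z^{\dagger}) + e$ in the left side of Step 1 and apply these two sign conditions. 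This leaves
\[
\lambda\|z^{\dagger}-z^{\star}\|^2 \le \langle e, z^{\dagger}-z^{\star}\rangle \le \|e\|_{\cZ'}\|z^{\dagger}-z^{\star}\|_{\cZ}.
\]
Dividing by $\|z^{\dagger}-z^{\star}\|$ gives the first bound; the case $z^{\dagger}=z^{\star}$ is trivial.

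\textbf{Step 3 (optimality gap).} Apply strong convexity with $z_1 = z^{\dagger}$ and $z_2 = z^{\star}$ and rearrange:
\[
\cJ(z^{\dagger}) - \cJ(z^{\star}) \le -\langle D_z\cJ(z^{\dagger}), z^{\star}-z^{\dagger}\rangle - \tfrac{\lambda}{2}\|z^{\star}-z^{\dagger}\|^2 .
\]
Again write $D_z\cJ(z^{\dagger}) = D_z\cJ_\theta(z^{\dagger}) + e$. The surrogate optimality condition at $z=z^{\star}$ gives $-\langle D_z\cJ_\theta(z^{\dagger}), z^{\star}-z^{\dagger}\rangle \le 0$. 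Hence
\[
\cJ(z^{\dagger}) - \cJ(z^{\star}) \le \|e\|\,\delta - \tfrac{\lambda}{2}\delta^2, \qquad \delta := \|z^{\star}-z^{\dagger}\|.
\]
The right-hand side is maximized at $\delta = \|e\|/\lambda$, where it equals $\|e\|^2/(2\lambda)$, which is the claimed bound.

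The only point needing care is justifying the variational inequality at $z^{\star}$ and making sure every argument used lies in $\openset$, where $\cJ$ is differentiable. Both follow from $\cZ_{ad}\subset\openset$ and the convexity of $\cZ_{ad}$. Existence and uniqueness of $z^{\star}$ are assumed in the statement, so nothing further is needed.
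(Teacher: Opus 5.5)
Your proposal is correct and follows essentially the same route as the paper. The error bound comes from strong monotonicity combined with the two variational inequalities (for $\cJ$ at $z^{\star}$ and for $\cJ_{\theta}$ at $z^{\dagger}$), and the optimality gap comes from strong convexity at $(z^{\dagger}, z^{\star})$ plus the surrogate optimality condition and maximizing $a t - \lambda t^2/2$; your one-sided directional-derivative justification of the variational inequality at $z^{\star}$ is a small addition the paper simply cites as the first-order optimality condition.
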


\subsection{Example: Derivative Error for the Expectation}
\Cref{theorem:optimization_error} suggests that it is important to control the derivative error $D_z \cJ - D_z \cJ_{\theta}$.
Formally, by virtue of the chain rule, this error depends on both the surrogate's output error $u(m,z) - u_{\theta}(m,z)$ and derivative error $D_z u(m,z) - D_z u_{\theta}(m,z)$, as shown in \cite{luo2023efficient}.
As an example, we will consider the case for the expectation $\rho_{\mean}$ and show the specific form of this dependence.

To this end, we will focus on solution operators that satisfy additional continuity and moment assumptions over a compact admissible set.
First, recall that we are using the shorthand notation $\cM := \cMref$ and $\cU := \cUref$ to denote the reference domain function spaces.
We then make the following assumptions on the solution operator (in the reference domain) and the distribution of $m$.
\begin{assumption}\label{assumption:pde_properties}
    Given a Borel probability measure $\mu_m$ on $\cM$ with finite $p^{th}$ moments,
    a compact admissible set $\cZ_{ad} \subset \cZ = \bR^{\dz}$,
    and an open set $\openset \supset \cZ_{ad}$,
    assume that the following hold:
    \begin{enumerate}
        \item The solution operator $u : \cM \times \openset \rightarrow \cU$ is continuously differentiable.
        \item There exist constants $\lip_{u} > 0$ and $\lip_{u_z} > 0$ such that for every $z_1, z_2 \in \openset$ and for every $m_1, m_2 \in \cM$, 
        \begin{subequations}
        \begin{align}
            \| u(m_2 ,z_2) - u(m_1 ,z_1)\|_{\cU} & \leq \lip_{u} \left( \|m_2 - m_1\|_{\cM} + \| z_2 - z_1\|_{\cZ}
            \right) \label{eq:u_lip}
            , \\
            \label{eq:Dzu_lip}
            \| D_z u(m_2, z_2) - D_z u(m_1, z_1)\|_{\HS(\cZ,\cU)} & \leq \lip_{u_z} \left( \|m_2 - m_1\|_{\cM} + \| z_2 - z_1\|_{\cZ} \right).
        \end{align}
        \end{subequations}
    \end{enumerate}
\end{assumption}
    
We will also work with assumptions on the quantity of interest $Q : \cU \times \cM \times \cZ \rightarrow \bR$ 
that accommodate quadratic forms in $u$, since these often arise in optimal design problems (e.g., tracking objective, drag reduction).
\begin{assumption}\label{assumption:qoi_properties}
Given a compact admissible set $\cZ_{ad} \subset \cZ = \bR^{\dz}$,
and an open set $\openset \supset \cZ_{ad}$,
assume that the following hold:
\begin{enumerate}
    \item The quantity of interest $Q : \cU \times \cM \times \openset \rightarrow \bR$ is twice continuously differentiable.
    \item There exist constants $\lip_{Q_u} > 0$ and $\lip_{Q_{uz}} > 0$ such that for every $u_1, u_2 \in \cU$, $z \in \openset$, and every $m \in \cM$,
    \begin{subequations}
    \begin{align}
        \| D_u Q(u_2, m, z) - D_u Q(u_1, m, w) \|_{\cU'} &\leq \lip_{Q_u} \|u_2 - u_1 \|_{\cU} , \\
        \| D_{uz} Q(u_2, m, z) - D_{uz} Q(u_1, m, z) \|_{\Op(\cU, \cZ')} & \leq \lip_{Q_{uz}} \|u_2 - u_1 \|_{\cU}.
    \end{align}
    \end{subequations}
    \item There exist constants $\ubQ,\ubDuQ,\ubDzQ,\ubDuzQ \geq 0$ such that for every $z \in \openset$ and every $m \in \cM$,
    \begin{subequations}
    \begin{align}
        |Q(0, m, z) | &\leq \ubQ, \\
        \|D_u Q(0, m, z) \|_{\cU'} & \leq \ubDuQ, \\ 
        \|D_z Q(0, m, z) \|_{\cZ'} & \leq \ubDzQ, \\ 
        \|D_{uz} Q(0, m, z) \|_{\Op(\cU, \cZ')} & \leq \ubDuzQ.
    \end{align}
    \end{subequations}
    
\end{enumerate}
    
\end{assumption}

Under \Cref{assumption:pde_properties} and \Cref{assumption:qoi_properties},
we can show that the gradient approximation error of the expectation 
depends on both the output error and the derivative error as measured in the $L^2_{\mu_m}$ sense.
This is summarized in the following proposition, the proof of which is presented in \Cref{appendix:grad_error_summary}.

\begin{proposition}\label{prop:surrogate_gradient_error}
    Suppose $u, u_{\theta}$ satisfy \Cref{assumption:pde_properties} and $Q$ satisfies \Cref{assumption:qoi_properties}
    with the Borel probability measure $\mu_m$, the compact admissible set $\cZ_{ad}$, and the open set $\openset$ for some $p \geq 2$.
    Let $\cJ^{\mean}(z) := \bE_{m \sim \mu_m}[Q(u(m,z),m,z)]$ and $\cJ_{\theta}^{\mean}(z) := \bE_{m \sim \mu_m}[Q(u_{\theta}(m,z),m,z)]$.
    Then, for any $z \in \cZ_{ad}$, the error in the derivatives of the cost functions is given by 
    \begin{align}
        & \| D_z \cJ^{\mean}(z) - D_z \cJ_{\theta}^{\mean} (z) \|_{\cZ'} 
        \nonumber \\
        & \qquad \leq 
        C_1 \left( 
            \|u(\cdot, z) - u_{\theta}(\cdot, z)\|_{L^2_{\mu_m}(\cM;\cU)}
            + \|D_z u(\cdot, z) - D_z u_{\theta}(\cdot, z)\|_{L^2_{\mu_m}(\cM;\HS(\cZ,\cU))}
        \right) \nonumber \\ 
        & \qquad + C_2 \left ( 
            \|u(\cdot, z) - u_{\theta}(\cdot, z)\|_{L^2_{\mu_m}(\cM;\cU)}^2 
            + \|D_z u(\cdot, z) - D_z u_{\theta}(\cdot, z)\|_{L^2_{\mu_m}(\cM;\HS(\cZ,\cU))}^2
            \right) ,
    \end{align}
    where $C_1$ and $C_2$ are constants independent of $u_{\theta}$.
\end{proposition}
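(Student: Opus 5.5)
We start from the chain rule. Differentiating under the expectation (justified by the growth bounds below and dominated convergence) gives
\[
D_z \cJ^{\mean}(z) = \bE_{m\sim\mu_m}\!\left[ D_u Q(u,m,z)\, D_z u(m,z) + D_z Q(u,m,z) \right],
\]
evaluated at $u = u(m,z)$. The analogous formula holds for $\cJ_\theta^{\mean}$ with $u_\theta$ in place of $u$. Writing $e := u - u_\theta$ and $E := D_z u - D_z u_\theta$ (all at $(m,z)$), we split the integrand difference into three terms:
\begin{align*}
&\underbrace{\big(D_u Q(u,m,z) - D_u Q(u_\theta,m,z)\big) D_z u}_{(\mathrm{I})}
+ \underbrace{D_u Q(u_\theta,m,z)\, E}_{(\mathrm{II})}
+ \underbrace{D_z Q(u,m,z) - D_z Q(u_\theta,m,z)}_{(\mathrm{III})}.
\end{align*}
We then take $\cZ'$ norms, use $\|\bE[\cdot]\|\le \bE\|\cdot\|$, and bound each term separately. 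We use throughout that the operator norm is dominated by the HS norm.

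\textbf{Term (I).} The Lipschitz bound on $D_uQ$ gives $\|(\mathrm{I})\|_{\cZ'}\le \lip_{Q_u}\|e\|_{\cU}\|D_z u\|_{\HS}$. By \Cref{assumption:pde_properties}, $\|D_z u(m,z)\|_{\HS}\le \|D_zu(0,z_0)\|+\lip_{u_z}(\|m\|_{\cM}+\|z-z_0\|)$ for a fixed $z_0$. This is in $L^2_{\mu_m}$, uniformly in $z\in\cZ_{ad}$ by compactness, since $\mu_m$ has $p\ge2$ moments. Cauchy--Schwarz then yields $C\|e\|_{L^2_{\mu_m}}$.

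\textbf{Term (II).} We write $D_uQ(u_\theta)=D_uQ(u)-(D_uQ(u)-D_uQ(u_\theta))$. The first piece satisfies $\|D_uQ(u,m,z)\|_{\cU'}\le \ubDuQ+\lip_{Q_u}\|u(m,z)\|_{\cU}$, and $u$ has linear growth in $\|m\|$ by \eqref{eq:u_lip}, so it lies in $L^2_{\mu_m}$; Cauchy--Schwarz gives $C\|E\|_{L^2}$. The second piece is bounded by $\lip_{Q_u}\|e\|\|E\|\le \tfrac{\lip_{Q_u}}{2}(\|e\|^2+\|E\|^2)$, which after integration produces the quadratic $C_2$ terms.

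\textbf{Term (III).} We need a Lipschitz bound on $D_zQ$ in $u$. By the mean value theorem along the segment $u_\theta+t e$,
\[
\|(\mathrm{III})\|_{\cZ'} \le \sup_{t\in[0,1]}\|D_{uz}Q(u_\theta+te,m,z)\|_{\Op(\cU,\cZ')}\|e\|.
\]
We bound the supremum by $\ubDuzQ+\lip_{Q_{uz}}(\|u\|+\|e\|)$. This gives a contribution $\le(\ubDuzQ+\lip_{Q_{uz}}\|u\|)\|e\|+\lip_{Q_{uz}}\|e\|^2$. The first part is handled by Cauchy--Schwarz using the moment bound on $\|u\|$, and the second part is already quadratic.

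Collecting the three terms gives the stated bound. $C_1$ depends on $\lip$'s, the $M$'s, the second moment of $\mu_m$, and $\sup_{z\in\cZ_{ad}}$ of $\|u(0,z)\|$ and $\|D_zu(0,z)\|$; $C_2$ collects $\lip_{Q_u},\lip_{Q_{uz}}$. None of these involve $u_\theta$.

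The main obstacle is keeping the constants independent of $u_\theta$. The plan is to never bound $\|u_\theta\|$ or $\|D_zu_\theta\|$ directly, and instead always re-express them as $u-e$ and $D_zu-E$, which is what generates the quadratic terms. A secondary technical point is the interchange of derivative and expectation. For this we use the linear-growth bounds, which give integrable dominating functions for the difference quotients locally uniformly in $z$ (\Cref{assumption:pde_properties} is imposed on $u_\theta$ as well). This justifies the interchange for both $\cJ^{\mean}$ and $\cJ_\theta^{\mean}$, although those Lipschitz constants of $u_\theta$ never enter the final constants.
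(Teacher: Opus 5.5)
Your proposal is correct and follows essentially the same route as the paper: the same three-term splitting of the integrand, the same use of Lipschitz continuity of $D_uQ$ and $D_{uz}Q$ with Cauchy--Schwarz and Young's inequality to separate linear from quadratic error terms, and the same dominated-convergence justification for differentiating under the expectation (which the paper isolates in a separate $L^1_{\mu_m}$-differentiability result). The differences are cosmetic. For term (III) you use the mean value inequality along the segment from $u_\theta$ to $u$, which gives $\kappa_{Q_{uz}}$ rather than $\kappa_{Q_{uz}}/2$ on the quadratic term. You also obtain the moment bounds on $u$ and $D_zu$ directly from linear growth, whereas the paper uses continuity plus compactness of $\cZ_{ad}$.
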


\begin{remark}[On \Cref{assumption:pde_properties,assumption:qoi_properties}]
    By inspecting the proof of \Cref{prop:surrogate_gradient_error},
    alternative results are available in which the Lipschitz and bounding constants in \Cref{assumption:pde_properties,assumption:qoi_properties} are allowed to depend on $m$ with appropriate moment assumptions.
    As a result, the gradient errors will depend on higher moments of the output and derivative errors.
    One can also consider gradient errors for generic risk measures of the form $f_2(\bE[f_1(\xi)])$ with appropriate growth conditions on $f_1$ and $f_2$. 
    This can similarly lead to gradient errors depending on higher moments of the output and derivative errors.
    We do not explore these extensions for conciseness.
\end{remark}

\section{Shape Derivative-Informed Operator Learning}\label{sec:shape_dino}
Motivated by the previous discussion, we seek formulations of operator learning for shape varying systems that provide theoretical and practical guarantees for controlling the derivative errors discussed in the previous section. 
In this section we proceed by deriving various universal approximation theorems for the simultaneous control of the operator error and the derivatives with respect to the shape parameters when using reduced basis architectures. 
This universal approximation theory is specifically targeted to the OUU problem, which does not specifically require control of the derivatives with respect to $m$. 
After this we discuss a practical formulation for training shape derivative-informed neural operators.
We also describe an effective strategy for computing reduced bases for the parameter and state spaces, with a discussion on the required adaptations in the context of shape optimization under uncertainty.

\subsection{Reduced Basis Architectures}
In this section we narrow our focus to reduced basis architectures which have been widely deployed in operator learning \cite{hesthaven2018pod,bhattacharya2021pca}, and specifically in the context of derivative-informed operator learning \cite{o2024derivative,luo2025dis} with applications to inverse problems and optimization \cite{cao2024lazydino,cao2025derivative,luo2023efficient}. 
As a reminder, we formulate the operator learning task on the reference domain $\Omega_0$ as in \eqref{eq:res_form_Omega0}; the solutions and the Fr\'echet derivatives can be mapped to the physical coordinates via the pushforward/pullback operators $T_{\diffeo}$ and $T_{\diffeo}^{-1}$.

Since $\cM = \cMref$ and $\cU = \cUref$ are assumed to be separable Hilbert spaces, we can represent the various functions in orthonormal bases $\{\phi_i\}_{i=1}^{\infty}$ of $\cUref$ and $\{\psi_i\}_{i=1}^{\infty}$ of $\cMref$, such that for any $u \in \cUref$ and $m \in \cMref$,
\begin{subequations}
    \begin{align}
        u &=  \sum_{i=1}^\infty \langle u, \phi_i\rangle_\mathcal{U}\phi_i \qquad &\langle \phi_i,\phi_j\rangle_\mathcal{U} = \delta_{ij},\\
        m &=  \sum_{i=1}^\infty \langle m,\psi_i\rangle_\mathcal{M}\psi_i \qquad &\langle \psi_i,\psi_j\rangle_\mathcal{M} = \delta_{ij},
    \end{align}
\end{subequations}
where $\delta_{ij}$ denotes the Kronecker delta.
Reduced basis architectures represent each of the formally infinite-dimensional fields via a truncated basis expansion. 
This corresponds to linear encoding and decoding operations, which we define in \Cref{tab:encoding_decoding}.
A neural network is then used to represent mapping between the reduced basis coefficients of the inputs and outputs.

\begin{table}[h!]
\centering
\begin{tabular}{@{}lll@{}}
\toprule
  & \textbf{Encoding } & \textbf{Decoding }\\
\midrule
$\mathcal{U}$ with basis $\{\phi_i\}_{i}$ & 
$\mathcal{U} \ni u \mapsto \mathbf{\Phi}_{\rru}^*u = \left( \langle u, \phi_i \rangle_{\mathcal{U}} \right)_{i=1}^{\rru} \in \mathbb{R}^{\rru}$& 
$\mathbb{R}^{\rru}\ni c \mapsto \mathbf{\Phi}_{\rru}c = \sum_{i=1}^{\rru} c_i \phi_i \in \mathcal{U}$ \\

$\mathcal{M}$ with basis $\{\psi_i\}_{i}$ & 
$\mathcal{M} \ni m \mapsto \mathbf{\Psi}_{\rrm}^*m = \left( \langle m, \psi_i \rangle_{\mathcal{M}} \right)_{i=1}^{\rrm} \in \mathbb{R}^{\rrm}$ &
$\mathbb{R}^{\rrm} \ni c \mapsto \mathbf{\Psi}_{\rrm}c = \sum_{i=1}^{\rrm} c_i \psi_i \in \mathcal{M}$  \\

$\mathcal{Z} = \bR^{d_z}$ &
Identity & 
Identity \\
\bottomrule
\end{tabular}
\caption{Encoding and decoding operations}
\label{tab:encoding_decoding}
\end{table}

For the output basis $\{\phi_i\}_{i=1}^{\infty}$, a natural choice is the proper orthogonal decomposition (POD) \cite{quarteroni2015reduced, hesthaven2018pod, bhattacharya2021pca}, which is also known as the principal component analysis (PCA).
For the input basis $\{\psi_i \}_{i=1}^{\infty}$, common choices used in operator learning include the POD (PCA), as well as active subspace (AS) bases that are constructed using the derivatives of the underlying mapping \cite{zahm2020gradient, oleary2022dipnet, luo2025dis}.
We elaborate on these approaches in \Cref{sec:practical_dimension_reduction}.
For the shape, we assume that $z$ is a finite parametrization of the diffeomorphism $\diffeo_z$, and we do not perform further dimension reduction.
Nevertheless, one can also consider analogous dimension reduction strategies when the hypothesized space of admissible designs is infinite or high dimensional.

In a reduced basis neural operator (RBNO), the mapping from the input coefficient space to the output coefficient space is typically approximated by a dense neural network, $g_\theta:\bR^{\rrm}\times \bR^{\dz}\rightarrow \bR^{\rru}$.
The network takes inputs  $a_0 = (m_r, z) \in \mathbb{R}^{\rrm + \dz}$ and maps to the output $a_{d_L + 1} \in \bR^{\rru}$, i.e., $a_{d_{L} + 1} = g_{\theta}(a_0)$,
through $d_L$ hidden layers with widths $d_{W,1}, d_{W,2}, \dots, d_{W,d_L}$, where each $a_i$ is 
\[
\begin{aligned}
a_i &= \sigma(W_{i-1} a_{i-1} + b_{i-1}), 
\quad i = 1,2,\dots,d_L, \\
a_{d_L+1} &= W_{d_L} a_{d_L} + b_{d_L}.
\end{aligned}
\]
The collection of network parameters is $\theta = \{(W_i, b_i)\}^{d_L}_{i = 0},$ where the weights are of dimensions 
\[
W_0 \in \mathbb{R}^{d_{W,1}\times (\rrm + \dz)}, 
\quad W_i \in \mathbb{R}^{d_{W, i+1}\times d_{W, i}}, i = 1, 2, \dots, d_{L-1}, \quad W_{d_L} \in \mathbb{R}^{\rru \times d_{W,d_L}},
\]
and the biases are of dimensions
\[b_i \in \mathbb{R}^{d_{W, i+1}}, 
\qquad i=0,
\dots,d_{L-1}, \quad b_{d_L} \in \mathbb{R}^{\rru}.\]
Here, we focus on the case where $g_{\theta}$ uses the Gaussian Error Linear Unit (GELU) as the activation function,
\begin{equation}
    \GELU(x) = x \Phi_{\text{Gauss}}(x),
\end{equation}
where $\Phi_{\text{Gauss}}(x)$ is the cumulative distribution function for the standard Gaussian distribution.
The full neural operator architecture then takes the form
\begin{equation} \label{eq:rbno}
    u_{\theta}(m,z) = \mathbf{\Phi}_{\rru} g_\theta(\mathbf{\Psi}_{\rrm}^*(m - \shiftm), z) + \shiftu,
\end{equation} 
where $\shiftu \in \cUref$ and $\shiftm \in \cMref$ are bias terms that can be selected based on the particular choices of reduced bases.

\subsection{Universal Approximation Results for Reduced Basis Neural Operators}
Under \Cref{assumption:pde_properties}, we can show that the reduced basis architectures \eqref{eq:rbno} can approximate the solution operator $u(m,z)$ and its $z$-derivative to arbitrary accuracy in the following sense:

\begin{theorem}[Universal approximation via multi-input neural operators]\label{theorem:rbno_ua_semibounded}
Suppose $u$ satisfies \Cref{assumption:pde_properties} 
with the Borel probability measure $\mu_m$, compact admissible set $\cZ_{ad}$, and open set $\openset \supset \cZ_{ad}$ for some $p \geq 2$.
Let $\{\phi_i\}_{i=1}^{\infty}$ and $\{\psi_i\}_{i=1}^{\infty}$ 
be orthonormal bases of $\cU$ and $\cM$, respectively. 
Additionally, let $\shiftu \in \cU$ and $\shiftm \in \cM$ be any two vectors.
Then, for any $\epsilon > 0$ 
there exist ranks $\rrm$ and $\rru$ along with a GELU neural network 
$g_{\theta} : \mathbb{R}^{\rrm} \times \bR^{\dz} \rightarrow \mathbb{R}^{\rru}$ 
such that the RBNO $u_{\theta}(m,z) = \mathbf{\Phi}_{\rru} g_{\theta}(\mathbf{\Psi}_{\rrm}^*(m - \shiftm), z) + \shiftu$ satisfies
\begin{equation}
    \sup_{z \in \cZ_{ad}} 
    \bE_{m \sim \mu_m} [ \| u(m,z) - u_{\theta}(m,z) \|_{\cU}^p ]
    + \bE_{m \sim \mu_m} [ \| D_z u(m,z) - D_z u_{\theta}(m,z) \|_{\HS(\cZ,\cU)}^p ]
    \leq \epsilon^p.
\end{equation}
\end{theorem}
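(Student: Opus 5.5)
We split the total error into an input-truncation term, an output-truncation term, and a network-approximation term, each controlled uniformly in $z \in \cZ_{ad}$ in $L^p_{\mu_m}$. Define the truncated operator
\[
u_{r}(m,z) := \mathbf{\Phi}_{\rru}\mathbf{\Phi}_{\rru}^*\bigl(u(P_{\rrm} m, z) - \shiftu\bigr) + \shiftu, \qquad P_{\rrm} m := \shiftm + \mathbf{\Psi}_{\rrm}\mathbf{\Psi}_{\rrm}^*(m-\shiftm).
\]
This can be written as $\mathbf{\Phi}_{\rru} G(\mathbf{\Psi}_{\rrm}^*(m-\shiftm),z) + \shiftu$ with $G(\xi,z) := \mathbf{\Phi}_{\rru}^*\bigl(u(\shiftm + \mathbf{\Psi}_{\rrm}\xi, z) - \shiftu\bigr)$, a $C^1$ map $\bR^{\rrm}\times\openset\to\bR^{\rru}$. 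We then bound $u-u_\theta = (u - u(P_{\rrm}\cdot,\cdot)) + (I-\Pi_{\rru})(u(P_{\rrm}\cdot,\cdot)-\shiftu) + \mathbf{\Phi}_{\rru}(G - g_\theta)$, and likewise for $D_z$. Here $\Pi_{\rru} := \mathbf{\Phi}_{\rru}\mathbf{\Phi}_{\rru}^*$, and $D_z$ commutes with the linear maps $\Pi_{\rru}$ and $\mathbf{\Phi}_{\rru}$, so each term is treated the same way for the output and its $z$-derivative.

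\textbf{Input truncation.} By \eqref{eq:u_lip} and \eqref{eq:Dzu_lip}, $\|u(m,z)-u(P_{\rrm}m,z)\|_{\cU}^p + \|D_zu(m,z)-D_zu(P_{\rrm}m,z)\|_{\HS}^p \le (\lip_u^p+\lip_{u_z}^p)\|(I-P_{\rrm})(m-\shiftm)\|_{\cM}^p$, uniformly in $z$. The right-hand side tends to zero pointwise and is dominated by $\|m-\shiftm\|^p$, which is integrable by the finite $p$-th moment. Dominated convergence then makes this term small for $\rrm$ large.

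\textbf{Output truncation.} Fix $\rrm$. The family $K(m) := \{u(P_{\rrm}m,z)-\shiftu : z\in\cZ_{ad}\}$ is compact for each $m$, being the continuous image of a compact set. Since $\|(I-\Pi_{r})v\|\to 0$ monotonically in $r$ and the maps $v \mapsto \|(I-\Pi_r)v\|$ are $1$-Lipschitz, Dini's theorem gives $\sup_{z}\|(I-\Pi_{\rru})(u(P_{\rrm}m,z)-\shiftu)\|\to 0$ for each $m$. The same argument applies to $D_z u$ in $\HS(\cZ,\cU)$: because $\dz<\infty$, this reduces to finitely many columns $D_zu\,e_j$. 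Domination follows from the Lipschitz bounds: $\|u(P_{\rrm}m,z)\|\le \|u(\shiftm,z_0)\| + \lip_u(\|m-\shiftm\|+\mathrm{diam}\,\cZ_{ad})$, and $\|D_zu\|_{\HS}$ is bounded similarly. Dominated convergence on the supremum then controls $\sup_z\bE[\cdot]$.

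\textbf{Network approximation (main obstacle).} We need a GELU network $g_\theta$ with $\sup_{z}\bE\bigl[\|G(\xi,z)-g_\theta(\xi,z)\|^p + \|D_zG-D_zg_\theta\|^p\bigr]$ small, where $\xi = \mathbf{\Psi}_{\rrm}^*(m-\shiftm)$ has a pushforward law on $\bR^{\rrm}$ with finite $p$-th moment. Since $\xi$ is unbounded, we split $\bR^{\rrm}$ into a ball $B_R$ and its complement. On the compact set $\overline{B_R}\times\cZ_{ad}$, classical $C^1$ universal approximation for smooth non-polynomial activations such as GELU (Pinkus-type results, or the results of \cite{luo2025dis}) gives $g_\theta$ within $\delta$ of $G$ in $C^1$. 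On the complement we have no control of $g_\theta$, so the network must be kept of at most linear growth with controlled constant. I would first try a cutoff construction: approximate $G\cdot\chi$ for a smooth bump $\chi$ equal to $1$ on $B_R$, and add a network that reproduces the affine growth. Alternatively, I would use the fact that GELU networks approximating a Lipschitz target on a slightly larger ball can be composed with a clipping-like GELU layer $\xi\mapsto$ (approximately the projection onto $B_{R}$), which is smooth and globally Lipschitz. Either way we get $\|G-g_\theta\|+\|D_zG-D_zg_\theta\| \le C(1+|\xi|)$ off $B_R$ with $C$ independent of $R$, using $|D_zG|\le\lip_{u_z}$-type bounds. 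The tail contribution $C^p\int_{|\xi|>R}(1+|\xi|)^p\,d\mu$ is then small for $R$ large. Choosing $R$ first, then $\delta$, and combining the three terms via $(a+b+c)^p\le 3^{p-1}(a^p+b^p+c^p)$ gives the bound $\epsilon^p$.
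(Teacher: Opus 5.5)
Your proof is correct provided the latent-network step uses your second option (clipping the latent input); that step takes a genuinely different route from the paper.

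\textbf{How the paper handles the network step.} It proves a general semibounded approximation theorem (\Cref{theorem:ua_semibounded}). It approximates $g$ on a compact set using the bounded smooth activation $\GELU(\cdot+1)-\GELU(\cdot-1)$, so the approximant and its derivatives are globally bounded. It then multiplies this by an explicit GELU cutoff in $x$ (built from an approximate $\ell_1$ norm), through a subnetwork that approximates $(t_1,t_2)\mapsto t_1t_2$ on a bounded box. This gives off-ball errors at most $C_p(\|g\|^p+1)$ with $C_p$ independent of $R$. The tail integrals are then made uniformly small over $\cZ_{ad}$ by Dini's theorem.

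\textbf{How your clipping route differs.} You confine the input instead. Use the coordinatewise layer $c_\theta(t)=\bigl(\GELU(\theta(t+R))-\GELU(\theta(t-R))\bigr)/\theta-R$, which is uniformly within $O(1/\theta)$ of the exact clip onto $[-R,R]$ and has bounded range. Pass $z$ through exactly via $\GELU(s)-\GELU(-s)=s$. Then $h_\theta(c_\theta(\xi),z)$ only ever evaluates $h_\theta$ on a fixed compact box. Hence $\|g_\theta\|+\|D_zg_\theta\|\le C(1+|\xi|)$ for all $\xi$ and $z\in\cZ_{ad}$, with $C$ independent of $R$. On $B_R$ the combined error is at most $(\lip_u+\lip_{u_z})|\xi-c_\theta(\xi)|+2\delta$.

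This works for two reasons. First, only $D_z$ is measured, so the $\xi$-Jacobian of the clip never enters. Second, $G$ and $D_zG$ are Lipschitz in $\xi$ by \Cref{assumption:pde_properties}. Note that what you need is this Lipschitz bound, which gives linear growth of $D_zG$, not a uniform bound $|D_zG|\le\lip_{u_z}$ as you wrote.

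\textbf{What each approach buys.} Your route is shorter and gets the tails directly from the finite $p$-th moment, with no Dini argument. The paper's theorem is more general: it needs only $C^1$ regularity and $L^p_{\mu_x}$-continuity in $y$, with no Lipschitz or growth assumption in $x$.

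\textbf{Your first option does not suffice as sketched.} A compact-set universal approximant of $G\chi$ gives no control of the network outside that compact set; a GELU network can grow linearly with a weight-dependent constant. That is exactly the difficulty the paper's bounded-activation-plus-multiplication construction resolves. Your argument should therefore rest on the clipping option.

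\textbf{Truncation steps.} These differ only mildly. You fix $\rrm$ first, apply Dini pointwise in $m$ on the compact orbit $\{u(P_{\rrm}m,z):z\in\cZ_{ad}\}$, and then use dominated convergence on the supremum; this even controls $\bE\sup_z$. The paper instead chooses the ranks independently via $\|u-u_r\|\le\|(I-\projru{\rru})u\|+\|u-u(\projrm{\rrm}m,z)\|$ and applies Dini to $z\mapsto\bE\|(I-\projru{r})u(\cdot,z)\|^p$. It also handles $\shiftm,\shiftu$ afterwards by reparametrizing biases, rather than building them into $u_r$ as you do.
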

The proof of \Cref{theorem:rbno_ua_semibounded} is presented in \Cref{appendix:ua_all}. Supporting lemmas are first developed in \Cref{appendix:ua_nn,appendix:dimension_red_errors}. These are then combined to complete the proof in \Cref{appendix:proof_rbno_ua_semibounded}.

Note that the approximation error of the neural operator is measured in a mixed norm, being taken as an $L^p_{\mu_m}$ norm in the $m$ direction and as a supremum in the $z$ direction over the compact admissible set.
This matches the form of the approximation error required to accurately represent the first-order optimality conditions of the OUU problem (e.g., \Cref{prop:surrogate_gradient_error}).
In particular, \Cref{theorem:rbno_ua_semibounded} implies that the RBNO architecture is capable of approximating the solution operator such that when used as a surrogate for expectation minimization (assuming the true cost functional is strongly convex), its stationary points can be arbitrarily close to the true minimizer. 

\begin{theorem}\label{theorem:rbno_ouu_approx}
    Suppose that $u$ satisfies \Cref{assumption:pde_properties} and $Q$ satisfies \Cref{assumption:qoi_properties}
    with the Borel probability measure $\mu_m$, a convex and compact admissible set $\cZ_{ad}$, and an open set $\openset \supset \cZ_{ad}$ for some $p \geq 2$.
    Consider now the optimization problem with the mean objective,
    \[
        \min_{z \in \cZ_{ad}} \cJ(z) := \bE_{m \sim \mu_m}[ Q(u(m,z),m,z)] + \cP(z),
    \]
    where $Q$, $u$, and $\cP$ are such that $\cJ \in C^1(\openset; \bR)$ is strongly convex. Let $\{ \phi_i \}_{i=1}^{\infty}$ and $\{ \psi_i \}_{i = 1}^{\infty}$ be orthonormal bases of $\cU$ and $\cM$, respectively, and let 
    $\shiftu \in \cU$ and $\shiftm \in \cM$ be any two vectors.
    Then, for any $\epsilon > 0$ 
    there exist ranks $\rrm$ and $\rru$ along with a GELU neural network 
    $g_{\theta} : \mathbb{R}^{\rrm} \times \bR^{\dz} \rightarrow \mathbb{R}^{\rru}$ 
    such that 
    any point $z^{\dagger}$ satisfying the first-order optimality conditions of the surrogate minimization problem 
    \[ 
        \min_{z \in \cZ_{ad}} \cJ_{\theta}(z) := \bE_{m \sim \mu_m}[Q(u_{\theta}(m,z),m,z)] + \cP(z)
    \] 
    with the RBNO $u_{\theta}(m,z) = \mathbf{\Phi}_{\rru} g_{\theta}(\mathbf{\Psi}_{\rrm}^*(m - \shiftm), z) + \shiftu$
    has its optimization error bounded by
    \begin{equation}
        \|z^{\star} - z^{\dagger} \|_{\cZ} \leq \epsilon
    \end{equation}
    and its optimality gap bounded by
    \begin{equation}
        \cJ(z^{\dagger}) - \cJ(z^{\star}) \leq \epsilon^2.
    \end{equation}
\end{theorem}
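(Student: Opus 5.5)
The plan is to combine \Cref{theorem:optimization_error}, \Cref{prop:surrogate_gradient_error}, and \Cref{theorem:rbno_ua_semibounded}. Let $\lambda>0$ be the strong convexity constant of $\cJ$. Since $\cP$ is shared by $\cJ$ and $\cJ_\theta$, we have $D_z\cJ - D_z\cJ_\theta = D_z\cJ^{\mean} - D_z\cJ^{\mean}_\theta$. Hence \Cref{theorem:optimization_error}, applied at any first-order stationary point $z^\dagger\in\cZ_{ad}$ of the surrogate problem, gives
\[
\|z^\star - z^\dagger\|_{\cZ} \le \frac{1}{\lambda}\,\delta, \qquad \cJ(z^\dagger)-\cJ(z^\star)\le \frac{1}{2\lambda}\,\delta^2,
\]
where $\delta:=\sup_{z\in\cZ_{ad}}\|D_z\cJ^{\mean}(z)-D_z\cJ^{\mean}_\theta(z)\|_{\cZ'}$. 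Taking the supremum over $\cZ_{ad}$ matters because $z^\dagger$ depends on $\theta$ and is not known in advance. The theorem requires $\cJ_\theta\in C^1(\openset;\bR)$. This follows from differentiating under the expectation, which is justified by the GELU network being smooth and the bounds in \Cref{assumption:qoi_properties}. The goal therefore reduces to making $\delta \le \lambda\epsilon$, since then both conclusions follow, the gap bound being $\lambda\epsilon^2/2 \le \epsilon^2$ after shrinking $\epsilon$ if needed.

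Next, apply \Cref{prop:surrogate_gradient_error} pointwise in $z$. For each $z\in\cZ_{ad}$ it bounds the gradient error by $C_1(e_0(z)+e_1(z)) + C_2(e_0(z)^2+e_1(z)^2)$, where $e_0(z)$ and $e_1(z)$ are the $L^2_{\mu_m}$ output and $z$-derivative errors. Because $p\ge 2$, Jensen's inequality gives $e_k(z)\le (\bE\|\cdot\|^p)^{1/p}$. \Cref{theorem:rbno_ua_semibounded}, with tolerance $\eta$, then supplies an RBNO with $\sup_{z} e_0(z)\le \eta$ and $\sup_z e_1(z)\le\eta$. I read the supremum in that theorem as covering both terms, as the subsequent remark about the mixed norm indicates. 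This yields $\delta \le 2C_1\eta + 2C_2\eta^2$. Choosing $\eta$ small enough that $2C_1\eta+2C_2\eta^2\le\lambda\epsilon$ completes the argument.

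The main obstacle is that \Cref{prop:surrogate_gradient_error} assumes both $u$ and $u_\theta$ satisfy \Cref{assumption:pde_properties}, whereas \Cref{theorem:rbno_ua_semibounded} only controls approximation errors. The constants $C_1, C_2$ must also be independent of $u_\theta$. I would inspect that proof: I expect the Lipschitz properties of $u_\theta$ are not really needed, only moment bounds on $u$, $D_zu$ (finite by Lipschitzness and the $p$-th moments of $\mu_m$), together with the Lipschitz bounds on $D_uQ$ and $D_{uz}Q$. The quadratic terms arise from products such as $(D_uQ(u_\theta)-D_uQ(u))(D_zu_\theta - D_zu)$, bounded by $\lip_{Q_u}\, e_0 e_1$ via Cauchy--Schwarz. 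If that proof genuinely uses the Lipschitz constants of $u_\theta$, the fallback is to note that $u_\theta$ is globally Lipschitz with $D_z u_\theta$ Lipschitz, because GELU has bounded first and second derivatives and the encoders and decoders are bounded linear maps. Its constants would then enter $C_1, C_2$ only through the cross terms. Those terms could be absorbed by rederiving the estimate directly with $u$ playing the role of the reference operator.
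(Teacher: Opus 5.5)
Your proposal is correct and matches the paper's proof step for step. The paper resolves your ``main obstacle'' with exactly your fallback: a GELU network has globally bounded derivatives of all orders, so the RBNO $u_\theta$ satisfies \Cref{assumption:pde_properties}, and the constants $C_1,C_2$ of \Cref{prop:surrogate_gradient_error} depend only on bounds for $u$ and $Q$. The paper then takes the $L^2_{\mu_m}$ tolerance below $1$ so that the linear term dominates. One wording point: ``shrinking $\epsilon$'' should mean targeting $\delta\le\min\{\lambda,\sqrt{2\lambda}\}\,\epsilon$, because $\lambda\epsilon^2/2\le\epsilon^2$ is scale-invariant in $\epsilon$.
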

\Cref{theorem:rbno_ouu_approx} combines the optimization error bounds in \Cref{theorem:optimization_error}, the error expressions in \Cref{prop:surrogate_gradient_error}, with the universal approximation result \Cref{theorem:rbno_ua_semibounded}. We present its proof in \Cref{appendix:uaouu}.
While the strong convexity assumption is strict, we note that it can often be achieved in practice when using strongly convex penalization functionals $\cP$.
Alternatively, one can reinterpret this result in terms of a bounded neighborhood around the true minimizer $z^{\star}$ over which the objective is strongly convex. 
Then, derivative accuracy of the surrogate within such a neighborhood yields bounded optimization errors for any surrogate stationary points that fall within the neighborhood (see, for example, \cite{yao2025derivative}).

\subsection{Shape Derivative-Informed Operator Learning}

Although \Cref{theorem:rbno_ouu_approx} suggests that the approximation error can be controlled uniformly over $z \in \cZ_{ad}$ when $\cZ_{ad}$ is compact, 
in practice, it is typically not possible to ascertain supremum errors bounds via supervised learning. 
Instead, as a practically actionable operator learning formulation, we adopt mean-squared-error (MSE) losses for the output values and the Fr\'echet derivatives of the operator in order to improve accuracy in the derivatives, following \cite{o2024derivative,luo2023efficient}. To this end, we consider a joint distribution $\mu := \mu_m \otimes \mu_z$, where $\mu_z$ is a postulated sampling distribution over $\cZ$ whose support contains $\cZ_{ad}$.
We then solve the following problem in constructing our operator surrogate:
\begin{align} \label{eq:h1_shape_no}
\min_\theta \mathbb{E}_{(m,z) \sim \mu} \bigg[ \|u(m,z) - u_\theta(m,z)\|_{\cU}^2 &
    + \|D_m u(m,z) - D_m u_\theta(m,z)\|^2_{\HS(\cM,\cU)} \nonumber\\
& + \|D_z u(m,z) - D_z u_\theta(m,z)\|^2_{\HS(\cZ, \cU)}\bigg],
\end{align}
We refer to this Sobolev training formulation as the $H^1_{\mu}$ formulation, since the loss function resembles $H^1$ Sobolev norm in the finite dimensional context.
This is in contrast to the conventional $L^2_{\mu}$ training formulation, 
\begin{align} \label{eq:l2_shape_no}
\min_\theta \mathbb{E}_{(m,z) \sim \mu} [ \|u(m,z) - u_\theta(m,z)\|_{\cU}^2],
\end{align}
which does not involve the derivative error.

Note that in \eqref{eq:h1_shape_no} we have additionally included the derivative with respect to the uncertain parameter, $D_m u$, in the training loss. 
This differs from the approach presented in \cite{luo2023efficient}.
Although this is not required for approximation theory presented in \Cref{theorem:optimization_error,theorem:rbno_ouu_approx}, 
it does serve as additional training data that can be used to improve the generalization accuracy. 
Moreover, as we will discuss in \Cref{sec:computational_cost_theoretical}, the computational cost of generating this additional Jacobian data is negligible.

The formulation of \eqref{eq:h1_shape_no} is general and applies to any neural operator architecture. However, training on the full $\HS(\cM,\cU)$ and $\HS(\cZ,\cU)$ norms for the derivatives can lead to prohibitive computational costs, both in generating the derivative (Jacobian) data and in evaluating the loss during training.
Instead, to make $H^1_{\mu}$ learning tractable, we leverage the nature of reduced basis architectures \eqref{eq:rbno}
and project both the state and Jacobian directly in the span reduced bases. 
This leads to the latent space form of the training problem:
\begin{align} \label{eq:h1_shape_rbno}
\min_\theta \mathbb{E}_{(m,z) \sim \mu} \bigg[ 
& \|\mathbf{\Phi}_{\rru}^*(u(m,z) - \shiftu) - g_\theta(\mathbf{\Psi}_{\rrm}^*(m - \shiftm),z)\|_{2}^2  \nonumber 
\\
&+ \|\mathbf{\Phi}_{\rru}^*D_m u(m,z)\mathbf{\Psi}_{\rrm} - D_{1} g_\theta(\mathbf{\Psi}_{\rrm}^*(m-\shiftm),z)\|^2_{F}\nonumber\\
&+ \|\mathbf{\Phi}_{\rru}^*D_z u(m,z) - D_2 g_\theta(\mathbf{\Psi}_{\rrm}^*(m - \shiftm),z)\|^2_{F}\bigg],
\end{align}
where $D_i$ denotes the derivative with respect to the $i^{th}$ argument. Since the output and derivatives of the RBNO lie only in the reduced subspaces, \eqref{eq:h1_shape_rbno} is equivalent to \eqref{eq:h1_shape_no}.
Moreover, as the losses are computed only in the latent spaces of dimensions $\rrm$, $\dz$, and $\rru$, training can be performed efficiently.
This approach was developed in \cite{o2024derivative} and has been utilized in various other works \cite{cao2024lazydino,cao2025derivative,luo2023efficient}.
We note that the $L^2_{\mu}$ loss \eqref{eq:l2_shape_no} can analogously be projected into the latent space, amounting to just the first term in \eqref{eq:h1_shape_rbno}.

\subsection{Practical Dimension Reduction}
\label{sec:practical_dimension_reduction}
Although any choice of orthonormal bases suffices for the purpose of the universal approximation results of \Cref{theorem:rbno_ua_semibounded,theorem:rbno_ouu_approx}, 
an effective dimension reduction strategy allows smaller input and output ranks to be chosen thereby requiring a smaller latent space neural network.
To this end, we adopt the strategy considered for the DIPNet architecture \cite{oleary2022dipnet}.
Namely, we use POD for the output $u$ and AS for the uncertain parameter $m$,
which balances computational practicality with effectiveness of dimension reduction.
Both are constructed using the data-generating distribution $\mu := \mu_m \otimes \mu_z$.

Here, we present the dimension reduction procedures and highlight small modifications to the standard theoretical results that arise due to (1) the need to account for the changing domains in the state variable and (2) only performing dimension reduction on one of the two input variables (namely, the uncertain parameter).
 
\subsubsection{Proper Orthogonal Decomposition for the State Variable}
We compute the POD directly on the reference domain. 
That is, we use the covariance operator for the state variable, 
\begin{equation}
    \cC_{u} = \bE_{(m,z) \sim \mu}[ (u(m,z) - \bar{u}) (u(m,z) - \bar{u})^T],
\end{equation}
where $\bar{u} = \bE_{(m,z) \sim \mu}[u(m,z)]$ is the mean over the training distribution,
and the integrand consists of the outer product
$(u(m,z) - \bar{u}) (u(m,z) - \bar{u})^T v = (u(m,z) - \bar{u}) \langle u(m,z) - \bar{u}, v \rangle_{\cU}$.
The POD basis is then given by the first $\rru$ eigenvectors of $\cC_u$, i.e., $\{ \phi_i \}_{i=1}^{\rru} = \{ \phi_i^{\POD} \}_{i=1}^{\rru}$ satisfying
\begin{equation}
    \cC_u \phi_i^{\POD} = \lambda_i^{\POD} \phi_i^{\POD}, \qquad i \in \bN,
\end{equation}
where $\lambda_i^{\POD}$ are given in descending order.
When using the mean-shifted POD, we adopt $\shiftu = \bar{u}$ as the bias term. This essentially corresponds to the affine decoder $u_r \mapsto \mathbf{\Phi}_{\rru}^* u_r + \bar{u}$.
While not required, mean-shifting of the covariance ensures that the affine subspace $\bar{u} + \mathrm{span}(\phi_1^{\POD}, \dots, \phi_{\rru}^{\POD})$ corresponding to the above decoding operation satisfies any linear, nonparametric constraints (e.g., homogeneous boundary conditions, divergence free conditions) \cite{luo2025dis}.

Standard results for POD suggest that the truncation error associated with the POD basis is bounded by the sum of the trailing eigenvalues in the truncated eigendirections. This is summarized below (see \cite{Lanthaler23,luo2025dis} for a proof).
\begin{proposition}[POD reconstruction error bound]
    \label{prop:pod_error}
    Suppose $\mu$ is a probability measure on $\cM \times \cZ$ and $u \in L^2_{\mu}(\cM \times \cZ; \cU)$.
    Let $\{\phi_i\}_{i=1}^{\rru} = \{ \phi_i^{\POD} \}_{i=1}^{\rru}$ be the first $\rru$ POD basis vectors.
    Then, the reconstruction error for the projection $\projru{\rru} := \sum_{i=1}^{\rru} \phi_i \phi_i^T$
    is bounded by
    \begin{equation}
    \bE_{(m,z)\sim\mu}[\|(I - \projru{\rru}) (u(m,z) - \bar{u})\|_{\cU}^2] \leq \sum_{i > \rru} \lambda_i^{\POD}.
    \end{equation}
\end{proposition}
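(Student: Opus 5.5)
The proof is the standard spectral argument for the POD (Karhunen--Loève) truncation error. Write $w(m,z) := u(m,z) - \bar{u}$. Since $u \in L^2_{\mu}(\cM\times\cZ;\cU)$, we have $w \in L^2_{\mu}$ with $\bE_{\mu}[w] = 0$. The covariance operator $\cC_u = \bE_\mu[w w^T]$ is well defined as a Bochner integral in $\HS(\cU,\cU)$, because $\|w w^T\|_{\HS} = \|w\|_{\cU}^2$ is integrable. It is self-adjoint and positive semidefinite, and it is trace class with
\[
\tr(\cC_u) = \bE_\mu[\|w\|_{\cU}^2] < \infty .
\]
By the spectral theorem for compact self-adjoint operators, $\cC_u$ has an orthonormal eigenbasis $\{\phi_i^{\POD}\}$ with nonnegative, summable eigenvalues $\lambda_i^{\POD}$ in descending order. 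This basis can be completed by an orthonormal basis of $\ker \cC_u$, all of whose eigenvalues are zero.

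The key identity expresses the reconstruction error as a trace. Since $I - \projru{\rru}$ is an orthogonal projection,
\[
\|(I - \projru{\rru}) w\|_{\cU}^2 = \langle (I-\projru{\rru}) w, w\rangle_{\cU} = \sum_{i > \rru} \langle w, \phi_i\rangle_{\cU}^2 ,
\]
where the sum runs over the completed basis. Take expectations and exchange sum and expectation by Tonelli (all terms are nonnegative). Using
\[
\bE_\mu[\langle w,\phi_i\rangle_{\cU}^2] = \langle \cC_u \phi_i, \phi_i\rangle_{\cU} = \lambda_i^{\POD},
\]
which follows by passing the continuous functional $A \mapsto \langle A\phi_i,\phi_i\rangle$ through the Bochner integral, we obtain
\[
\bE_\mu\big[\|(I-\projru{\rru})w\|_{\cU}^2\big] = \sum_{i>\rru}\lambda_i^{\POD}.
\]
This is equality, so in particular the claimed bound holds. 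The kernel directions contribute zero.

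The only step needing care is the technical justification of these manipulations in infinite dimensions. Two things must be checked: that $\cC_u$ is trace class, so that its eigen-expansion exists and $\sum_i \lambda_i^{\POD} < \infty$; and that the interchange of expectation with the infinite sum and with the inner product is legitimate. Both follow from the square integrability of $w$, together with monotone convergence and the linearity of the Bochner integral. None of the geometry or shape structure enters, since the POD is computed on the fixed reference space $\cU = \cUref$ and the pair $(m,z)$ is simply treated as a joint random input under $\mu$.
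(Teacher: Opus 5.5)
Your proof is correct and is the standard spectral/trace argument that the paper defers to through its citations (the paper gives no proof of its own). The argument is $\bE_{\mu}[\|(I-\projru{\rru})w\|_{\cU}^2] = \tr\big((I-\projru{\rru})\cC_u(I-\projru{\rru})\big) = \sum_{i>\rru}\lambda_i^{\POD}$, the same trace manipulation the paper itself uses for the active-subspace bound; as you observe, it gives equality, so the stated inequality follows immediately.
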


Since we compute the POD in the reference domain, 
this truncation error bound uses the reference domain function space $\cU$.
For $\cUref = L^2(\Omegaref)$ or $H^1(\Omegaref)$, we can additionally consider the error bound for the truncated state variable pushed forward to the spatial domain $\Omegaz$. 
\begin{proposition}[POD reconstruction error bound in spatial domain norms]
    Consider the setting of \Cref{prop:pod_error}.
    Additionally, assume that there exist constants $\gamma_{0}$ and $\gamma_{1}$ such that 
    \begin{subequations}
    \begin{align}
        \| \diffeo_z\|_{W^{1,\infty}(\Omegaref)} &\leq \gamma_0 \qquad \forall z \in \cZ_{ad}, \\
        \| \diffeo_z^{-1}\|_{W^{1,\infty}(\Omegaz)} &\leq \gamma_1 \qquad \forall z \in \cZ_{ad}.
    \end{align}
    \end{subequations} 
    We have the following:
    \begin{enumerate}
    \item If $\cUref = L^2(\Omegaref)$ and $\cUz = L^2(\Omegaz)$, then 
    \begin{equation}
        \bE_{(m,z)\sim \mu} [ \| T_{\diffeo_z}^{-1} ((I-\projru{\rru})(u(m,z) - \bar{u})) \|_{\cUz}^2 ]
        \leq \gamma_0^{d} \sum_{i > \rru} \lambda_i^{\POD}.
    \end{equation}
    \item If $\cUref = H^1(\Omegaref)$ and $\cUz = H^1(\Omegaz)$, then 
    \begin{equation}
        \bE_{(m,z)\sim \mu} [ \| T_{\diffeo_z}^{-1} ((I-\projru{\rru})(u(m,z) - \bar{u})) \|_{\cUz}^2 ]
        \leq (1 + \gamma_1^2) \gamma_0^{d} \sum_{i > \rru} \lambda_i^{\POD}.
    \end{equation}
    \end{enumerate}
\end{proposition}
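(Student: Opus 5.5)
The plan is to bound the spatial-domain error pointwise in $(m,z)$ by the reference-domain error, using the operator norm bounds of \Cref{prop:pullback_pushforward_operators}, and then take expectations and apply \Cref{prop:pod_error}.

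Fix $(m,z)$ with $z \in \cZ_{ad}$ and set $e(m,z) := (I-\projru{\rru})(u(m,z)-\bar{u}) \in \cUref$. Since $\diffeo_z : \Omegaref \to \Omegaz$ is a diffeomorphism with $\diffeo_z \in W^{1,\infty}(\Omegaref)$ and $\diffeo_z^{-1} \in W^{1,\infty}(\Omegaz)$, \Cref{prop:pullback_pushforward_operators} applies with $\Omega_0 = \Omegaref$ and $\Omega_1 = \Omegaz$. In the $L^2$ case, part 1 gives
\[
\|T_{\diffeo_z}^{-1} e(m,z)\|_{L^2(\Omegaz)}^2 \leq \|\diffeo_z\|_{W^{1,\infty}(\Omegaref)}^{d}\, \|e(m,z)\|_{L^2(\Omegaref)}^2 \leq \gamma_0^{d}\, \|e(m,z)\|_{L^2(\Omegaref)}^2 .
\]
In the $H^1$ case, part 2 gives the factor $(1 + \|\diffeo_z^{-1}\|_{W^{1,\infty}(\Omegaz)}^2)\|\diffeo_z\|_{W^{1,\infty}(\Omegaref)}^{d} \leq (1+\gamma_1^2)\gamma_0^{d}$. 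Both constants are uniform in $z$ over the admissible set.

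Next, take the expectation over $(m,z)\sim\mu$. Because the constants do not depend on $(m,z)$, they come out of the expectation. What remains is $\bE_{\mu}[\|e(m,z)\|_{\cU}^2]$, which \Cref{prop:pod_error} bounds by $\sum_{i>\rru}\lambda_i^{\POD}$. This yields both claimed inequalities.

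The only point requiring care is that the uniform bounds $\gamma_0,\gamma_1$ are assumed only for $z \in \cZ_{ad}$, while the expectation is over $\mu = \mu_m\otimes\mu_z$. I would handle this by reading the statement as requiring $\mu_z$ to be supported in $\cZ_{ad}$, or equivalently by assuming the bounds hold $\mu_z$-almost surely, so that the pointwise estimate holds $\mu$-a.e. I would also note that measurability of $(m,z)\mapsto \|T_{\diffeo_z}^{-1}e(m,z)\|_{\cUz}$ is implicit in the statement. Apart from this, the argument is a direct composition of the two earlier propositions.
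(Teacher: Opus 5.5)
Your proposal is correct and matches the paper's proof: apply \Cref{prop:pullback_pushforward_operators} pointwise, bound its constants uniformly by $\gamma_0,\gamma_1$ for $z \in \cZ_{ad}$, then take expectations over $(m,z)\sim\mu$ and invoke \Cref{prop:pod_error}. Your caveat that $\mu_z$ must be supported in $\cZ_{ad}$ (or that the bounds hold $\mu_z$-a.s.) is genuinely needed, and the paper's proof leaves it implicit.
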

\begin{proof}
    Consider first the case of $\cUref = H^1(\Omegaref)$ and $\cUz = H^1(\Omegaz)$.
    Then, by $\Cref{prop:pullback_pushforward_operators}$, we have that 
    \begin{align*}
        \| T_{\diffeo_z}^{-1}((I - \projru{\rru})(u(m,z) - \bar{u}))\|_{\cUz}^2 
        & \leq 
            \left( 1 + \|\diffeo_z^{-1}\|_{W^{1,\infty}(\Omegaz)}^2 \right) \| \diffeo_z \|_{W^{1,\infty}(\Omegaref)}^{d} \|(I - \projru{\rru})(u(m,z) - \bar{u})\|_{\cUref}^2 \\
        & \leq 
        (1 + \gamma_1^2) \gamma_0^{d} \|(I - \projru{\rru})(u(m,z) - \bar{u})\|_{\cUref}^2
    \end{align*}
    for any $z \in \cZ_{ad}$.
    Thus, taking expectations with respect to $(m,z) \sim \mu$ and applying the standard POD bound yields the desired result.
    The proof is analogous for $\cUref = L^2(\Omegaref)$.
\end{proof}

\subsubsection{Active Subspaces for the Uncertain Parameter} 
We then construct our dimension reduction for $m$ based on the truncated representation of the solution operator in the reduced POD coordinates.
That is, we compute the AS for the mapping $\projru{\rru} u(m,z) = \mathbf{\Phi}_{\rru} \mathbf{\Phi}_{\rru}^* u(m,z)$,
for which the derivative covariance is given by
\begin{equation}\label{eq:active_subspace_matrix}
    \cH_m = \bE_{(m,z) \sim \mu}[ D_m u(m,z)^* \mathbf{\Phi}_{\rru} \mathbf{\Phi}_{\rru}^* D_m u(m,z)].
\end{equation}
The subspace basis is then given by the first $\rrm$ eigenvectors of $\cH_m$, i.e., 
\begin{equation}\label{eq:active_subspace_eigenvalue_problem}
    \cH_m \psi_i^{\AS} = \lambda_i^{\AS} \psi_i^{\AS}, \qquad i \in \bN,
\end{equation}
where $\lambda_i^{\AS}$ are again given in descending order.
When then take $\{ \psi_i \}_{i=1}^{\rrm} = \{ \psi_i^{\AS} \}_{i=1}^{\rrm}$ and $\shiftm = \bar{m}$.
AS captures dominant directions in the parameter space that have the greatest impact on the output as measured by the size of the derivatives \cite{zahm2020gradient}. 
Since the dimension reduction explicitly takes into account sensitivity information,
it is typically more effective than POD for input dimension reduction,
which considers only the input variance and is agnostic to the mapping itself.
Here we note that our AS based on \eqref{eq:active_subspace_matrix} differs from the standard AS in that only the $m$-derivatives are considered.

Theoretical properties of AS can be analyzed when the underlying distribution $\mu_m$ satisfies the subspace Poincar\'e inequality. That is, let $\salg(\projrm{\rrm})$ be the sub sigma algebra generated by an orthogonal projection $\projrm{\rrm}$ of rank $r$.
Then, for any $\fun \in L^2_{\mu_m}$, its conditional expectation
$\bE_{\mu_m}[\fun | \salg(\projrm{\rrm})]$
is the best $L^2_{\mu}$ approximation of $\fun$ within the subspace of $\salg_{\projrm{\rrm}}$-measurable functions. 
Such functions can be characterized by the fact that $\fun(m) = g(\projrm{\rrm} m)$ for some measurable function $g$, i.e., these are ridge functions that do not depend on the null space of $\projrm{\rrm}$.
We can quantify the effectiveness of the input dimension reduction strategy by inspecting this $L^2_{\mu_m}$ error between $\fun$ and $\bE_{\mu_m}[\fun | \salg(\projrm{\rrm})]$.
To this end, let $\latent := \bR^{\rru}$ denote the latent space for the output. 
We then have the following definition.
\begin{definition}[Subspace Poincar\'e inequality]
    We say a Borel probability measure $\mu_m$ satisfies the \textit{Subspace Poincar\'e inequality} 
    if for any $\fun \in C^1(\cM;\latent)$ with $\bE_{\mu_m}[\|D_m \fun\|_{\HS(\cM,\latent)}^2] < \infty$,
    \begin{equation}
        \bE_{\mu_m}[ \| \fun - \bE_{\mu}[\fun | \salg(\projrm{\rrm})] \|_{\latent}^2 ] 
        \leq c_{\mu_m} \bE_{\mu_m}[\|D_m \fun (I - \projrm{\rrm}) \|_{\HS(\cM,\latent)}^2],
    \end{equation}
    where $c_{\mu_m} > 0$ is a Poincar\'e constant that is independent of $\fun$ and $\projrm{\rrm}$.
\end{definition}
The canonical example of a distribution satisfying a Subspace Poincar\'e inequality is the Gaussian distribution; we remark on this case below.
Other examples and characterizations are available, though these are typically given for finite-dimensional input spaces.

Now consider any differentiable $\fun : \cM \times \cZ \rightarrow \latent$.
Since we are only interested in dimension reduction of $m$,
we study the conditional expectation of $u$ with respect to $m$ taken pointwise in $z$,
\begin{equation}
    \condexp{\rrm}{\fun}(m,z) 
    := \bE_{\mu_m}[\fun(\cdot,z) | \salg(\projrm{\rrm}) ](m).
\end{equation}
and study the approximation error of $\condexp{r}{\fun}(m,z)$. 
Then, the product measure $\mu = \mu_m \otimes \mu_z$ satisfies a partial subspace Poincar\'e inequality whenever
$\mu_m$ satisfies the subspace Poincar\'e inequality.
In particular, since $z$ is not truncated, the result does not rely on $\mu_z$ satisfying the subspace Poincar\'e inequality, and the resulting Poincar\'e constant depends only on the subspace Poincar\'e constant for $\mu_m$.

\begin{proposition}[Subspace Poincar\'e inequality for partial dimension reduction]\label{prop:mixed_subspace_poincare}
    Suppose $\mu_m$ satisfies a subspace Poincar\'e inequality,
    $\mu_z$ is a Borel probability measure on $\cZ$, and $\fun \in C^1(\cM \times \cZ; \cW)$ satisfies the moment bounds for $\mu = \mu_m \otimes \mu_z$,
    \begin{equation}
        \bE_{\mu} [ \|\fun\|_{\latent}^2] 
        + \bE_{\mu} [ \|D_m \fun\|_{\HS(\cM,\latent)}^2]  
        < \infty.
    \end{equation}
    Then, 
    \begin{equation}
        \bE_{\mu} [\|\fun - \condexp{\rrm}{\fun}\|_{\latent}^2 ] \leq 
        c_{\mu} \bE_{\mu} [\| D_m \fun (I - \projrm{\rrm}) \|_{\HS(\cM, \latent)}^2].
    \end{equation}
\end{proposition}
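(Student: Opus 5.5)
The plan is to apply the subspace Poincar\'e inequality for $\mu_m$ pointwise in $z$ and then integrate against $\mu_z$ using Fubini--Tonelli. We take $c_{\mu} := c_{\mu_m}$, which matches the remark that the constant depends only on $\mu_m$.

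\textbf{Step 1 (slices are admissible).} Fix $z \in \cZ$ and set $f_z := \fun(\cdot, z)$. Since $\fun \in C^1(\cM \times \cZ; \latent)$, each slice satisfies $f_z \in C^1(\cM; \latent)$, and $D_m f_z(m) = D_m \fun(m,z)$. Because $\mu = \mu_m \otimes \mu_z$ is a product measure, Tonelli's theorem applied to the nonnegative measurable functions $\|\fun\|_{\latent}^2$ and $\|D_m \fun\|_{\HS(\cM,\latent)}^2$ gives
\[
\int \Big( \bE_{\mu_m}[\|f_z\|_{\latent}^2] + \bE_{\mu_m}[\|D_m f_z\|_{\HS(\cM,\latent)}^2] \Big)\, d\mu_z(z) < \infty .
\]
Hence there is a $\mu_z$-null set $N$ such that for every $z \notin N$ both slice moments are finite. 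This is exactly the hypothesis of the subspace Poincar\'e inequality. For such $z$ the conditional expectation $\condexp{\rrm}{\fun}(\cdot,z) = \bE_{\mu_m}[f_z \mid \salg(\projrm{\rrm})]$ is well defined in $L^2_{\mu_m}$.

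\textbf{Step 2 (pointwise inequality).} For $z \notin N$ the subspace Poincar\'e inequality gives
\[
\bE_{\mu_m}\big[\|f_z - \condexp{\rrm}{\fun}(\cdot,z)\|_{\latent}^2\big] \le c_{\mu_m}\, \bE_{\mu_m}\big[\|D_m \fun(\cdot,z)(I-\projrm{\rrm})\|_{\HS(\cM,\latent)}^2\big].
\]
Integrating both sides over $z \sim \mu_z$ and applying Tonelli once more turns the iterated integrals into expectations under $\mu$. This yields the claimed bound with $c_\mu = c_{\mu_m}$.

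\textbf{Main obstacle: joint measurability.} Applying Tonelli in Step 2 requires $(m,z) \mapsto \condexp{\rrm}{\fun}(m,z)$ to be jointly measurable. A family of conditional expectations, each chosen separately for a fixed $z$, need not be. I would handle this by defining the conditional expectation explicitly through disintegration. Write $m = \projrm{\rrm} m + (I-\projrm{\rrm})m$, and let $\mu_m(\cdot \mid \projrm{\rrm} m)$ denote a regular conditional distribution of $(I-\projrm{\rrm})m$ given $\projrm{\rrm} m$, which exists because $\cM$ is a Polish space. Then set
\[
\condexp{\rrm}{\fun}(m,z) := \int \fun\big(\projrm{\rrm} m + w,\, z\big)\, \mu_m\big(dw \mid \projrm{\rrm} m\big).
\]
This version is jointly measurable because the integrand is jointly measurable (indeed continuous in $(m,w,z)$) and the kernel is measurable in $\projrm{\rrm} m$. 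Another route is to define $\condexp{\rrm}{\fun}$ as the conditional expectation of $\fun$ under $\mu$ given the sigma algebra generated by $(\projrm{\rrm} m, z)$; the product structure of $\mu$ makes it coincide a.e.\ with the slice conditional expectations. Once a jointly measurable version is fixed, the remaining steps are routine.
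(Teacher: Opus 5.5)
Your proposal is correct and follows the same route as the paper: you apply the subspace Poincar\'e inequality for $\mu_m$ to each slice $\fun(\cdot,z)$ and integrate over $z \sim \mu_z$, with $c_\mu = c_{\mu_m}$. Your extra steps fill in details the paper's two-line proof leaves implicit: restricting to $\mu_z$-a.e.\ $z$ where the slice moments are finite, and fixing a jointly measurable version of $\condexp{\rrm}{\fun}$ through a regular conditional distribution so that Tonelli applies.
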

\begin{proof}
    This follows directly from the subspace Poincar\'e inequality. 
    Recall that for any $z$, we have 
    \begin{equation}
        \bE_{m \sim \mu_m} [\|\fun(m,z) - \condexp{\rrm}{\fun}(m,z) \|_{\latent}^2]  
        \leq 
        c_{\mu} \bE_{m \sim \mu_m} [\|D_m \fun(m,z) (I - \projrm{\rrm}) \|_{\HS(\cM, \latent)}^2]
    \end{equation}
    Taking expectations now with respect to $z \sim \mu_z$ yields the desired result.
\end{proof}

This allows us to quantify the effectiveness of the partial AS given in \eqref{eq:active_subspace_matrix}--\eqref{eq:active_subspace_eigenvalue_problem}.
\begin{proposition}\label{prop:partial_as_result}
    Suppose $\mu_m$ satisfies a subspace Poincar\'e inequality and $\mu_z$ is a Borel probability measure over $\cZ$,
    and $u \in C^1(\cM \times \cZ; \cU)$ and satisfies the moment bounds for $\mu = \mu_m \otimes \mu_z$
    \begin{equation}
        \bE_{\mu} [ \|u\|_{\cU}^2] 
        + \bE_{\mu} [ \|D_m u\|_{\HS(\cM,\cU)}^2]  
        < \infty.
    \end{equation}
    Let $\{ \psi_i \}_{i=1}^{\rrm}$
    = $\{ \psi_i^{\AS} \}_{i=1}^{\rrm}$
    be the AS basis
    and $\projrm{\rrm} := \sum_{i=1}^{\rrm} \psi_i \psi_i^T$. Then, the conditional expectation given the projection $\projrm{\rrm}$
    satisfies
    \begin{equation}
        \bE_{(m,z) \sim \mu}[ \|\mathbf{\Phi}_{\rru}^*( u(m,z) - \condexp{\rrm}{u}(m,z) )\|_{\latent}^2] \leq c_{\mu} \sum_{i > \rrm} \lambda_i^{\AS}.
    \end{equation}
\end{proposition}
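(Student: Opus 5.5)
We will apply \Cref{prop:mixed_subspace_poincare} to the latent-space map $\fun(m,z) := \mathbf{\Phi}_{\rru}^* u(m,z) \in \latent = \bR^{\rru}$, and then identify the resulting right-hand side with the trailing AS eigenvalues via a trace computation.

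\textbf{Step 1 (reduction to the latent map).} Since $\mathbf{\Phi}_{\rru}^*$ is a bounded linear operator (of norm at most one, being built from orthonormal vectors), $\fun \in C^1(\cM\times\cZ;\latent)$ with $D_m \fun = \mathbf{\Phi}_{\rru}^* D_m u$. The moment bounds on $u$ then transfer directly to $\fun$:
\[
\bE_\mu[\|\fun\|_{\latent}^2] \le \bE_\mu[\|u\|_{\cU}^2], \qquad \bE_\mu[\|D_m\fun\|_{\HS(\cM,\latent)}^2] \le \bE_\mu[\|D_m u\|_{\HS(\cM,\cU)}^2].
\]
Bochner conditional expectation commutes with bounded linear maps. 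Hence, pointwise in $z$,
\[
\mathbf{\Phi}_{\rru}^*\condexp{\rrm}{u} = \condexp{\rrm}{\mathbf{\Phi}_{\rru}^* u}.
\]
It follows that the left-hand side of the claim equals $\bE_\mu[\|\fun - \condexp{\rrm}{\fun}\|_{\latent}^2]$. \Cref{prop:mixed_subspace_poincare} then bounds this by
\[
c_\mu\, \bE_\mu\bigl[\|\mathbf{\Phi}_{\rru}^* D_m u\,(I-\projrm{\rrm})\|_{\HS(\cM,\latent)}^2\bigr].
\]

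\textbf{Step 2 (trace identity).} Write $A := \mathbf{\Phi}_{\rru}^* D_m u(m,z)$ and $P^\perp := I-\projrm{\rrm}$. Then
\[
\|A P^\perp\|_{\HS}^2 = \tr(P^\perp A^* A P^\perp),
\]
and $A^*A = D_m u^*\mathbf{\Phi}_{\rru}\mathbf{\Phi}_{\rru}^* D_m u$ is exactly the integrand of $\cH_m$ in \eqref{eq:active_subspace_matrix}. The integrand is nonnegative and trace class, with trace bounded by $\|D_m u\|_{\HS}^2$, which is integrable. So Tonelli/Fubini lets us exchange expectation and trace, giving $\tr(P^\perp \cH_m P^\perp)$. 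Since $\cH_m$ is self-adjoint, positive and trace class, it has an orthonormal eigenbasis $\{\psi_i^{\AS}\}$ completing the first $\rrm$ vectors, and $P^\perp$ projects onto the span of $\{\psi_i^{\AS}\}_{i>\rrm}$. Therefore
\[
\tr(P^\perp\cH_m P^\perp)=\sum_{i>\rrm}\langle \psi_i^{\AS},\cH_m\psi_i^{\AS}\rangle_{\cM}=\sum_{i>\rrm}\lambda_i^{\AS}.
\]

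\textbf{Main obstacle.} The delicate point is the measure-theoretic justification in Step 2. One must show that $\cH_m$ is a well-defined trace-class Bochner integral, that trace and expectation can be interchanged, and that the eigenbasis is complete, so the trace over $\operatorname{range}(P^\perp)$ may be computed in that basis. I would handle this by expanding the HS norm in the basis $\{\psi_i^{\AS}\}$, so that
\[
\|AP^\perp\|_{\HS}^2=\sum_{i>\rrm}\|A\psi_i^{\AS}\|_{\latent}^2,
\]
and then applying monotone convergence to swap sum and expectation. Each term gives
\[
\bE_\mu\bigl[\|A\psi_i^{\AS}\|_{\latent}^2\bigr]=\langle\psi_i^{\AS},\cH_m\psi_i^{\AS}\rangle_{\cM}=\lambda_i^{\AS}.
\]
Completeness of the eigenbasis is handled by including the kernel of $\cH_m$, whose eigenvectors carry eigenvalue zero. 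A minor additional check is that the constant $c_\mu$ from \Cref{prop:mixed_subspace_poincare} is the Poincaré constant of $\mu_m$, independent of $\projrm{\rrm}$.
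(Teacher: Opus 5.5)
Your proposal is correct and matches the paper's proof: apply \Cref{prop:mixed_subspace_poincare} to $\mathbf{\Phi}_{\rru}^* u$, then rewrite the right-hand side as $\tr\bigl((I-\projrm{\rrm})\cH_m(I-\projrm{\rrm})\bigr)=\sum_{i>\rrm}\lambda_i^{\AS}$. The extra justifications you give fill in steps the paper leaves implicit: that $\mathbf{\Phi}_{\rru}^*$ commutes with the conditional expectation, that the moment bounds transfer, and that trace and expectation can be exchanged via the eigenbasis expansion and monotone convergence.
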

\begin{proof}
    We make use of the subspace Poincar\'e inequality from above applied to the mapping $\fun(m,z) = \mathbf{\Phi}_{\rru}^* u(m,z)$. 
    That is, for any $\projrm{\rrm}$, we have 
    \[
     \bE_{(m,z) \sim \mu} [ \|\fun(m,z) - \condexp{\rrm}{\fun}(m,z) \|_{\latent}^2] \leq  c_{\mu_m}
     \bE_{(m,z) \sim \mu} [ \|D_m \fun(m,z) (I - \projrm{\rrm}) \|_{\HS(\cM, \latent)}^2].
    \]
    In particular, the upper bound can also be written as 
    \begin{align*}
     \bE_{\mu} [ \|D_m \fun (I - \projrm{\rrm}) \|_{\HS(\cM, \latent)}^2]
     &= \tr ( (I - \projrm{\rrm}) \bE_{\mu} [ D_m \fun^* D_m\fun] (I- \projrm{\rrm}) ) \\
     &=  \tr ((I - \projrm{\rrm}) \cH_m (I - \projrm{\rrm})).
    \end{align*}
    For $\projrm{\rrm} = \sum_{i = 1}^{\rrm} \psi_i \psi_i^T$, the trace term simplifies to
    \[
    \tr ((I - \projrm{\rrm}) \cH_m (I - \projrm{\rrm})) = \sum_{i > \rrm} \lambda_i^{\AS},
    \]
    which yields the desired result.
\end{proof}

This result shows that there exists a ridge function, namely $\condexp{\rrm}{u}$, whose average errors are bounded by the sum over trailing eigenvalues of $\cH_m$.
While this is not immediately connected with the approximation theory developed in the previous section (which makes use of the ridge function $u_r(m,z) = \projru{\rru} u(\projrm{\rrm} m,z)$ instead of the conditional expectation), it does provide intuitive justification that the subspace represented by $\projrm{\rrm}$ captures the important directions of the input space. 
We do note that under further regularity assumptions on $u$ and $\mu_m$, such as Lipschitz continuity of $u$ and $Du$ and Gaussian $\mu_m$
we can show that $\bar{u}_r$ is also continuously differentiable with Lipschitz continuous derivatives,
and hence develop approximation errors of the RBNO based on a neural network approximation of $\bar{u}_r$,
analogous to the approach used in Section 2.
A similar approach is taken in \cite{luo2023efficient}.

\paragraph{Finite- and infinite-dimensional Gaussian inputs}
In our numerical examples, we will mostly consider discretizations of the Gaussian random fields for the uncertain parameter $\cM$, i.e., $\cM = \bR^{d_m}$ is finite dimensional and $\mu_m = \cN(0, \cC_m)$ is a Gaussian with a non-degenerate covariance $\cC_m$.
In such cases, it is customary to equip $\cM$ with the precision-weighted inner product. To avoid confusion, we will write $\cE$ to denote $\cM$ equipped with this inner product, such that
$\langle m_1, m_2 \rangle_{\cE} = \langle \cC_m^{-1/2} m_1, \cC_m^{-1/2} m_2 \rangle_{\cM}$.
For Gaussian $\mu_m$, conditional expectations with respect to $\salg(\projrm{\rrm})$ have an explicit expression, 
\begin{equation}
    \bE_{\mu_m}[\fun | \salg(\projrm{\rrm})] = \int \fun(\projrm{\rrm} m + (I - \projrm{\rrm})\eta) d\mu_m(\eta).
\end{equation}
Moreover, $\mu_m$ satisfies the subspace Poincar\'e inequality 
\begin{equation}\label{eq:gaussian_subspace_poincare}
    \bE_{\mu_m}[\|\fun - \bE_{\mu_m}[\fun | \salg(\projrm{\rrm})]\|_{\cU}^2] 
    \leq 
    \bE_{\mu_m}[\| D_m \fun (I - \projrm{\rrm})\|_{\HS(\cE, \cU)}^2],
\end{equation}
with $c_{\mu_m} = 1$.
In this case, the adjoint in \eqref{eq:active_subspace_matrix} is given by $D_m u(m,z)^* = (D_m u(m,z))^*_{\cE} = \cC_m (D_m u(m,z))^*_{\cM}$, 
where $(\cdot)^{*}_{\cE}$ refers to the adjoint with respect to the Cameron--Martin space inner product and $(\cdot)^{*}_{\cM}$ denotes the adjoint with respect to the original inner product $\langle \cdot, \cdot \rangle_{\cM}$.
The corresponding eigenvalue problem \eqref{eq:active_subspace_eigenvalue_problem} 
can then be written as a generalized eigenvalue problem, 
\begin{equation}\label{eq:gaussian_evp}
    \widetilde{\cH}_m \psi_i^{\AS} = \lambda_i^{\AS} \cC_m^{-1} \psi_i^{\AS}, \qquad i \in \bN,
\end{equation}
where 
\begin{equation}\label{eq:gaussian_as_matrix}
\widetilde{\cH}_m = \bE_{(m,z)\sim \mu} [ (D_m u(m,z))_{\cM}^* \mathbf{\Phi}_{\rru} \mathbf{\Phi}_{\rru}^* D_m u(m,z))],
\end{equation}
Importantly, the subspace Poincar\'e constant $c_{\mu_m} = 1$, and therefore the error bound in \Cref{prop:partial_as_result}, does not explicitly depend on the discretization dimension $d_m$.

Although $\cM$ and $\cE$ are equivalent in finite dimensions, 
this is no longer the case in infinite dimensions.
Specifically, $\cC_m^{-1/2}$ can be an unbounded operator on $\cM$, and hence the $\cE$ norm is only well-defined on the Cameron--Martin space $\mathrm{Range}(\cC_m^{1/2})$, which does not necessarily coincide with $\cM$. Nevertheless, the reduced bases constructed from \eqref{eq:active_subspace_eigenvalue_problem}
can still be a well-defined form of input dimension reduction for infinite-dimensional Gaussians (see \cite{luo2025dis,cao2024lazydino,cao2025derivative}).
This can be viewed as the limiting case of $d_m \rightarrow \infty$ provided \eqref{eq:gaussian_evp} and \eqref{eq:gaussian_as_matrix} have meaningful limits.

\subsubsection{Sample-Based Approximations for Reduced Basis Computation}

In practice, both the POD and AS need to be computed from a finite sample-based approximation of the expectations.
For the POD, we take the mean estimator
\begin{equation}\label{eq:empirical_mean_u}
    \widehat{u} := \frac{1}{n_{\POD}} \sum_{i=1}^{n_{\POD}} u(m^{(i)}, z^{(i)})
\end{equation}
and the unbiased covariance estimator
\begin{equation}\label{eq:empirical_covariance_u}
    \widehat{\cC}_u := \frac{1}{n_{\POD}-1}  \sum_{i=1}^{n_{\POD}} 
    (u(m^{(i)}, z^{(i)}) - \widehat{u})(u(m^{(i)}, z^{(i)}) - \widehat{u})^*
\end{equation}
where $(m^{(i)}, z^{(i)}) \sim \mu$, $i = 1, \dots, n_{\POD}$ 
are i.i.d.~samples from the input distribution.
Then, the empirical POD is given by the dominant eigenvectors $\widehat{\cC}_{u}$,
\begin{equation}\label{eq:empirical_pod_eigenvalue}
    \widehat{\cC}_u \widehat{\phi}_{j}^{\POD} = \widehat{\lambda}_{j}^{\POD} \widehat{\phi}_j^{\POD}, \qquad j \in \bN
\end{equation}
where we take $\{ \phi_j \}_{i=1}^{\rru} = \{\widehat{\phi}_{j}^{\POD}\}_{j=1}^{\rru}$
and use $\shiftu = \hat{u}$ for the bias in the RBNO.

For the AS, we take the derivative-covariance estimator
\begin{equation}\label{eq:empirical_active_subspace_matrix}
    \widehat{\cH}_m := \frac{1}{n_{\AS}} \sum_{i=1}^{n_{\AS}} 
    D_{m} u(m^{(i)}, z^{(i)})^* 
    {\mathbf{\Phi}}_{\rru} 
    {\mathbf{\Phi}}_{\rru}^* 
    D_{m} u(m^{(i)}, z^{(i)}),
\end{equation}
where $(m^{(i)}, z^{(i)}) \sim \mu$, $i = 1, \dots, n_{\AS}$ 
are again i.i.d.~samples from the input distribution.
The empirical AS is given by the dominant eigenvectors of $\widehat{\cH}_m$, 
\begin{equation}\label{eq:empirical_active_subspace_eigenvalue}
    \widehat{\cH}_m \widehat{\psi}_{j}^{\AS} = \widehat{\lambda}_{j}^{\AS} \widehat{\psi}_j^{\AS}, \qquad j \in \bN
\end{equation}
where we take $\{ \psi_j \}_{j=1}^{\rrm} = \{\widehat{\psi}_{j}^{\AS}\}_{j=1}^{\rrm}$.
Since we assume that the mean $\bar{m}$ of $\mu_m$ is known, we simply take $\shiftm = \bar{m}$ to be the bias.

\section{Implementation Details}\label{sec:implementation_details}
\subsection{Diffeomorphic Deformations via Basis Expansions and Elastic Extensions} \label{sec:deformations}
In this section, we discuss choices for $\diffeo_z$ and the associated distribution $z\sim \mu_z$ such that $\diffeo_z$ remains a diffeomorphism for all choice of $z$.
Specifically, we will consider representations of $z \mapsto \diffeo_z$ that itself utilizes a basis expansion, 
\begin{equation}\label{eq:motion_basis_expansion}
    \diffeo_z(X) = X + \sum_{i=1}^{d_z} z_i \displace_i(X),
\end{equation}
where $\{ \displace_i \}_{i=1}^{d_z}$ are a set of smooth displacement basis functions $\displace_i : \Omegaref \rightarrow \bR^{d}$.
We have that if 
\begin{equation} \label{eq:condition_diffeo}
    \left\| \sum_{i=1}^{d_Z}z_i \nabla_X \displace_i - I \right\|_{L^\infty(\Omegaref)} < 1,
\end{equation}
then $\diffeo_z$ given by \eqref{eq:motion_basis_expansion} is a diffeomorphism.
However, as noted in \cite{BallarinManzoniRozzaEtAl13}, the condition \eqref{eq:condition_diffeo} is often overly conservative and in practice, a larger admissible set $\cZ_{ad}$ can be permitted. 
In our numerical examples, we typically select $\cZ_{ad}$ to be constrained by box bounds $z_{l} \leq z_i \leq z_u$ that ensures the resulting deformations $\diffeo_z$ are diffeomorphic, and subsequently choose $\mu_z$ to be a uniform distribution over $\cZ_{ad}$.

The formulation of \eqref{eq:motion_basis_expansion} accommodates several parametric shape representations commonly used in shape optimization.
This includes domain deformations given by a Fourier basis expansion, which enables the design of boundaries as the graphs of arbitrary functions \cite{nicholson2021joint,CastrillonCandasNobileTempone21}. 
It also includes shape representations via free-form deformations \cite{sederberg_parry_1986}, 
in which $\displace_i$ are typically chosen from an expressive class of locally-supported basis functions such as Bernstein polynomials or B-splines.
In our numerical experiments, we consider shape optimizations using the Fourier representation in a Poisson PDE in \Cref{section:poisson}
and the free-form deformation approach in steady-state Navier--Stokes flow in \Cref{section:ns2d,section:ns3d}.
Details of the shape parameterizations are presented in \Cref{section:poisson_fourier,section:ffd} for the two approaches, respectively.
 
In many other design problems, the shape boundaries are prescribed as graphs of basis expansions without an explicit extension to the interior of the domain. 
This is the case when using domain-specific bases on non-trivial domains, 
such as the Hicks--Henne bases \cite{hicks1978wing, wu2003comparisons, sabater2020gradient} for airfoil design
and Lewis forms \cite{athanassoulis1985extended} for ship hull design.
In such cases, the displacement basis $\displace_{D}^{(i)}$ is prescribed only on the designed boundaries $\Gamma_{D} \subset \partial \Omega_0$. Nevertheless, we can still pose the problem using \eqref{eq:motion_basis_expansion} by extending the displacement basis into the entire domain $\Omegaref$ via elastic extensions, 
a commonly-used method for mesh updating in shape optimization 
\cite{radtke_bletsos_kühl_suchan_rung_düster_welker_2023, schulz_siebenborn_2016} 
and fluid structure interaction \cite{shamanskiy_simeon_2020, stein_tezduyar_benney_2003, diosady_murman}.  
To do so, we obtain $\displace_i$ as the solution of the linear elasticity PDE with 
Dirichlet boundary condition $\displace_i |_{\Gamma_D} = \displace_D^{(i)}$ and $\displace _i|_{\partial \Omega_0 \setminus \Gamma_D} = 0$,
\begin{subequations}
\label{eq:linear_elasticity}
    \begin{alignat}{3}
        -\nabla \cdot \left( 
            a_{\mu} (\nabla u + \nabla u^T) + a_{\lambda} (\nabla \cdot u) I 
            \right) &= 0 && \quad \text{in } \Omega_0\\
        \displace_i &= \displace_D^{(i)} && \quad \text{on } \Gamma_D \\
        \displace_i &= 0 && \quad \text{on } \partial \Omega_0 \setminus \Gamma_D,
    \end{alignat}
\end{subequations}
where $(\nabla u^T)_{ij} = (\nabla u)_{ji}$ corresponds to the matrix transpose of the displacement gradient.
The Lam\'e parameters $a_{\lambda}$ and $a_{\mu}$ can be defined in a spatially-varying manner (e.g., based on distance to boundaries) to improve the conditioning of the deformation gradient $\defgradz(X) = \nabla_X \diffeo_z(X)$.
The displacement bases are then numerically computed by solving \eqref{eq:linear_elasticity} using, say, the finite element method.

In fact, even when using free-form deformations where $\displace_i$ are prescribed on the entire $\Omega_0$, 
we will still use $\displace_i$ to generate a new set of displacement basis functions $\tilde{\displace}_i$ by using $\displace_i |_{\partial \Omega_0}$ as the boundary conditions.
This does not change the motion of the shape boundaries when used in \eqref{eq:motion_basis_expansion}, but can improve the conditioning of the deformation gradient and hence the quality of the deformed mesh in the resulting displacement basis functions.

\begin{remark}[Regularity of elastic extensions]
Theoretically, for a deformation map $\diffeo_z$ to be a diffeomorphism, it is required to be of class $C^1$. However, when such maps are generated by solving the equations of linear elasticity with the finite element method, the resulting weak solutions belong to Sobolev spaces such as $W^{1,\infty}$ as opposed to $C^1$.
This level of smoothness is often sufficient to define pushforward/pullback operations and ensure differentiability in shape optimization (e.g., \cite[Section 11]{ManzoniQuarteroniSalsa21}).
Moreover, invertibility of the numerical deformations can be verified by ensuring the deformation gradient has a positive determinant at the element level.
Nevertheless, we remark that elastic extensions are not strictly required for free-form deformations approaches, and can be omitted if smoothness of the deformation maps is of concern.
\end{remark}

\subsection{Computational Workflow for Operator Learning}
We now summarize the overall workflow for constructing shape-DINO surrogates for PDE-constrained OUU, which largely follows that described in \cite{luo2023efficient}.
We then elaborate on the details of the training data generation and neural operator training. 

\begin{enumerate}
\item \textbf{State data generation:}
Sample parameter–control pairs
$\{(\mref^{(i)}, z^{(i)})\}_{i=1}^{n_s}$ from the prescribed training distribution.
For each sample, solve the high-fidelity PDE to obtain the corresponding state
solutions $\{\uref^{(i)}\}_{i=1}^{n_s}$.

\item \textbf{State-space reduction via POD:}
Compute the sample mean $\widehat{u}$ over a subset of $n_{\POD}$ training samples and perform POD on the centered snapshots $\{u^{(i)} - \widehat{u}\}_{i=1}^{n_{\POD}}$
following \eqref{eq:empirical_mean_u}--\eqref{eq:empirical_pod_eigenvalue}. Then, select a reduced basis $\{\phi_j\}_{j=1}^{\rru}$ of rank $\rru$ to represent the state space.

\item \textbf{Sensitivity data generation:}
For each training sample, compute reduced-output parameter sensitivities 
$\{\mathbf{\Phi}_{\rru}^* D_m u^{(i)}\}_{i=1}^{n_s}$ 
and shape sensitivities
$\{\mathbf{\Phi}_{\rru}^* D_{z} u^{(i)}\}_{i=1}^{n_s}$ 
using the adjoint method.

\item \textbf{Parameter-space reduction via active subspaces:}
Compute an active subspace for the random parameters from 
$n_{\AS}$ reduced-output parameter sensitivities $\{\mathbf{\Phi}_{\rru}^* D_m u^{(i)}\}_{i=1}^{n_{\AS}}$
following \eqref{eq:empirical_active_subspace_matrix}--\eqref{eq:empirical_active_subspace_eigenvalue}
and select a reduced basis $\{\psi_i\}_{i=1}^{\rrm}$ of rank $\rrm$.

\item \textbf{Latent-space encoding:}
Project parameters, states, and Jacobians onto their respective reduced bases,
\[
m_r^{(i)} = \mathbf{\Psi}_{\rrm}^* \, m^{(i)}, \qquad
u_r^{(i)} = \mathbf{\Phi}_{\rru}^* \, (u^{(i)} - \widehat{u}), \qquad
J_{m,r}^{(i)} = \mathbf{\Phi}_{\rru}^* D_m u^{(i)} \mathbf{\Psi}_{\rrm}, \qquad
J_{z,r}^{(i)} = \mathbf{\Phi}_{\rru}^* D_z u^{(i)}.
\]

\item \textbf{Neural operator training.}
Train the neural network $g_\theta$ via empirical risk minimization using the
reduced training dataset
\[
\{(m_r^{(i)}, z^{(i)}, u_r^{(i)}, J_{m,r}^{(i)}, J_{z,r}^{(i)})\}_{i=1}^{n_s},
\]
thereby learning a surrogate model that maps parameters and shapes to reduced states and sensitivities in latent space.
\end{enumerate}

\subsubsection{Details and Computational Costs}\label{sec:computational_cost_theoretical}
\paragraph{State data generation}
State training data are generated by solving the underlying PDE $\Rref(\uref^{(i)}, \mref^{(i)}, z^{(i)}) = 0$ for $\uref^{(i)}$ given the sampled inputs $(\mref^{(i)}, z^{(i)})$.
This is typically the dominant cost in training data generation.
When $\Rref$ corresponds to a linear PDE (i.e., $\Rref$ is linear in $\uref$), the resulting computational cost is solving a linear system of equations that scales with the discretization dimension $\du$. 
On the other hand, when $\Rref$ corresponds to a nonlinear PDE, the solution $\uref^{(i)}$ may require successively solving several linear PDEs, as in Newton or Picard iteration. 

\paragraph{POD computation} 
Once the state data $\{\uref^{(i)}\}_{i=1}^{n_s}$ is generated, 
the POD can be computed by solving the eigenvalue problem using 
a sample-based estimate of the covariance using $n_{\POD}$ of the $n_s$ training samples,
as in \eqref{eq:empirical_mean_u}--\eqref{eq:empirical_pod_eigenvalue}. In particular, the eigenvalue problem \eqref{eq:empirical_pod_eigenvalue} 
can be solved efficiently in a matrix-free manner via iterative methods such as Lanczos \cite{saad2003iterative} or randomized methods \cite{halko2011finding}, and requires no additional PDE solves.

\paragraph{Derivative data computation}
Recall that VJPs can be computed by the adjoint sensitivity equations \eqref{eq:linearized_adjoint}.
Thus, for a given input sample $(m^{(i)}, z^{(i)})$, once the state $u^{(i)}$ has been computed, the cost of generating the VJPs along each of the $\{\phi_j\}_{j=1}^{n_\POD}$ POD basis directions involves solving $\rru$ linear systems of equations. 
Notably, the $\rru$ linear systems share the same operator (left-hand side), 
and only differ in their right-hand sides.
This allows amortization of the solution costs, either by re-using the matrix factorizations when using direct methods (e.g., LU/Cholesky factors), 
or by re-using expensive preconditioners when using iterative methods. 
In our numerical examples, we consider problems for which direct methods are tractable. 
In this case, the cost of generating the $\rru$ VJPs (including both $m$ and $z$ Jacobians) is asymptotically equivalent to that of solving just the first linear system, since cost of applying the matrix factors to the remaining $\rru - 1$ right-hand sides is negligible compared to the cost of performing the matrix factorization.

\paragraph{Active subspace computation}
In our workflow, the AS can be directly computed from the sampled VJPs $\mathbf{\Phi}_{\rru}^* D_m u(m^{(i)},z^{(i)})$.
Again, the eigenvalue problem in \eqref{eq:active_subspace_eigenvalue_problem} can be solved 
through randomized methods
without explicitly forming $\widehat{\cH}_m$,
and requires no additional PDE solves once the sensitivity data has been generated.
{Here, we note when using the conventional $L^2_{\mu}$ loss \eqref{eq:l2_shape_no}, 
Jacobian data only needs to be generated for the first $n_{\AS}$ input samples to compute the active subspace since it is not used elsewhere in training.
However, in this setting, 
it may be preferable in practice to use an input PCA for dimension reduction of $\cM$ to avoid computing derivatives altogether. 
This approach is investigated in \cite{luo2023efficient}, 
but we do not adopt it in order to retain consistency in our comparisons between Shape-DINO and Shape-NO.
}

\subsubsection{Neural Network Training}
For neural network training, with given reduced bases, we can approximate \eqref{eq:h1_shape_rbno} via Monte Carlo using a finite training data corpus 
\[  \left\{(m_r^{(i)}, z^{(i)}, u_r^{(i)}, J_{m,r}^{(i)}, J_{z,r}^{(i)} \right\}_{i=1}^{n_s}. \]
In particular, we will adopt the loss function using a normalized form of \eqref{eq:h1_shape_rbno}
\begin{equation}
\label{eq:finite_h1_rnbo}
    \min_\theta 
    \sum_{i=1}^{n_s} 
    \frac{\| u_r^{(i)} -    g_\theta(m_r^{(i)}, z^{(i)})\|^2_2}{\| u_r^{(i)} \|_2^2}
    + 
    \alpha_{D} \sum_{i=1}^{n_s}  \left(
    + \frac{\| J_{m,r}^{(i)} - D_1 g_\theta(m_r^{(i)}, z^{(i)})\|^2_2}{\| J_{m,r}^{(i)}\|^2_F} 
    + \frac{\| J_{z,r}^{(i)} - D_2 g_\theta(m_r^{(i)}, z^{(i)})\|^2_2}{\| J_{z,r}^{(i)}\|^2_F}  \right).
\end{equation}
Here, we take $\alpha_{D} = 1$ for derivative-informed training (Shape-DINO) and $\alpha_{D} = 0$ for conventional training (Shape-NO).

\section{Numerical Results}\label{sec:numerical_results}
In this section, we evaluate the accuracy and sample efficiency of shape-DINO across three distinct PDE-constrained shape OUU problems:
(i) a flux-tracking problem for a Poisson equation with a shape-parametrized top boundary and an uncertain log-permeability field, (ii)
dissipation reduction for 2D steady-state Navier--Stokes 
flow around an object under uncertain inflow conditions,
and (iii) bending-moment and force reduction for 3D steady-state Navier–Stokes flow around a tower under uncertain inflow conditions. 
In each example, our goal is to assess the effectiveness of Shape-DINOs (and Shape-NOs) through 
(1) their generalization accuracies for output and derivative predictions and
(2) their accuracies in producing solutions to the OUU problems compared to PDE-based solutions using similar computational budgets.
Note that for the 2D problems, we can afford to compare the approximate OUU solutions to an accurate PDE-based reference solution computed using a large sample size.
On the other hand, 
for the 3D case, 
producing an accurate reference solution is computationally prohibitive,
and instead the purpose of the experiment is to demonstrate scalability of our surrogate-based approach. 

For the numerical results, we use FEniCS \cite{logg2012automated} 
to implement finite element discretizations for the PDEs
and
PyTorch \cite{paszke_gross_massa_lerer_bradbury_chanan_killeen_lin_gimelshein_antiga_2019} to construct the neural network approximations. 
Additionally, data generation is implemented using hIPPYlib \cite{10.1145/3428447} and hIPPYflow \cite{olearyroseberry2021hippyflow}, and the OUU problems are formulated and solved using SOUPy \cite{luo_chen_o’leary-roseberry_villa_ghattas_2024}.
CPU-based timings are carried out on a single compute node of the TACC VISTA system using an NVIDIA Grace CPU Superchip,
while neural network training and deployment are performed with an NVIDIA H200 GPU.

\subsection{Latent-space Neural Network Architecture}
Across all experiments, we utilize fully connected feedforward architectures to represent the latent-space neural network $g_\theta$. We use the Adam optimizer \cite{KingmaBa14} to train the neural networks on either the conventional $L^2_{\mu}$ loss (Shape-NO) 
or the derivative-informed $H^1_{\mu}$ loss (Shape-DINO).
In each case, we use a learning-rate scheduler that halves the learning rate at specified epoch milestones. 
The architectural and training hyperparameters, including $d_L$, $d_{W,i}$, 
the initial learning rate, 
the scheduler stages, 
and the number training epochs are all problem-dependent. 
These are summarized in \Cref{tab:nn_hyperparams} and are also discussed within each numerical example.
We note that the same hyperparameters are used for Shape-DINO and Shape-NO for consistency.
\begin{table}[H]
\centering
\resizebox{0.85\textwidth}{!}{%
\begin{tabular}{lccccccc}
\hline
Example 
& $d_{W,i}$ 
& Initial LR
& LR Scheduler,  $\gamma=0.5$
& Epochs 
& $\rru$
& $\rrm$
& $\dz$
\\
\hline
Poisson 
& $[512, 512, 512, 512]$  
& $0.0005$ 
& $\{800,1200,1800\}$ 
& $2000$ 
& $400$
& $256$
& $11$
\\

2D NS  
& $[512, 1024, 512]$ 
& $0.0001$ 
& $\{500, 750\}$ 
& $1000$
& $256$
& $50$
& $162$
\\

3D NS 
& $[512, 512]$ 
& $0.0001$ 
& $\{1000, 1500\}$ 
& $2000$ 
& $200$
& $128$
& $160$
\\
\hline
\end{tabular}}
\caption{Neural network architectures and training hyperparameters for all experiments.
For each experiment, Shape-DINO and Shape-NO use identical settings, except for the loss function. Initial LR denotes the initial learning rate used by the Adam optimizer. LR Scheduler specifies the learning-rate stages, where the set of integers denotes the epoch milestones at which the learning rate is updated by a factor $0.5$.}
\label{tab:nn_hyperparams}
\end{table}

\subsection{Cost accuracy comparison of the neural operator}
We train Shape-DINOs and Shape-NOs for a range of sample sizes. The trained neural operators are then used to solve the OUU problems via SAA of the objective function using a large sample size. We denote the resulting optimal solutions by the neural operators by $z^\dagger_\text{NN}$. For comparison, we also solve the OUU problems directly using the governing PDE and SAA with varying sample sizes, chosen to reflect computational budgets comparable to those required for training the neural operators. The corresponding PDE-based optimal solutions are denoted by $z^\dagger_\text{PDE}$. All optimization problems are solved using gradient-based methods. For the linear elliptic Poisson example with box constraints, we employ the L-BFGS-B algorithm. For the 2D and 3D Navier–Stokes shape OUU problems, we use a trust-region constrained method with box constraints. 

The accuracies of the approximations are assessed relative to the reference solutions obtained by solving the PDE-based OUU problem using a large sample size. The resulting reference optimal solution is denoted by  $z^\star_\text{ref}$. Approximate optimal solutions are compared against $z^\star_\text{ref}$ using the relative optimal cost error defined as 
\begin{equation}
    \frac{|\mathcal{J}(z^\dagger_\text{approx}) - \mathcal{J}(z^\star_\text{ref})|}{|\mathcal{J}(z^\star_\text{ref})|},
\end{equation}
where $\mathcal{J}(z^\dagger_\text{approx})$ and $\mathcal{J}(z^\star_\text{ref})$ are evaluated using the true objective function, estimated via an accurate Monte Carlo estimator with a large number of samples. Here, $z^\dagger_\text{approx}$ denotes either the neural-operator-based solution $z^\dagger_\text{NN}$ or the PDE-based solution $z^{\dagger}_\text{PDE}$, depending on the method under consideration.

Computational efficiency is measured in terms of the total number of state PDE solves, that is, the evaluation of $u(m, z)$ required to obtain the optimal solution. This quantity represents the dominant computational cost of the optimization process. For neural-operator-based optimization, this cost corresponds to the total number of training samples used. For PDE-based optimization, it is given by the product of the total number of objective function evaluations and the SAA sample size. 

In \Cref{section:numerical_timings}, we compare the computational cost of solving for the forward state and its sensitivity with respect to the shape parameter using PDE-based solvers and neural operators. Because these two operations dominate the overall optimization cost, we focus exclusively on them. 
Across our numerical experiments, we show that neural operators can provide orders-of-magnitude speedups over high-fidelity PDE solves, making large-scale OUU problems computationally feasible.

\subsection{Flux-tracking design in a Poisson Problem under Uncertain Permeability Fields}
\label{section:poisson}
\subsubsection{Poisson Problem with Varying Top Boundary}
For our first numerical example, we consider a Poisson PDE on a domain $\Omegaz$ in the form of a rectangle with a top boundary that we are free to design.
We assume that $\Omegaz$ is related to the reference domain $\Omegaref = (0, 4) \times (0, 1) \subset \bR^2$ through the family of diffeomorphisms $\diffeo_z$, the form of which is described below.
This example is analogous to the setup considered in \cite{nicholson2021joint}.
Here, the PDE is given in spatial coordinates as
\begin{equation}
\label{eq:possion}
    \begin{aligned}
        -\nabla_x \cdot( \exp(\mz) \nabla_x \uz) &= f \qquad x \in \Omegaz \\
        \nabla_x \uz \cdot n &= 0 \qquad x \in \Gamma_{L,z} \cup \Gamma_{R,z} \\
        \uz &= 0\qquad x \in \Gamma_{B,z} \\
        \uz &= 1 \qquad x \in \Gamma_{T,z} \\
    \end{aligned}
    \end{equation}
where $x = (x_1, x_2)^T \in \mathbb{R}^2$, and $\Gamma_{L,z}, \Gamma_{R,z}, \Gamma_{B,z}, \Gamma_{T,z}$ are the left, right, bottom, and top boundaries of $\partial \Omegaz$, respectively. 
Here, $f(x) = 0.1\sin(x_1)\cos(x_2)$ is a source term that is prescribed in spatial coordinates. 
The model is parametrized by the log permeability field $\mz$, 
whose uncertainty is characterized by a Gaussian random field on the reference domain, 
$\mref \sim \mu_m = \cN(0, \cC_m)$, 
such that $\mz = T_{\diffeo_z}^{-1} \mref$.
The covariance of the Gaussian is given by a Bi-Laplacian operator on the reference domain 
\begin{equation}\label{eq:bilplacian_covariance}
    \cC_m = (\delta - \gamma \nabla_X \cdot( \Theta \nabla_X))^{-2},
\end{equation}
which corresponds to a Mat\'ern covariance kernel \cite{LindgrenRueLindstroem11, villa2024note}. 
Here we take $\gamma = 1, \delta = 25$, 
and $\Theta$ to be a symmetric positive definite matrix with eigenvalues $\theta_1 = 2$ 
along the eigendirection $[\sin(\pi/4), \cos(\pi/4)]$ and $\theta_2 = 0.5$ along the eigendirection $[\cos(\pi/4), \sin(\pi/4)]$.

\subsubsection{Shape Parametrization using Fourier Basis}
\label{section:poisson_fourier}
We represent the shape of the top boundary using a Fourier expansion with respect to the horizontal coordinates, 
\begin{equation}
    x_2(x_1) = a_0 + \sum_{k=1}^{n_z} a_k \cos(k \pi x_1) + b_k \sin(k \pi x_1),
\end{equation}
with a maximum wavenumber $n_z \pi$.
This can be formulated through deformations of the form \eqref{eq:motion_basis_expansion} with the displacement basis functions
\begin{align*}
    \displace_{2k}(X) &= (0, X_2 \cos(k \pi X_1))^T, \qquad k = 0, \dots, n_z, \\
    \displace_{2k + 1}(X) &= (0, X_2 \sin(k \pi X_1))^T, \qquad k = 1, \dots, n_z.
\end{align*}
and $z=[a_0,a_1,b_1, a_2, b_2, \dots, a_{n_z}, b_{n_z}]^T\in\mathbb{R}^{2{n_z}+1}$.
In particular, we will take $n_{z} = 5$ such that we have 11 displacement basis functions
and take the admissible set to be $\cZ_{ad} = [-0.2, 0.2]^{\dz}$.

\subsubsection{Reference Domain Formulation}
The PDE can be formulated on the reference domain through a change of coordinates in its weak form. On the spatial domain, we have the trial space
\[
    \cUz := 
    \{\uz \in H^1(\Omegaz) : \uz = 0 \text{ on } \Gamma_{B,z} \text{ and } \uz = 1 \text{ on } \Gamma_{T,z} \}
\]
and the test space 
\[
    \cVz := 
    \{\vz \in H^1(\Omegaz) : \vz = 0 \text{ on } \Gamma_{B,z} \text{ and } \vz = 0 \text{ on } \Gamma_{T,z} \}.
\] 
The weak form is then given for any $\vz \in \cVz$ as
\begin{equation}
    \langle R_z(\uz, \mz), \vz \rangle_{\cVz' \times \cVz} = 
    \int_{\Omegaz} \exp(\mz) \nabla_{x} \uz \cdot \nabla_x \vz \; dx 
    - \int_{\Omegaz} f \vz \; dx.
\end{equation}

We then apply the coordinate transformations from \ref{appendix:coordinate_transforms} to the integrals, 
which yields the weak form of the PDE residual expressed in terms of the reference domain quantities.
To this end, we consider reference domain state and test spaces,
\begin{align*}
    \cUref &:= \{u \in H^1(\Omegaref) : u = 0 \text{ on } \Gamma_B \text{ and } u = 1 \text{ on } \Gamma_T \}, \\
    \cVref &:= \{v \in H^1(\Omegaref) : v = 0 \text{ on } \Gamma_B \text{ and } v = 0 \text{ on } \Gamma_T \}.
\end{align*}
where $\Gamma_{T}$ and $\Gamma_{B}$ are the top and bottom boundaries of $\Omegaref$, respectively.
We then have
\begin{align} \label{eq:poisson_weak_reference}
    \langle \Rref(\uref, \mref, z), \vref \rangle_{\cVref' \times \cVref} 
    = \int_{\Omegaref} 
        \exp(\mref) (\defgradz^{-T} \nabla_{X} \uref ) \cdot (\defgradz^{-T} \nabla_{X} \vref) \det \defgradz dX 
        - \int_{\Omegaref} (T_{\diffeo_z} f) \vref \det \defgradz dX.
\end{align}
We observe that the shape variable $z$ appears explicitly in the residual \eqref{eq:poisson_weak_reference} through the deformation gradient and $\defgradz = \nabla_X \diffeo_z$ and the pullback $T_{\diffeo_z}$.

As previously discussed, we treat \eqref{eq:poisson_weak_reference} as the more fundamental definition of the PDE problem for shape optimization. 
This is discretized using a Galerkin finite element method with continuous piecewise-linear elements for bot the state and random parameter space, leading to $\dm = \du = 4,257$. 

\subsubsection{Flux-tracking Shape Optimization under Uncertainty}
For the OUU problem, we consider a tracking-type objective 
for the flux across the bottom boundary. 
That is, given a target flux profile
$\flux_{\target} \in L^2(\Gamma_{B,z})$, 
we seek to minimize the $L^2(\Gamma_{B,z})$ misfit between the 
normal flux $-e^{m} \nabla_x u \cdot n$ and the target, 
\begin{equation}\label{eq:poisson_flux_tracking_omegaz}
    Q_z (u,m) = \frac{1}{2}
        \int_{\Gamma_{B,z}} (- e^{m} \nabla_x u \cdot n - \flux_{\target})^2 ds.
\end{equation}
This transforms to the following integral in reference coordinates
\begin{equation}\label{eq:poisson_flux_tracking_omega0}
    Q (u,m,z) = \frac{1}{2}
        \int_{\Gamma_B} 
        \left(- \exp(m) \defgradz^{-T}
        \nabla_X \uref
        \cdot N
        - 
        \flux_{\target} \right)^2 dS,
\end{equation}
where we have made the simplification using the fact that
$\diffeo_z(X) = X$ on $\Gamma_{B}$ 
and therefore the unit normal vector remains unchanged, i.e.,
\[
n(X) = \frac{\det \defgradz(X) \defgradz^{-T}(X) N(X)}{|\det \defgradz(X) \defgradz^{-T}(X) N(X)|} = N(X).
\]

For the penalization, we consider an $\ell_2$ norm on $z$, i.e.,
\begin{equation}
    \label{eq:L2penalty}
    \cP_{\ell_2}(z; \alpha) := \alpha |\Omegaref| \| z\|_{2}^2,
\end{equation}
where $\alpha > 0$ is a penalization weight 
and the factor $|\Omegaref|$, which denotes the Lebesgue measure of $\Omegaref$, is used for ease of implementation in FEniCS.

We consider two example risk measures for our objectives. In the first example, we consider the risk neutral case using the expectation as the optimization objective, 
\begin{equation}
\min_{z \in \cZ_{ad}} \rho^{\mean}_{m \sim \mu_m}(Q(u(m,z),m,z) + \cP_{\ell_2}(z; \alpha).
\end{equation}
Our second example considers the risk averse case using CVaR as the optimization objective,
\begin{equation}
\min_{z \in \cZ_{ad}} \rho^{\CVaR,\beta}_{m \sim \mu_m} Q(u(m,z),m,z) + \cP_{\ell_2}(z;\alpha),
\end{equation}
with $\beta = 0.95$,
which we solve using the smoothed CVaR formulation \eqref{eq:smoothed_cvar_saa}. 
In both cases, the objective functions include the 
$\ell_2$ penalization on the shape variables,
where we take $\alpha = 0.001$ for the mean objective
and examine both $\alpha = 0.001$ and $\alpha = 0.1$ for the CVaR objective.

\subsubsection{Neural Operator Construction}
The training data for the neural operator is generated by sampling from the input distributions of $m$ and $z$, 
where we use the uncertain parameter distribution $\mu_m = \cN(0, \cC_m)$
and uniform distribution for the shape variables 
$\mu_z = \unif([-0.2, 0.2]^{d_z})$.
We train the neural networks using different training
datasets of sizes 512, 1,024, 2,048, and 4,096, 
both with and without Jacobian information. 
In each case, to generate the reduced basis, 
we use a subset of $n_{\POD} = n_{\AS} = 512$ samples 
from the training set to compute both the POD and the AS,
from which we take $r_u = 400$ POD basis vectors 
and $r_m = 256$ AS basis vectors to define the dimension reduction. 
The latent-space mappings are represented by multi-input dense neural networks with 4 hidden layers of width 512 and the GELU activation function.
These are then trained using Adam for 2,000 epochs with an initial learning rate of $5 \times 10^{-4}$ that is halved after 800 epochs, 1,200 epochs, and 1,600 epochs. 

In Figure~\ref{fig:poisson_nn}, we plot the mean relative errors in $\cU = L^2(\Omegaref)$ 
of the neural operator output 
and the mean relative errors of its reduced Jacobians 
$J_{m,r}(m,z) = \mathbf{\Phi}_{\rru}^* D_m u(m,z) \mathbf{\Psi}_{\rrm}$
and
$J_{z,r}(m,z) = \mathbf{\Phi}_{\rru}^* D_z u(m,z)$.
The errors are computed on a test set of 1,024 samples from the input distribution, and are averaged 10 different runs of the training process with different initializations and training/validation splits. 
We observe that training with Jacobian information yields consistently lower errors in both the solution approximation and its associated sensitivities.

Figure~\ref{fig:poisson_nn_error} illustrates a representative test example. We visualize the state $u$ on the deformed domain, showing (top-left) the reference high-fidelity PDE solution $u_\text{true}$, 
(top-right) the Shape-DINO prediction $u_\text{DINO}$ trained with 512 samples, (bottom-left) the pointwise difference $u_\text{DINO} - u_\text{true}$, and (bottom-right) the parameter $m$ used in the computation. 
Visually, we see that the prediction errors for the state remain small despite the limited training dataset. 

\begin{figure}
\centering
\includegraphics[width=0.95\linewidth]
{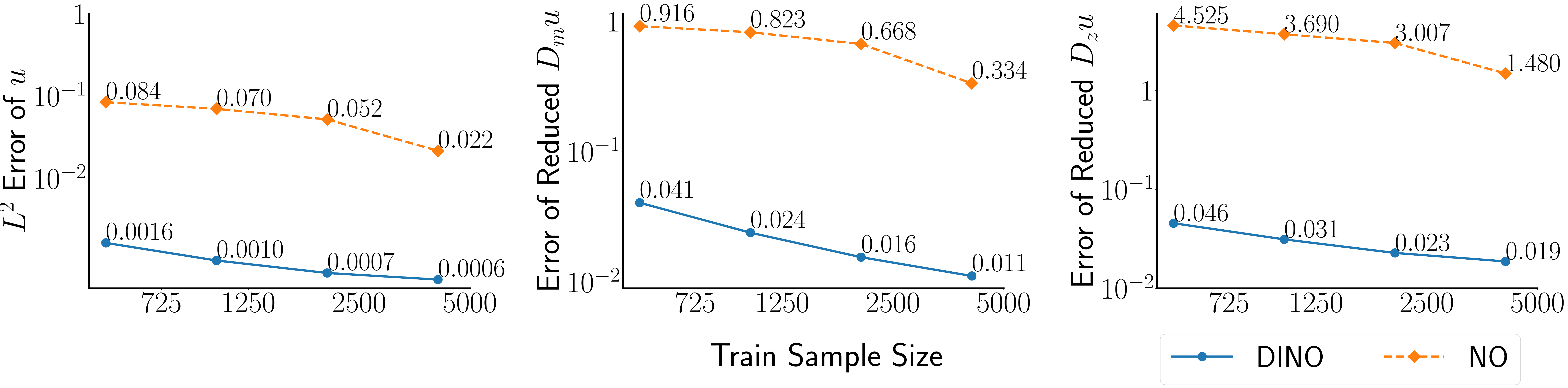}
\caption{Mean relative testing errors for full order state ($L^2(\Omega_0)$) (left) and reduced order Jacobians (middle and right) of Poisson
PDE neural operators trained with (Shape-DINO) and without derivative information (Shape-NO). Training with Jacobian information consistently yields lower errors in both the solution and its sensitivities, demonstrating improved accuracy and robustness of shape-DINO.}
\label{fig:poisson_nn}
\end{figure}

\begin{figure}
\centering
\includegraphics[width=0.67\linewidth]{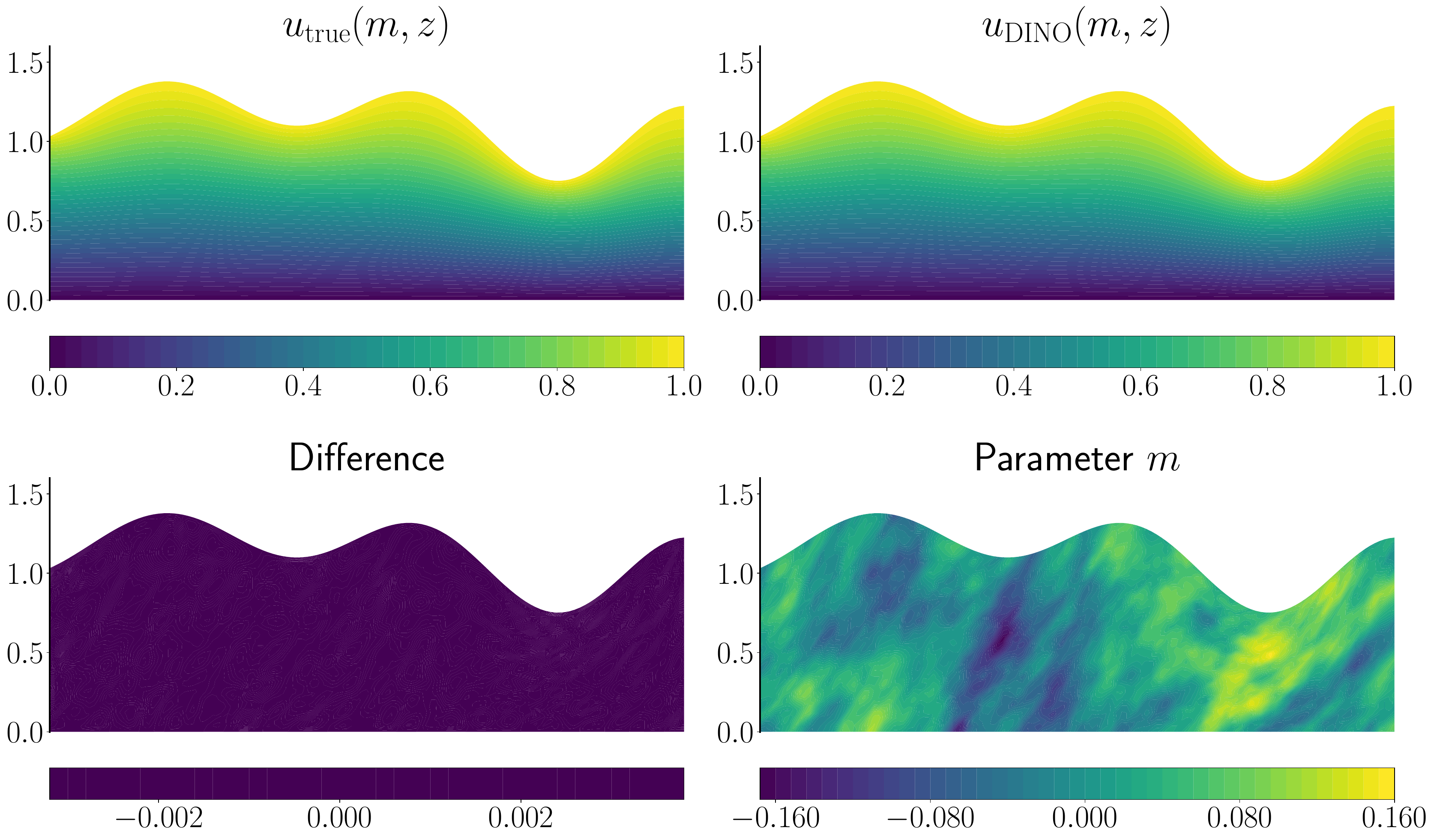}
\captionof{figure}{Example of state $u$ plotted on deformed domain. Top-left: true PDE state $u_\text{true}$. Top-right: predicted state $u_\text{DINO}$ using Shape-DINO, with 512 training samples. Bottom-left: difference: $u_\text{DINO} - u_\text{true}$. Bottom-right: parameter $m$ used for the calculation. Visually, the prediction error is near-zero, confirming the accuracy of Shape-DINO.}
\label{fig:poisson_nn_error}
\end{figure}

\subsubsection{Shape Optimization and Cost-Accuracy Comparison}
To evaluate the predictive and optimization accuracy of Shape-DINO and Shape-NO, 
we compare their optimized objective values against those obtained from PDE-based optimization solutions. For the PDE-based approach, the objective is evaluated via SAA with increasing samples sizes per iteration: 16, 32, 64, 128, 256, and 512, and a reference solution is obtained by a high-fidelity PDE SAA with 16,384 samples.

For the surrogate-based methods (Shape-DINO and Shape-NO), the optimization is performed using a fixed batch size of 2,048 samples per iteration. Since these samples are computed and evaluated using the trained neural surrogates, their computational cost is negligible compared to the PDE solves used for training data generation. Consequently, when comparing computational efficiency across methods, we report surrogates' results based on the training sample size, which directly reflects the number of PDE solves required to construct each surrogate and provides a fair measure of offline computational cost relative to the function-evaluation cost of PDE-based optimization. 

The comparison is summarized in three sets of plots \Cref{fig:poisson_ouu_mean_errors,fig:poisson_ouu_cvar_errors_small_pen,fig:poisson_ouu_cvar_errors_large_pen}, each showing the optimized objective function value with respect to the number of state PDE solves (for PDE-based SAA) or, equivalently, the neural network training size (for surrogate-based methods). Along with the mean relative error lines, we also display the associated $25\%–75\%$ quantile bands. Each set corresponds to a different objective configuration: (i) mean objective with penalty weight $\alpha = 0.001$, (ii) CVaR objective with $\alpha = 0.001$, and (iii) CVaR objective with $\alpha = 0.1$. 

We make several key observations. Across all risk settings, Shape-DINO consistently achieves significantly smaller optimization errors than the PDE-based SAA for the same or even orders-of-magnitude fewer PDE solves. In particular, Shape-DINO attains near-reference accuracy with as few as $512$ training samples, demonstrating significant improvements in sample efficiency. This advantage is most evident for the CVaR objective with a small penalty weight, where accurate risk estimation is highly sample-intensive. On the other hand, for the risk-neutral objective or CVaR with much larger penalty weights, the performance gap between Shape-DINO and PDE is noticeable but less substantial. 
In such cases, the objective functions are either relatively easy to estimate (as in the mean objective) or are dominated by the penalization term (as in the CVaR objective with $\alpha = 0.1$). In comparison, Shape-NO struggles to produce competitive solutions even with $4{,}096$ training samples in all cases, indicating that Jacobian information can be essential for training accurate and practically useful surrogates that amortize the offline training cost and deliver reliable performance across a wide range of use cases. 

\begin{figure}[!htp]
    \centering
    \includegraphics[width=0.98\linewidth]{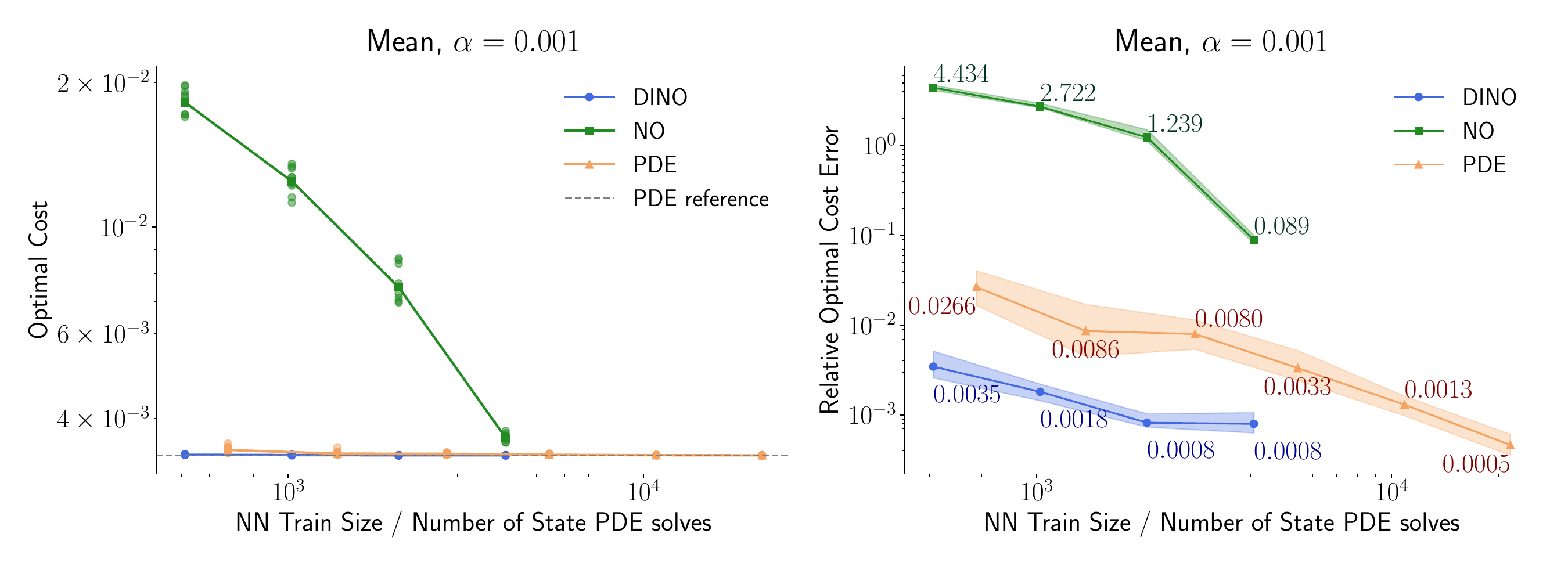}
    \caption{Optimal cost values at optimal solutions (left) and relative error of the objective function (right) versus the number of state PDE solves
required for the optimization, both with mean SAA and penalty weight $\alpha = 0.001$. Results are shown for shape-DINO, shape-NO, and the PDE-based SAA. Solid lines denote the mean relative error, and shaded regions indicate the $25\%–75\% $quantile bands across multiple runs. Even when the objective function is easy to estimate (as in the mean objective), Shape-DINO converges rapidly to the reference solution with substantially fewer PDE solves than the PDE-based method, while Shape-NO exhibits larger errors and slower convergence.}
    \label{fig:poisson_ouu_mean_errors}
\end{figure}

\begin{figure}[!htp]
    \centering
    \includegraphics[width=0.98\linewidth]{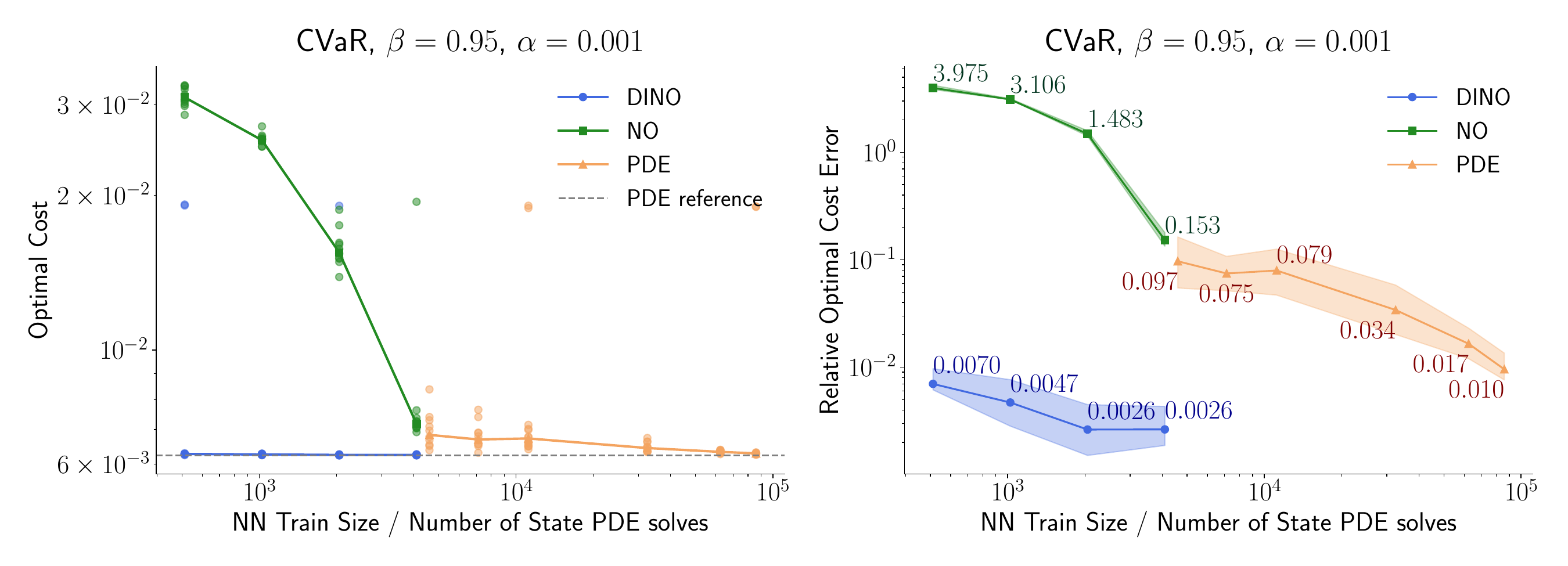}
    \caption{Optimal cost values at optimal solutions (left) and relative error of the objective function (right) versus the number of state PDE solves
required for the optimization, both with quantile $\beta = 0.95$ and penalty weight $\alpha = 0.001$. Solid lines show the mean relative error and shaded regions represent the $25\%–75\%$ quantile bands. In this highly risk-sensitive setting, Shape-DINO achieves near-reference accuracy with orders of magnitude fewer PDE solves than the PDE-based SAA, whereas Shape-NO performs poorly for small and moderate training sizes, highlighting the importance of derivative information for accurate risk estimation.}
    \label{fig:poisson_ouu_cvar_errors_small_pen}
\end{figure}

\begin{figure}[!htp]
    \centering
    \includegraphics[width=0.98\linewidth]{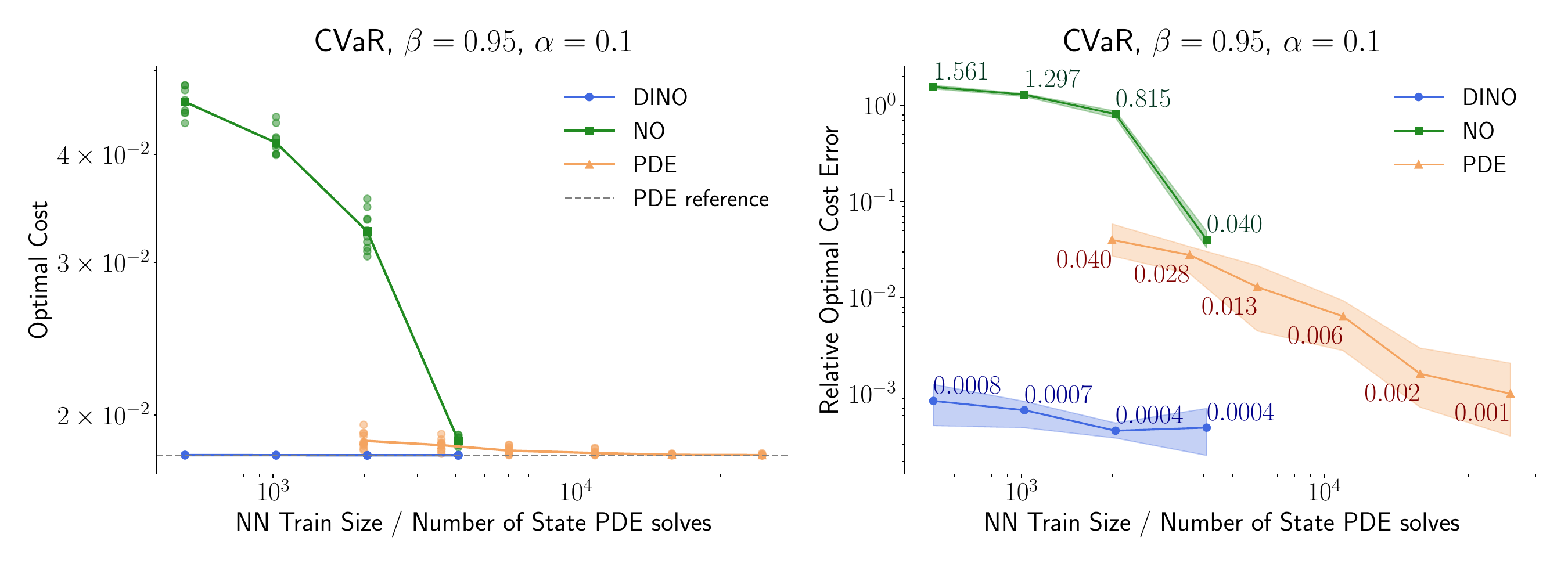}
    \caption{Optimal cost values at optimal solutions (left) and relative error of the objective function (right) versus the number of state PDE solves
required for the optimization, both with quantile $\beta = 0.95$ and penalty weight $\alpha = 0.1$. Solid lines denote the mean relative error and shaded regions show the $25\%–75\%$ quantile bands. As the penalization term becomes dominant, the performance gap between Shape-DINO and the PDE-based method narrows, though Shape-DINO still provides the best trade-off between accuracy and offline computational cost, while Shape-NO remains significantly less reliable.}
    \label{fig:poisson_ouu_cvar_errors_large_pen}
\end{figure}

Figure~\ref{fig:poisson_flux} illustrates representative optimal designs and their corresponding state solutions obtained from CVaR–SAA optimization with quantile $\beta = 0.95$ and penalty weight $\alpha = 0.001$. The top row shows the optimal upper-boundary designs obtained by Shape-DINO ($z^{\dagger}_{\text{DINO}}$) and Shape-NO ($z_{\text{NO}}^{\dagger})$ compared to the PDE reference ($z^{\star}_{\text{ref}}$), together with an associated sample of the PDE state plotted on the deformed domains. The reference solution $z^\star_\text{ref}$ is computed using a high-fidelity PDE-based SAA with 16{,}284 samples. The bottom row reports sample flux profiles evaluated at each optimal design. The black curve denotes the target flux $\flux_\text{target}$, and the colored curves correspond to flux profiles computed using $u(m^{(i)}, z^\dagger_\text{DINO})$, $u(m^{(i)}, z^\dagger_\text{NO})$ and $u(m^{(i)}, z^\star_\text{ref})$ for $256$ independent parameter samples $m^{(i)} \sim \mu_m$. 
We note that $z^\dagger_\text{DINO}$ yields a comparable top-boundary shape to the reference solution $z^\star_{\text{ref}}$ and 
the resulting flux profiles align much more closely with the target flux than those produced by Shape-NO, 
achieving performance very close to that of the reference.

To demonstrate the generalization capabilities of the operator surrogate, 
we solve two additional OUU problems using the 
trained Shape-DINOs and Shape-NOs each for a
different target flux,
focusing on the neural operators trained with 512 samples.
We then compare the flux profile samples corresponding to the optimal shape designs in 
Figures~\ref{fig:bump_flux_poisson}--\ref{fig:tanh_flux_poisson}.
Here, the first case corresponds to a smooth target flux that is consistent the training shape distribution, 
while the second introduces a discontinuous target flux that is not achievable by the shapes in the admissible set. 
Together with Figure~\ref{fig:poisson_flux}, Shape-DINO consistently yields flux responses that more closely match the prescribed target and display reduced variability across parameter realizations across all cases. Notably, even in the out-of-distribution setting, Shape-DINO yields flux profiles that are comparable to those obtained from a large-sample PDE-based SAA, with an offline cost that is orders of magnitude smaller. The improved robustness observed in both in-distribution and out-of-distribution settings highlights the importance of incorporating derivative information when training neural surrogates for such OUU problem. 

\begin{figure}[!htp]
    \centering
    \begin{subfigure}{0.95\linewidth}
        \centering
        \hspace*{0.15cm}
        \includegraphics[width=\linewidth]{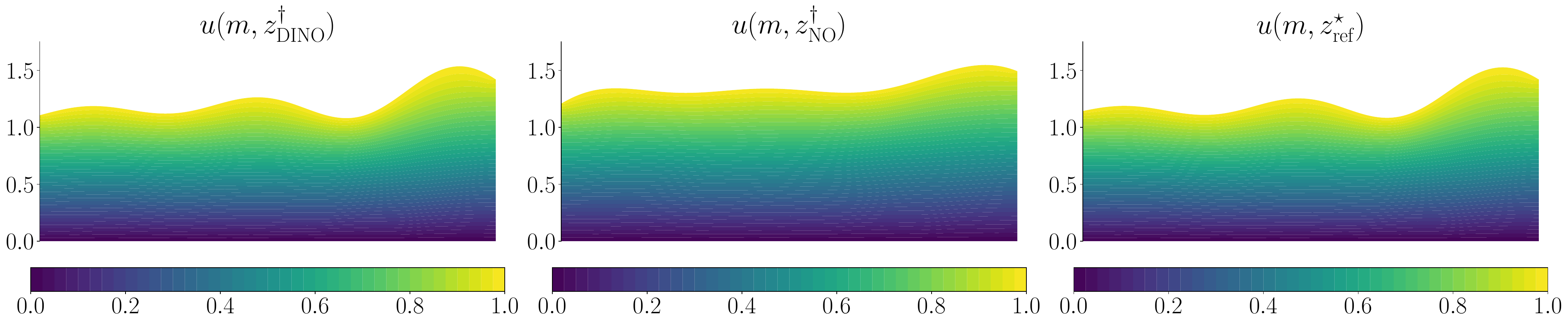}
        \caption{Optimal designs}
    \end{subfigure}

    \vspace{0.5em}

    \begin{subfigure}{0.95\linewidth}
        \centering
        \includegraphics[width=\linewidth]{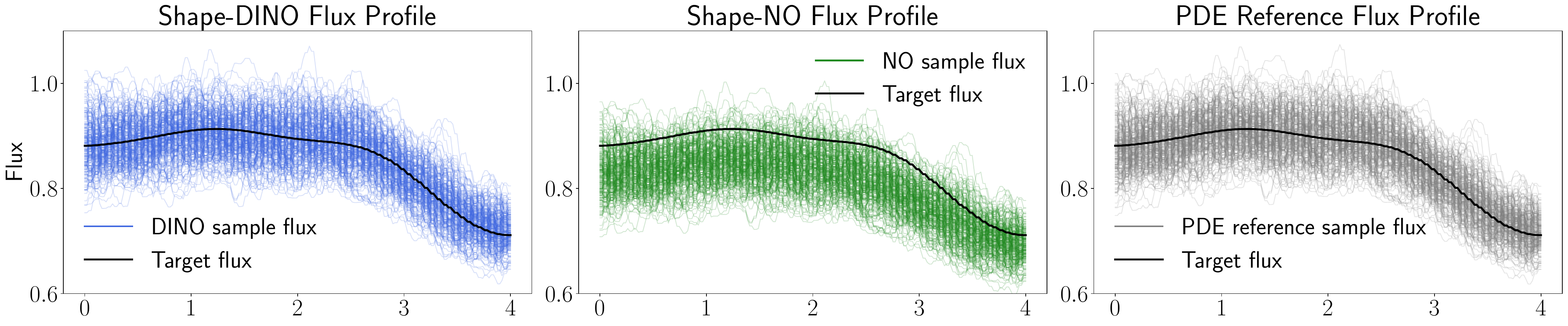}
        \caption{Flux profiles at optimal designs}
    \end{subfigure}

    \caption{Top: optimal upper boundary designs from CVaR-SAA optimization with quantile $\beta = 0.95$ and penalty weight $\alpha = 0.001$. The three subplots show the optimal designs by Shape-DINO, Shape-NO, and PDE reference, respectively, together with their corresponding PDE state solutions $u(m, z)$ on the deformed domain. Here, $z^\star_\text{ref}$ is obtained from a large-sampled PDE-based SAA with 16{,}284 samples. Bottom: sample flux profile at optimal designs. The black line denotes the target flux $\flux_\text{target}$. Colored lines correspond to flux profiles computed with $u(m^{(i)}, z^\dagger_\text{DINO})$, $u(m^{(i)}, z^\dagger_\text{NO})$ and $u(m^{(i)}, z^\star_\text{ref})$ for $256$ independent parameter samples $m^{(i)} \sim \mu_m$. Shape-DINO optimal design produces flux profiles that aligns more closely with the target flux $\sigma_\text{target}$ compared to shape-NO and achieve performance very close to the PDE reference. 
    }
    \label{fig:poisson_flux}
\end{figure}

\begin{figure}[!htp]
    \centering
    \includegraphics[width=0.95\linewidth]{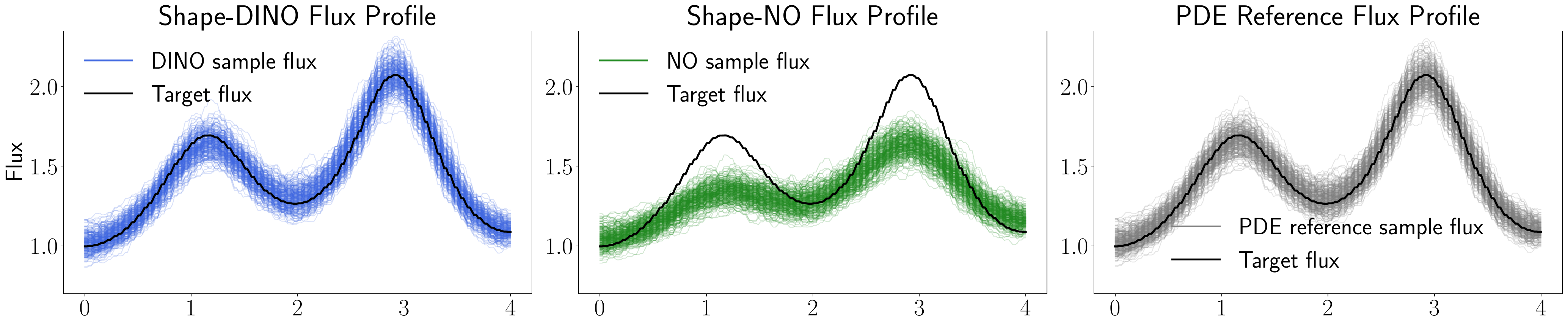}
    \caption{Sample flux profile at the optimal designs obtained from CVaR optimization with $\beta = 0.95$ and $\alpha = 0.001$, using a smooth target flux profile different from Figure~\ref{fig:poisson_flux}. Both DINO and NO are trained with 512 samples, while $z^\star_\text{ref}$ is obtained from a large-sampled PDE-based SAA with 16{,}284 samples. Despite using orders of magnitude fewer PDE solves in the offline stage, shape-DINO produces flux profiles that are nearly indistinguishable from those of the large-sample PDE reference, closely tracking the target flux across parameter realizations and exhibiting reduced variability, whereas shape-NO shows much larger deviations.} \label{fig:bump_flux_poisson}
\end{figure}

\begin{figure}[!htp]
    \centering
    \includegraphics[width=0.95\linewidth]{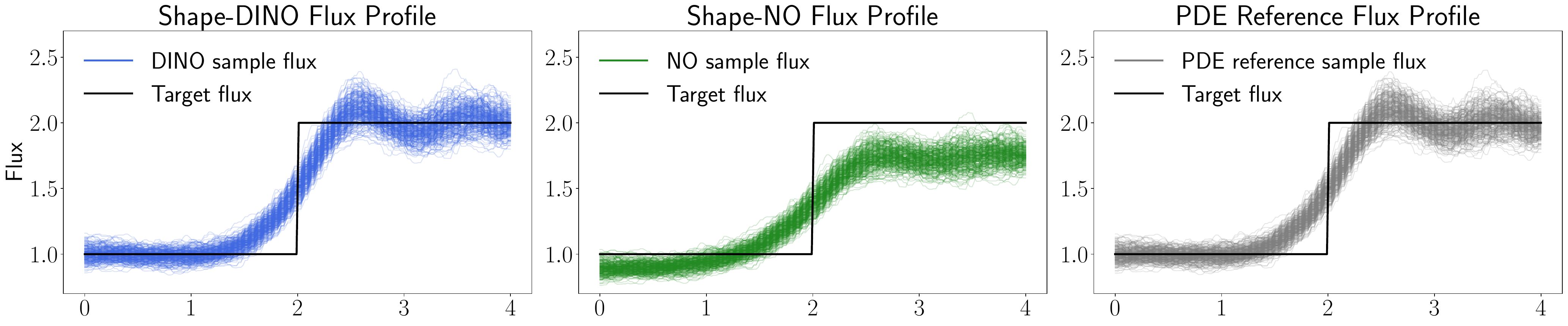}
    \caption{Sample flux profile at the optimal designs obtained from CVaR optimization, with $\beta = 0.95$ and $\alpha = 0.001$, using a discontinuous target flux profile defined by $x_2=1$ for $x_1 \in [0,2]$ and $x_2=2$ for $x_1 \in [2,4]$. Both DINO and NO are trained with 512 samples, while the PDE reference is obtained from a large-sample PDE-based SAA. Despite the target flux lying outside of the distribution used during training, shape-DINO produces flux profiles that better match the target and remain stable across uncertainty, whereas shape-NO produces flux profiles that significantly deviate from the target.}
    \label{fig:tanh_flux_poisson}
\end{figure}

\subsection{Design for Dissipation Reduction in 2D Flow under Multiple Uncertain Inflow Conditions} 
\label{section:ns2d}

\subsubsection{Navier--Stokes Equations with a Variable-Shape Object}
Next we consider a steady incompressible Navier–Stokes problem on the channel domain 
    $\Omega_{\text{channel}} = (0,2) \times (0,1) \subset \mathbb{R}^2$
in which there is an initially circular object 
    $\Omega_{\text{object}} = \{ x \in \bR^{2} : (x_1-0.5)^2 + (x_2 - 0.5)^2 < 0.1^2\}$
with radius $r = 0.1$ 
centered at $(0.5, 0.5)^T$.
The goal is to design the shape of the object to minimize the viscous dissipation through free-form deformations,
where the shape of the object is transformed through a family of diffeomorphic deformations $\diffeo_z$ 
to obtain the design $\Omega_{\object,z} = \diffeo_z(\Omega_{\object})$.
This setting is often used as a benchmark configuration to assess shape-induced drag reduction and dissipation control in confined flows \cite{quarteroni2014numerical}.

Here, we introduce an additional difficulty by
considering two independent inflow scenarios, 
in which the object faces either a horizontally-directed inflow (eastward flow) or a vertically-directed inflow (northward flow),
serving as a prototype for shape design problems under different wind loading directions.
We formulate this problem by considering two sets of PDEs whose domains differ by a reflection of the object along the line $x_1 = x_2$.
For the eastward flow case, we take the object in its default orientation, $\Omega^{\eastward}_{\object} = \Omega_{\object,z}$,
for which the flow domain becomes
\[ 
\Omegaz^{\eastward} = \Omega_{\channel} \setminus \closure{\Omega_{\object}^{\eastward}},
\]
where $\closure{\cdot}$ denotes the closure.
For the northward flow, we reflect the orientation of the object, $\Omega_{\object}^{\northward} = \reflect(\Omega_{\object,z})$
such that the flow domain is 
\[
    \Omegaz^{\northward} = \Omega_{\channel} \setminus \closure{\Omega_{\object}^{\northward}},
\]
where $\reflect(\cdot)$ denotes a reflection about the line $x_1 = x_2$.
Note that this operation amounts to a 90-degree clockwise rotation about the center $(0.5, 0.5)^T$ followed by a reflection along the line $(x_1, 0.5)$. 
This additional reflection will not affect our problem definition, since we will subsequently consider inflow distributions that are invariant to this reflection.

We now formulate the PDEs for the two scenarios on the domains $\Omegaz^{s}, s \in \{\eastward, \northward\}$, 
for which their states in spatial coordinates are given by the velocity-pressure pairs $\uz^{s} = (\vz^{s}, \pz^{s})$.
Using the Cauchy stress tensor,
\begin{equation}\label{eq:ns_stress_tensor}
    \stressz(\uz^s) := 2\nu \strainz(\vz^s) - \pz^s I,
\end{equation}
where
\begin{equation}\label{eq:ns_strain_rate_tensor}
    \strainz(\vz^s) := \symm(\nabla_x \vz^{s}) = \frac{\nabla_x \vz^{s} + (\nabla_x \vz^{s})^T}{2}
\end{equation}
is the strain rate tensor, we can write the governing equations as 
\begin{subequations}
\begin{alignat}{3}
\label{eq:ns_pellet}
    (\vz^s \cdot \nabla_x) \vz^s - \nabla_x \cdot \stressz (\uz^s) &= 0 && \quad \text{in } \Omegaz^{s},\\
    \nabla_x \cdot \vz^{s} &= 0 && \quad \text{in } \Omegaz^{s},\\
    \vz^{s} -  (\vz)_{\text{in}}^{s} &= 0 && \quad \text{on } \Gamma_{\text{in}}, \\ 
    \stressz(\uz^{s}) n &= 0 && \quad \text{on } \Gamma_{\text{out}}, \\
    \vz^{s} &= 0 && \quad \text{on } \Gamma_{\text{no-slip}}, \\ 
    \stressz(\uz^{s}) n \cdot e_t &= 0  && \quad \text{on } \Gamma_{\text{free-slip}}, \\ 
    \vz^{s} \cdot n &= 0  && \quad \text{on } \Gamma_{\text{free-slip}},
\end{alignat}
\end{subequations}
where $e_t$ is the unit tangent vector on the boundary and 
\begin{alignat*}{2}
    \Gamma_{\text{in}} &= \{ x \in \partial \Omega_z^{s} : x_1 = 0 \}
    ,  \qquad & 
    \Gamma_{\text{no-slip}} &= \partial \Omega_{\object}^{s}
    , \\
    \Gamma_{\text{out}} &= \{ x \in \partial \Omega_z^{s} : x_1 = 2 \}
    , \qquad & 
    \Gamma_{\text{free-slip}} &= \{x \in \partial \Omega_z^{s} : x_2 = 0 \text{ or } x_2 = 1 \}
    .
\end{alignat*}
The inflow profile is determined by the uncertain model parameter through
\[
(\vz)_{\text{in}}^{s}(x) = \exp(\mz^{s}(x)) e_{1}, \qquad e_1 = (1, 0)^T.
\]
We define the distributions of $\mz^{s}$ Gaussian random fields on the reference domain 
$\Omegaref := \Omega_{\channel} \setminus \closure{\Omega_{\object}}$, 
such that $\mref^{s} \sim \mu_m^{s} = \cN(0, \cC_{m}^{s})$ with a different Bi-Laplacian covariance operator 
$\cC_{m}^{s}$ 
for each scenario $s \in \{\eastward, \northward\}$.
In particular, $\cC_m^{s}$ is given by \eqref{eq:bilplacian_covariance}, where we take $\Theta = I$ to be the identity matrix, 
and $\delta^{s}, \gamma^{s}$ 
such that the correlation length is $0.25$ for both flow scenarios, 
while the pointwise variance is $0.2$ for $s = \eastward$ and $0.1$ for $s = \northward$.

\subsubsection{Shape Parametrization using Free-Form Deformations}
\label{section:ffd}
The admissible shapes variations are represented using a
free-form deformation approach where $\diffeo_z$ are given by Bernstein polynomials. 
This free-form deformation strategy has been widely applied in PDE-constrained optimization due to its flexibility, smoothness, and relatively low-dimensional parametrization of boundary motion \cite{sederberg_parry_1986, Samareh04, rozza2013free, salmoiraghi_scardigli_telib_rozza_2018}.
We first present the approach based on \cite{Manzoni_Quarteroni_Rozza_2011} and then show how this parametrization can be interpreted in the form of \eqref{eq:motion_basis_expansion}.

To this end, 
we consider the unit square $\Omega_{\control} := (0, 1)^2$ containing the object $\Omega_{\object}$
along with a lattice of control points $\xi_{k,l} = (k/K, l/L)^T$, for $k = 0, \dots, K$ and $l = 0, \dots L$, where $K, L$ are integers.
We then define the Bernstein polynomials, which for a maximum degree $n_p$ are given by 
\begin{equation}
    \label{eq:2d_bernstein}
    b_{j}^{n_p}(t) := \binom {n_p}{j} t^{j} \left(1 - t\right)^{n_p-j}, \quad j = 0, \ldots, n_p.
\end{equation}
In 2D, we can construct the tensor-product Bernstein polynomials from the 1D Bernstein polynomials 
\begin{equation}
    b_{k,l}^{K,L} (t_1, t_2) = b_{k}^{K}(t_1) b_{l}^{L}(t_2), \qquad k = 0, \dots, K, \qquad l = 0, \dots, L.
\end{equation}
Next, we let $\delta_{k,l} = ((\delta_{k,l})_{1}, (\delta_{k,l})_2 )^T\in \bR^{2}$ 
be the displacement of the control point $\xi_{k,l}$, 
such that its displaced location is given by $\widetilde{\xi}_{k,l} = \xi_{k,l} + \delta_{k,l}$.
We let the motion of the control points define the deformation map, such that 
\begin{equation}
    \diffeo(X) := \sum_{k=0}^{K} \sum_{l=0}^{L} b_{k,l}^{K,L}(X_1, X_2) \widetilde{\xi}_{k,l}.
\end{equation}
Evidently, the control point displacements $\delta_{k,l}$ parametrize the deformations. 
To connect this with deformations of the form \eqref{eq:motion_basis_expansion}, we recognize that by the property $\sum_{k=0}^{K} k b_{k}^{K}(t) = K t$ of Bernstein polynomials,
we have that 
\begin{equation}
    \diffeo(X) = \sum_{k=0}^{K} \sum_{l=0}^{L} b_{k,l}^{K,L}(X_1, X_2) (\xi_{k,l} + \delta_{k,l}) = X + \sum_{k=0}^{K} \sum_{l = 0}^{L} b_{k,l}^{K,L} \delta_{k,l}.
\end{equation}
Thus, we can equivalently define $\diffeo_z$ in the form of $\eqref{eq:motion_basis_expansion}$
using the displacement basis functions
\[
    \displace^{1}_{k,l}(X) = b_{k,l}^{K,L}(X_1, X_2) e_1, \quad \displace^{2}_{k,l}(X) = b_{k,l}^{K,L}(X_1, X_2) e_2,
\]
where $e_1 = (1,0)^T$ and $e_2 = (0,1)^T$, along with the shape variables $z_{k,l}^j = (\delta_{k,l})_{j}$ for $k = 0, \dots K$, $l = 0, \dots L$, and $j = 1, 2$.

In our problem, we will adopt a $K = 10$ by $L = 10$ lattice of control points over the region $\Omega_{\control}$.
However, we exclude the first and last control point from our optimization variables to ensure that the outer boundaries do not move,
which also allows us to extend all remaining displacement basis functions by zero beyond $X_1 \geq 1$.
Thus, our resulting admissible deformations consists of the displacement bases $\displace^{j}_{k,l}(X)$ and shape variables $z_{k,l}^{j}$ for $k = 1, \dots K-1$, $l = 1, \dots L-1$, and $j = 1, 2$, giving a total of $\dz = 162$ degrees of freedom.
These displacements are also preprocessed using the linear elasticity equations \eqref{eq:linear_elasticity} to better condition the deformations within the fluid domain.

To generate the geometry variation for the northward configuration, we maintain the same reference domain $\Omegaref$ 
and construct a mirrored displacement basis by swapping the parametric coordinates in the displacement basis functions, such that 
\[
    \displace^{1}_{k,l}(X) = b_{k,l}^{K,L}(X_2, X_1) e_2, \quad \displace^{2}_{k,l}(X) = b_{k,l}^{K,L}(X_2, X_1) e_1,
\]
which effectively reflects the displacement field about the diagonal line $X_1 = X_2$, changing the object orientation relative to the fixed inflow boundary. 
We present example deformations for both inflow cases along with the reference channel domain in Figure~\ref{fig:2d_ns_domain}. 

\begin{figure}[H]
    \centering
    \includegraphics[width=0.99\linewidth]{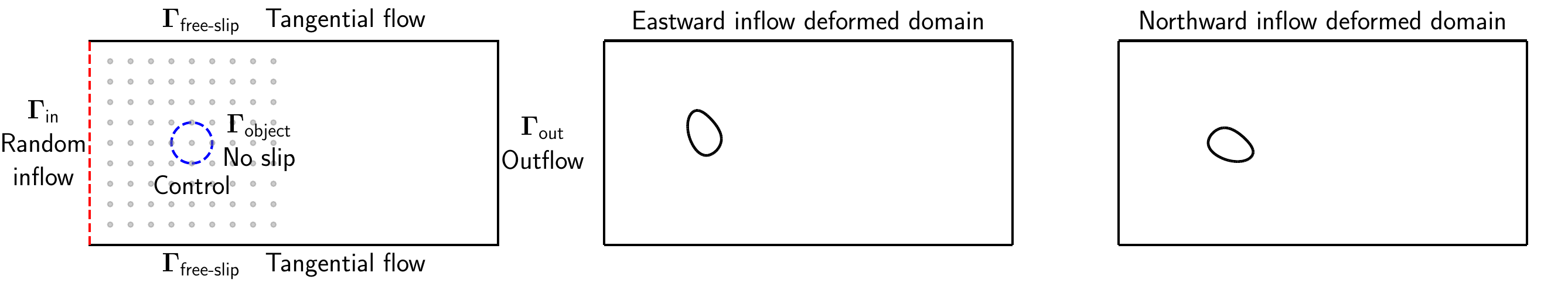}
    \caption{ Panel (left) shows the reference channel domain  $\Omega_{\text{channel}} = (0,2) \times (0,1)$ with the labeled boundaries. The obstacle is a circle with radius $0.1$ centered at $(0.5, 0.5)$. Grey dots are the Bernstein control points $\xi_{k,l}$.
Panels (middle) and (right) show the corresponding deformed domains under eastward and northward displacement bases, respectively, with only domain boundaries displayed for clarity.}
\label{fig:2d_ns_domain}
\end{figure}

\subsubsection{Reference Domain Formulation}
We formulate the Navier--Stokes equations \eqref{eq:ns_pellet} in weak form, where we use Nitsche's method to weakly impose the inflow boundary conditions involving the model parameters. 
In the spatial domain, we define the function spaces for the velocity and pressure as 
\begin{align*}
    \cUvz := \{ v \in H^1(\Omegaz^{s}) : v = 0 \text{ on } \Gamma_{\text{no-slip}}, \, v \cdot n = 0 \text{ on } \Gamma_{\text{free-slip}} \}, \qquad 
    \cUpz := L^2(\Omegaz^{s}),
\end{align*}
such that the trial space is given by 
$\cUuz := \cUvz \times \cUpz$
and the test space is given by $\cVuz := \cUuz$.
For any $\utestz^s = (\wz^s, \qz^s) \in \cVuz$, 
we have 
\begin{equation}
    r_z^{s}(\uz^{s} , \mz^{s}, \utestz^{s})
        := \langle \Rz(\uz^{s}, \mz^{s}), \utestz^{s} \rangle_{\cVuz' \times \cVuz}
        = r_{\flow}^{s}(\uz^{s}, \mz^{s}, \utestz)
        + r_{\nitsche}^{s}(\uz^{s}, \mz^{s}, \utestz)
\end{equation}
where 
\begin{align*}
    r_{\flow}^{s}(\uz^{s}, \mz^{s}, \utestz^{s}) &= 
    \int_{\Omegaz^s} \left( (\vz^{s} \cdot \nabla_x) \vz^{s} \right) \cdot \wz^{s} dx 
    + \int_{\Omegaz^s} 2 \nu \; \strainz(\vz^{s}) : \strainz(\wz^{s}) dx  \\
    & \qquad - \int_{\Omegaz^s} \pz^{s} \left( \nabla_x \cdot \wz^{s} \right) dx
    + \int_{\Omegaz^s} \left( \nabla_x \cdot \vz^{s} \right) \qz^{s} dx
\end{align*}
and 
\begin{align*}
    r_{\nitsche}^{s}(\uz^{s}, \mz^{s}, \utestz^{s}) 
    &= 
    -\int_{\Gammain} \wz^{s} \cdot (\stressz(\uz^{s}) n) ds 
    -  \int_{\Gammain}  ( \vz^{s} - (\vz)^{s}_{\inflow} ) \cdot (2 \nu \strainz(\wz^{s}) n + \qz^{s} n) ds  \\
    & \qquad - \int_{\Gammain} \wz^{s} \cdot (\vz^{s} - (\vz)_{\inflow}^{s}) (\vz^{s} \cdot n) ds 
    + \int_{\Gammain} c_d \nu (\vz^{s} - (\vz)_{\inflow}^{s}) \cdot \wz^{s} ds.
\end{align*}
Here, $c_d$ is the constant that is chosen to ensure the correct convergence rate of the Dirichlet boundary condition. 
We discuss its choice below.

We then transform the weak forms above into the reference domain forms. 
Since the reference domain is the same for the two flow scenarios, we can use the same trial and test spaces for both, i.e.,
$\cUuref = \cVuref = \cUvref \times \cUpref$, where
\begin{align*}
    \cUvref := \{ v \in H^1(\Omegaref) : v = 0 \text{ on } \partial \Omega_{\object}, \, v \cdot n = 0 \text{ on } \Gamma_{\text{free-slip}}\}, 
    \qquad 
    \cUpref := L^2(\Omegaref)
\end{align*}
Using the variable transformations for the strain rate and stress tensors, 
\begin{align}  \strain(\vref^s) &:= \symm(\nabla_X \vref^s F^{-1}) ,\\\stress(\uref^s) &:= 2 \nu \strain(\vref^s) - \pref^s I.
\end{align}
we have that for any $\utest \in \cVref$,
\begin{equation}\label{eq:ns_ref_2d}
    r(\uref^s, \mref^s, z, \utest)
    := 
    \langle \Rref (\uref^s, \mref^s, z), \utest \rangle_{\cV' \times \cV}
    = r_{\flow}(\uref^s, \mref^s, z, \utest)
    + r_{\nitsche}(\uref^s, \mref^s, z, \utest),
\end{equation}
where
\begin{align}
    r_{\flow}(\uref^s, \mref^s, z, \utest)
    &=
    \int_{\Omegaref} \left( \nabla_X \vref^s F^{-1} \vref^s \right) \cdot w^{s} \det F dX 
    + \int_{\Omegaref} 2 \nu \strain (\vref^s) : \strain(w^s) \det F dX  \nonumber \\
    & \qquad - \int_{\Omegaref} p^s \tr(\nabla_X w^s F^{-1}) \det F dX 
    + \int_{\Omegaref} (\tr(\nabla_X v^s F^{-1}) q^s \det F dX,
\end{align}
and 
\begin{align}
    & r_{\nitsche}(\uref^s, \mref^s, z, \utest) 
    = -\int_{\Gammain} w^{s} \cdot \stress(\uref^s) N dS
    - \int_{\Gammain} (v^s - v^s_{\inflow}) \cdot (2 \nu \strain(w^s) + q^s I ) 
    N dS  \nonumber \\
    & \qquad 
    - \int_{\Gammain} w^s \cdot (v^s - v^s_{\inflow}) 
    (v^s \cdot N) dS 
    + \int_{\Gammain} c_d \nu (v^s - v^s_{\inflow}) \cdot w^{s} 
    dS
\end{align}
Note that we have again made simplifications based on the fact that the left boundary remains fixed under the free-form deformations such that the unit normal does not change. 
Moreover, under this representation, the inflow profile is simply given by $v^s_{\inflow}(X) = \exp(m^{s}(X)) e_1$.

The PDEs are discretized in the reference domain using a mixed Galerkin formulation on a triangular mesh. We employ Taylor--Hood elements for the state $u = (v, p)$, consisting of continuous piecewise-quadratic elements for the velocity $v$ and continuous piecewise-linear for the pressure $p$, leading to a state dimension $\du = 86{,}205$. Continuous piecewise-quadratic elements are also used for the random parameter field, yielding 
$\dm = 201$. For the penalty term in the Nitsche method, we choose $c_d = 10^{5}/h_e$, 
where $h_e$ is the element size corresponding to each triangular element of the reference domain.
The control variable is represented in a finite-dimensional Euclidean space with dimension $d_{\mathcal Z} = 162$.

\subsubsection{Dissipation-reducing 2D Shape Design under Uncertain Inflows}

Our optimization problem seeks to simultaneously minimize the viscous dissipation rate induced by each of the two distinct inflow configurations $s \in \{ \eastward, \northward \}$, 
a measure that is physically related to the drag forces acting on the object. We write the dissipation QoI in spatial coordinates as 
\begin{equation}
    Q_{z}^{s}(\uz^{s})
    :=
    \int_{\Omegaz^{s}}
    2 \nu 
    \strainz(\vz^{s}) : \strainz(\vz^{s})
    \, dx.
\end{equation}
We can pose this in the reference domain by the change of variables for the strain tensor,
\begin{equation}
    Q(\uref^s, \mref^s,  z)
   = 
   \int_{\Omegaref} 2 \nu \strain(\vref^s) : \strain (\vref^s) 
   \det F dX.
\end{equation}

To quantify performance under uncertainty, we define a weighted entropic risk objective that combines the two flow scenarios in a weighted sum,
\begin{equation}
\cJ^{\mathrm{weighted}}(z) := \cJ^{\eastward}(z) + 0.3 \cJ^{\northward}(z) + \cP_{\ell_2}(z ;\alpha),
\end{equation}
where we take
\begin{equation}
    \cJ^{s}(z)  := \rho^{\entropic,\beta}_{\mref^{s} \sim \mu_m^{s}}(Q(\uref^{s}(\mref^{s}, z), \mref^{s}, z)), 
\end{equation}
to be the entropic risk measure for each inflow scenario $s \in \{\eastward, \northward\} $
and $\cP_{\ell_2}$ to be the $\ell_2$ penalization functional \eqref{eq:L2penalty}.
Here, we use $\alpha = 0.001$ for the penalty weight
and $\beta = 1$ for a moderate level of risk aversion.

We then consider the shape optimization problem subject to the governing PDE constraints and box constraints on the shape variables $z$ \eqref{eq:L2penalty}. In addition, we impose a volume constraint to prevent unrealistic shrinkage or growth of the object by requiring the volume to remain between $30\%$ and $120\%$ of its initial value $V_0 := |\Omega_{\object}| = \pi \cdot 0.1^2$.
That is, we solve the minimization problem
\begin{equation}
    \min_{z \in \cZ_{ad}} \cJ^{\mathrm{weighted}}(z),
    \qquad \cZ_{ad} =  \{ z \in [-1, 1]^{\dz} : \vol(z) \in [0.3 V_0, 1.2 V_0]\},
\end{equation}
where the volume functional is implemented as 
$
    \vol(z) = | \Omega_{\channel} | -  \int_{\Omegaref} \det F  dX.
$

\subsubsection{Neural Operator Construction}
For this problem, we construct two separate neural operators $u_{\theta}^{s}(m^{s}, z)$ to approximate the solution operators $u^{s}(m^{s}, z)$ 
for each of the two flow scenarios $s \in \{\eastward, \northward\}$.
Training data for each neural operator is generated by sampling the input distributions $m^{s} \sim \mu_m^{s}$ and $z \sim \mu_z = \unif([-1,1]^{\dz})$.
We treat the training problem for each flow scenario completely separate from the other.
For each flow scenario, we train neural networks using different training dataset of sizes 512, 1,024, 2,048, 4,096, both with and without Jacobian information.
In each case, we use a subset of $n_{\POD} = n_{\AS} = 512$ samples to compute the state POD basis of rank $\rru = 256$ and the parameter AS basis of rank $\rrm = 50$.
We represent latent space mapping by multi-input dense neural networks with 3 hidden layers of widths 512, 1,024, 512, each using the GELU activation function.
The neural networks are then trained using Adam for 1,000 epochs with an initial learning rate of $1 \times 10^{-4}$ that is halved after 500 epochs and 750 epochs. 

We present the generalization errors obtained by the neural networks for the eastward flow scenario in \Cref{fig:left_inflow_nn} and for the northward flow scenario in \Cref{fig:top_inflow_nn}, as a function of the number of training samples.
The reported test errors represent the average performance across these $10$ runs.
Similar to the Poisson problem, we again observe that the neural operators trained with the derivative information yield significantly lower generalization errors in both the outputs and the derivatives. This is true for both flow scenarios.
\begin{figure}[H]
\includegraphics[width=0.95
\linewidth]{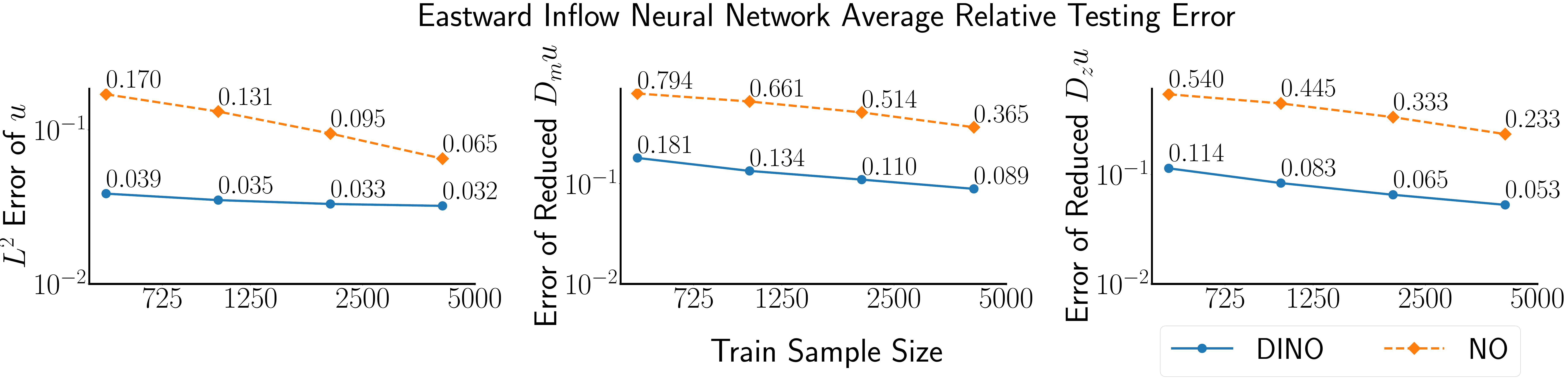}
    \caption{State (left) and Jacobians (middle and right) testing errors for eastward inflow case 2D Navier-Stokes neural operators with (Shape-DINO) and without (Shape-NO) Jacobian training. Training sample sizes are 512, 1{,}024, 2{,}048, 4{,}096 and test sample size is 1{,}024. Shape-DINO yields
significantly lower generalization errors in both the outputs and the derivatives compared to Shape-NO.}
    \label{fig:left_inflow_nn}
\end{figure}

\begin{figure}[H]
\includegraphics[width=0.95\linewidth]{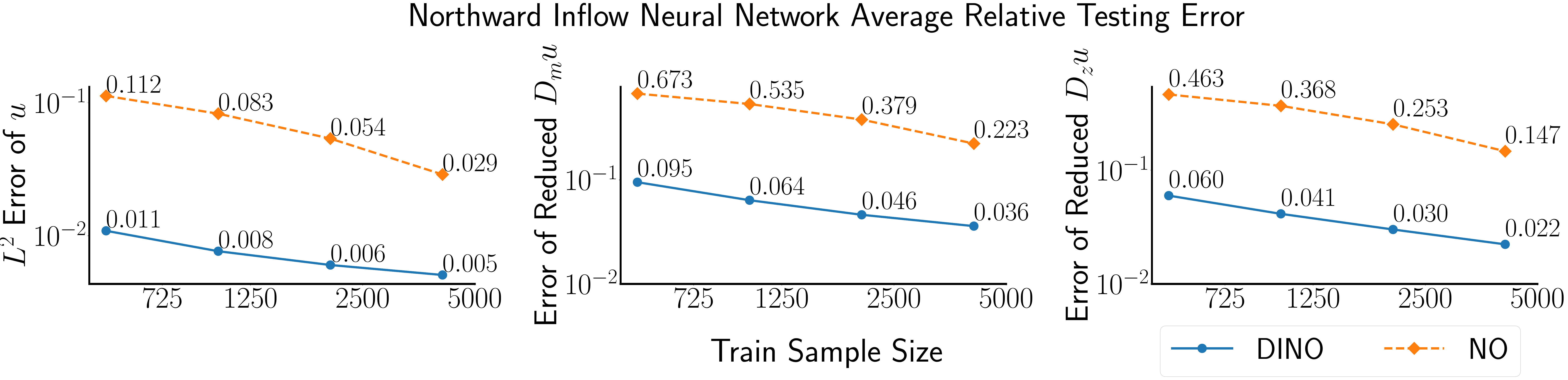}
    \caption{State (left) and Jacobians (middle and right) testing errors for northward inflow case 2D Navier-Stokes neural operators with (Shape-DINO) and without (Shape-NO) Jacobian training. Training sample sizes are 512, 1024, 2048, 4096 and test sample size is 1024. Similar to the eastward inflow case, Shape-DINO again yields markedly lower errors in both the predicted state and its derivatives compared to Shape-NO.}
    \label{fig:top_inflow_nn}
\end{figure}

\subsubsection{Shape Optimization and Cost-Accuracy Comparison}
We subsequent deploy the trained neural operators to solve the dissipation minimization problem.
Figure~\ref{fig:channel_pellet_optimal} compares representative flow fields
before and after optimization for the two inflow configurations.  Figures~\ref{fig:casea} and \ref{fig:casec} display the velocity
fields around the initial object geometry for the left and top inflow scenarios, respectively, obtained from high-fidelity PDE solves. In contrast,
Figures~\ref{fig:caseb} and~\ref{fig:cased} show the corresponding optimized
flow fields evaluated at the shape DINO-derived optimal shape $z^\dagger_{\text{DINO}}$,
using the shape DINO model trained with $1024$ samples. The optimized geometry yields a markedly reduced recirculation behind the object, 
which in turn lowers the overall viscous
dissipation rate compared to the initial shape.
Moreover object is predominantly streamlined in the eastward direction, yet is not perfectly symmetric since it additionally needs to account for flow in the northward direction at a smaller weighting.

\begin{figure}[H]
\centering
\begin{tabular}{cc}
\subcaptionbox{\label{fig:casea}}{\includegraphics[width=0.45\linewidth]{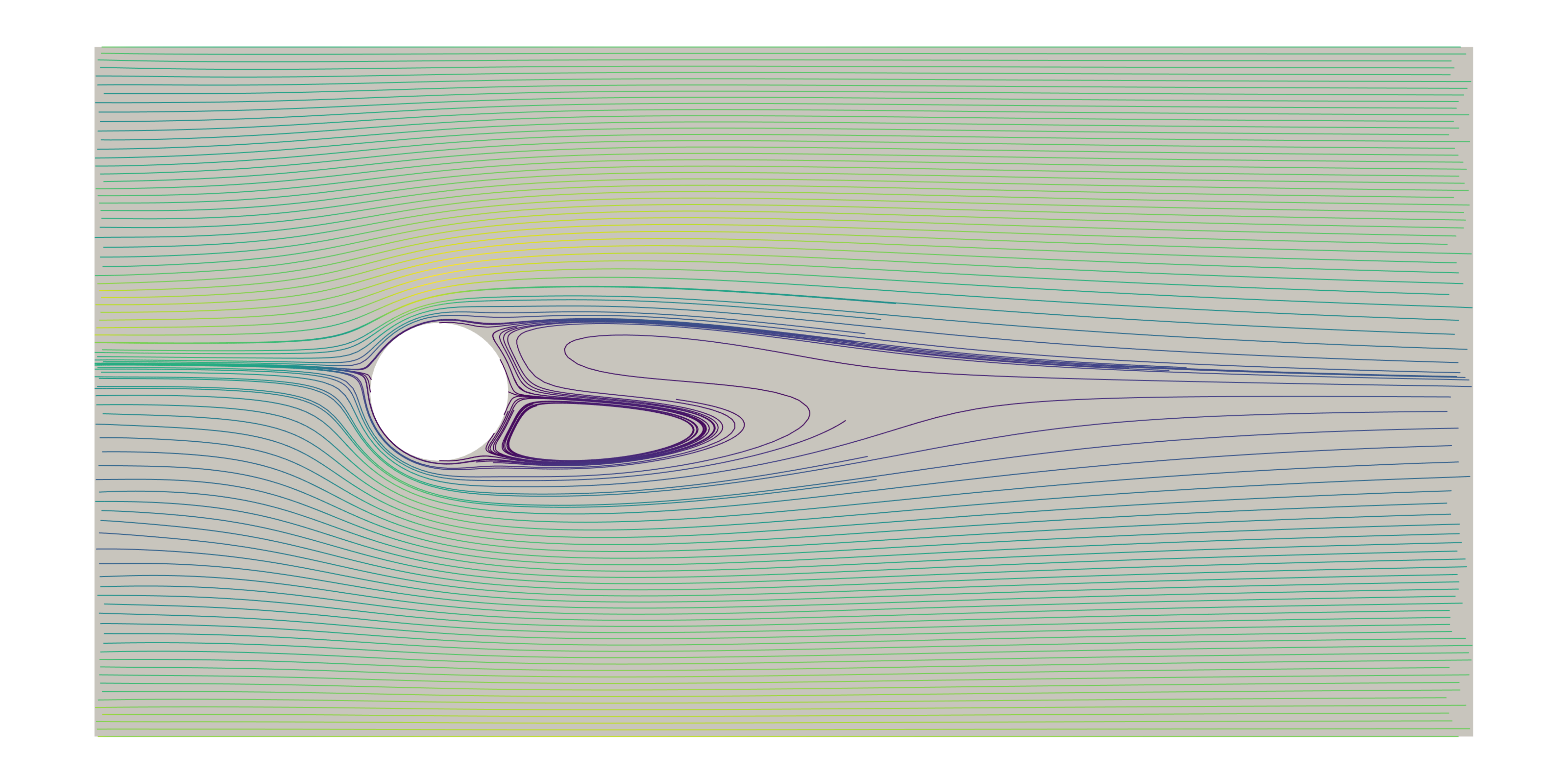}} &
\subcaptionbox{\label{fig:caseb}}{\includegraphics[width=0.45\linewidth]{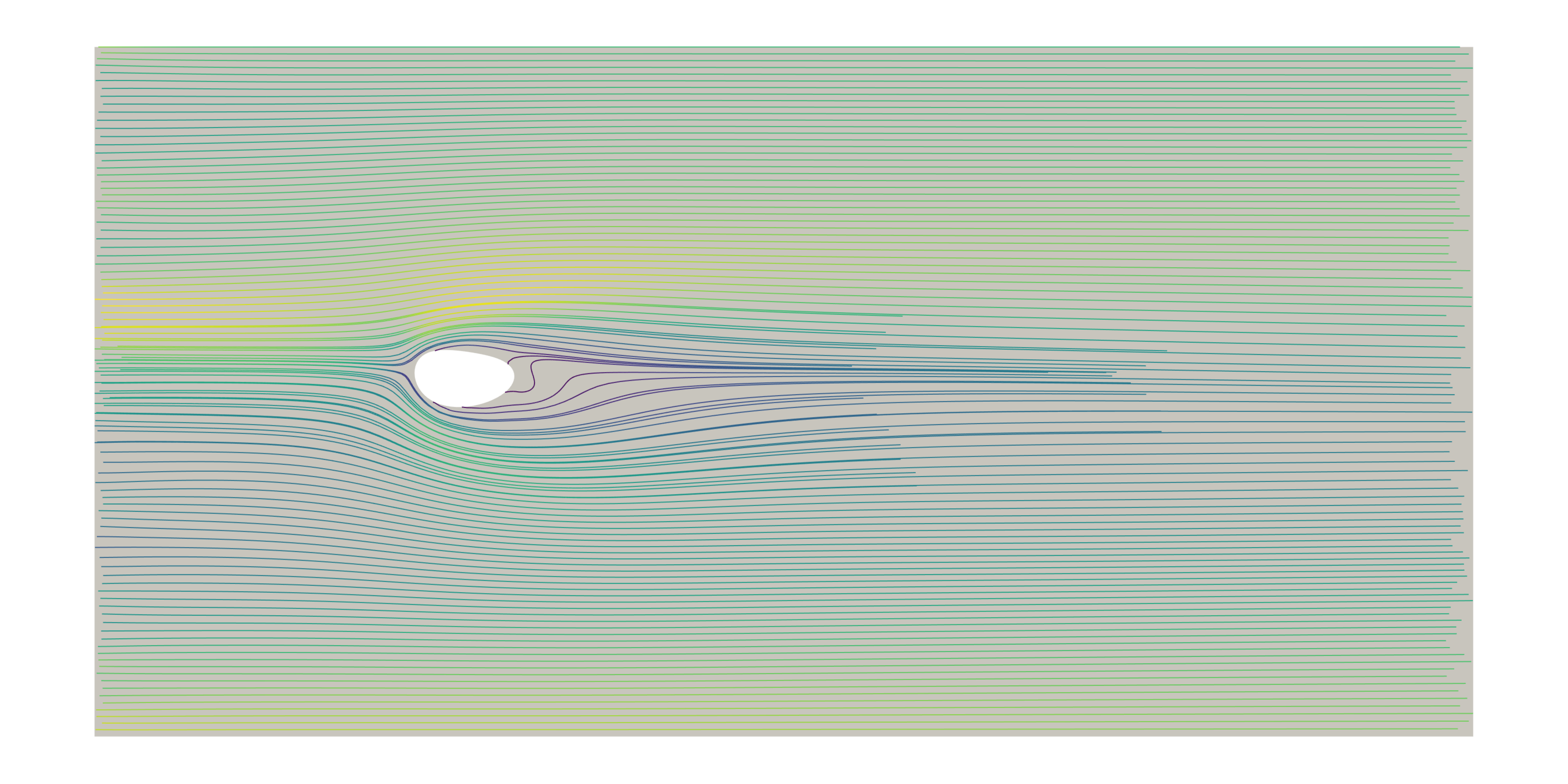}} \\
\subcaptionbox{\label{fig:casec}}{\includegraphics[width=0.45\linewidth]{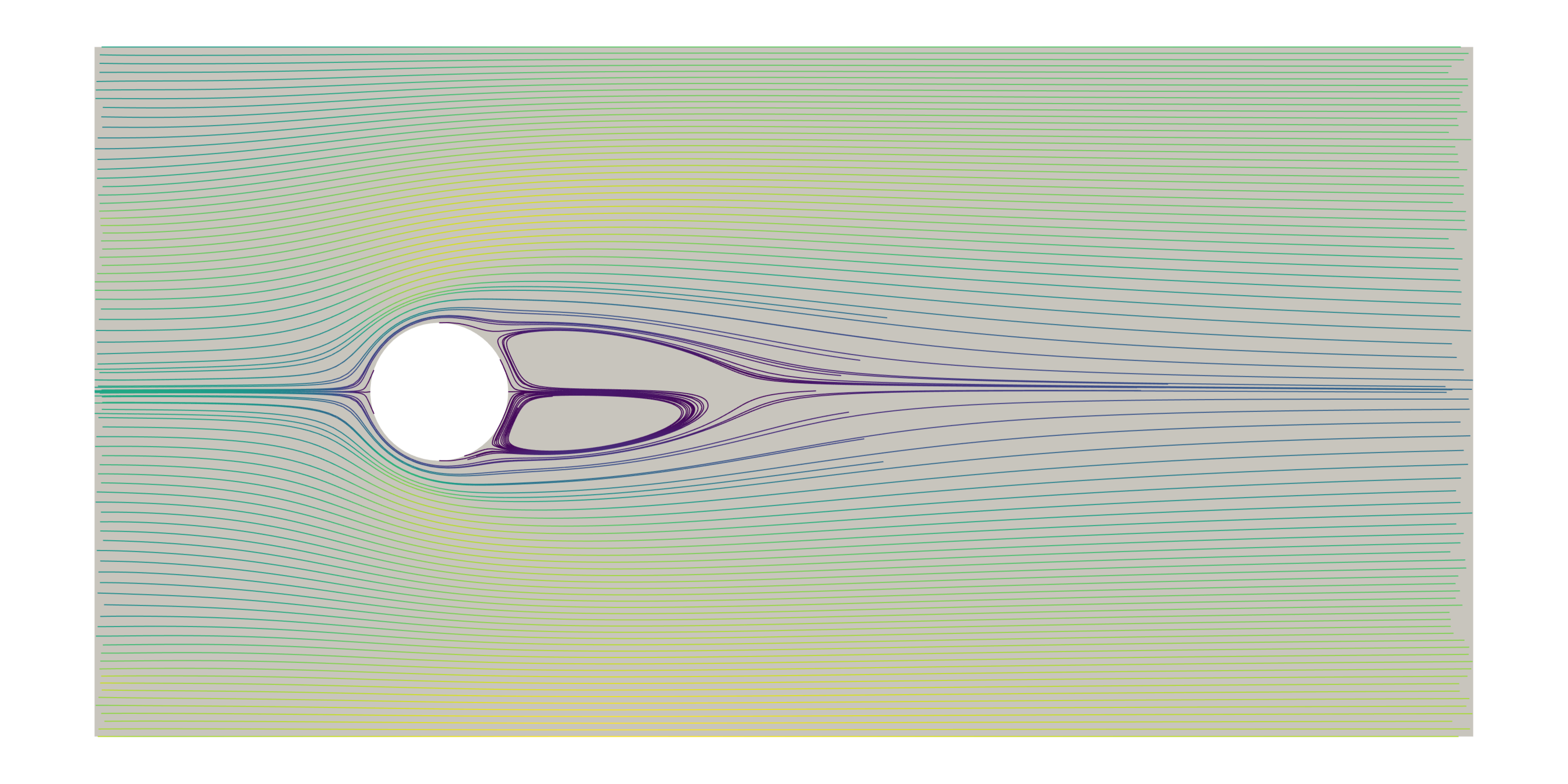}} &
\subcaptionbox{\label{fig:cased}}{\includegraphics[width=0.45\linewidth]{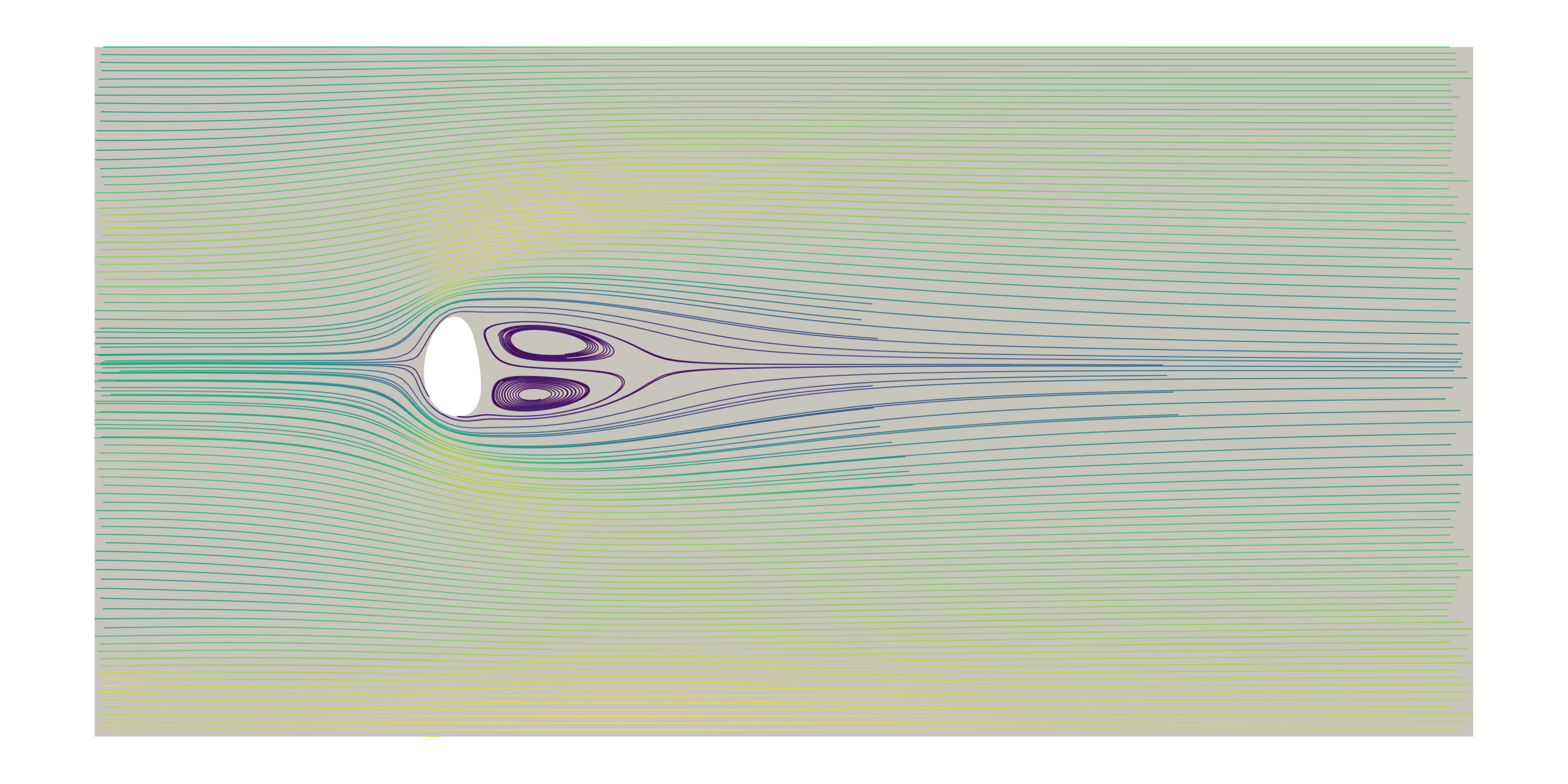}} 
\end{tabular}
\caption{Comparison of representative flow fields for the initial object geometry and
the optimized geometry obtained using DINO trained with $1024$ samples. Each
row corresponds to one of the two inflow scenarios (eastward and northward flow),
and each column shows the velocity fields before (left column) and after (right column)
optimization. All flow fields are evaluated using high-fidelity PDE solves. The optimized geometry $z^\dagger_\text{DINO}$, though not symmetric, significantly reduces
recirculation behind the object and lowers the overall viscous dissipation rate compared to the
initial shape.}
\label{fig:channel_pellet_optimal}
\end{figure}

These geometry-induced flow improvements motivate a quantitative assessment
of the risk-averse design performance. To this end, we present the evolution of the optimal objective function value as a function of the number of state PDE solves,
together with the associated $(25\%$–$75\%)$ quantile bands in Figure~\ref{fig:channel_ouu_eval}. For the neural-operator surrogates, both DINO and NO were trained with 512, 1{,}024, 2{,}048, and 4{,}096 samples, while all surrogate-based OUU runs used $8{,}192$ samples per iteration. In comparison, the PDE-based OUU problems were solved with 16, 32, 64, 128, 256, and 512 samples per iteration, and the PDE reference OUU problem was carried out with $8{,}192$ samples per iteration. All optimal designs $z^\star$ were subsequently assessed using the identical set of $16{,}384$ samples employed in the reference evaluation. As in the Poisson PDE examples, the errors are averaged across 10 runs with different random inflow samples corresponds to the same random seeds used in neural network training.

We make several key observations from these results. First, even with relative small sample sizes in the SAA formulation, the PDE-based approach yields reasonably accurate OUU solutions for the viscous dissipation objective. Nevertheless, the DINO surrogate achieves comparable or
lower optimal dissipation values using nearly an order of magnitude fewer
state PDE solves, demonstrating a substantial gain in sample efficiency. By
contrast, the NO surrogate without Jacobian learning does not attain the same cost-effectiveness as its performance does not exceed that of the PDE-based approach under identical computational budgets. With the same budget of state PDE solves, the DINO-based OUU achieves lower
optimal dissipation values than both the NO-based OUU and the direct PDE-based OUU, indicating superior sample efficiency of DINO in the risk-averse
design setting.

\begin{figure}[H]
    \centering
    \includegraphics[width=0.98\linewidth]{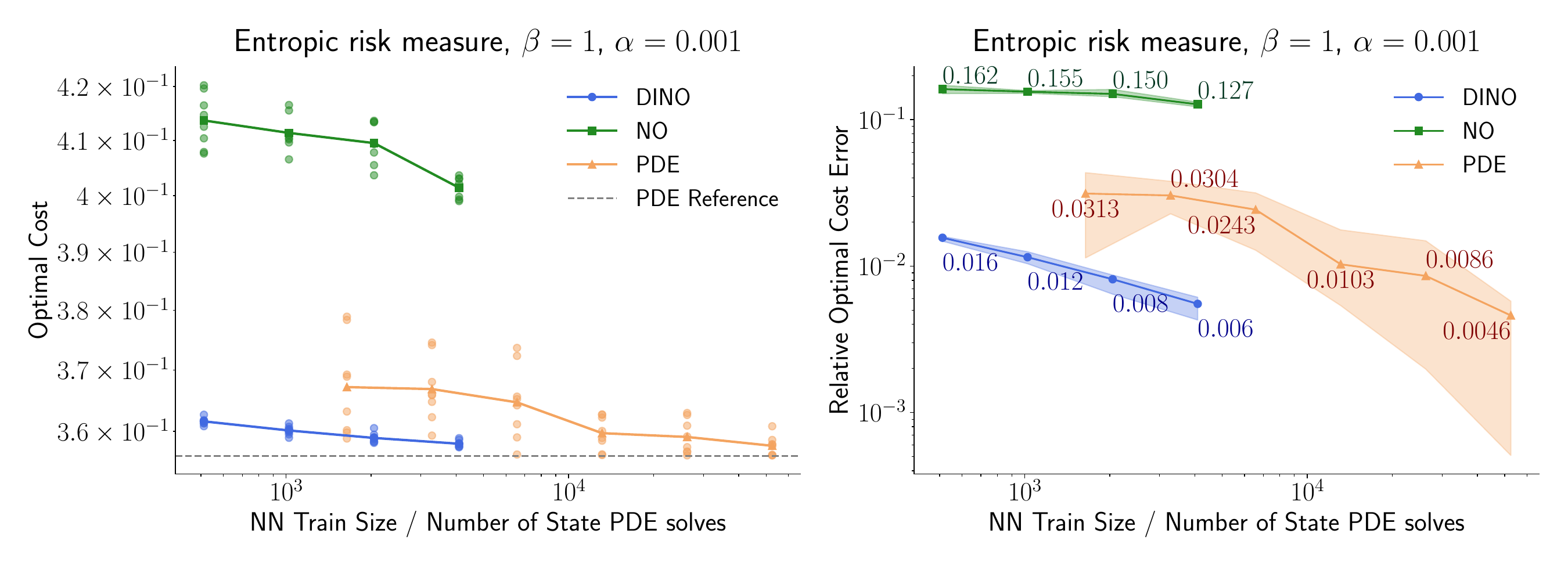}
    \caption{Optimal cost values at optimal solutions (left) and relative error of the objective function (right) versus the number of state PDE solves
required for the optimization, both with risk-aversion parameter $\beta = 1$ for entropic risk measure and penalty weight $\alpha = 0.001$. NN train sizes are 512, 1{,}024, 2{,}048, and 4{,}096, and all used 8{,}192 samples per iteration for ouu. PDE ouu per iteration with 16, 32, 64, 128, 512 samples. PDE reference ouu is done with 8{,}192 samples for iteration. The results are evaluated with the same 16{,}384 samples.
Note that the counts reported above refer to the number of samples \textit{per flow scenario}. The actual number of PDE solves is twice the reported number since we evaluate the QoI for both the eastward flow and northward flow scenarios.
}
\label{fig:channel_ouu_eval}
\end{figure}

\subsection{Design for Drag Reduction in 3D under Multiple Uncertain Inflow Conditions}
\label{section:ns3d}

We now extend the 2D problem considered in Section to a 3D example.
That is, we consider the flow in a channel $\Omega_{\channel} = (0, 2) \times (0, 1) \times (0, 1) \subset \mathbb{R}^3$ past a tower-shaped object $\Omega_{\object}$, whose geometry will be designed using free-form deformations.
In this case, the boundary of the channel, $\partial \Omega_{\channel}$ can be decomposed into its six planar faces, 
\begin{alignat*}{2}
\Gamma_{\text{left}} &= \{(x_1,x_2,x_3) \in \partial\Omega_{\channel}: x_1 = 0\}, 
    & \quad \Gamma_{\text{right}} & = \{(x_1,x_2,x_3) \in \partial\Omega_{\channel}: x_1 = 2\},\\
\Gamma_{\text{front}} &= \{(x_1,x_2,x_3) \in \partial\Omega_{\channel}: x_3 = 0\}, 
    & \quad \Gamma_{\text{back}} & = \{(x_1,x_2,x_3) \in \partial\Omega_{\channel}: x_3 = 1\},\\ 
\Gamma_{\text{base}} &= \{(x_1,x_2,x_3) \in \partial\Omega_{\channel}: x_2 = 0\}, 
    & \quad \Gamma_{\text{top}} & = \{(x_1,x_2,x_3) \in \partial\Omega_{\channel}: x_2 = 1\}.
\end{alignat*}

We again consider two flow scenarios corresponding to inflow along the eastward and northward directions respectively.
We formulate this in the same manner as \Cref{section:ns2d} using two separate orientations of the object geometry, i.e.,
$\Omega_{\object}^{\eastward} = \diffeo_z(\Omega_{\object})$ in the eastward flow and $\Omega_{\object}^{\northward} = \reflect(\Omega_{\object}^{\eastward})$
in the northward flow, where $\reflect(\cdot)$ now denotes a reflection about the $X_1 = X_3$ plane.
The spatial domains are then given by $\Omegaz^{s} := \Omega_{\channel} \setminus \closure{\Omega_{\object}^{s}}$.
The solution for each flow scenario $u^{s} = (v^{s}, p^{s}), s \in \{\eastward, \northward\}$ 
is then obtained by solving \eqref{eq:ns_pellet} with the boundaries
\begin{alignat*}{2}
\Gamma_{\text{in}} &= \Gamma_{\text{left}}
, & \quad 
\Gamma_\text{no-slip} &= \Gamma_{\text{base}} \cup \Gamma_{\object}^{s}
, \\
\Gamma_{\text{out}} &= \Gamma_{\text{right}}
, & \quad 
\Gamma_\text{free-slip} &= \Gamma_\text{front} \cup \Gamma_\text{top}\cup \Gamma_\text{back}
.
\end{alignat*}
That is, 
inflow conditions are imposed at the left surface, 
free-flow conditions are imposed at the right surface,
no-slip conditions are imposed at the base (ground) $x_2 = 0$ and the surface of the object, 
and free-slip conditions are imposed on the remaining surfaces. 
In particular, the inflow flow profile is defined by the model parameter, which scales a power law profile with exponent equal to $1/5$,
\begin{equation}
    (v_{z}^{s})_{\inflow} = \exp(\mz^{s}(x)) x_2^{1/5} e_{1}, \qquad e_1 = (1, 0, 0)^{T}.
\end{equation}
This specific power law is common in modeling boundary layer phenomena in an atmospheric setting \cite{schlichting2016boundary}.
We again take $\mz^{s}$ to be Gaussian random fields defined on the reference domain, i.e.,
$\mz^{s} = T_{\diffeo_z}^{-1} \mref^{s}$, $\mref^{s} \sim \cN(0, \cC_m^{s})$, where the covariance operators $\cC_m^{s}$ are again the Bi-Laplacian operator with correlation lengths and pointwise variances chosen to match that of the 2D case.

Figure~\ref{fig:left-inflow} shows the computational domain and the initial object geometry, which is a frustum-shaped tower, with streamlines of resulting velocity field obtained from the PDE solution. This configuration serves as the reference geometry for the following numerical experiments.
\begin{figure}[hbt!]
\centering
\begin{tikzpicture}
  \node[anchor=south west,inner sep=0] (img) at (0,0)
    {\includegraphics[width=8cm]{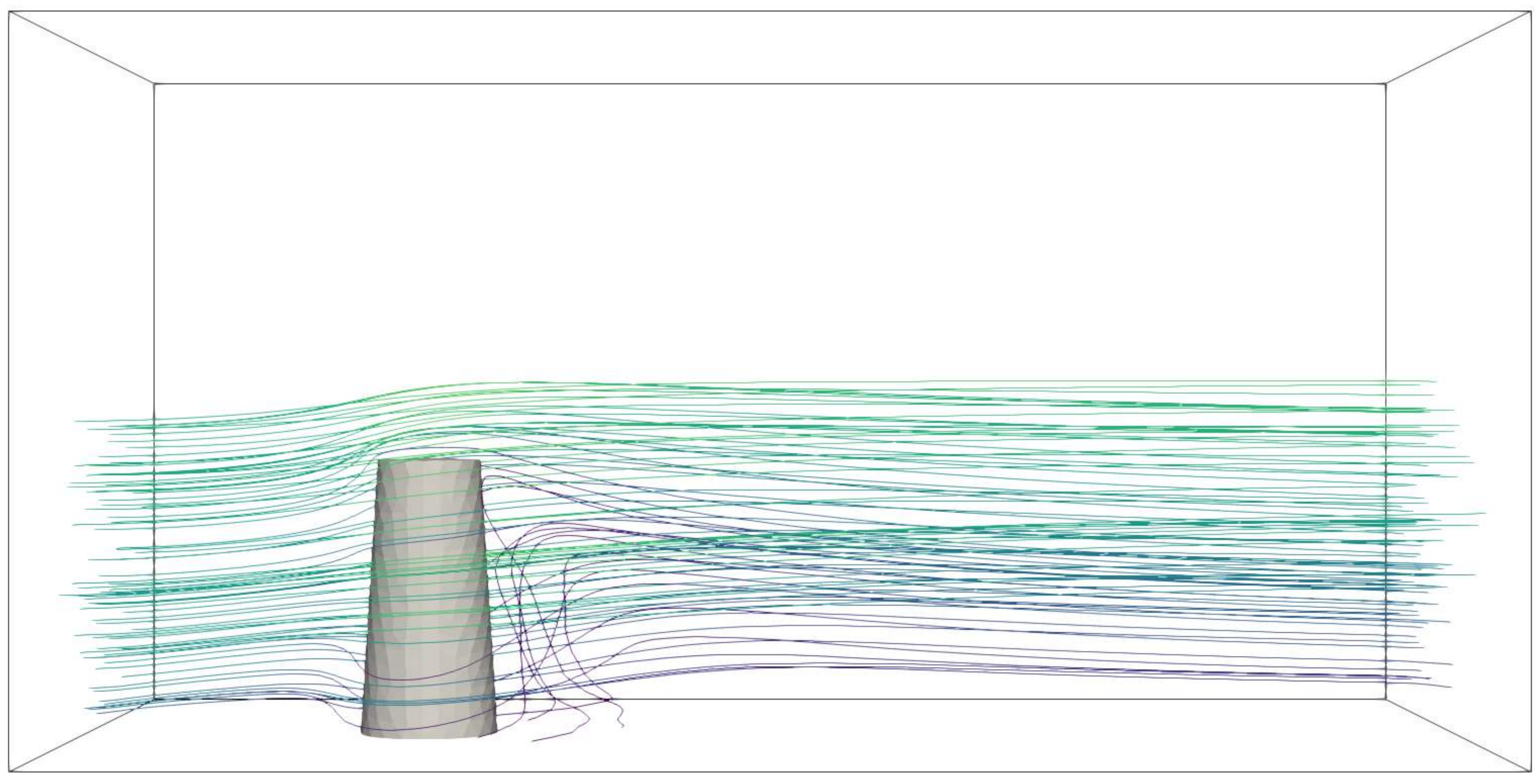}};

  \draw[-{Stealth[length=2mm]}, thin] (-0.8,2) -- (0,2);
  \draw[-{Stealth[length=2mm]}, thin] (-0.8,3.8) -- (0,3.8);
  \draw[-{Stealth[length=2mm]}, thin] (-0.8,2.9) -- (0,2.9);
  \draw[-{Stealth[length=2mm]}, thin] (-0.8,0.2) -- (0,0.2);
  \draw[-{Stealth[length=2mm]}, thin] (-0.8,1.1) -- (0,1.1);
  \node[anchor=east] at (-1,2) {\shortstack[c]{Random\\inflow\\ velocity}};
\end{tikzpicture}

\caption{Geometry of the domain and initial frustum-shaped tower together with streamlines of the velocity field obtained from the PDE Navier–Stokes solution. The case shown corresponds to the left inflow condition with inflow on $\Gamma_{\text{left}}$
    and outflow on $\Gamma_{\text{right}}$. The remaining faces $\Gamma_\text{free-slip} = \Gamma_\text{front} \cup \Gamma_\text{top}\cup \Gamma_\text{back}$ act as impermeable walls, while no-slip conditions are imposed on $\Gamma_\text{object} \cup \Gamma_\text{base}$.}
\label{fig:left-inflow}
\end{figure}

The shape parametrization is adapted from the 2D free-form deformation framework described above \eqref{eq:2d_bernstein} to include Bernstein polynomial bases along all three axes. Specifically, the Bernstein control points lie on a $6 \times 6 \times 6$ uniform lattice inside the unit cube $[0,1]^3$, 
where the points are located at $\{0, 0.2, 0.4, 0.6, 0.8, 1\}$ along each axis. Similar to the 2D problem, we exclude the first and last control points on the $X_1$ and $X_3$ axes, as well as the first point on the $X_2$ axis,
such that the left, base, front, and back boundaries are not moved. 
Moreover, we only allow the control points to move in the horizontal plane $X_1-X_3$, such that the height of object does not change. 
As a result, we have $\dz = 160$ shape degrees of freedom, 
coming from a total of 80 control points, each contributing $2$ degrees of freedom.

To generate the geometry variation for the northward flow scenario, 
we construct a mirrored displacement basis by swapping the parametric coordinates
\[
(X_1, X_2, X_3) \mapsto (X_3, X_2, X_1), \qquad (e_1, e_2, e_3) \mapsto (e_3, e_2, e_1)
\]
which reflects the displacement fields about the plane $X_1 = X_3$. 

The PDE is then formulated in the reference domain following \eqref{eq:ns_ref_2d}.
We discretize the domain using a tetrahedral mesh with Taylor-Hood elements for the state and quadratic elements for the parameter, 
resulting $\du = 393{,}331$
and $d_{\mathcal M} = 3{,}593$.

\subsubsection{Problem Formulation}
In this problem, we consider two different optimization QoIs that both capture notion of the drag forces on the object. 
These are (i) the total horizontal force on the object and (ii) the total bending moment on the object with respect to the center of its base. These QoIs are defined separately for each inflow scenario indexed by $s \in \{ \eastward, \northward \}$. 
For each case, let $u^{s}=(v^{s},p^{s})$ 
denote the velocity–pressure solution of the steady incompressible Navier--Stokes equations in spatial coordinates
The 
force and bending moments acting on the tower boundary 
$\Gamma_{\object}$ with outward unit normal $n$ are defined as
\begin{align}
  \mathbf{F}_z^{s}(\uz^{s}, \uz^{s})
  &= \int_{\Gamma_{\object}^{s}} \stressz(\uz^{s}) \, n \, ds,
  \label{eq:hydro-force-i} \\
  \mathbf{M}_z^{s}(\uz^{s}, \mz^{s})
  &= \int_{\Gamma_{\object}^{s}} (x - O) \times \left( \stressz (\uz^{s}) n \right)
  \, 
  ds,
  \label{eq:hydro-moment-i}
\end{align}
where $x=(x_1,x_2,x_3)^T$ are the spatial coordinates, $O = (0.5, 0, 0.5)^T$ is the center of the base of the structure, and $\times$ denotes the cross product.
We let 
$\mathbf{F}^{s}(u^{s}, m^{s}, z)$ 
and 
$\mathbf{M}^{s}(u^{s}, m^{s}, z)$ 
denote their respective forms when the input coordinates are in the reference domain (i.e., the force and moment vector are still expressed in spatial coordinates). 
Writing $\mathbf{F}^{s}=(F_1^{s}, F_2^{s}, F_3^{s})$
and $\mathbf{M}^{s}=(M_1^{s}, M_2^{s}, M_3^{s})^T$, 
the two scalar QoIs that we will consider are the total horizontal force
\begin{equation}\label{eq:total_force_qoi}
    Q_{\mathrm{force}}^{s}(u^{s}, m^{s}, z) := (F_1^{s})^2 + (F_2^{s})^2,
\end{equation}
where the vertical component, $F_2^{s}$, is omitted, 
and the total bending moment
\begin{equation}\label{eq:bending_moment_qoi}
    Q_{\mathrm{bend}}^{s}(u^{s}, m^{s}, z) := \|\mathbf{M}^{s}(u^{s},m^{s}, z) \|_2^2 = (M_1^{s})^2 + (M_2^{s})^2 + (M_3^{s})^2.
\end{equation}

In addition to the optimization QoIs, we incorporate penalization terms to regularize the design and enforce physical consistency. Again, two types of penalization are considered: (i) an $\ell_2$ penalty on the shape control coefficients \eqref{eq:L2penalty}, and (ii) a gravity moments penalization.
For the gravity moment, we assume the structure itself has a uniform solid density of $\rho_{\object} = 2{,}000$.
The moment about the center of its base is then given by 
\begin{equation}
    \mathbf{M}^{g}(z) = \int_{\Omega_{\object}} (x - O) \times \rho_{\object} g \; e_2 \; dx,
\end{equation}
where $g = 9.81$ is the gravitational acceleration constant and $e_2 = (0, -1, 0)^T$ is the unit vector in the vertical direction.
Similar to the integrals above, we can formulate this gravitational moment integral in the reference domain. 
We then take 
\begin{equation}
    \label{eq:grav_penalty}
    \cP_{\mathrm{grav}}(z;\alpha) := \frac{\alpha}{2} \|\mathbf{M}^{g}(z)\|^2_2.
\end{equation}

The objective functions for design under uncertainty are then given 
by the weighted sum of entropic risk measures across the two flow scenarios
supplemented by an appropriate penalization term.

For the horizontal force reduction, we adopt the objective function
\begin{equation}
    \cJ_{\force}(z) = 0.75 \cJ_{\force}^{\eastward}(z) + 0.25 \cJ_{\force}^{\northward}(z) + \cP_{\ell_2}(z;\alpha)
\end{equation}
where for each flow scenario $s \in \{\eastward, \northward\}$, we have 
\begin{equation}
    \cJ_{\force}^{s}(z) = \rho^{\entropic,\beta}_{m^{s} \sim \mu_m^{s}}(Q^{s}(u^{s}, m^{s}, z)).
\end{equation}
Here, the penalization $\mathcal{P}_{\ell_2}$ is the $\ell_2$ penalization used in \eqref{eq:L2penalty}, for which we take $\alpha = 2\times 10^{-5}$.

For bending-moment reduction, we adopt the objective function
\begin{equation}
    \cJ_{\bend}(z) = 0.75 \cJ_{\bend}^{\eastward}(z) + 0.25 \cJ_{\bend}^{\northward} (z) + \cP_{\grav}(z;\alpha)
\end{equation}
with $\alpha = 1$, where for each flow scenario $s \in \{\eastward, \northward\}$, we have 
\begin{equation}
    \cJ_{\bend}^{s}(z) := \rho^{\entropic,\beta}_{m^{s} \sim \mu_m^{s}}(Q_{\bend}^{s}(u^{s}, m^{s}, z)).
\end{equation}
This formulation can be thought of as minimizing the sum of the flow-induced moments and gravity induced moments.

\subsubsection{Neural Operator Construction}

The training data for the neural operator are generated by sampling from the input distributions of $m^{s}$ and $z$, where we use the uniform distribution $\mu_z = \unif([-0.5, 0.5]^{\dz})$ for the shape variables.
Since the PDE is much more expensive to solve in the 3D setting, we train the neural networks on a small training dataset consisting of size $384$ both with and without Jacobian information. 
In each case, to generate the reduced basis, 
we use a subset of $n_{\POD} = n_{\AS} = 256$ samples 
from the training set to compute both the POD and the AS,
from which we take $r_u = 200$ POD basis vectors 
and $r_m = 128$ AS basis vectors.
Unlike the previous two examples, we only perform a single training run for each case rather than multiple independent runs
due to the limited dataset.
Multi-input dense neural networks with 2 hidden layers of width 512 are used to represent the latent space mapping. 
The neural networks are then trained using Adam for 2,000 epochs with an initial
learning rate of $1 \times 10^{-4}$ that is halved after 1,000 epochs and 1,500 epochs. For the eastward flow scenario, the resulting test accuracies on full order $u$ over a testing dataset of size 128 are $7.22\%$ for Shape-DINO and $18.91\%$ for Shape-NO. In the northward flow scenario, 
the corresponding test accuracies are $4.13\%$ for Shape-DINO and $12.55\%$ for Shape-NO.

\subsubsection{Shape Optimization using Neural Operators}
We then deploy the trained neural operators to solve both the force and bending-moment minimization problems (without retraining). 
Since the computational cost associated with the 3D Navier--Stokes PDE largely precludes PDE-based risk-averse optimization without extensive high-performance computing resources, 
we instead compare our surrogate-based solutions to the optimal design computed from a deterministic PDE-based optimization $z^\dagger_{\mathrm{PDE}}$ using a nominal value of $m^{s} = \bar{m}^{s}$.
This corresponds to a single-point approximation of the OUU objective, which can be obtained at more modest computational costs.
Moreover, we then assess the quality of the PDE-based and surrogate-based solutions by inspecting the resulting QoI distributions at the optimal designs computed on 200 samples of the inputs $\mref^{s}$.

The resulting distributions are shown in \Cref{fig:tf_qoi_dist,fig:bm_qoi_dist} for the force and bending-moment objectives.
From the resulting distribution plots, we observe that for both QoIs, the distributions induced by the shape-DINO optimal design are more tightly concentrated and shifted closer to zero compared to those obtained from the deterministic PDE solution, and significantly closer than those produced by shape-NO. This observation indicates that shape-DINO yields designs that are simultaneously robust with respect to multiple performance metrics, reducing both the magnitude and variability of the responses under uncertainty. In contrast, the shape-NO solutions exhibit broader distributions with heavier tails for both QoIs, emphasizing the importance of incorporating derivative information during surrogate training. 

We further note that the deterministic PDE-based optimization is performed using single realizations of the uncertain parameter $\mref^{s}$ at its mean values and therefore does not account for variability induced by uncertainty. As a result, while the deterministic solution may perform well at $\bar{m}^{s}$, 
it provides no guarantees of robustness with respect to the inflow distributions. 
On the other hand, surrogate-based models enable efficient large-batch evaluation of the SAA objectives and specifically shape-DINO provides accurate objective gradients, which would be prohibitively expensive using high-fidelity PDE solves.

\begin{figure}[!htpb]
    \centering
    \includegraphics[width=0.98\linewidth]{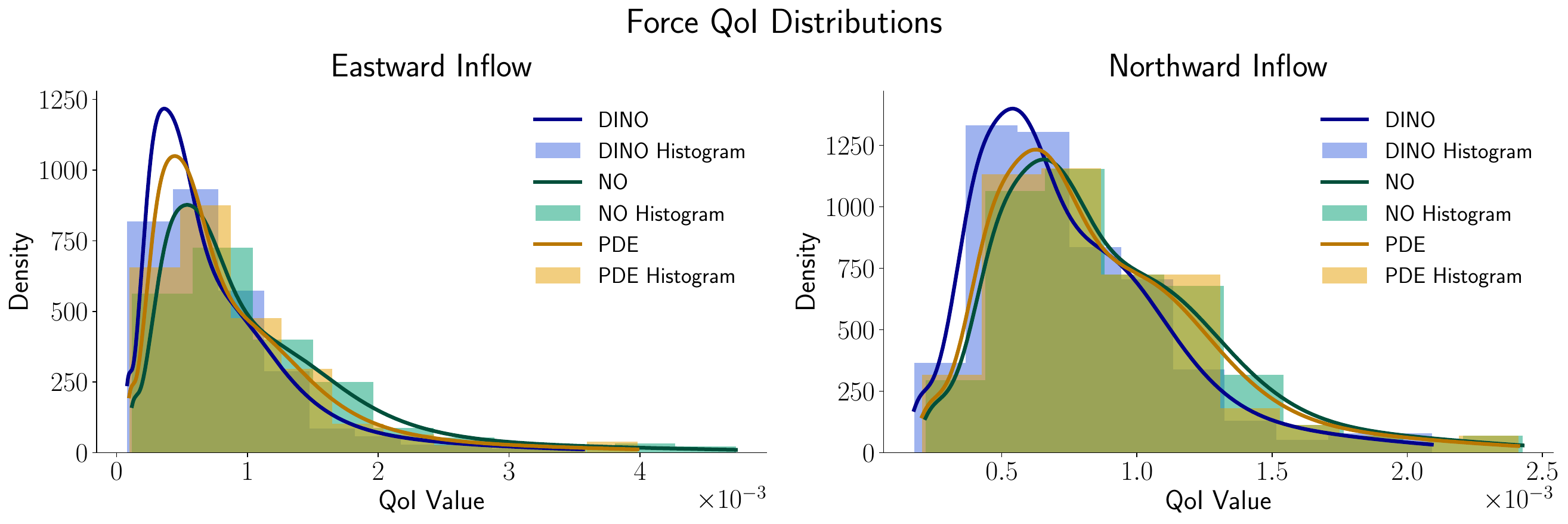}
    \caption{Distribution of horizontal-force QoIs for the optimal designs 
$z^\dagger_{\mathrm{DINO}}$, $z^\dagger_{\mathrm{NO}}$, and $z^\dagger_{\mathrm{PDE}}$. 
All designs are evaluated with 200 Monte Carlo samples. The DINO and NO solutions 
are obtained with 1,024 samples per iteration, while the PDE solution is computed 
deterministically at $\bar{m}^{s}$. The Shape-DINO design yields a distribution that is both more concentrated and closer to zero, while shape-NO exhibits broader distributions with heavier tails, highlighting the benefit of incorporating derivative information for achieving robust designs under uncertainty.}
    \label{fig:tf_qoi_dist}
\end{figure}

\begin{figure}[!htpb]
    \centering
    \includegraphics[width=0.98\linewidth]{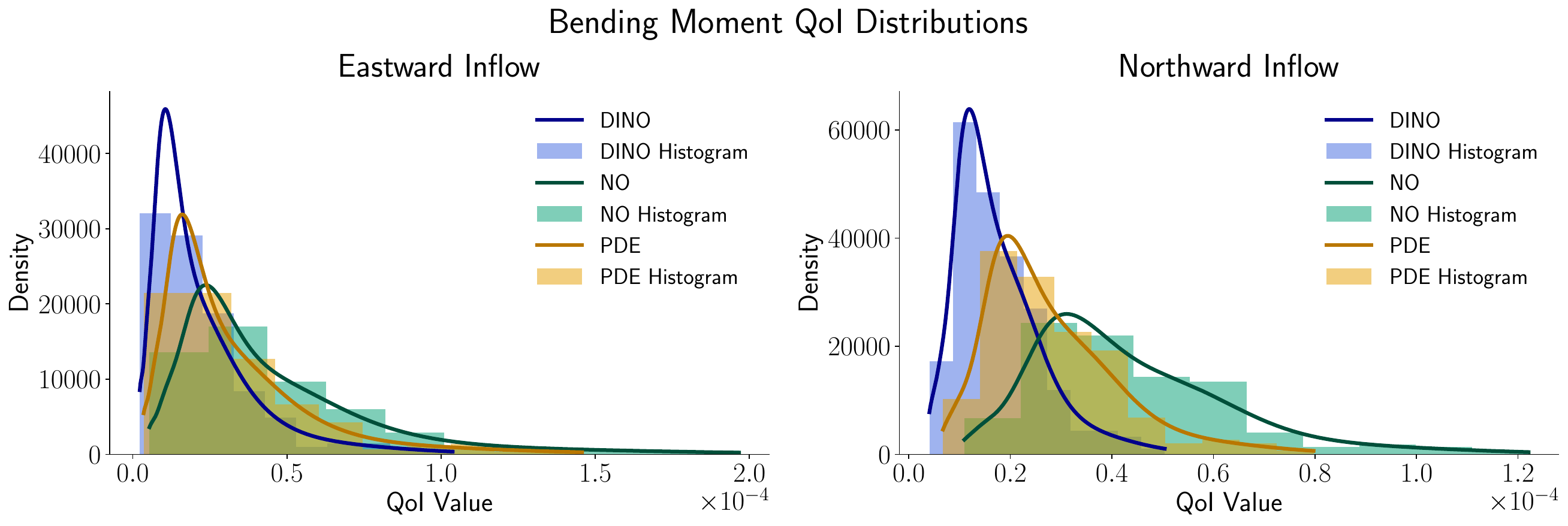}
    \caption{Distribution of bending-moment QoIs for the optimal designs 
$z^\dagger_{\mathrm{DINO}}$, $z^\dagger_{\mathrm{NO}}$, and $z^\dagger_{\mathrm{PDE}}$. 
All designs are evaluated with 200 Monte Carlo samples. The DINO and NO solutions 
are obtained with 1,024 samples per iteration, while the PDE solution is computed 
deterministically at $\bar{m}^{s}$. The Shape-DINO distribution is much more tightly concentrated and shifted closer to zero, indicating reduced magnitude and variability of the bending-moment QoI compared to both the deterministic PDE and shape-NO designs.}
    \label{fig:bm_qoi_dist}
\end{figure}

We present the corresponding optimal tower designs $z^\dagger$ 
obtained for both quantities of interest in Figure~ \ref{fig:opt_tower}. To illustrate the resulting flow behavior, streamlines are visualized under the mean parameter configuration $\bar{m}^{s}$.
Consistent with the QoI distribution results discussed above, the optimal designs obtained using shape-DINO are thinner compared to deterministic PDE, which allows the flow to pass smoothly around the tower with limited obstruction. In contrast, the shape-NO designs give bulkier and even twisting geometries that increase flow deflection, manifested by denser streamlines along the sides of the tower and a larger wake region downstream. These differences indicate the superiority of shape-DINO in producing optimal designs that effectively reduce horizontal forces and bending moments and provide a physical explanation for the broader and heavier-tailed QoI distribution for shape-NO as we discussed before.
\begin{figure}[htpb!]
   \centering
    \begin{subfigure}{0.49\textwidth}
    \centering
    \includegraphics[width=\linewidth]{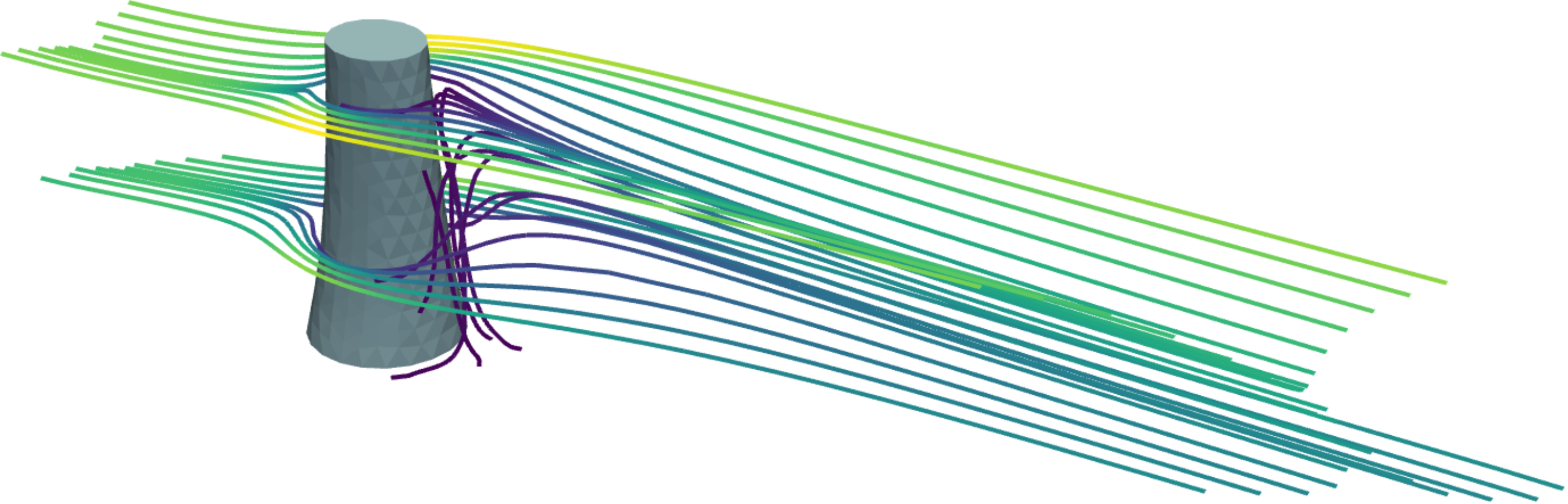}
    \caption{Horizontal Force $z^\dagger_\text{PDE}$}
    \label{fig:tf_pde}
    \end{subfigure}
    \begin{subfigure}{0.49\textwidth}
        \centering
        \includegraphics[width=\linewidth]{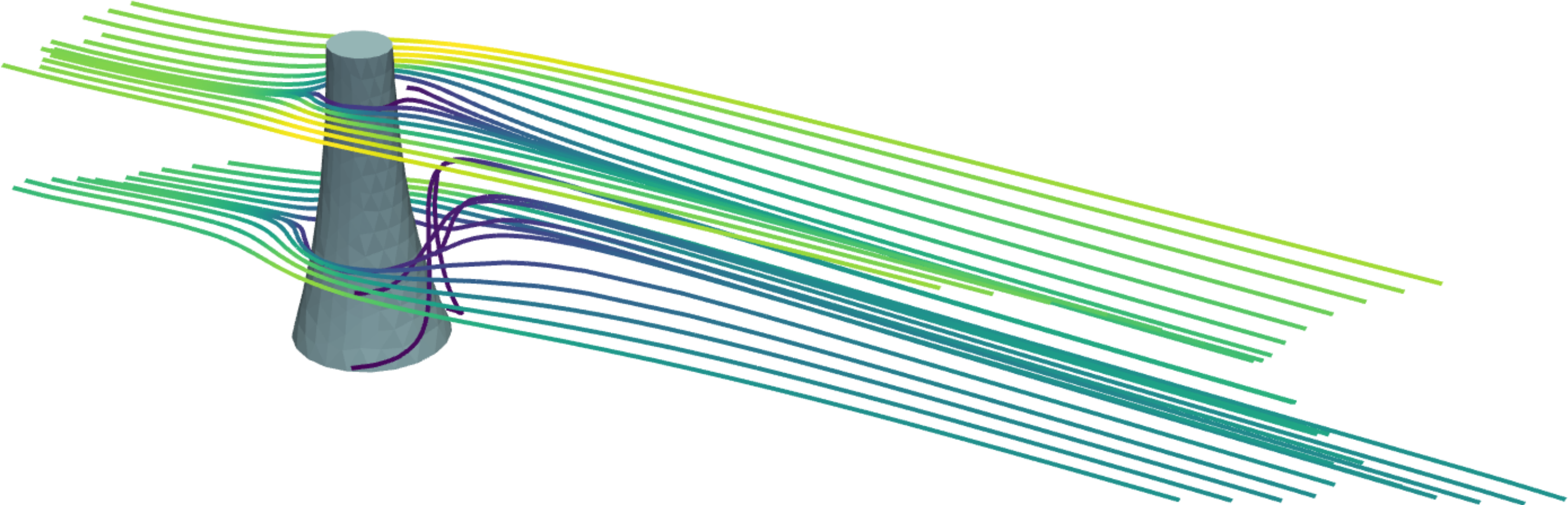}
        \caption{Bending Moment $z^\dagger_\text{PDE}$}
        \label{fig:bm_pde}
    \end{subfigure}
    
    \begin{subfigure}{0.49\textwidth}
        \centering
        \includegraphics[width=\linewidth]{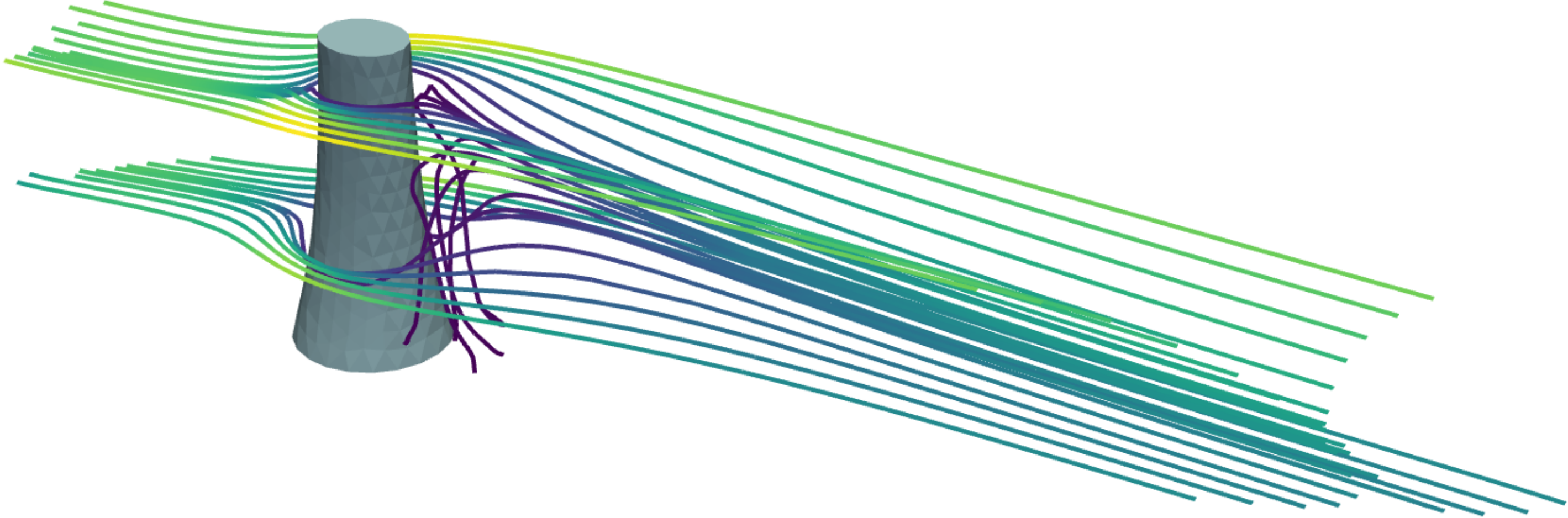}
        \caption{Horizontal Force $z^\dagger_\text{DINO}$}
        \label{fig:tf_dino}
    \end{subfigure}
    \begin{subfigure}{0.49\textwidth}
        \centering
        \includegraphics[width=\linewidth]{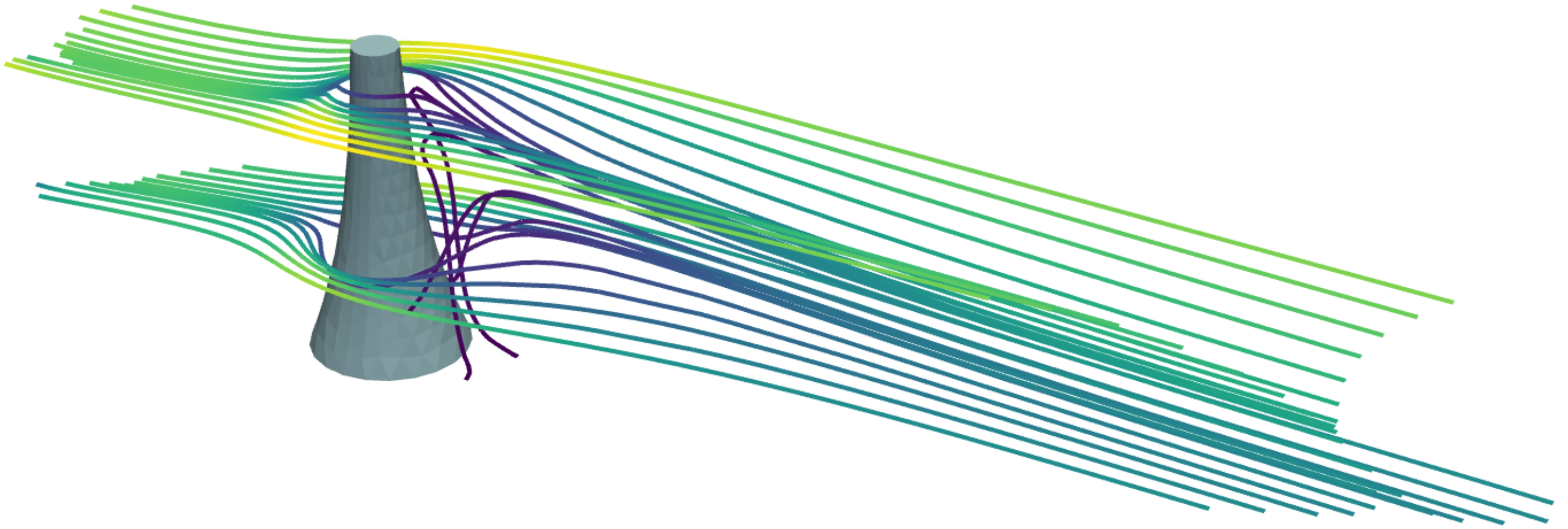}
        \caption{Bending Moment $z^\dagger_\text{DINO}$}
        \label{fig:bm_dino}
    \end{subfigure}

    \begin{subfigure}{0.49\textwidth}
        \centering
        \includegraphics[width=\linewidth]{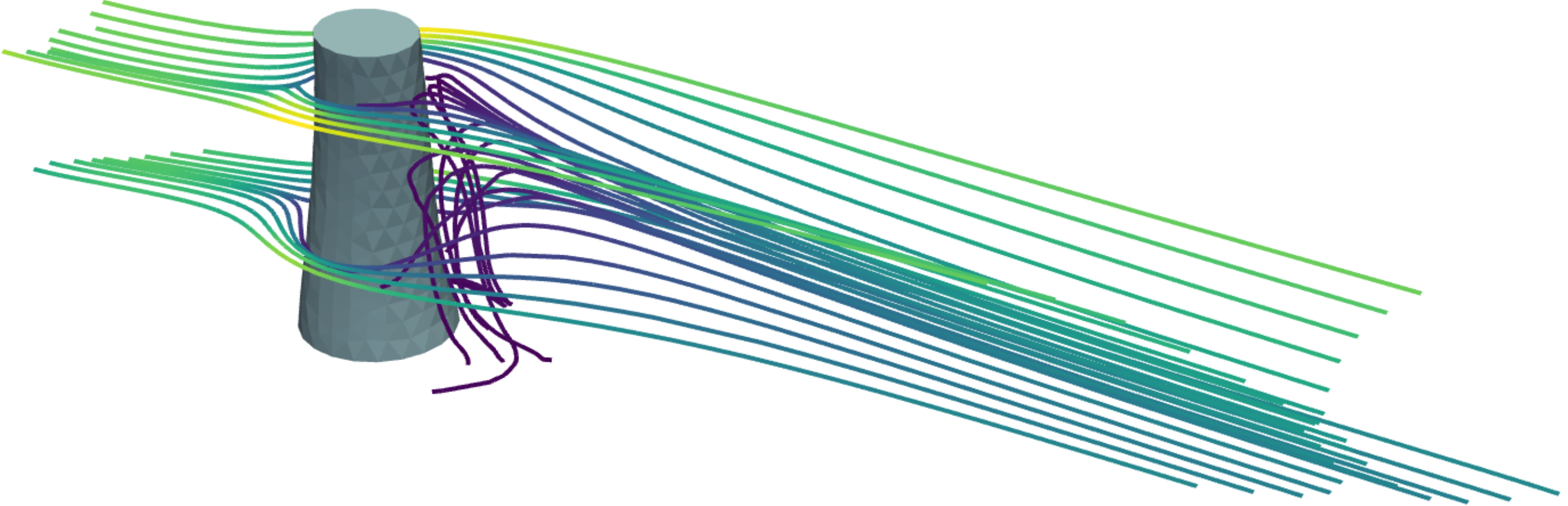}
        \caption{Horizontal Force $z^\dagger_\text{NO}$}
        \label{fig:tf_no}
    \end{subfigure}
    \begin{subfigure}{0.49\textwidth}
        \centering
        \includegraphics[width=\linewidth]{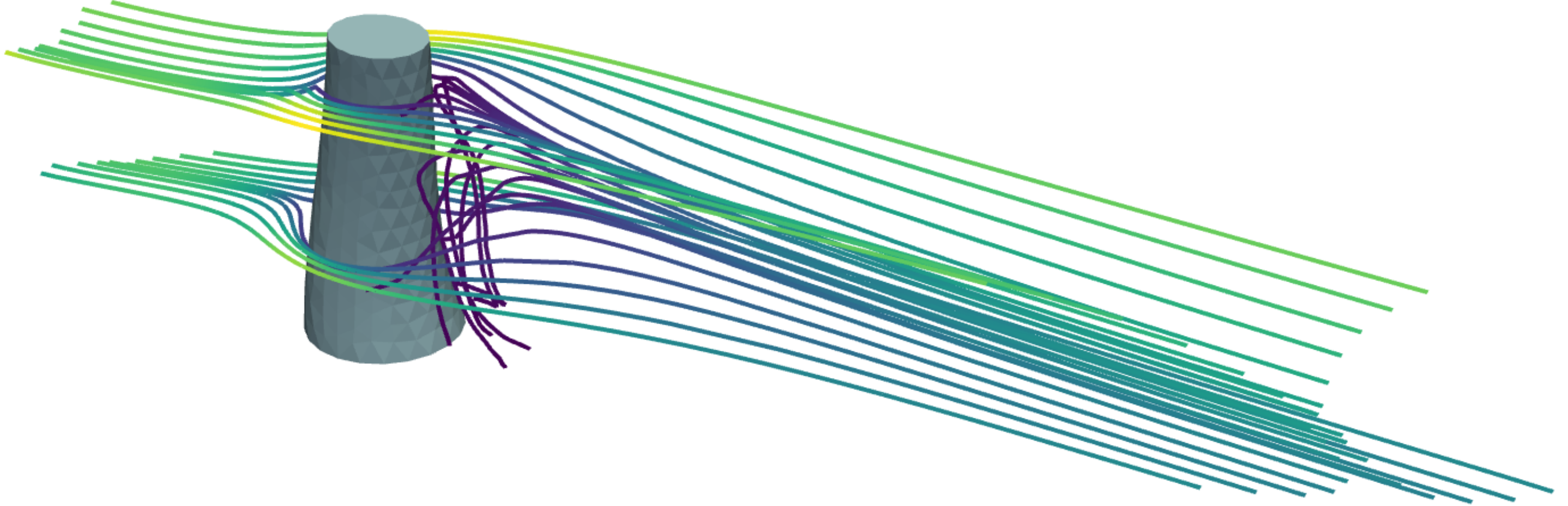}
        \caption{Bending Moment $z^\dagger_\text{NO}$}
        \label{fig:bm_no}
    \end{subfigure}
    
    \caption{Optimal tower shapes $z^\dagger$
    with streamlines evaluated under eastward flow at the mean parameter values $\mref^{\eastward} = \bar{\mref}^{\eastward}$.
    Panels 
    \ref{fig:tf_pde}, \ref{fig:tf_dino}, and \ref{fig:tf_no} correspond to the horizontal force QoI,
    while panels 
    \ref{fig:bm_pde},  \ref{fig:bm_dino}, and \ref{fig:bm_no} show results for the bending moment QoI. The Shape-DINO designs are thinner and allow smoother flow passage compared to deterministic PDE, whereas shape-NO yields bulkier and sometimes twisted geometries that induce stronger flow deflection and larger downstream recirculation regions. This comparison provides a physical explanation for the reduced forces and bending moment achieved by Shape-DINO.
    }
    \label{fig:opt_tower}

\end{figure}

\subsection{Comparison of timings}\label{section:numerical_timings}
In this section, we report the computational cost of evaluating the state and subsequent VJPs using either the PDE solver or the neural operator approximation. Execution times are measured using device-appropriate timers: CPU timings are measured as wall-clock elapsed time using Python’s \texttt{time.perf\_counter}, while GPU timings are measured as device execution times using CUDA events. Since the state and VJP evaluations account for the majority of the computational cost in both training-data generation, neural operator training, and the evaluation of the objective function and its gradients in OUU, we use these timing results to contextualize the sample-efficiency comparisons from the previous sections and to highlight the online speedups achieved by the Shape-DINO approach.

Recall that for the PDE-based approach, state evaluation corresponds to solving the state PDE, which itself may consist of solving a sequence of linear systems when the state PDE is nonlinear, while the VJP requires solving a single linear system corresponding to the adjoint PDE.
We therefore report the single-core timings of the solving the state and adjoint PDEs, where we use the direct solver MUMPS \cite{amestoy2000mumps} as the underlying linear solver.
For surrogate-based methods, we simply report the timing for the forward (state) and backward (VJP) passes of the neural operator; we present per-sample timings for both single sample and batched (1000 sample) evaluations.
Because Shape-NO and Shape-DINO share the same network architecture, these results are reported as one single time.  
The results for the Poisson, 2D and 3D Navier-Stokes examples are shown in Table \ref{tab:timings}. 

We first focus on the PDE-based timings. In the Poisson example, where the state PDE is linear, the costs of the state and adjoint solves are indeed comparable. 
In fact, when direct solvers are used, this additional adjoint solve can actually be computed at negligible cost by reusing the transpose of the matrix factorization for the state PDE.
On the other hand, in the Navier--Stokes examples, 
the cost of the adjoint solves are on average $0.2$--$0.25\times$ that of the state PDE solves due to the Newton iterations required in the state PDE solves.
Moreover, since direct solvers allow multiple VJPs to be computed at the cost of a single VJP,
the cost of generating the additional Jacobian data for DINO training using direct solvers is indeed a fraction 
of the cost of the state data (negligible amount in the linear Poisson case and $0.2$--$0.25\times$ in the Navier--Stokes case).
Thus, compared to the orders-of-magnitude gain in sample efficiency seen in the previous sections, the benefits of derivative-based training far outweigh this small additional cost per sample.

In terms of strictly online costs in optimization, neural operators provide substantial speedups over PDE solvers when used to evaluate the QoI $(m,z) \mapsto Q(u(m,z)m,z)$ and its gradients with respect to the shape variable $z$. 
For state solves, single sample neural operator evaluations are more than two orders of magnitude faster for the Poisson problem and more than four orders of magnitude faster for the 2D Navier--Stokes problem. For batched evaluations the speedups exceed five and six orders of magnitude for the Poisson and 2D Navier--Stokes problems, respectively. 
This speedup is particularly noteworthy because batched execution is native to single-GPU systems. In contrast, achieving comparable throughput for CPU-based PDE solves in a bulk-synchronous setting typically requires additional hardware, whereas neural operator evaluation typically does not. Similar advantages are observed for VJP/adjoint solve used to evaluate the $z$ gradient of the QoI. In the single sample case, the neural operator yields nearly four orders of magnitude speedup in the Poisson problem and three orders of magnitude speedup for the 2D Navier--Stokes problem. In the batched case the speedups exceed four orders of magnitude for both problems. 

The advantage becomes even more pronounced in the 3D Navier--Stokes setting, where the computational cost of high-fidelity PDE solves increases dramatically due to both the higher discretization dimension and the increased complexity of sparse factorizations in 3D relative to 2D.
For this problem the neural operators state solve speedups exceed six and eight orders of magnitude for the single sample and batched evaluation respectively. The VJP speedups likewise exceed five and seven orders of magnitude for the single sample and batched evaluations, respectively.
These results demonstrate that replacing repeated PDE solves with neural operator surrogates yields significant computational savings, making large-scale optimization problem feasible at a cost that can be prohibitive using only PDE solvers.

\begin{table}[H]
\centering
\renewcommand{\arraystretch}{1.15}
\begin{tabular}{l||ccc||ccc}
\hline
\multicolumn{7}{c}{\textbf{Execution time per sample (seconds)}} \\
\hline

& \multicolumn{3}{c||}{\textbf{State Solve}} &
\multicolumn{3}{c}{\textbf{VJP / Adjoint}} \\
\cline{2-4} \cline{5-7}

\textbf{Problem} & \textbf{PDE} & \textbf{NN (1)} & \textbf{NN (1000)}
& \textbf{PDE} & \textbf{NN (1)} & \textbf{NN (1000)} \\
\hline

Poisson
& 1.06e$-$1 & 4.12e$-$4 & 5.50e$-$7
& 8.84e$-$2 & 1.25e$-$5 & 1.35e$-$6 \\

2D Navier--Stokes
& 5.08e$+$0  & 3.86e$-$4 & 1.70e$-$6
& 1.07e$+$1  & 1.15e$-$4 & 3.16e$-$6 \\

3D Navier--Stokes
& 7.18e$+$2  & 3.85e$-$4 & 5.46e$-$6
& 1.62e$+$2  & 9.68e$-$4 & 1.07e$-$5 \\

\hline
\end{tabular}
\caption{Time (in seconds) for solving the state PDE and computing vector--Jacobian products (VJPs) using neural operators and PDE-based solvers. PDE-based solver timings are averages over 100 independent random samples of $(m,z)$. For neural operators, NN (1) denotes inference performed on a single sample, while NN (1000) corresponds to batched inference over 1000 samples evaluated simultaneously, with the time normalized per sample. The reported times for NN (1) and NN (1000) are both averages over 100 independent trials. For neural operators, we report the time to compute the VJP by a backward pass, while for PDE-based method, we report the time taken to solve the adjoint PDE, which is the dominant cost of the VJP. Neural operators achieve orders-of-magnitude reductions in computational time compared to PDE-based solvers for both state and VJP evaluations, with particularly pronounced gains in the 2D and 3D Navier--Stokes cases. These gains are further amplified under batched inference (NN (1000)), where native GPU parallel evaluation leads to additional per-sample reductions in execution time.}
\label{tab:timings}
\end{table}

\section{Conclusions}\label{sec:conclusions}
In this work, we have presented a novel surrogate-based framework, Shape-DINO, for accelerating PDE-constrained shape OUU problems. By representing geometric variability through diffeomorphic mappings to a fixed reference domain and utilizing reduced-basis neural operator architectures with derivative-informed training, the proposed framework jointly learns the solution map and its derivatives with respect to both the random parameters and the design variables. This framework directly addresses the coupled challenges of geometric variability, sampling complexity, fast online evaluation, and gradient accuracy that arise in risk-based shape optimization. 

Beyond the methodological contributions, to the best of our knowledge, this work provides the first end-to-end theoretical analysis of RBNO surrogates for optimization under uncertainty. We proved the universal approximation properties (Theorem \ref{theorem:rbno_ua_semibounded}) of the multi-input RBNO and established bounds on the induced optimization error in the risk neutral setting (\Cref{theorem:rbno_ouu_approx}), providing a rigorous justification for surrogate-based optimization.    

Through a collection of numerical experiments, we have demonstrated that derivative-informed neural operators can efficiently solve a range of shape OUU problems. In all cases, incorporating Jacobian information during training led to substantial improvements in state prediction accuracy, gradient fidelity, and most importantly, reliable optimal designs. These observations are consistent with the a priori error bounds derived in Section \ref{sec:formulation}, which show that optimization errors are directly controlled by errors in both the solution and its shape derivatives. As a result, Shape-DINO consistently produced more robust solutions and more reliable optimization behavior than non-derivative neural operators. Moreover, the reduced-basis architecture enabled fast inference, while surrogate-based SAA allowed large-batch sampling that would be computationally infeasible using PDE solves alone. 

Our experiments further showed that Shape-DINO achieves significantly better cost–accuracy trade-offs than PDE-based approaches. For comparable levels of accuracy, Shape-DINO required several orders of magnitude fewer PDE solves than direct SAA-based PDE optimization, while enabling fast online evaluation once trained. This performance aligns with the approximation results established in Section \ref{sec:shape_dino}, which provides theoretical guarantee that reduced-basis multi-parameter neural operators can approximate both the solution and its derivatives arbitrarily well in appropriate $C^1$ norms. In practice, this allows the offline training cost to be amortized across multiple optimization runs involving different objectives, risk measures, or regularization choices, enabling repeated design queries, interactive design-space exploration, and near-real-time shape optimization under uncertainty.

In addition, we demonstrated that the proposed framework scales to complex 3D problems, where fully PDE-based optimization is computationally prohibitive. These results highlight the potential of Shape-DINO for novel applications involving high-dimensional geometry, uncertainty, and complex physical models.

We also acknowledge the limitations of our framework. When the optimization objective is relatively simple, i.e. accurately approximated with few PDE solves, our framework may be unnecessarily complex. In such settings, classical PDE-based optimization may be more appropriate. Shape-DINO is more advantageous in regimes where geometric variability, uncertainty, and sampling complexity make traditional approaches computationally infeasible. Also, our framework relies on diffeomorphic mesh deformation and elastic extensions, which are effective for moderate geometric variations. However, the chosen shape parametrization and training distribution may not adequately cover large or extreme deformations. If the optimization explores shapes far outside this regime, mesh distortion can occur and can affect both PDE accuracy and surrogate training. 

Finally, the results of this work suggest several directions for future research, including extensions to incorporation of higher-order derivative information, adaptive sampling strategies guided by optimization progress, and applications to more complex geometric parameterizations. Moreover, rather than serve as proxies for the PDE solves in evaluating the cost functional, surrogates can act as control variates to be employed along with small numbers of PDE-based evaluations to accelerate estimation of risk measures within a variance reduction framework \cite{NgWillcox14,ChenVillaGhattas19}. This opens the door to methods that are intermediate between neural operator-based and PDE-based cost functional evaluation, and to studying optimal tradeoffs between PDE solves during Shape-DINO training versus PDE solves during control variate-based optimization. 

More broadly, this work demonstrates that incorporating shape and parametric sensitives into neural operator training is essential for reliable and scalable OUU, positioning Shape-DINO as a powerful tool for complex PDE-constrained design optimization workflows. Beyond risk-based shape optimization, we envision that the proposed shape derivative informed operator learning framework can be employed in a broad range of settings, including PDE-based inverse problems for geometric and shape reconstruction, Bayesian inversion under geometric uncertainty, and near real time parametric deterministic shape optimization. This highlights the potential of Shape-DINO as a general computational framework for design, inference, and decision making for complex engineered systems.

\section{Acknowledgments}\label{sec:acknowledgments}
This research was partially supported by the U.S.\ Department of Energy under award DE-SC0023171,
the U.S.\ Air Force Office of Scientific Research under award FA9550-24-1-0327, by the U.S.\ National Science Foundation under awards DMS-2324643 and OAC-2313033, and by the Royal Society of New Zealand Te Ap\={a}rangi (Marsden Fund
Council) grant MFP-24-UOA-279.
The authors acknowledge the Texas Advanced Computing Center (TACC) at The University of Texas at Austin for providing computational resources that have contributed to the research results reported within this paper.

\addcontentsline{toc}{section}{References}
\bibliographystyle{model1-num-names}
\biboptions{sort,numbers,comma,compress}                 
\bibliography{shapedino.bib}

\appendix
\renewcommand{\thesection}{\Alph{section}}

\section{Spatial and Reference Domain Transformations}

\subsection{Common Coordinate Transformations} \label{appendix:coordinate_transforms}
Recall that under the diffeomorphism $\diffeo$, 
we can convert between spatial and reference domain coordinates by a change of variables 
\[
    x = \diffeo (X), \qquad X = \diffeo^{-1}(x)
\]
for $X \in \Omegaref$ and $x \in \diffeo(\Omegaref)$.
Differential and integral quantities transform accordingly. 
In \Cref{tab:transform}, we present the coordinate transformations for common quantities that are used in our numerical examples. 
Further details can be found in standard continuum mechanics texts such as \cite{GonzalezStuart01}.
\begin{table}[!htpb]
    \centering
    \renewcommand{\arraystretch}{1.2}
    \caption{Common coordinate transformations. Here $\defgrad := \nabla_X \diffeo$ is the deformation gradient tensor, $|\cdot|$ is the Euclidean norm of the vector, and $(\cdot)^T$ corresponds to the matrix transpose.}
    \label{tab:transform}
    \begin{tabular}{l c c}
        \toprule
         & Spatial coordinates $x$ & Reference coordinates $X$ \\
        \midrule
         Coordinate & $x$ & $\diffeo(X)$ \\
         Function & $u(x) $& $u(\diffeo(X)) = (T_{\diffeo}^{-1}u)(X)$  \\
         Gradient (scalar function) & $\nabla_x u(x) $& $\defgrad^{-T}(X) \nabla_X u(\diffeo(X)) $\\
         Gradient (vector function) & $\nabla_x v(x) $& $\nabla_X v(\diffeo(X)) \defgrad^{-1}(X) $\\
         Divergence & $\nabla_x \cdot v(x) $ & $\mathrm{tr}(\nabla_X v(\diffeo(X)) \defgrad^{-1}(X) )$ \\ 
         Volume element & $dx$ &  $\det \defgrad(X) dX$\\
         Vector surface element & $n(x) ds(x)$ & $\det \defgrad(X) \defgrad^{-T}(X) N(X) dS(X)$\\ 
         Scalar surface element & $ds(x)$ & $|\det \defgrad(X) \defgrad^{-T}(X) N(X)|dS(X)$\\ 
         \bottomrule
    \end{tabular}
\end{table}

\subsection{Proof of Proposition \ref{prop:pullback_pushforward_operators}}
\label{appendix:proof_pushforward}
\begin{proof}
    We start by deriving the norm bounds for the pushforward operator.
    First, recall that for $\defgrad(X) = \nabla_X \diffeo(X)$, we have 
    \[|\det \defgrad(X)| \leq \|\defgrad(X)\|_{\Op(\bR^{d},\bR^{d})}^{d} \leq \|\diffeo\|_{W^{1,\infty}(\Omega_0)}^{d}.\]
    Consider now any $u \in L^2(\Omega_0)$ and its pushforward $T_{\diffeo}^{-1}u = u(\diffeo^{-1}(x))$. By a change of variables, the $L^2(\Omega_1)$ norm of $T_{\diffeo^{-1}} u$ can be written as
    \begin{align*}
        \| T_{\diffeo}^{-1} u \|_{L^2(\Omega_1)}^2 
           &= \int_{\Omega_1} |(T_{\diffeo}^{-1}u)(x)|^2 dx \\
           &= \int_{\Omega_1} |u(\diffeo^{-1}(x))|^2 dx \\
           &= \int_{\Omega_0} |u(X)|^2 \det \defgrad(X) dX \\
           &\leq \|\diffeo\|_{W^{1,\infty}(\Omega_0)}^{d} \|u\|_{L^2(\Omega_0)}^2,
    \end{align*}
    which is the desired result for the $L^2$ case.

    For the $H^1$ case, we consider any $u \in H^1(\Omega_0)$.
    The $H^1(\Omega_1)$ seminorm of the pushforward can be written as 
    \begin{align*}
        \int_{\Omega_1} |\nabla_x (T_{\diffeo}^{-1}u)(x)|^2 dx 
           &= \int_{\Omega_1} |\nabla_x u(\diffeo^{-1}(x))|^2 dx \\
           &= \int_{\Omega_0} |\defgrad(X)^{-T} \nabla_X u(X)|^2 \det \defgrad(X) dX \\
           &\leq \|\diffeo\|_{W^{1,\infty}(\Omega_0)}^d \|\diffeo^{-1}\|_{W^{1,\infty}(\Omega_1)}^2 |u|_{H^1(\Omega_0)}^2,
    \end{align*}
    Combining this with the $L^2$ result, we have 
    \begin{equation}\label{eq:change_of_variable_h1_norm}
        \|T_{\diffeo}^{-1} u\|_{H^1(\Omega_1)}^2
        \leq 
            \left( 1 + \|\diffeo^{-1}\|_{W^{1,\infty}(\Omega_1)}^2 \right)
            \|\diffeo\|^d_{W^{1,\infty}(\Omega_0)}
            \|u\|_{H^1(\Omega_0)}^2.
    \end{equation}
    This yields the desired result for the $H^1$ case. Repeating the argument with $\diffeo$ instead of $\diffeo^{-1}$ yields the results for the pullback operator $T_{\diffeo}$.
\end{proof}

\section{Optimization Error}
\label{appendix:opt_error}
\subsection{Derivative Domination Bound}
\label{appendix:gradient_domination}
\begin{proof}(of \Cref{theorem:optimization_error})
    The proof is based on standard variational analysis. 
    In the following, we will use the shorthand notation $\langle \cdot ,\cdot \rangle$ 
    to denote the duality pairing $\langle \cdot, \cdot \rangle_{\cZ' \times \cZ}$.
    We first note that by $\lambda$-strong convexity of $\cJ$, for any $z_1, z_2 \in \cZ_{ad}$, we have
    \[
        \cJ(z_2) \geq \cJ(z_1) + \langle D_z \cJ(z_1), z_2 - z_1 \rangle + \frac{\lambda}{2} \|z_2 - z_1\|_{\cZ}^2,
    \]
    which additionally implies that 
    \[
        \langle D_z \cJ(z_2) - D_z \cJ(z_1), z_2 - z_1 \rangle \geq \lambda \|z_2 - z_1\|_{\cZ}^2.
    \]

    \paragraph{Optimality gap} 
    This is simply a constrained optimization form of the Polyak-\L{}ojasiewicz inequality and can be shown similarly.
    Using the strong convexity of $\cJ$, we have 
    \[
        \cJ(z^{\star}) \geq \cJ(z^{\dagger}) 
            + \langle D_z \cJ(z^{\dagger}), z^{\star} - z^{\dagger} \rangle
            + \frac{\lambda}{2} \|z^{\star} - z^{\dagger}\|_{\cZ}^2.
    \]
    We can rearrange this to obtain
    \begin{equation}\label{eq:optgap_base}
        \cJ(z^{\dagger}) - \cJ(z^{\star}) \leq
            - \langle D_z \cJ(z^{\dagger}), z^{\star} - z^{\dagger} \rangle
            - \frac{\lambda}{2} \|z^{\star} - z^{\dagger}\|_{\cZ}^2.
    \end{equation}
    Since $z^{\dagger}$ satisfies the first-order optimality conditions of $\cJ_{\theta}$, 
    we have additionally that
    \begin{equation}\label{eq:optanalysis_firstopt}
        \langle D_z \cJ_{\theta}(z^{\dagger}), z^{\star} - z^{\dagger} \rangle \geq 0.
    \end{equation}
    Adding \eqref{eq:optanalysis_firstopt} to \eqref{eq:optgap_base}, we have 
    \begin{align}
        \cJ(z^{\dagger}) - \cJ(z^{\star}) & \leq
           \langle D_z \cJ_{\theta}(z^{\dagger}) - D_z \cJ(z^{\dagger}), z^{\star} - z^{\dagger} \rangle
            - \frac{\lambda}{2} \|z^{\star} - z^{\dagger}\|_{\cZ}^2 \nonumber \\
        & \leq
           \| D_z \cJ_{\theta}(z^{\dagger}) - D_z \cJ(z^{\dagger})\|_{\cZ'} \|z^{\star} - z^{\dagger}\|_{\cZ}
            - \frac{\lambda}{2} \|z^{\star} - z^{\dagger}\|_{\cZ}^2. \label{eq:optgap_almost}
    \end{align}
    The right hand side of \eqref{eq:optgap_almost} is of the form a quadratic function 
    $a t - \lambda t^2/2$, 
    where $a = \| D_z \cJ_{\theta}(z^{\dagger}) - D_z \cJ(z^{\dagger})\|_{\cZ'} \geq 0$ and $t$ plays the role of  $\|z^{\star} - z^{\dagger}\|_{\cZ}$.
    This quadratic is maximized at $t^{\star} = a / \lambda$.
    Substituting this maximizer into \eqref{eq:optgap_almost} as an upper bound yields 
    \begin{equation}
        \cJ(z^{\dagger}) - \cJ(z^{\star}) \leq
           \frac{1}{2\lambda}\| D_z \cJ_{\theta}(z^{\dagger}) - D_z \cJ(z^{\dagger})\|_{\cZ'}^2,
    \end{equation}
    which is the desired result.

    \paragraph{Optimization error} 
    We begin with \eqref{eq:optanalysis_firstopt}, to which we add and subtract the derivative terms $D_z \cJ(z^{\star})$ and $D_z \cJ(z^{\dagger})$. This gives 
    \[
        \langle 
        D_z \cJ_{\theta}(z^{\dagger})
        - D_z \cJ (z^{\dagger})
        + D_z \cJ (z^{\dagger})
        - D_z \cJ (z^{\star})
        + D_z \cJ (z^{\star})
        ,
        z^{\star} - z^{\dagger} \rangle \geq 0.
    \]
    Rearranging this yields
    \begin{equation}\label{eq:opterror_base_inequality}
        \langle 
            D_z \cJ (z^{\star}) - D_z \cJ (z^{\dagger}), z^{\star} - z^{\dagger} 
        \rangle 
        \leq 
        \langle 
            D_z \cJ_{\theta}(z^{\dagger}) - D_z \cJ (z^{\dagger}), z^{\star} - z^{\dagger} 
        \rangle
        +
        \langle 
            D_z \cJ (z^{\star}) , z^{\star} - z^{\dagger} 
        \rangle.
    \end{equation}
    On the left hand side of \eqref{eq:opterror_base_inequality}, strong convexity implies that 
    \begin{equation} \label{eq:opterror_lhs}
        \langle 
            D_z \cJ (z^{\star}) - D_z \cJ (z^{\dagger}), z^{\star} - z^{\dagger} 
        \rangle  
        \geq 
        \lambda \|z^{\star} - z^{\dagger}\|_{\cZ}^2.
    \end{equation}
    On the right hand side of \eqref{eq:opterror_base_inequality}, we have that 
    \begin{equation} \label{eq:opterror_rhs1}
        \langle 
            D_z \cJ_{\theta}(z^{\dagger}) - D_z \cJ (z^{\dagger}), z^{\star} - z^{\dagger} 
        \rangle
        \leq 
            \| D_z \cJ_{\theta}(z^{\dagger}) - D_z \cJ (z^{\dagger}) \|_{\cZ'} \| z^{\star} - z^{\dagger} \|_{\cZ},
    \end{equation}
    and 
    \begin{equation} \label{eq:opterror_rhs2}
        \langle 
            D_z \cJ (z^{\star}) , z^{\star} - z^{\dagger} 
        \rangle
        \leq 0,
    \end{equation}
    where this latter inequality follows from the first-order optimality conditions for $\cJ$.
    Substituting \eqref{eq:opterror_lhs}--\eqref{eq:opterror_rhs2} into \eqref{eq:opterror_base_inequality}, we have
    \[
        \lambda \|z^{\star} - z^{\dagger}\|_{\cZ}^2 \leq \| D_z \cJ_{\theta}(z^{\dagger}) - D_z \cJ (z^{\dagger}) \|_{\cZ'} \| z^{\star} - z^{\dagger} \|_{\cZ},
    \]
    from which the desired result follows.
    
\end{proof}

\subsection{Derivative Error of the Expectation}
\label{appendix:grad_error_summary}
In this section, we present the proof of \Cref{prop:surrogate_gradient_error}.
To this end, we first show that the expectation leads to a continuously differentiable functional 
$z \mapsto \bE_{\mu_m}[Q(u(\cdot, z), \cdot, z)]$ when $\mu_m, u, Q$ satisfy
\Cref{assumption:pde_properties,assumption:qoi_properties}. 
This allows us to compare the difference in the derivatives when $u$ is replaced by a surrogate $u_{\theta}$ as in \Cref{prop:surrogate_gradient_error}.

To begin, we note that \Cref{assumption:pde_properties} implies that the mappings $z \mapsto u(\cdot,z)$ and $z \mapsto D_z u(\cdot, z)$ are Lipschitz continuous. 
\begin{proposition}
    \label{proposition:lipschitz_u} 
    Suppose that $u$, satisfies \Cref{assumption:pde_properties} 
    with the Borel probability measure $\mu_m$, compact admissible set $\cZ_{ad}$, and open set $\openset$ 
    for some $p \geq 2$. 
    Then, the mappings 
    $\openset \ni z \mapsto u(\cdot, z) \in L^p_{\mu_m}(\cM; \cU)$
    and
    $\openset \ni z \mapsto D_z u(\cdot, z) \in L^p_{\mu_m}(\cM; \HS(\cZ,\cU))$ are Lipschitz continuous. 
    That is, for every $z_1, z_2 \in \openset$
    \begin{align}
        \| u(\cdot, z_2) - u(\cdot, z_1) \|_{L^p_{\mu_m}(\cM; \cU)} &\leq \lip_{u} \|z_2 - z_1\|_{\cZ}, \\
        \| D_z u(\cdot, z_2) - D_z u(\cdot, z_1) \|_{L^p_{\mu_m}(\cM; \HS(\cZ,\cU))} &\leq \lip_{u_z} \|z_2 - z_1 \|_{\cZ}.
    \end{align}
    Moreover,  
    there exist constant $\ubu$ and $\ubDzu$
    such that 
    \begin{align}
        \| u(\cdot, z) \|_{L^p_{\mu_m}(\cM; \cU)} &\leq \ubu, \label{eq:ub_u}\\
        \| D_z u(\cdot, z) \|_{L^p_{\mu_m}(\cM; \HS(\cZ,\cU))} &\leq \ubDzu,  \label{eq:ub_Dzu}
    \end{align}
    for every $z \in \cZ_{ad}$.
\end{proposition}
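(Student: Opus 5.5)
The plan is to derive both Lipschitz bounds pointwise in $m$ from \Cref{assumption:pde_properties} and then integrate against $\mu_m$. For the uniform bounds, we compare with a fixed reference point and use the finite $p$th moment of $\mu_m$ together with compactness of $\cZ_{ad}$.

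\emph{Lipschitz continuity.} Fix $z_1, z_2 \in \openset$. Applying \eqref{eq:u_lip} with $m_1 = m_2 = m$ gives $\|u(m,z_2) - u(m,z_1)\|_{\cU} \leq \lip_u \|z_2 - z_1\|_{\cZ}$ for every $m \in \cM$. Raising this to the $p$th power and integrating against the probability measure $\mu_m$ yields
\[
\|u(\cdot,z_2) - u(\cdot,z_1)\|_{L^p_{\mu_m}(\cM;\cU)} \leq \lip_u \|z_2-z_1\|_{\cZ}.
\]
The bound for $D_z u$ follows identically from \eqref{eq:Dzu_lip}.

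The only subtlety is measurability, which is needed for these maps to be well-defined elements of the Bochner spaces. Continuity of $u$, given by item 1 of \Cref{assumption:pde_properties}, makes $m \mapsto u(m,z)$ continuous and hence strongly measurable, since $\cU$ is separable. For $D_z u$ we use the Lipschitz bound \eqref{eq:Dzu_lip}, which makes $m \mapsto D_z u(m,z)$ continuous into the separable Hilbert space $\HS(\cZ,\cU)$. Finiteness of the norms is handled in the next step.

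\emph{Uniform bounds.} Fix a reference point $z_0 \in \cZ_{ad}$ and some $m_0 \in \cM$, for instance $m_0 = 0$. By \eqref{eq:u_lip},
\[
\|u(m,z)\|_{\cU} \leq \|u(m_0,z_0)\|_{\cU} + \lip_u\left(\|m - m_0\|_{\cM} + \|z - z_0\|_{\cZ}\right).
\]
Taking $L^p_{\mu_m}$ norms and using Minkowski's inequality gives
\[
\|u(\cdot,z)\|_{L^p_{\mu_m}} \leq \|u(m_0,z_0)\|_{\cU} + \lip_u\left( \bE_{\mu_m}[\|m-m_0\|_{\cM}^p]^{1/p} + \operatorname{diam}(\cZ_{ad})\right).
\]
The right-hand side is finite because $\mu_m$ has finite $p$th moments and $\cZ_{ad}$ is compact, hence bounded. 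This constant serves as $\ubu$, and it is independent of $z \in \cZ_{ad}$. The same argument with \eqref{eq:Dzu_lip} gives $\ubDzu$; here $D_z u(m_0,z_0) \in \HS(\cZ,\cU)$ is finite because $\cZ$ is finite-dimensional and $D_z u(m_0,z_0)$ is a bounded operator.

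\emph{Main difficulty.} There is essentially no hard step. The only points needing care are the measurability and finiteness checks that make these maps valid elements of the Bochner spaces, which the argument above settles. If uniform bounds are wanted on all of $\openset$ rather than on $\cZ_{ad}$, they require $\openset$ to be bounded. The statement only asks for $z \in \cZ_{ad}$, so compactness suffices.
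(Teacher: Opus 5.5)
Your proposal is correct and follows essentially the paper's route: the Lipschitz bounds come from applying \eqref{eq:u_lip} and \eqref{eq:Dzu_lip} with $m_1 = m_2 = m$ and integrating, and finiteness comes from Lipschitz continuity in $m$ together with the finite $p$th moment of $\mu_m$. The differences are cosmetic. You obtain the uniform bounds explicitly from a reference point $(m_0,z_0)$ and $\mathrm{diam}(\cZ_{ad})$, whereas the paper appeals to continuity on the compact set $\cZ_{ad}$, which amounts to the same estimate. You also verify Bochner measurability, which the paper leaves implicit. Finally, your reference-point inequality, before $\|z - z_0\|_{\cZ}$ is replaced by the diameter, already gives $u(\cdot,z) \in L^p_{\mu_m}(\cM;\cU)$ for every $z \in \openset$, which the Lipschitz statement on $\openset$ tacitly requires.
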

\begin{proof}
    First, for any $z \in \openset$, \eqref{eq:u_lip} of \Cref{assumption:pde_properties} yields
    \[
        \| u(m,z) \|_{\cU}^p \leq (\|u(0,z)\|_{\cU} + \lip_{u} \|m\|_{\cM})^p, 
    \]
    such that $\bE_{m \sim \mu}[ \|u(m,z)\|_{\cU}^p < \infty$ since $\mu_m$ has finite $p^{th}$ moments.
    Moreover, for any $z_1, z_2 \in \openset$, 
    \[
        \|u(\cdot, z_2) - u(\cdot, z_1) \|_{L^p_{\mu_m}}^p
        = \bE_{m \sim \mu_m}[\|u(m, z_2) - u(m,z_1)\|_{\cU}^p]
        \leq \lip_{u}^p \|z_2 - z_1\|_{\cZ}^p,
    \]
    which shows that $z \mapsto u(\cdot, z)$ is Lipschitz continuous with constant $\lip_{u}$.
    An analogous application of \eqref{eq:Dzu_lip} yields Lipschitz continuity of $z \mapsto D_z u(\cdot, z)$ with constant $\lip_{u_z}$.
    This continuity combined with compactness of the admissible set $\cZ_{ad} \subset \openset$ then implies the existence of the upper bounds in \eqref{eq:ub_u} and \eqref{eq:ub_Dzu}.
\end{proof}
    
On the other hand, the Lipschitz continuity of $D_u Q$ and $D_{uz} Q$ from \Cref{assumption:qoi_properties} implies the following bounds.
\begin{lemma}[Quadratic bounds] \label{lemma:quadratic_bounds}
Suppose $Q$ satisfies \Cref{assumption:qoi_properties} with the compact admissible set $\cZ_{ad}$ and open set $\openset$. 
Then, for every $u_1, u_2 \in \cU$, $m \in \cM$, and $z \in \openset$, we have 
\begin{equation}\label{eq:quadratic_bound}
    |Q(u_2, m, z) - Q(u_1, m, z)| \leq \| D_u Q(u_1, m, z) \|_{\cU'} \|u_2 - u_1 \|_{\cU} + \frac{\lip_{Q_u}}{2} \| u_2 - u_1 \|_{\cU}^2
\end{equation}
and 
\begin{equation}\label{eq:quadratic_derivative_bound}
    \|D_z Q(u_2, m, z) - D_z Q(u_1, m, z) \|_{\cZ'} 
    \leq 
    \| D_{uz} Q(u_1, m, z) \|_{\Op(\cU,\cZ')} \|u_2 - u_1 \|_{\cU} + \frac{\lip_{Q_{uz}}}{2} \| u_2 - u_1 \|_{\cU}^2.
\end{equation}
\end{lemma}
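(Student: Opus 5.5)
The plan is to apply the fundamental theorem of calculus along the segment joining $u_1$ and $u_2$ in $\cU$, and then use the Lipschitz bounds of \Cref{assumption:qoi_properties}. Fix $m \in \cM$ and $z \in \openset$, and set $h := u_2 - u_1$ and $u_t := u_1 + t h$ for $t \in [0,1]$. Since $Q$ is twice continuously differentiable by \Cref{assumption:qoi_properties}, the scalar map $t \mapsto Q(u_t, m, z)$ is $C^1$ on $[0,1]$, with derivative $\langle D_u Q(u_t,m,z), h\rangle_{\cU'\times\cU}$. Hence
\[
Q(u_2,m,z) - Q(u_1,m,z) = \langle D_u Q(u_1,m,z), h\rangle_{\cU'\times\cU} + \int_0^1 \langle D_u Q(u_t,m,z) - D_u Q(u_1,m,z), h\rangle_{\cU'\times\cU}\, dt .
\]
The first term is bounded by $\|D_uQ(u_1,m,z)\|_{\cU'}\|h\|_{\cU}$. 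For the integrand we use the Lipschitz bound on $D_uQ$, which gives $\lip_{Q_u} \|u_t - u_1\|_{\cU}\|h\|_{\cU} = \lip_{Q_u} t \|h\|_{\cU}^2$. Integrating $t$ over $[0,1]$ produces the factor $1/2$, which yields \eqref{eq:quadratic_bound}.

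For \eqref{eq:quadratic_derivative_bound} we repeat the same argument, now for the $\cZ'$-valued map $t \mapsto D_z Q(u_t,m,z)$. This map is $C^1$ because $Q \in C^2$, and its derivative is $D_{uz}Q(u_t,m,z)h$, interpreting $D_{uz}Q$ as an element of $\Op(\cU,\cZ')$ in the convention of the assumption. The Bochner-integral form of the fundamental theorem of calculus in the Banach space $\cZ'$ then gives
\[
D_zQ(u_2,m,z) - D_zQ(u_1,m,z) = D_{uz}Q(u_1,m,z)h + \int_0^1 \big(D_{uz}Q(u_t,m,z) - D_{uz}Q(u_1,m,z)\big)h \, dt .
\]
Taking $\cZ'$ norms, bounding the first term by the operator norm and the integrand by $\lip_{Q_{uz}} t\|h\|_{\cU}^2$, and integrating again gives the factor $\lip_{Q_{uz}}/2$.

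The only step that needs any care is justifying the vector-valued fundamental theorem of calculus and identifying $\tfrac{d}{dt}D_zQ(u_t,m,z)$ with $D_{uz}Q(u_t,m,z)h$. Both follow from continuity of the second derivative along the segment, using symmetry of mixed partials, or simply the definition of $D_{uz}Q$ as the $u$-derivative of $D_zQ$. Because $\cZ = \bR^{\dz}$ is finite dimensional, we can alternatively avoid Bochner integrals entirely: test against each fixed $h_z \in \cZ$, apply the scalar argument, and take the supremum over $\|h_z\|_{\cZ} = 1$. Note that the bounds only use the Lipschitz conditions at fixed $(m,z)$, so the constants $\ubQ$ and related bounds are not needed here.
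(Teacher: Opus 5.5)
Your proposal is correct and follows essentially the same route as the paper: an integral (fundamental theorem of calculus) expansion along the segment $u_1 + th$, then the Lipschitz bounds on $D_uQ$ and $D_{uz}Q$ to get a term $t\|h\|_{\cU}^2$ whose integral over $[0,1]$ gives the factor $1/2$. Your remarks on justifying the vector-valued version, or reducing to scalar functions via the finite dimensionality of $\cZ$, add rigor that the paper leaves implicit.
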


\begin{proof}
    Consider first \eqref{eq:quadratic_bound}. 
    Taking a Taylor expansion for $Q$ yields
    \[
        Q(u_2, m, z) - Q(u_1,m,z)   = \int_0^{1} D_u Q(u_1 + th, m, z) h dt,
    \]
    where $h = u_2 - u_1$. Taking absolute values and making use of the Lipschitz continuity of $D_u Q$, we have
    \begin{align*}
        | Q(u_2, m, z)  - Q(u_1, m, z) | 
        & \leq \int_{0}^{1} 
        \|D_u Q(u_1 + th, m, z) \|_{\cU'} \|h\|_{\cU} dt \\
        & \leq \int_{0}^{1} 
        \left( \| D_u Q(u_1, m, z) \|_{\cU'} + \lip_{Q_u} t \|h\|_{\cU} \right)
        \|h\|_{\cU} dt  \\
        & \leq
        \|D_u Q(u_1, m, z)\|_{\cU'} \|h\|_{\cU}
        + \frac{\lip_{Q_u}}{2} \|h\|_{\cU}^2.
    \end{align*}
    For $D_z Q$, we can use a similar Taylor expansion
    \[
        D_z Q(u_2, m, z) - D_z Q(u_1, m, z)  = \int_{0}^{1} D_u D_z Q (u_1 + th, m, z) h dt = 
        \int_{0}^{1} D_{uz} Q (u_1 + th, m, z) h dt,
    \]
    which follows from the fact that $Q$ is twice continuously differentiable. 
    Taking $\|\cdot\|_{\cZ'}$ and making use of the Lipschitz continuity of $D_{uz} Q$ yields \eqref{eq:quadratic_derivative_bound}.
\end{proof}

Moreover, the bounds in \Cref{lemma:quadratic_bounds} can be written as follows.
\begin{corollary}\label{corollary:other_quadratic_bound}
    Consider the setting of \Cref{lemma:quadratic_bounds}. We have 
    \begin{equation}\label{eq:cor_quadratic_bound}
        |Q(u_2, m, z) - Q(u_1, m, z)| \leq (\ubDuQ + \lip_{Q_u} \|u_1\|_{\cU}) \|u_2 - u_1\|_{\cU} + \frac{\lip_{Q_u}}{2}\|u_2 - u_1\|_{\cU}^2
    \end{equation}
    and 
    \begin{equation}\label{eq:cor_quadratic_bound_derivative}
        \|D_z Q(u_2, m, z) - D_z Q(u_1, m, z) \|_{\cZ'} 
        \leq (\ubDuzQ + \lip_{Q_{uz}} \|u_1\|_{\cU}) \|u_2 - u_1\|_{\cU} + \frac{\lip_{Q_{u_z}}}{2}\|u_2 - u_1\|_{\cU}^2.
    \end{equation}
\end{corollary}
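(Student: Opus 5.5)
The plan is to start from the bounds of \Cref{lemma:quadratic_bounds} and replace the $u_1$-dependent leading coefficients by quantities controlled through \Cref{assumption:qoi_properties}. The only ingredient needed is a linear growth bound on $D_u Q(\cdot, m, z)$ and $D_{uz} Q(\cdot, m, z)$ obtained by comparing with the value at $u = 0$.

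For \eqref{eq:cor_quadratic_bound}, I would first apply the triangle inequality together with the Lipschitz continuity of $D_u Q$ in its first argument (item 2 of \Cref{assumption:qoi_properties}, with $u_2 = u_1$ and $u_1 = 0$). Combined with the bound $\|D_u Q(0, m, z)\|_{\cU'} \leq \ubDuQ$ from item 3, this gives
\[
\|D_u Q(u_1, m, z)\|_{\cU'} \leq \|D_u Q(0, m, z)\|_{\cU'} + \lip_{Q_u}\|u_1 - 0\|_{\cU} \leq \ubDuQ + \lip_{Q_u}\|u_1\|_{\cU}.
\]
This holds uniformly in $m \in \cM$ and $z \in \openset$. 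Substituting it into the first term of \eqref{eq:quadratic_bound} yields \eqref{eq:cor_quadratic_bound} directly; the quadratic term is unchanged.

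For \eqref{eq:cor_quadratic_bound_derivative}, I would repeat the argument in the operator norm $\Op(\cU,\cZ')$. The Lipschitz bound on $D_{uz} Q$ and the bound $\|D_{uz} Q(0,m,z)\|_{\Op(\cU,\cZ')} \leq \ubDuzQ$ give
\[
\|D_{uz} Q(u_1, m, z)\|_{\Op(\cU,\cZ')} \leq \ubDuzQ + \lip_{Q_{uz}}\|u_1\|_{\cU}.
\]
Inserting this into \eqref{eq:quadratic_derivative_bound} completes the proof.

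There is no real obstacle here. The only point to check is that the Lipschitz conditions in \Cref{assumption:qoi_properties} are stated with the same $m, z$ in both arguments, so they can be applied at the pair $(u_1, 0)$; the $w$ appearing in the first displayed condition should be read as $z$. The rest is bookkeeping.
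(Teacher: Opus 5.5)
Your proposal is correct and is essentially the paper's proof: it bounds $\|D_u Q(u_1,m,z)\|_{\cU'}$ and $\|D_{uz}Q(u_1,m,z)\|_{\Op(\cU,\cZ')}$ by their values at $u=0$ plus the Lipschitz increment, then substitutes these bounds into the two estimates of \Cref{lemma:quadratic_bounds}. Your reading of the stray $w$ in the Lipschitz condition of \Cref{assumption:qoi_properties} as $z$ is also the right interpretation.
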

\begin{proof}
    Using both the Lipschitz continuity of $D_u Q$ and its boundedness at $u = 0$, we can write
    \[
        \|D_u Q(u_1,m,z)\|_{\cU'} \leq \|D_u Q(0,m,z)\|_{\cU'} + \lip_{Q_u} \|u_1\|_{\cU} \leq \ubDuQ + \lip_{Q_{u}} \|u_1\|_{\cU}.
    \]
    Similarly, 
    \[
        \|D_{uz} Q(u_1,m,z)\|_{\Op(\cU,\cZ')} \leq \|D_{uz} Q(0,m,z)\|_{\cL(\cU,\cZ')} + \lip_{Q_{uz}} \|u_1\|_{\cU} \leq \ubDuzQ + \lip_{Q_{uz}} \|u_1\|_{\cU}.
    \]
    Substituting the above expressions into \eqref{eq:quadratic_bound} and \eqref{eq:quadratic_derivative_bound} yield the alternative bound given in this corollary.
\end{proof}

Alternatively, we can directly derive upper bounds on $Q$ and $D_z Q$ by applying the Taylor expansion at the point $u_1 = 0$.
\begin{corollary}\label{corollary:direct_q_bounds}
    Consider the setting of \Cref{lemma:quadratic_bounds}. For every $u \in \cU$, $m \in \cM$, and $z \in \openset$, we have 
    \begin{equation}\label{eq:cor_direct_Q_bound}
        |Q(u, m, z)| \leq \ubQ + \ubDuQ \|u\|_{\cU} + \frac{\lip_{Q_u}}{2} \|u\|_{\cU}^2
    \end{equation}
    and 
    \begin{equation}\label{eq:cor_direct_DzQ_bound}
        \|D_z Q(u, m, z)\|_{\cZ'} \leq \ubDzQ + \ubDuzQ \|u\|_{\cU} + \frac{\lip_{Q_{u_z}}}{2} \|u\|_{\cU}^2.
    \end{equation}
\end{corollary}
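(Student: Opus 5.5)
The plan is to specialize the proof of \Cref{lemma:quadratic_bounds} to the base point $u_1 = 0$, $u_2 = u$, and then replace the remaining first-order coefficients by the uniform constants of \Cref{assumption:qoi_properties}.

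For \eqref{eq:cor_direct_Q_bound}, I would begin with the triangle inequality,
\[
|Q(u,m,z)| \leq |Q(0,m,z)| + |Q(u,m,z) - Q(0,m,z)|.
\]
The first term is at most $\ubQ$ by part 3 of \Cref{assumption:qoi_properties}. For the second term I apply \eqref{eq:quadratic_bound} with $u_1 = 0$ and $u_2 = u$. This gives the bound $\|D_u Q(0,m,z)\|_{\cU'}\|u\|_{\cU} + \frac{\lip_{Q_u}}{2}\|u\|_{\cU}^2$. The bound $\|D_u Q(0,m,z)\|_{\cU'} \leq \ubDuQ$, also from part 3, then finishes this inequality. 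Equivalently, one could use \eqref{eq:cor_quadratic_bound} with $u_1 = 0$, since there the term $\lip_{Q_u}\|u_1\|_{\cU}$ vanishes.

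For \eqref{eq:cor_direct_DzQ_bound} the argument is the same. The triangle inequality in $\cZ'$ gives
\[
\|D_z Q(u,m,z)\|_{\cZ'} \leq \|D_z Q(0,m,z)\|_{\cZ'} + \|D_z Q(u,m,z) - D_z Q(0,m,z)\|_{\cZ'}.
\]
The first term is at most $\ubDzQ$. For the second I apply \eqref{eq:quadratic_derivative_bound} at $u_1 = 0$, and then bound $\|D_{uz}Q(0,m,z)\|_{\Op(\cU,\cZ')}$ by $\ubDuzQ$. This yields $\ubDzQ + \ubDuzQ\|u\|_{\cU} + \frac{\lip_{Q_{uz}}}{2}\|u\|_{\cU}^2$.

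I do not expect a genuine obstacle, since everything is already contained in \Cref{lemma:quadratic_bounds}. The only points needing care are notational. First, the constant $\lip_{Q_{u_z}}$ in the statement is the Lipschitz constant $\lip_{Q_{uz}}$ of \Cref{assumption:qoi_properties}. Second, the bounds hold for all $z \in \openset$, not only for $z \in \cZ_{ad}$, because the constants in part 3 of the assumption are uniform over $\openset$ and $\cM$.
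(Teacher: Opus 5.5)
Your proposal is correct and takes essentially the same route as the paper. Both specialize the Taylor-expansion argument of \Cref{lemma:quadratic_bounds} to $u_1 = 0$, $u_2 = u$, then substitute the uniform bounds $\ubQ$, $\ubDuQ$, $\ubDzQ$, $\ubDuzQ$ from \Cref{assumption:qoi_properties}; your remark that $\lip_{Q_{u_z}}$ is just $\lip_{Q_{uz}}$ is also right.
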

\begin{proof}
    The proof is almost identical to that of \Cref{lemma:quadratic_bounds}, where instead we take $u_2 = u$ and $u_1 = 0$ such that $h = u_2 - u_1 = u$. 
    We therefore have 
    \begin{equation}
        | Q(u,m,z) | \leq |Q(0,m,z)| + \|D_u Q(0,m,z)\|_{\cU'} \|u\|_{\cU} + \frac{\lip_{Q_u}}{2} \|u\|_{\cU}^2
    \end{equation}
    and  
    \begin{equation}
        \| D_z Q(u,m,z) \|_{\cZ'} \leq \|D_z Q(0,m,z)\|_{\cZ'} + \|D_{uz} Q(0,m,z)\|_{\cL(\cU,\cZ')} \|u\|_{\cU} + \frac{\lip_{Q_{uz}}}{2} \|u\|_{\cU}^2.
    \end{equation}
    Substituting the upper bounds from \Cref{assumption:qoi_properties} then yields the desired result.
\end{proof}

\subsubsection{Differentiability of the Quantity of Interest}
\label{appendix:differentiability_of_qoi}
As discussed, for any fixed $z$, the quantity $Q(u(m,z),m,z)$ is a random variable depending on the realization of $m \in \cM$. 
Thus, we can treat the mapping $z \mapsto Q(u(\cdot, z), \cdot, z)$ as a mapping from the variable $z$ to the space of random variables. 
Under \Cref{assumption:pde_properties,assumption:qoi_properties}, we can show that this is a continuously differentiable mapping from 
$\openset$ to $L^1_{\mu_m}$.
\begin{proposition}\label{prop:L1_differentiability}
    Suppose $u$ satisfies \Cref{assumption:pde_properties} and $Q$ satisfies \Cref{assumption:qoi_properties} with the Borel probability measure $\mu_m$, admissible set $\cZ_{ad}$, and open set $\openset$ for some $p \geq 2$.
    Then, the map $f_Q : z \mapsto Q(u(\cdot, z), \cdot, z)$ is a continuously differentiable function from $\openset$ to $L^1_{\mu_m}(\cM;\bR)$.
\end{proposition}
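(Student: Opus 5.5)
The plan is to identify the candidate derivative, show it is a Fréchet derivative into $L^1_{\mu_m}(\cM;\bR)$, and then show continuity in $z$. For fixed $z \in \openset$ the chain rule gives, pointwise in $m$,
\[
    G(z)(m) := D_u Q(u(m,z),m,z)\, D_z u(m,z) + D_z Q(u(m,z),m,z) \in \cZ'.
\]
This defines $G(z) \in \Op(\cZ, L^1_{\mu_m})$ by $h \mapsto G(z)(\cdot)h$. To see that it is well defined, I will use \Cref{corollary:other_quadratic_bound} and \Cref{corollary:direct_q_bounds}. These give the pointwise bound $\|G(z)(m)\|_{\cZ'} \leq (\ubDuQ + \lip_{Q_u}\|u(m,z)\|_{\cU})\|D_z u(m,z)\|_{\HS} + \ubDzQ + \ubDuzQ\|u\| + \tfrac{\lip_{Q_{uz}}}{2}\|u\|^2$. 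The right-hand side is integrable by Cauchy--Schwarz together with the $L^p_{\mu_m}$ bounds ($p\ge2$) on $u(\cdot,z)$ and $D_z u(\cdot,z)$. Those bounds follow as in the proof of \Cref{proposition:lipschitz_u}; that argument works at any $z\in\openset$, not only on $\cZ_{ad}$, using $\|u(m,z)\|\le\|u(0,z)\|+\lip_u\|m\|$ and similarly for $D_zu$.

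For differentiability, fix $z$ and $h$ small with $z+sh \in \openset$ for $s\in[0,1]$. Since $u$ and $Q$ are $C^1$, the pointwise map $s \mapsto Q(u(m,z+sh),m,z+sh)$ is $C^1$, and the fundamental theorem of calculus gives
\[
    f_Q(z+h)(m) - f_Q(z)(m) - G(z)(m)h = \int_0^1 \big(G(z+sh)(m) - G(z)(m)\big)h\,ds .
\]
It therefore suffices to show $\bE_{\mu_m}\big[\sup_{s\in[0,1]}\|G(z+sh)-G(z)\|_{\cZ'}\big]\to0$ as $h\to0$, or at least the integrated version $\int_0^1\bE[\|G(z+sh)-G(z)\|]\,ds = o(1)$. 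This is the same statement as continuity of $z\mapsto G(z)$ in $L^1_{\mu_m}(\cM;\cZ')$, which then also gives the ``continuously'' part. So the whole proof reduces to that continuity.

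To prove continuity of $G$, I split $G(z_2)-G(z_1)$ into three pieces. The first is $[D_uQ(u_2,m,z_2)-D_uQ(u_1,m,z_1)]D_zu_2$. The second is $D_uQ(u_1,m,z_1)[D_zu_2-D_zu_1]$. The third is $D_zQ(u_2,m,z_2)-D_zQ(u_1,m,z_1)$. Here $u_i=u(m,z_i)$. The second piece is bounded by $(\ubDuQ+\lip_{Q_u}\|u_1\|)\,\lip_{u_z}\|z_2-z_1\|$, which is controlled in $L^1$ directly.

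The first and third pieces are the main obstacle. \Cref{assumption:qoi_properties} only gives Lipschitz control in $u$ at fixed $z$, and only continuity (from $C^2$) in $z$. For these I split again, into a change in $u$ at fixed $z_2$ and a change in $z$ at fixed $u_1$. The change in $u$ is handled by the Lipschitz bounds $\lip_{Q_u}\|u_2-u_1\|\le\lip_Q\lip_u\|z_2-z_1\|$ times $\|D_zu_2\|$, which is integrable by the moment bounds. The change in $z$ at fixed $u_1$, e.g. $\|D_uQ(u_1,m,z_2)-D_uQ(u_1,m,z_1)\|$, tends to $0$ pointwise in $m$ by continuity of $D_uQ$ and $D_zQ$. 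I then apply dominated convergence, with dominating function obtained from the growth bounds $\ubDuQ+\lip_{Q_u}\|u_1\|$ and $\ubDzQ+\ubDuzQ\|u_1\|+\tfrac{\lip_{Q_{uz}}}{2}\|u_1\|^2$ (uniform in $z$), multiplied by $\sup_{z_2 \text{ near } z_1}\|D_zu(m,z_2)\|$. That supremum is itself bounded by $\|D_zu(m,z_1)\|+\lip_{u_z}\delta$, so the dominating function lies in $L^1$ by Cauchy--Schwarz and $p\ge2$.

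This gives $G\in C(\openset;\Op(\cZ,L^1_{\mu_m}))$, and hence $f_Q\in C^1(\openset;L^1_{\mu_m})$ with $D_zf_Q=G$.
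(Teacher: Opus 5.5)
Your proof is correct, but it handles the differentiability step differently from the paper.

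\textbf{The paper's route.} The paper proves Fr\'echet differentiability directly. It applies dominated convergence to the difference quotient
$|Q(u(m,z+h),m,z+h)-Q(u(m,z),m,z)-A(m,z)h|/\|h\|_{\cZ}$.
This requires separate dominating bounds for $\anoperator_1$ and $\anoperator_2$, and for the increments $\differ_1$ (via \Cref{corollary:other_quadratic_bound}) and $\differ_2$ (via the mean value theorem). It then runs a second, independent dominated-convergence argument for continuity of the derivative in $L^1_{\mu_m}(\cM;\cZ')$. That argument lets $z_2\to z_1$ in $\anoperator_1$ and $\anoperator_2$ all at once, using pointwise continuity and a common dominating function.

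\textbf{Your route.} You use the pointwise integral mean-value identity together with Tonelli. This reduces differentiability to continuity of $G$ in $L^1_{\mu_m}(\cM;\cZ')$, so one continuity proof yields both the Fr\'echet derivative and its continuity. Continuity in $\Op(\cZ,L^1_{\mu_m}(\cM;\bR))$ follows because that operator norm is dominated by the $L^1_{\mu_m}(\cM;\cZ')$ norm. This is the standard ``continuous G\^ateaux derivative implies $C^1$'' argument, and it is a bit more economical.

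\textbf{Trade-offs.} The paper's route has one advantage, unused here: it gives differentiability at a point without any continuity of the derivative. Your finer splitting in the continuity step is also valid but does more than is needed. You use Lipschitz rates for the changes in $u$ and $D_z u$, and reserve dominated convergence for the explicit $z$-dependence of $D_uQ$ and $D_zQ$ at fixed $u$. The paper's single dominated-convergence pass on $\anoperator_1$ and $\anoperator_2$ suffices.

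\textbf{Two small omissions to patch.}
\begin{enumerate}
\item You never check that $f_Q(z)$ itself lies in $L^1_{\mu_m}$. This follows from \eqref{eq:cor_direct_Q_bound} and $u(\cdot,z)\in L^2_{\mu_m}$.
\item In your third piece, $D_zQ$ is not assumed Lipschitz in $u$. The bound for the change in $u$ at fixed $z_2$ should come from \eqref{eq:cor_quadratic_bound_derivative}, not from the $\lip_{Q_u}$ estimate you wrote for the first piece. It gives $(\ubDuzQ+\lip_{Q_{uz}}\|u_1\|_{\cU})\lip_u\|z_2-z_1\|_{\cZ}+\tfrac{\lip_{Q_{uz}}}{2}\lip_u^2\|z_2-z_1\|_{\cZ}^2$. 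Alternatively, handle the whole third piece by dominated convergence as the paper does.
\end{enumerate}
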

\begin{proof}
    \textit{Well-defined:}
    We begin by verifying $f_Q(z)$ is in $L^1_{\mu_m}$ for any $z \in \openset$. 
    By \eqref{eq:cor_direct_Q_bound} of \Cref{corollary:direct_q_bounds}, we have
    \[
        \|f_Q(z)\|_{L^1_{\mu_m}} = 
        \bE_{m \sim \mu}[|Q(u(m,z),m,z)|] 
        \leq
        \bE_{m \sim \mu_m} \left[ \ubQ + \ubDuQ\|u(m,z)\|_{\cU} + \frac{\lip_{Q_u}}{2} \|u(m,z)\|_{\cU}^2 \right].
    \]
    Since $u(\cdot, z) \in L^p_{\mu_m}(\cM, \cU)$ with $p \geq 2$, the resulting expectation is finite such that $f_Q(z) \in L^1_{\mu_m}$
    
    \textit{Differentiability:}
    Since $Q$ and $u$ are both continuously differentiable, the derivative is formally given by 
    \[
        D_z f_Q(z) = D_u Q(u(\cdot, z), \cdot, z) D_z u(\cdot,z) + D_z Q(u(\cdot,z), \cdot, z).
    \]
    We verify this is indeed the derivative. In particular, we consider the residual
    \[
        r(z, h) := \| Q(u(\cdot, z+h), \cdot, z + h ) - Q(u(\cdot, z), \cdot ,z) - D_z f_Q(z) h \|_{L^1_{\mu_m}}
    \]
    is $o(\|h\|_{\cZ})$.
    We begin by letting $A(m,z) := D_u Q(u(m,z),m,z) D_z u(m,z) + D_zQ(u(m,z),m,z)$, such that 
    \begin{align*}
        \frac{r(z,h)}{\|h\|_{\cZ}} 
        = \bE_{m \sim \mu_m} \left[ \frac{
            | Q(u(m,z+h), m, z+h) - Q(u(m,z), m, z) - A(m,z) h |}
            {\|h\|_{\cZ}}\right].
    \end{align*}
    In particular, by the differentiability of $Q$ and $u$, the integrand converges to zero pointwise in $m$ as $h \rightarrow 0$.
    We now seek a dominating function on the integrand.
    To this end, define
    \[
        \anoperator_1 := D_u Q(u(m,z),m,z) D_z u(m,z),  \qquad \anoperator_2 := D_z Q(u(m,z),m,z)
    \]
    such that $A = \anoperator_1 + \anoperator_2$, and 
    \[
        \differ_1 := Q(u(m,z + h), m, z+h) - Q(u(m,z), m, z+h), \quad \differ_2 := Q(u(m,z), m, z+h) - Q(u(m,z),m,z),
    \]
    such that $Q(u(m,z + h), m, z+h) - Q(u(m,z),m,z) = \differ_1 + \differ_2$.
    We can bound each term separately. 
    \begin{enumerate}
        \item For $\anoperator_1$, we make use of the Lipschitz continuity of $D_u Q$, such that 
        \begin{equation}\label{eq:A1_bound}
            \frac{|\anoperator_1(m,z) h|}{\|h\|_{\cZ}} \leq  
            \|\anoperator_1(m,z)\|_{\cZ'}  
            \leq  \ubDuQ \|D_z u(m,z) \|_{\Op(\cZ,\cU)} + \lip_{Q_u} \|u(m,z)\|_{\cU}  \|D_z u(m,z) \|_{\Op(\cZ,\cU)}.
        \end{equation}
        
        \item For $\anoperator_2$, we make use of \eqref{eq:cor_direct_DzQ_bound} from \Cref{corollary:direct_q_bounds}, such that 
        \begin{equation}\label{eq:A2_bound}
            \frac{|\anoperator_2(m,z) h|}{\|h\|_{\cZ}}
            \leq \|\anoperator_2(m,z)\|_{\cZ'}
            \leq \ubDzQ + \ubDuzQ \|u(m,z)\|_{\cU} + \frac{\lip_{Q_{uz}}}{2} \|u(m,z)\|_{\cU}^2.
        \end{equation}
    
        \item For $\differ_1$, we make use of \eqref{eq:cor_quadratic_bound} from \Cref{corollary:other_quadratic_bound}, such that 
        \begin{align*}
            |\differ_1(m,z)| & \leq (\ubDuQ + \|u(m,z)\|_{\cU}) \|u(m,z+h) - u(m,z) \|_{\cU} + \frac{\lip_{Q_u}}{2} \|u(m,z+h) - u(m,z)\|_{\cZ}^2 \\
            & \leq (\ubDuQ + \|u(m,z)\|_{\cU}) \lip_{u} \|h\|_{\cZ} + \frac{\lip_{Q_u} \lip_{u}^2 }{2} \|h\|_{\cZ}^2,
        \end{align*}
        where this latter inequality follows from the Lipschitz continuity of $u$ from \Cref{assumption:pde_properties}. 
        Therefore,
        \begin{equation}
            \frac{|\differ_1(m,z)|}{\|h\|_{\cZ}} \leq (\ubDuQ + \|u(m,z)\|_{\cU}) \lip_u + \frac{\lip_{Q_u} \lip_u^2}{2} \|h\|_{\cZ}.
        \end{equation}
        \item For $\differ_2$, we first make use of the mean value theorem, such that 
        \[
            |\differ_2(m,z)| \leq \| D_z Q(u(m,z),m, \zeta) \|_{\cZ'} \|h\|_{\cZ},
        \]
        where $\zeta = z + th$ for some $t \in [0,1]$. 
        For sufficiently small $h$, we have that $\zeta \in \openset$. 
        This allows us to apply \eqref{eq:cor_direct_DzQ_bound} from \Cref{corollary:direct_q_bounds} to obtain
        \begin{equation}
            \frac{|\differ_2(m,z)|}{\|h\|_{\cZ}} 
            \leq
            \|D_z Q(u(m,z),m,\zeta) \|_{\cZ'} 
            \leq 
            \ubDzQ + \ubDuzQ \|u(m,z)\|_{\cU} + \frac{\lip_{Q_{uz}}}{2} \|u(m,z)\|_{\cU}^2.
        \end{equation}
    \end{enumerate}
    In each case, the upper bounds are quadratic forms of $\|u(m,z)\|_{\cU}$ and $\|D_z u(m,z)\|_{\Op(\cZ,\cU)}$ (or equivalently $\|D_z u(m,z)\|_{\HS(\cZ,\cU)}$), 
    both of which are $L^2_{\mu_m}$ functions. 
    Thus, the integrand is dominated by an $L^1_{\mu_m}$ function. 
    The dominated convergence theorem then allows passing to the limit such that
    \[
        \frac{r(z,h)}{\|h\|_{\cZ}} \rightarrow 0 \qquad \text{as } h \rightarrow 0.
    \]

    \textit{Continuity of the derivative:}
    To see continuity, consider any $z_1, z_2 \in \openset$. 
    We have that
    \[
        \| D_z f_Q(z_2) - D_z f_Q(z_1)  \|_{\cZ'}
        \leq 
        \bE_{m \sim \mu_m}[
            \| \anoperator_1(m, z_2) - \anoperator_1(m,z_1) \|_{\cZ'}
            + \| \anoperator_2(m, z_2) - \anoperator_2(m,z_1) \|_{\cZ'}
        ].
    \]
    Continuity of $D_u Q$, $D_z Q$, $u$, and $D_z u$ implies that the integrand converges to zero for every $m$.
    Again, it remains to find a dominating function that allows the limit $z_2 \rightarrow z_1$ 
    to be taken outside of the expectation.
    In particular, \eqref{eq:A1_bound} yields 
    \[
        \|\anoperator_1(m,z_1)\|_{\cZ'}
        \leq  \ubDuQ \|D_z u(m,z_1) \|_{\Op(\cZ,\cU)} + \lip_{Q_u} \|u(m,z_1)\|_{\cU}  \|D_z u(m,z_1) \|_{\Op(\cZ,\cU)}.
    \]
    Making use of the Lipschitz continuity of $u$ and $D_z u$, we also have 
    for $h = z_2 - z_1$
    \begin{align*}
        \|\anoperator_1(m,z_2)\|_{\cZ'}
        & \leq  \ubDuQ \|D_z u(m,z_2) \|_{\Op(\cZ,\cU)} + \lip_{Q_u} \|u(m,z_2)\|_{\cU}  \|D_z u(m,z_2) \|_{\Op(\cZ,\cU)} \\
        & \leq  \ubDuQ \left( \|D_z u(m,z_1) \|_{\Op(\cZ,\cU)} + \lip_{u_z} \|h\|_{\cZ} \right) \\
        & \qquad + \lip_{Q_u} (\|u(m,z_1)\|_{\cU} + \lip_{u} \|h\|_{\cZ})  
            ( \|D_z u(m,z_1) \|_{\Op(\cZ,\cU)} + \lip_{u_z} \|h\|_{\cZ}),
    \end{align*}
    For sufficiently small $h$, both 
    $\|\anoperator_1(m,z_1)\|_{\cZ'}$
    and
    $\|\anoperator_1(m,z_2)\|_{\cZ'}$
    are dominated by a common $L^1_{\mu_m}$ function.
    Similarly, for $\anoperator_2$, \eqref{eq:A2_bound} yields 
    \[
        \|\anoperator_2(m,z_1)\|_{\cZ'}
        \leq 
        \ubDzQ + \ubDuzQ \|u(m,z_1)\|_{\cU} + \frac{\lip_{Q_{uz}}}{2} \|u(m,z_1)\|_{\cU}^2,
    \]
    while Lipschitz continuity of $u$ and $D_z u$ imply
    \[
        \|\anoperator_2(m,z_2)\|_{\cZ'} 
        \leq 
        \ubDzQ + \ubDuzQ (\|u(m,z_1)\|_{\cU} + \lip_{u} \|h\|_{\cZ}) + \frac{\lip_{Q_{uz}}}{2} (\|u(m,z_1)\|_{\cU} + \lip_{u} \|h\|_{\cZ})^2.
    \]
    such that 
    $\|\anoperator_2(m,z_1)\|_{\cZ'}$
    and
    $\|\anoperator_2(m,z_2)\|_{\cZ'}$
    are also dominated by a $L^1_{\mu_m}$ function.
    The dominated convergence theorem then allows passing to the limit, such that 
    \[
        \| D_z f_Q(z_2) - D_z f_Q(z_1) \|_{\cZ'} \rightarrow 0 \qquad \text{as } z_2 \rightarrow z_1,
    \]
    which shows the continuity of $D_z f_Q$.
\end{proof}
    
A corollary of this result is that the cost function defined by the expectation of $Q$ is continuously differentiable.
\begin{corollary}\label{corollary:expectation_differentiability}
    Consider the setting of \Cref{prop:L1_differentiability}. 
    Then, the function $\cJ : \openset \rightarrow \bR$ given by 
    \begin{equation}
        \cJ(z) := \bE_{m \sim \mu_m}[Q(u(m,z),m,z)]
    \end{equation}
    is continuously differentiable, and its derivative is given by 
    \begin{equation}
        D_z \cJ(z) = \bE_{m \sim \mu_m}[D_u Q(u(m,z),m,z) D_z u(m,z) + D_z Q(u(m,z),m,z)].
    \end{equation}
\end{corollary}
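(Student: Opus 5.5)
The corollary follows from \Cref{prop:L1_differentiability} by composing with the expectation functional. Let $\bE : L^1_{\mu_m}(\cM;\bR) \rightarrow \bR$, $\xi \mapsto \bE_{m\sim\mu_m}[\xi(m)]$. This map is linear and bounded, since $|\bE[\xi]| \leq \|\xi\|_{L^1_{\mu_m}}$. Hence it is Fr\'echet differentiable, with derivative equal to itself everywhere. Since $\cJ = \bE \circ f_Q$ and \Cref{prop:L1_differentiability} gives $f_Q \in C^1(\openset; L^1_{\mu_m})$, the chain rule yields $D_z \cJ(z) = \bE \circ D_z f_Q(z)$.

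Concretely, I would repeat the residual estimate from the proof of \Cref{prop:L1_differentiability}. For $h$ small:
\[
\bigl|\cJ(z+h) - \cJ(z) - \bE[A(\cdot,z)]h\bigr| \leq \bE\bigl[|Q(u(\cdot,z+h),\cdot,z+h) - Q(u(\cdot,z),\cdot,z) - A(\cdot,z)h|\bigr] = r(z,h) = o(\|h\|_{\cZ}).
\]
Here $A(m,z) = D_u Q(u(m,z),m,z) D_z u(m,z) + D_z Q(u(m,z),m,z)$. Interchanging $\bE$ with the action on $h$ is justified because $h$ is deterministic and $\|A(\cdot,z)\|_{\cZ'} \in L^1_{\mu_m}$ by \eqref{eq:A1_bound}--\eqref{eq:A2_bound}. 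To see that $\|A(\cdot,z)\|_{\cZ'}$ is integrable, note that the bounds are quadratic in $\|u(m,z)\|_{\cU}$ and $\|D_z u(m,z)\|_{\HS(\cZ,\cU)}$. By Cauchy--Schwarz and \Cref{proposition:lipschitz_u} with $p\geq 2$, these quadratic terms are integrable. Therefore $\bE[A(\cdot,z)]$ is a well-defined element of $\cZ'$ (Bochner integral in the finite-dimensional $\cZ'$), and it gives the stated formula for $D_z\cJ(z)$.

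For continuity of the derivative, use
\[
\|D_z\cJ(z_2) - D_z\cJ(z_1)\|_{\cZ'} \leq \bE\bigl[\|A(\cdot,z_2) - A(\cdot,z_1)\|_{\cZ'}\bigr] = \|D_z f_Q(z_2) - D_z f_Q(z_1)\|_{\cL(\cZ, L^1_{\mu_m})}\text{-type bound}.
\]
This tends to zero by the dominated convergence argument already given in \Cref{prop:L1_differentiability}.

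The only delicate point is identifying the derivative of $f_Q$ as an element of $\cL(\cZ, L^1_{\mu_m})$ with the pointwise integrand $A(\cdot,z)$, so that pulling $\bE$ through is legitimate. This is immediate here because $\cZ=\bR^{\dz}$ is finite dimensional and $A$ is integrable. No further obstacle is expected.
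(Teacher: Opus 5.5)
Your proposal is correct and takes the same route as the paper. The paper's proof is exactly the chain-rule argument: $\bE_{\mu_m}[\cdot]$ is a bounded linear functional on $L^1_{\mu_m}$, and $z \mapsto Q(u(\cdot,z),\cdot,z)$ is $C^1$ into $L^1_{\mu_m}$ by \Cref{prop:L1_differentiability}; your explicit residual and continuity estimates simply unpack this. One small correction: the ``$=$'' in your continuity display should be an inequality. The quantity $\bE[\|A(\cdot,z_2)-A(\cdot,z_1)\|_{\cZ'}]$ dominates the $\cL(\cZ,L^1_{\mu_m})$ norm of $D_z f_Q(z_2)-D_z f_Q(z_1)$ (and, since $\cZ=\bR^{\dz}$, is equivalent to it up to a constant), so the conclusion is unaffected.
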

\begin{proof}
    This follows from the chain rule, and the fact that the expectation $\bE_{\mu_m}[ \cdot ]$ is a continuous linear functional on $L^1_{\mu_m}$, and therefore is continuously differentiable.
\end{proof}

\subsubsection{Proof of Proposition \ref{prop:surrogate_gradient_error}}
\label{appendix:proof_grad_error}
We now present the proof for \Cref{prop:surrogate_gradient_error}.
\begin{proof}
    Recall that by \Cref{corollary:expectation_differentiability}, we have the following expressions for the derivatives of 
    $\cJ^{\mean}(z) = \bE_{m \sim \mu_m}[Q(u(m,z),m,z)]$ and 
    $\cJ_{\theta}^{\mean}(z) = \bE_{m \sim \mu_m}[Q(u_{\theta}(m,z),m,z)]$,
    \begin{align*}
        D_z \cJ^{\mean}(z) &= \bE_{m \sim \mu_m}[D_u Q(u(m,z),m,z) D_z u(m,z) + D_z Q(u(m,z),m,z)], \\
        D_z \cJ_{\theta}^{\mean}(z) &= \bE_{m \sim \mu_m}[D_u Q(u_{\theta}(m,z),m,z) D_z u_{\theta}(m,z) + D_z Q(u_{\theta}(m,z),m,z)]. 
    \end{align*}
    We can write their difference using the following three terms
    \begin{align*}
        \differ_1(m,z) &:= \left( D_u Q(u(m,z),m,z) - D_u Q(u_{\theta}(m,z), m, z)  \right) D_z u(m,z), \\
        \differ_2(m,z) &:= D_u Q(u_{\theta}(m,z), m, z)  \left( D_z u(m,z) - D_z u_{\theta}(m,z) \right), \\
        \differ_3(m,z) &:=  D_z Q(u(m,z),m,z) - D_z Q(u_{\theta}(m,z), m, z), 
    \end{align*}
    such that 
    \[
        D_z \cJ^{\mean}(z) - D_z \cJ_{\theta}^{\mean} (z) = 
        \bE_{m \sim \mu_m}[ \differ_1(m,z) + \differ_2(m,z) + \differ_3(m,z)].
    \]
    Consider now any $z \in \cZ_{ad}$. We can bound the norms of each $\differ_i$ separately. 
    To simplify the notation, we will use 
    $\|u(\cdot, z)\|_{L^2_{\mu_m}}$ to denote the state Bochner norm $\|u(\cdot, z)\|_{L^2_{\mu_m}(\cM; \cU)}$
    and $\|D_z u(\cdot, z)\|_{L^2_{\mu_m}}$ to denote the derivative Bochner norm $\|D_z u(\cdot, z)\|_{L^2_{\mu_m}(\cM; \HS(\cZ,\cU))}$
    (likewise for the errors).
    Moreover, we will make use of the fact that $\|\cdot\|_{\Op(\cZ,\cU)} \leq \|\cdot \|_{\HS(\cZ,\cU)}$ throughout, and will not explicitly note this in the derivations below.
    We then have the following:
    \begin{enumerate}
        \item For $\differ_1$, we have  
        \[
            \|\differ_1(m,z)\|_{\cZ'}
            \leq 
            \| D_u Q(u(m,z), m, z) - D_u Q (u_{\theta}(m,z), m, z) \|_{\cU'}
            \| D_z u(m,z) \|_{\Op(\cZ, \cU)}.
        \]
        The Lipschitz continuity of $D_u Q$ then yields 
        \[
            \|\differ_1(m,z)\|_{\cZ'}
            \leq 
            \lip_{Q_u} \| u(m,z) - u_{\theta}(m,z) \|_{\cU}
            \| D_z u(m,z) \|_{\Op(\cZ, \cU)}
        \]
        Then, taking expectations and using the Cauchy--Schwarz inequality, we have 
        \begin{align*}
            \bE_{m\sim\mu_m}[\|\differ_1(m,z)\|_{\cZ'}]
            \leq 
            \lip_{Q_u} 
            \|u(\cdot, z) - u_{\theta}(\cdot, z)\|_{L^2_{\mu_m}}
            \|D_z u(\cdot, z)\|_{L^2_{\mu_m}}
            \leq \lip_{Q_u} \ubDzu \|u(\cdot, z) - u_\theta(\cdot, z)\|_{L^2_{\mu_m}},
        \end{align*}
        where we have made use of the upper bound from \Cref{proposition:lipschitz_u}.

        \item For $\differ_2$, we have 
        \[
            \|\differ_2(m,z)\|_{\cZ'} \leq \|D_u Q(u_{\theta}(m,z), m, z)\|_{\cU'} \|D_z u(m,z) - D_z u_{\theta}(m,z)\|_{\Op(\cZ, \cU)}.
        \]
        We can further make use of the Lipschitz continuity of $D_u Q$ to write 
        \begin{align*}
            \|D_u Q(u_{\theta}(m,z), m, z)\|_{\cU'}
            &\leq
            \|D_u Q(0, m, z) \|_{\cU'}
            +
            \lip_{Q_u} \| u(m,z) - u_{\theta}(m,z) \|_{\cU}
            +
            \lip_{Q_u} \| u(m,z) \|_{\cU} \\
            & \leq 
            \ubDuQ
            +
            \lip_{Q_u} \| u(m,z) - u_{\theta}(m,z) \|_{\cU}
            +
            \lip_{Q_u} \| u(m,z) \|_{\cU}.
        \end{align*}
        Taking expectations, and using Cauchy--Schwarz inequality along with the upper bound $\|u(\cdot,z)\|_{L^2_{\mu_m}} \leq \ubu$ 
        from \Cref{proposition:lipschitz_u}, 
        we also have 
        \begin{align*}
            \bE_{m\sim\mu_m}[\|\differ_2(m,z)\|_{\cZ'}]
            & \leq 
            (\ubDuQ + \lip_{Q_u} \ubu) \| D_z u(\cdot, z) - D_z u_{\theta}(\cdot, z)\|_{L^2_{\mu_m}} \\
            & \qquad + 
            \lip_{Q_u} \|u(\cdot, z) - u_{\theta}(\cdot, z) \|_{L^2_{\mu_m}} \| D_z u(\cdot, z) - D_z u_{\theta}(\cdot, z)\|_{L^2_{\mu_m}}.
        \end{align*}
        Note that by Young's inequality, $ab \leq \tfrac{a^2}{2} + \tfrac{b^2}{2}$, we can write this without cross terms as 
        \begin{align*}
            \bE_{m\sim\mu_m}[\|\differ_2(m,z)\|_{\cZ'}]
            & \leq 
            (\ubDuQ + \lip_{Q_u} \ubu) \| D_z u(\cdot, z) - D_z u_{\theta}(\cdot, z)\|_{L^2_{\mu_m}} \\
            & \qquad + 
            \frac{\lip_{Q_u}}{2} \|u(\cdot, z) - u_{\theta}(\cdot, z) \|_{L^2_{\mu_m}}^2
            + \frac{\lip_{Q_u}}{2}
            \| D_z u(\cdot, z) - D_z u_{\theta}(\cdot, z)\|_{L^2_{\mu_m}}^2.
        \end{align*}

        \item Finally, for $\differ_3$, we directly use \eqref{eq:cor_quadratic_bound_derivative} from \Cref{corollary:other_quadratic_bound},
        \[
            \|\differ_3(m,z)\|_{\cZ'} \leq 
            (\ubDuzQ + \lip_{Q_{uz}} \|u(m,z)\|_{\cU})  \|u(m,z) - u_{\theta}(m,z)\|_{\cU}
            + \frac{\lip_{Q_{uz}}}{2} \|u(m,z) - u_{\theta}(m,z)\|_{\cU}^2.
        \]
        Again, taking expectations and using the Cauchy--Schwarz along with the upper bound from \Cref{proposition:lipschitz_u}, we have
        \[
            \bE_{m \sim \mu_m}[\|\differ_3(m,z)\|_{\cZ'}]
            \leq 
            \left( \ubDuzQ  + \lip_{Q_{uz}} \ubu \right)
            \|u(\cdot, z) - u_{\theta}(\cdot, z)\|_{L^2_{\mu_m}}
            + \frac{\lip_{Q_{uz}}}{2} 
            \|u(\cdot, z) - u_{\theta}(\cdot, z)\|_{L^2_{\mu_m}}^2.
        \]
    \end{enumerate}

    Combining the above, we have 
    \begin{align*}
        \| D_z \cJ^{\mean}(z) - D_z \cJ_{\theta}^{\mean} (z) \|_{\cZ'} 
        & \leq 
        C_1 \left( 
            \|u(\cdot, z) - u_{\theta}(\cdot, z)\|_{L^2_{\mu_m}}
            + \|D_z u(\cdot, z) - D_z u_{\theta}(\cdot, z)\|_{L^2_{\mu_m}}
        \right) \\ 
        & \qquad + C_2 \left ( 
            \|u(\cdot, z) - u_{\theta}(\cdot, z)\|_{L^2_{\mu_m}}^2 
            + \|D_z u(\cdot, z) - D_z u_{\theta}(\cdot, z)\|_{L^2_{\mu_m}}^2
           \right),
    \end{align*}
    where $C_1, C_2 > 0$ are constants depending on $\ubDuQ, \ubDuzQ, \lip_{Q_u}, \lip_{Q_{uz}}, \ubu$ but not $u_{\theta}$.
\end{proof}
    
\section{Universal Approximation Results}
\label{appendix:ua_all}
In this section, we develop the proofs for the universal approximation property of RBNOs in terms of both the outputs and the derivatives (\Cref{theorem:rbno_ua_semibounded}), and its subsequent application to risk neutral optimization problems (\Cref{theorem:rbno_ouu_approx}). In order to prove \Cref{theorem:rbno_ua_semibounded}, we decompose the approximation errors of an RBNO into the dimension reduction errors associated with projecting the inputs and outputs and the neural network approximation error in the resulting latent space. 
We bound each source of the approximation errors separately, beginning with the latent space neural network in \Cref{appendix:ua_nn} followed by the dimension reduction in \Cref{appendix:dimension_red_errors}, before combining them into a proof of \Cref{theorem:rbno_ua_semibounded} in \Cref{appendix:proof_rbno_ua_semibounded}.
The proof of \Cref{theorem:rbno_ouu_approx} then follows from \Cref{theorem:rbno_ua_semibounded} along with \Cref{theorem:optimization_error} and \Cref{prop:surrogate_gradient_error}.

Within this section, we will deviate from the notational conventions introduced in \Cref{remark:geometry_notation} relating to the change of coordinates between spatial and reference domains.
Namely, $x$ will be used to denote generic vectors in $\bR^{d_x}$, and $\widetilde{\cdot}$ will not carry any implications about whether it is a spatial or reference domain quantities.

\subsection{Finite-Dimensional Neural Network Approximation}
\label{appendix:ua_nn}
We first prove a variant of the universal approximation theorem for neural networks in finite dimensions, which we state as follows.

\begin{theorem}[Universal approximation on semibounded sets]\label{theorem:ua_semibounded}
    Let $g \in C^1(\mathbb{R}^{d_x} \times \openset; \bR^{d_g})$, where $\openset \subset \bR^{d_y}$ is open 
    and let $\mu_x$ be a Borel probability measure on $\bR^{d_x}$.
    Suppose that for some $p \geq 1$,
    \begin{enumerate}
        \item the mapping $\openset \ni y \mapsto g(\cdot, y) \in L^p_{\mu_x}(\bR^{d_x}; \bR^{d_g})$ is continuous, and
        \item the mapping $\openset \ni y \mapsto D_y g(\cdot, y) \in L^p_{\mu_x}(\bR^{d_x}; \bR^{d_g \times d_y})$ is continuous, where $\bR^{d_g \times d_y}$ is equipped with the Frobenius norm $\|\cdot\|_{F}$.
    \end{enumerate}
    Then for any $\epsilon > 0$ and any compact set $K_y \subset \openset$, 
    there exists a GELU neural network $g_{\theta}$ such that 
    \[
        \sup_{y \in K_y} \mathbb{E}_{x \sim \mu_x} \left[ \| g(x,y)- g_{\theta}(x,y) \|_2^p \right] + \mathbb{E}_{x \sim \mu_x} \left[ \| D_y g(x,y)- D_y g_{\theta}(x,y) \|_F^p \right] < \epsilon^p.
    \]
\end{theorem}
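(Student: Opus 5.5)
Three steps: truncate the unbounded $x$-direction to a large ball, approximate there with a classical $C^1$ universal approximation theorem on compact sets, and control the tail contributed by the network. I read the statement as requiring the supremum over $y\in K_y$ of the sum of both expectations.

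\textbf{Step 1: reduce to a compact set.} For $R>0$ let $B_R=\{x:\|x\|_2\le R\}$. The maps $y\mapsto g(\cdot,y)$ and $y\mapsto D_yg(\cdot,y)$ are continuous into $L^p_{\mu_x}$, so their images over the compact $K_y$ are compact in $L^p_{\mu_x}$. A compact family in $L^p$ is uniformly $p$-integrable. Since $\mu_x(B_R^c)\to 0$, we can therefore choose $R$ with
\[
\sup_{y\in K_y}\int_{B_R^c}\bigl(\|g(x,y)\|_2^p+\|D_yg(x,y)\|_F^p\bigr)\,d\mu_x<\delta .
\]
For the record, uniform integrability can be obtained from a finite $\delta$-net $\{y_j\}$ of $K_y$ in the product $L^p$ topology, using the uniform integrability of finitely many $L^p$ functions.

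\textbf{Step 2: approximate on $B_R\times K_y$.} Enlarge $K_y$ slightly to a compact $K'\subset\openset$ so that derivatives are taken in an open neighbourhood. On the compact set $B_R\times K'$, $g$ is $C^1$. The $C^1$ universal approximation theorem for smooth nonpolynomial activations applies here; for GELU this is the result used in \cite{luo2025dis,yao2025derivative}, following Hornik-type $C^k$ density. It gives a GELU network $h$ with $\|g-h\|_{C^1(B_R\times K')}<\eta$. This controls the bulk contribution: for each $y\in K_y$,
\[
\int_{B_R}\bigl(\|g-h\|_2^p+\|D_yg-D_yh\|_F^p\bigr)\,d\mu_x\le 2\eta^p .
\]

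\textbf{Step 3: control the tail of the network (main obstacle).} On $B_R^c$ we need $\int_{B_R^c}\|h\|^p+\|D_yh\|^p\,d\mu_x$ to be small uniformly in $y\in K_y$. The difficulty is that $h$ is built only from data on $B_R$ and grows at most linearly in $x$, while $\mu_x$ is assumed to have no moments. My first attempt is to make the network bounded outside a compact set by composing with a cutoff realised by GELU units. GELU satisfies $\GELU(t)-\GELU(t-c)\to$ const, so a difference of two shifted GELUs gives a smooth bounded ramp. Applying such a clamp coordinatewise to $x$ before feeding it to $h$ yields $\tilde h(x,y)=h(\mathrm{clip}(x),y)$. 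This is still a GELU network, bounded with bounded $y$-derivative on $\bR^{d_x}\times K_y$ by a constant $C_h$ independent of $R$ (take $\|h\|_{C^1}\le \|g\|_{C^1(B_{R'}\times K')}+1$ on the clipped region). The tail then contributes at most $2^{p}(\delta+C_h^p\mu_x(B_R^c))$.

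There is a circularity: $C_h$ depends on $R$. I break it by choosing the clamp radius $R$ first, then a larger $R_2$ with $\mu_x(B_{R_2}^c)$ small relative to $\sup_{B_R\times K'}\|g\|_{C^1}$, and approximating $g$ on $B_{R}$ only. On $B_{R_2}\setminus B_R$ the error is controlled by the tail bound of Step 1 plus the bounded clipped values times $\mu_x(B_R^c)$. If needed I would instead choose $R$ via $\mu_x(B_R^c)\,(1+\sup_{B_{R}\times K'}\|g\|_{C^1})^p\to0$; if this product fails to vanish, fall back to the clipped-input formulation and argue via the uniform integrability of Step 1. The clamp must stay exactly the identity, up to $\eta$ in $C^1$, on $B_R$, which the shifted-GELU ramp achieves up to a small, controllable error. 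Summing the three contributions and choosing $\delta,\eta$ small gives the bound $<\epsilon^p$.
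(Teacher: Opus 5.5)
Steps 1 and 2 are fine. Your uniform tail bound, obtained from compactness of the image of $K_y$ in $L^p_{\mu_x}$, is equivalent to the paper's Dini argument. Step 3, however, has a genuine gap, and no choice of radii rescues the input clamp.

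For $x$ outside the clamp region, $\tilde h(x,y)\approx g(\mathrm{clip}(x),y)$ and $D_y\tilde h(x,y)\approx D_yg(\mathrm{clip}(x),y)$. These are \emph{boundary} values of $g$, spread over the entire exterior mass. The uniform integrability from Step 1 only controls $\|g(x,y)\|_2$ and $\|D_yg(x,y)\|_F$ at the exterior point $x$ itself, so it says nothing about these boundary values. You therefore really do need $C_h^p\,\mu_x(B_R^c)$ to be small. The $R_2$ device does not help: on $B_{R_2}\setminus B_R$ the clamped values still contribute $C_h^p\,\mu_x(B_{R_2}\setminus B_R)$, which is the same uncontrolled term.

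This term can fail to be small for every $R$. Take $d_x=d_y=1$, $\openset=(-1,2)$, $K_y=[0,1]$, and $\mu_x=\mathcal{N}(0,1)$ with tail $F(t)=\mu_x((t,\infty))$. Let $\phi$ be a smooth bump supported in $(-1,1)$ with $\phi(0)=1$. For $y>0$ set $D_yg(x,y)=F(x)^{-1/p}\phi\big((x-1/y)/y^3\big)$ and $g(x,y)=\int_0^{y}D_yg(x,s)\,ds$; set $g=0$ for $y\le 0$.
\begin{itemize}
\item $g$ is smooth, because for $x$ in a compact set it vanishes near $y=0$.
\item By Mills' ratio, $\bE_{x\sim\mu_x}[|D_yg(x,y)|^p]=O(y^2)$, so both continuity hypotheses hold.
\item For a clamp saturating at $R$, at $y=1/R$ the clamped network has $y$-derivative $\approx F(R)^{-1/p}$ on all of $\{x>R\}$.
\item But $D_yg(\cdot,1/R)$ vanishes for $x>R+R^{-3}$, so the derivative error is at least about $F(R+R^{-3})/F(R)\to 1$.
\end{itemize}

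The missing idea, which is how the paper proceeds (its lemma on bounded approximation on semi-bounded sets), is to cut off the \emph{output} rather than the input. Set $g_\theta(x,y)=\ftimes\big(\funcut(x),\tilde g_\theta(x,y)\big)$, where:
\begin{itemize}
\item $\funcut\approx 1$ on $B_R$, $\funcut\approx 0$ outside $B_{5(\sqrt{d_x}+1)R}$, and $|\funcut|\le\ub_{\cutoff}$ with $\ub_{\cutoff}$ independent of $R$;
\item $\tilde g_\theta$ is a globally bounded GELU network (bound $\ub_g$) approximating $g$ in $C^1$ on a ball containing the \emph{entire} transition layer of the cutoff;
\item $\ftimes$ approximates multiplication on $[-\ub_{\cutoff},\ub_{\cutoff}]\times[-\ub_g,\ub_g]$.
\end{itemize}
In the transition layer the network is then $\approx\funcut(x)\,g(x,y)$, so it is controlled by $g$ at the same point $x$. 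Beyond the layer the network is $\approx 0$, once $\epsilon_{\cutoff}$ is chosen after $\ub_g$ is known. Since $\funcut$ does not depend on $y$, the same holds for $D_yg_\theta\approx\funcut\,D_y\tilde g_\theta$.

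This gives the pointwise domination $\|g-g_\theta\|_2^p\le\const_p(\|g\|_2^p+1)$ and $\|D_yg-D_yg_\theta\|_F^p\le\const_p(\|D_yg\|_F^p+1)$ for all $x\in\bR^{d_x}$ and $y\in K_y$. Here $\const_p$ depends only on $p$ and $\ub_{\cutoff}$, not on $R$ or on the accuracy. That independence is exactly what breaks your circularity: $R$ is fixed first from the uniform tail bound of your Step 1, and only afterwards is the network built.
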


The result differs from conventional universal approximation results in two ways. 
First, the approximation accuracy considers both the output and the derivative error. 
Second, instead of considering approximation errors as either uniform over compact sets or integral quantities over distributions (with potentially unbounded support), 
we decompose the inputs into two sets of variables, here denoted $x$ and $y$, where integration is performed over the former while the supremum is taken over the latter.
This anticipates the differing roles of the two inputs for the solution operator $(m,z) \mapsto u(m,z)$, where integration is with respect to $m$ while optimization is with respect to $z$.

In this work, we prove \Cref{theorem:ua_semibounded} following a cutoff argument used in \cite{yao2025derivative}. 
That is, we use conventional universal approximation results of feedforward neural networks over compact sets to construct a neural network $\tilde{g}$ with high accuracy over a compact subset $K_x \times K_y \subset \bR^{d_x} \times \openset$.
We then construct a cutoff function in the form of a second neural network such that when applied to $\tilde{g}$, controls the approximation error outside of the compact set $K_x \times K_y$.
Finally, we show that this construction can be taken to the limit as $K_x$ approaches the entire $\bR^{d_x}$, allowing the $L^p_{\mu_x}$ error over $\bR^{d_x}$ to be made arbitrarily small uniformly over $y \in K_y$.

We present proof of \Cref{theorem:ua_semibounded} follows. 
First, we show how an appropriate cutoff function can be constructed as a neural network (\Cref{lemma:ell1,lemma:cutoff,lemma:cutoff_ell2}).
We then show how the cutoff function can be used in combination with standard universal approximation results to construct approximations of $g$ that are accurate within a compact set while having controlled growth rates outside of the compact set (\Cref{lemma:bounded_compact,lemma:bounded_approx_ua}).
Finally, in Section \ref{sec:completing_the_proof}, we complete the proof for \Cref{theorem:ua_semibounded} by combining the aforementioned results through the limiting argument taking $K_x$ to $\bR^{d_x}$.

\subsubsection{Neural Network Constructions}
\label{appendix:nn_constructions}
In order to construct the cutoff function, we first require an approximation of the norm function $\|\cdot\|$ as a neural network.
As shown in \cite[Lemma D.4]{yao2025derivative}, we can use GELU activation functions to construct an approximation of the absolute value function $|\cdot|$.
We summarize this result as follows.
\begin{lemma}[Absolute value function]\label{lemma:absval}
    Given $\theta > 0$, define for $x \in \mathbb{R}$
    \begin{equation}
        \fabs (x; \theta) := \frac{ \GELU(\theta x) + \GELU(-\theta x) }{\theta},
    \end{equation}
    Then, for any $\epsilon > 0$, there exists $\theta > 0$ such that 
    \begin{equation}
        \left| \fabs(x;\theta) - |x| \right| \leq \epsilon
    \end{equation}
    and $\fabs(x;\theta) \geq 0$ for all $x \in \mathbb{R}$.    
\end{lemma}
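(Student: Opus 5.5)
The plan is to prove the lemma from the explicit form of $\GELU$ by a direct computation. Write $\GELU(t) = t\,\Phi_{\text{Gauss}}(t)$ and use the symmetry $\Phi_{\text{Gauss}}(-t) = 1 - \Phi_{\text{Gauss}}(t)$. This gives
\[
\GELU(t) + \GELU(-t) = t\Phi_{\text{Gauss}}(t) - t\bigl(1 - \Phi_{\text{Gauss}}(t)\bigr) = t\bigl(2\Phi_{\text{Gauss}}(t) - 1\bigr).
\]
Since $2\Phi_{\text{Gauss}}(t) - 1$ has the same sign as $t$, the right-hand side equals $|t|\,\bigl(2\Phi_{\text{Gauss}}(|t|) - 1\bigr) \geq 0$. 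Taking $t = \theta x$ and dividing by $\theta>0$ yields
\[
\fabs(x;\theta) = |x|\bigl(2\Phi_{\text{Gauss}}(\theta|x|) - 1\bigr).
\]
Nonnegativity is therefore immediate for every $\theta > 0$.

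For the approximation bound, I first use the identity above to write
\[
|x| - \fabs(x;\theta) = 2|x|\bigl(1 - \Phi_{\text{Gauss}}(\theta |x|)\bigr) = \frac{2}{\theta}\, s\bigl(1-\Phi_{\text{Gauss}}(s)\bigr), \qquad s = \theta|x| \geq 0.
\]
This quantity is nonnegative. It remains to show that $h(s) := s(1 - \Phi_{\text{Gauss}}(s))$ is bounded on $[0,\infty)$.

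The only step needing care, though it is still straightforward, is bounding $h$ uniformly. I would use the Gaussian tail (Mills ratio) bound $1 - \Phi_{\text{Gauss}}(s) \leq \varphi(s)/s$ for $s>0$, where $\varphi$ is the standard normal density. This gives $h(s) \leq \varphi(s) \leq 1/\sqrt{2\pi}$, and $h(0)=0$ covers the remaining point. Alternatively, $h$ is continuous, vanishes at $0$, and tends to $0$ at infinity, so it is bounded by some constant $C$.

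Hence
\[
\bigl|\fabs(x;\theta) - |x|\bigr| \leq \frac{2}{\theta\sqrt{2\pi}}
\]
uniformly in $x$. Choosing $\theta \geq 2/(\epsilon\sqrt{2\pi})$ gives the claimed error $\epsilon$.
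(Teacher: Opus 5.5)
Your proof is correct and complete. The identity $\GELU(t)+\GELU(-t)=t\bigl(2\Phi_{\text{Gauss}}(t)-1\bigr)$ gives the closed form $\fabs(x;\theta)=|x|\bigl(2\Phi_{\text{Gauss}}(\theta|x|)-1\bigr)$. The Mills-ratio bound $1-\Phi_{\text{Gauss}}(s)\le \varphi(s)/s$, with $\varphi$ the standard normal density, then gives $0\le |x|-\fabs(x;\theta)\le 2/(\theta\sqrt{2\pi})$ uniformly in $x$.

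The paper does not prove this lemma itself; it imports it from \cite[Lemma D.4]{yao2025derivative}. Your direct computation is therefore a self-contained substitute rather than a reconstruction of an argument given in the text. It also delivers slightly more than the statement requires:
\begin{itemize}
\item nonnegativity for every $\theta>0$, not only for the chosen one;
\item a one-sided error;
\item an explicit $O(1/\theta)$ rate, with the concrete choice $\theta\ge\sqrt{2/\pi}/\epsilon$.
\end{itemize}
The explicit rate would make the choice of $\theta$ in \Cref{lemma:ell1}, which uses tolerance $\epsilon/d_x$, fully explicit as well. The citation route keeps the paper short but leaves that constant implicit.
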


We can then use this to construct an approximation to the $\ell_1$ norm as follows.
\begin{lemma}[$\ell_1$ norm]\label{lemma:ell1}
        Given $\theta > 0$, define for $x \in \mathbb{R}^{d_x}$
    \begin{equation}
        \fnorm (x; \theta) := \ones^T \frac{ \GELU(\theta x) + \GELU(-\theta x) }{\theta},
    \end{equation}
    where $\mathbf{1} := (1, \dots, 1) \in \mathbb{R}^{d_x}$.
    Then, for any $\epsilon > 0$, there exists $\theta > 0$ such that 
    \begin{equation}
        \left| \fnorm(x;\theta) - \|x\|_1 \right| \leq \epsilon
    \end{equation}
    and $\fnorm(x;\theta) \geq 0$ for all $x \in \mathbb{R}^{d_x}$.
\end{lemma}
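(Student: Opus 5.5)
The plan is to reduce the statement coordinatewise to \Cref{lemma:absval}. Note that $\fnorm(x;\theta) = \sum_{i=1}^{d_x} \fabs(x_i;\theta)$, where $x_i$ is the $i$-th component of $x$. This holds because $\GELU$ acts componentwise and contracting with $\ones^T$ sums the components. Likewise $\|x\|_1 = \sum_{i=1}^{d_x} |x_i|$.

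Given $\epsilon > 0$, I will apply \Cref{lemma:absval} with tolerance $\epsilon/d_x$. This produces a single $\theta > 0$ such that $|\fabs(t;\theta) - |t|| \leq \epsilon/d_x$ for every $t \in \bR$. The key point is that this bound is uniform in $t$, so the same $\theta$ serves all coordinates. By the triangle inequality,
\[
    \left| \fnorm(x;\theta) - \|x\|_1 \right|
    \leq \sum_{i=1}^{d_x} \left| \fabs(x_i;\theta) - |x_i| \right|
    \leq d_x \cdot \frac{\epsilon}{d_x} = \epsilon
\]
for all $x \in \bR^{d_x}$.

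Nonnegativity follows because each summand satisfies $\fabs(x_i;\theta) \geq 0$ by \Cref{lemma:absval}. If one wants it directly, it can also be read off from $\GELU(s) + \GELU(-s) = s(2\Phi_{\text{Gauss}}(s) - 1) \geq 0$.

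There is no real obstacle here. The only point needing care is that \Cref{lemma:absval} gives an approximation uniform over all of $\bR$ rather than only over compact sets, which is exactly what allows one $\theta$ to work globally. If that uniformity were in doubt, I would verify it via $|\fabs(t;\theta) - |t|| = 2|t|\Phi_{\text{Gauss}}(-\theta|t|) = \tfrac{2}{\theta}\, s\,\Phi_{\text{Gauss}}(-s)$ with $s = \theta|t|$. Since $s\,\Phi_{\text{Gauss}}(-s)$ is bounded on $[0,\infty)$, the error is $O(1/\theta)$ uniformly in $t$.
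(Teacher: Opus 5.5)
Your proposal is correct and follows essentially the same route as the paper: write $\fnorm(x;\theta)$ as a coordinatewise sum of $\fabs(x_i;\theta)$, invoke \Cref{lemma:absval} with tolerance $\epsilon/d_x$ (uniform over $\bR$, so a single $\theta$ serves all coordinates), apply the triangle inequality, and inherit nonnegativity from each summand. Your optional direct check that $|\fabs(t;\theta) - |t|| = \tfrac{2}{\theta}\, s\,\Phi_{\text{Gauss}}(-s)$ with $s = \theta|t|$ is also correct and makes the uniformity explicit.
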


\begin{proof}
    As shown in \Cref{lemma:absval}, the GELU activation function can be used to construct an approximation of the absolute value function on $\mathbb{R}$. 
    Then, for any $\epsilon$, there exists $\theta > 0$ such that 
    \begin{equation}
        \fabs(t; \theta) := \frac{\GELU(\theta t) + \GELU(-\theta t)}{\theta},
    \end{equation}
    satisfies
    $\fabs(t;\theta) \geq 0$ and $|\fabs(t; \theta) - |t|| \leq \epsilon/d_x$ for all $t \in \mathbb{R}$.
    This implies that for $x \in \mathbb{R}^{d_x}$, we have $\fnorm(x;\theta) \geq 0$ and 
    \[
        | \fnorm(x;\theta) - \|x\|_1 | \leq \sum_{j=1}^{d_x} \left| \fabs(x_j; \theta) - |x_j| \right| \leq \epsilon.
    \]
\end{proof}

The GELU activation function can also be used to construct a cutoff function $\tilde{f}_{\cutoff}$ on $\bR$, i.e., $\tilde{f}_{\cutoff}(t) \approx 1$ for $t \leq R$ while $\tilde{f}_{\cutoff}(t) \approx 0$ for $t \gg R$.  
The following is a simplified statement of \cite[Lemma D.5]{yao2025derivative}.
\begin{lemma}[Cutoff function]\label{lemma:cutoff}
Given $\theta, b > 0$, define for $x \in \mathbb{R}$
\begin{equation}\label{eq:cutoff}
    \fcutoff(x; \theta, b) :=  \frac{\GELU(\theta(x-b-1)) - \GELU(\theta(x-b))}{\theta}+1.
\end{equation}
Then, for any $\epsilon > 0$ and $R > 1$, 
there exist $\theta, b > 0$ such that the following hold:
\begin{equation}
    \begin{cases}
        |\fcutoff(x; \theta, b) - 1 | \leq \epsilon & 0 \leq x \leq R, \\
        |\fcutoff(x; \theta, b) | \leq \epsilon & x \geq 4R, \\
    \end{cases}
\end{equation}
    Additionally, there exists $\ub > 0$ independent of $\theta, b$ such that 
    $| \fcutoff(x; \theta, b)| \leq \ub$ for all $x \in \bR$.
\end{lemma}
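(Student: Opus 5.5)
The plan is to compare $\fcutoff$ with its piecewise-linear ReLU counterpart and then choose $b$ and $\theta$ explicitly. Write $\sigma_{\mathrm{ReLU}}(t)=\max(t,0)$.

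First, I would record the scaling limit of GELU. We have
\[
\GELU(s)-\sigma_{\mathrm{ReLU}}(s)=s\,\Phi_{\text{Gauss}}(s)-\max(s,0)=-|s|\,\Phi_{\text{Gauss}}(-|s|).
\]
This is bounded in absolute value by $c_0:=\sup_{t\geq 0} t\,\Phi_{\text{Gauss}}(-t)<\infty$, which is finite by the Gaussian tail bound. Since $\sigma_{\mathrm{ReLU}}(\theta t)=\theta\,\sigma_{\mathrm{ReLU}}(t)$, it follows that for all $t\in\bR$,
\[
\bigl|\GELU(\theta t)/\theta-\sigma_{\mathrm{ReLU}}(t)\bigr|\leq c_0/\theta .
\]
Applying this twice gives, uniformly in $x$ and $b$,
\[
\bigl|\fcutoff(x;\theta,b)-h_b(x)\bigr|\leq \frac{2c_0}{\theta},
\qquad
h_b(x):=\sigma_{\mathrm{ReLU}}(x-b-1)-\sigma_{\mathrm{ReLU}}(x-b)+1 .
\]

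Second, I would evaluate $h_b$ directly. It equals $1$ for $x\leq b$, equals $1-(x-b)$ on $[b,b+1]$, and equals $0$ for $x\geq b+1$. I then take $b=R$. With this choice $h_b\equiv1$ on $[0,R]$. Because $R>1$, we have $b+1=R+1<2R\leq 4R$, so $h_b\equiv 0$ for $x\geq 4R$. Choosing $\theta\geq 2c_0/\epsilon$ then yields both claimed inequalities.

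Third, for the uniform bound $\ub$ I would not use the approximation step, since its error term $2c_0/\theta$ depends on $\theta$. Instead I would use that GELU is globally Lipschitz with constant $L=\sup_s|\GELU'(s)|<\infty$; this holds because $\GELU'(s)=\Phi_{\text{Gauss}}(s)+s\,\phi_{\text{Gauss}}(s)$ is bounded. The two GELU arguments differ by exactly $\theta$, so
\[
|\fcutoff(x;\theta,b)|\leq 1+\frac{|\GELU(\theta(x-b-1))-\GELU(\theta(x-b))|}{\theta}\leq 1+L=:\ub .
\]
This constant is independent of $\theta$ and $b$.

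There is no real obstacle in this argument. The only points needing care are the uniform (in $s$) bound $c_0$ on $|\GELU-\sigma_{\mathrm{ReLU}}|$, and using the Lipschitz bound rather than the approximation error to obtain a $\theta$-independent $\ub$.
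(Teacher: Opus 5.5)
Your proof is correct and complete. The paper gives no proof of this lemma; it presents it as a simplified restatement of \cite[Lemma D.5]{yao2025derivative}. Your argument is therefore a self-contained proof along the natural lines.

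The identity $\GELU(s)-\max(s,0)=-|s|\,\Phi_{\text{Gauss}}(-|s|)$ gives the uniform estimate $|\GELU(\theta t)/\theta-\max(t,0)|\le c_0/\theta$. This reduces everything to the piecewise-linear ramp $h_b$. The choices $b=R$ and $\theta\ge 2c_0/\epsilon$ then give both inequalities, using $R+1<2R\le 4R$ since $R>1$.

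Bounding $\fcutoff$ through the global Lipschitz constant of $\GELU$, rather than through the approximation error, is the right choice. It yields $\ub=1+\sup_s|\GELU'(s)|$, valid for every $\theta>0$ and every $b$. This is exactly the uniformity that Lemma~\ref{lemma:cutoff_ell2} and Lemma~\ref{lemma:bounded_approx_ua} need, since they require $\ub_{\cutoff}$ to be independent of $\epsilon_{\cutoff}$ and $R$, so that the final constant $\const_p$ does not depend on $\epsilon$ or $R$.
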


\Cref{lemma:ell1,lemma:cutoff} can then be used to construct a cutoff function based on the $\ell_2$ norm of the input. 
\begin{lemma}[Cutoff function using $\ell_2$ norm]\label{lemma:cutoff_ell2}
    There exists $\ub > 0$ such that 
    for any $\epsilon > 0$ and $R > 1$, 
    there exists a two-layer GELU neural network $\funcut : \mathbb{R}^{d_x} \rightarrow \mathbb{R}$ satisfying 
    \begin{equation}
        \begin{cases}
            |\funcut(x) - 1 | \leq \epsilon & 0 \leq \|x\|_{2} \leq R, \\
            |\funcut(x)| \leq \epsilon & \|x\|_2 \geq \scalerad (\sqrt{d_x} + 1) R, \\
            |\funcut(x) | \leq \ub & x \in \mathbb{R}^{d_x}.
        \end{cases} 
    \end{equation}
     
\end{lemma}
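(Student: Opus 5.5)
The plan is to compose the $\ell_1$-norm network of \Cref{lemma:ell1} with the scalar cutoff of \Cref{lemma:cutoff}, using norm equivalence $\|x\|_2 \leq \|x\|_1 \leq \sqrt{d_x}\|x\|_2$. Define
\[
\funcut(x) := \fcutoff\bigl(\fnorm(x;\theta_1); \theta_2, b\bigr),
\]
which is a GELU network. Its first hidden layer computes $\GELU(\pm\theta_1 x_j)$. The affine map $\ones^T(\cdot)/\theta_1$ feeding into $\theta_2(\cdot - b - 1)$ and $\theta_2(\cdot - b)$ is absorbed into the second layer's weights and biases, giving two hidden layers. The uniform bound $\ub$ is inherited directly from the bound $|\fcutoff|\leq \ub$ in \Cref{lemma:cutoff}, which is independent of $\theta_2, b$, and hence independent of $\epsilon$ and $R$. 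This gives the third property for free.

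For the first two properties, fix $\epsilon$ and $R>1$. Choose $\theta_1$ by \Cref{lemma:ell1} so that $|\fnorm(x;\theta_1)-\|x\|_1|\leq 1$ and $\fnorm\geq 0$. If $\|x\|_2\leq R$, then $0\leq \fnorm(x)\leq \sqrt{d_x}R + 1 \leq (\sqrt{d_x}+1)R =: R'$, using $R>1$. If instead $\|x\|_2 \geq \scalerad(\sqrt{d_x}+1)R$, then $\fnorm(x)\geq \|x\|_1 - 1 \geq \|x\|_2 - 1 \geq 5R' - 1 \geq 4R'$, since $R'\geq 1$.

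Now apply \Cref{lemma:cutoff} with radius $R'>1$ and tolerance $\epsilon$. This yields $\theta_2, b$ such that $|\fcutoff(t)-1|\leq\epsilon$ on $[0,R']$ and $|\fcutoff(t)|\leq\epsilon$ for $t\geq 4R'$. Combined with the two range estimates above, this gives exactly the first two claimed properties. The factor $\scalerad = 5$ in the statement is precisely what leaves room for the additive $1$ error of the $\ell_1$ approximation.

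The only delicate point is bookkeeping. The tolerance of the $\ell_1$ approximation must be fixed (here $1$, not $\epsilon$) so that the cutoff radius $R'$, and hence the final constant, does not depend on $\epsilon$. The nonnegativity of $\fnorm$ is needed to stay in the domain $x\geq 0$ where \Cref{lemma:cutoff} controls $\fcutoff$ near $1$. Beyond that I expect no real obstacle: the counting of layers is only a remark about merging consecutive affine maps.
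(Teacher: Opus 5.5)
Your proof is correct and is essentially the paper's argument: define $\funcut(x) = \fcutoff(\fnorm(x))$, apply \Cref{lemma:cutoff} at radius $(\sqrt{d_x}+1)R$, use $\|x\|_2 \leq \|x\|_1 \leq \sqrt{d_x}\|x\|_2$, and inherit $\ub$ directly from \Cref{lemma:cutoff}. The only difference is cosmetic: the paper takes the $\ell_1$-approximation tolerance to be $R$ rather than your $1$; both choices fit within the slack, and $\ub$ does not depend on this choice in either case. You also make explicit the nonnegativity of $\fnorm$, which the paper uses only implicitly.
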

\begin{proof}
    We define $\funcut(x) = \funcut(x; \theta_{\ell_1}, \theta_{\cutoff}, b_{\cutoff})$ as the composition of $\fnorm$ and $\fcutoff$, i.e.,
    \begin{equation}
        \funcut(x; \theta_{\ell_1}, \theta_{\cutoff}, b_{\cutoff}) = \fcutoff(\fnorm(x; \theta_{\ell_1}); \theta_{\cutoff}, b_{\cutoff}).
    \end{equation}
    We will choose the cutoff radius for $\fcutoff$ to account for the conversion for $\ell_1$ to $\ell_2$, i.e., $\|x\|_2 \leq \|x\|_{1} \leq \sqrt{d_x} \|x\|_2$.
    
    To this end, let $\epsilon, R$ be given. We construct $\fnorm$ to satisfy 
    \[
        | \fnorm(x) - \|x\|_1 |  \leq R, \qquad \forall x \in \bR^{d_x},
    \]
    and $\fcutoff$ to satisfy
    \[
    \begin{cases}
        |\fcutoff(x; \theta, b) - 1 | \leq \epsilon & 0 \leq x \leq \tilde{R}, \\
        |\fcutoff(x; \theta, b) | \leq \epsilon & x \geq 4\tilde{R}, \\
        |\fcutoff(x; \theta, b) | \leq \ub & x \in \bR^{d_x}, \\
    \end{cases}
    \]
    where $\tilde{R} := (\sqrt{d_x} + 1) R$.

    We then consider the following cases.
    \begin{enumerate}
        \item $\|x\|_2 \leq R$. We have 
        \[ 
            \fnorm(x) \leq \|x\|_1 + R \leq (\sqrt{d_x} + 1)R = \tilde{R},
        \]
        implying that $| \fcutoff(\fnorm(x)) - 1 | \leq \epsilon$.
        \item $\|x\|_2 \geq \scalerad(\sqrt{d_x} + 1)R$. We have 
        \[
            \fnorm(x) \geq \|x\|_1 - R \geq \scalerad(\sqrt{d_x} + 1) R - R \geq 4 \tilde{R},
        \]
        implying that $| \fcutoff(\fnorm(x)) | \leq \epsilon$.
    \end{enumerate}
    Finally, $|\funcut(x)| \leq \ub$ holds for any $x \in \mathbb{R}^{d_x}$ with the same $\ub$ from \Cref{lemma:cutoff}.
    Since, both $\fcutoff$ and $\fnorm$ can be interpreted as single-layered neural networks, their resulting composition $\funcut(x)$ is a two-layered neural network.
    
\end{proof}

\subsubsection{Neural Network Approximations with Controlled Growth}

Universal approximation theorems, such as \cite[Theorem 4.1]{Pinkus99}
show that single-layer neural networks are able to approximate continuously differentiable functions arbitrarily-well in terms of both their output value and their derivatives over compact input domains.
As a corollary, we can show that for GELU activation functions, the resulting approximant can be constructed to have globally bounded outputs and derivatives despite the fact that GELU itself is unbounded.

\begin{lemma}[Bounded approximation on compact sets]\label{lemma:bounded_compact}
Suppose $g \in C^1(\mathbb{R}^{d_x} \times \openset; \bR^{d_g})$
where $\openset \subset \bR^{d_y}$ is open. 
Then, for any $\epsilon > 0$ and compact sets $K_x \subset \mathbb{R}^{d_x}$ and $K_y \subset \openset$,
there exists a single-layer GELU neural network $g_{\theta}$ such that 
simultaneously
\begin{equation}
    \sup_{(x,y) \in K_x \times K_y} \| g(x,y) - g_{\theta}(x,y) \|_2 \leq \epsilon,
\end{equation}
and 
\begin{equation}
    \sup_{(x,y) \in K_x \times K_y} \| D_y g(x,y) - D_y g_{\theta}(x,y) \|_F \leq \epsilon.
\end{equation}
Moreover, $g_{\theta}$ and its derivatives are bounded, i.e., there exists $\ub = M(\epsilon) > 0$ such that simultaneously
\begin{equation}
    \sup_{(x,y) \in \mathbb{R}^{d_x} \times \mathbb{R}^{d_y}} \|  g_{\theta}(x,y) \|_{2} \leq \ub
\end{equation}
and
\begin{equation}
    \sup_{(x,y) \in \mathbb{R}^{d_x} \times \mathbb{R}^{d_y}} \| D_y g_{\theta}(x,y) \|_{F} \leq \ub.
\end{equation}
\end{lemma}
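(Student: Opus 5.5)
The plan is to reduce to the classical $C^1$ universal approximation theorem \cite[Theorem 4.1]{Pinkus99}, applied not with $\GELU$ itself but with a bounded activation built from two shifted GELUs. Define
\[
\psi(t) := \GELU(t+1) - \GELU(t), \qquad t \in \bR .
\]
Any single-layer network $\sum_{i} c_i\, \psi(w_i^T (x,y) + b_i)$ is then a single-layer GELU network with twice the width.

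\textbf{Step 1 (properties of $\psi$).} I will show that $\psi$ is $C^\infty$, non-polynomial, bounded, and has bounded derivative.
\begin{itemize}[label={}]
    \item \emph{Smoothness.} $\GELU(t) = t\Phi_{\text{Gauss}}(t)$ is smooth.
    \item \emph{Boundedness.} As $t \to +\infty$, $\GELU(t) - t \to 0$, so $\psi(t) \to 1$. As $t \to -\infty$, both terms tend to $0$ (Gaussian tails), so $\psi(t) \to 0$. Continuity then gives $\sup_t |\psi(t)| < \infty$.
    \item \emph{Bounded derivative.} $\GELU'(t) = \Phi_{\text{Gauss}}(t) + t\,\varphi(t)$ is bounded on $\bR$, with $\varphi$ the Gaussian density. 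Hence $\psi' = \GELU'(\cdot+1) - \GELU'(\cdot)$ is bounded.
    \item \emph{Non-polynomial.} $\psi$ is bounded and non-constant, so it cannot be a polynomial.
\end{itemize}

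\textbf{Step 2 (approximation on $K_x \times K_y$).} Pinkus' theorem is stated for functions on all of $\bR^{d_x+d_y}$, while $g$ lives only on $\bR^{d_x}\times\openset$.
\begin{itemize}[label={}]
    \item \emph{Extension.} Since $K_y \subset \openset$ is compact, pick an open $V$ with $K_y \subset V \subset \closure{V} \subset \openset$ and $\closure{V}$ compact. Take a smooth bump $\eta$ with $\eta = 1$ on $V$ and support in $\openset$. The function $\hat g := \eta(y) g(x,y)$, extended by zero, lies in $C^1(\bR^{d_x+d_y};\bR^{d_g})$ and agrees with $g$ near $K_x \times K_y$.
    \item \emph{Applying Pinkus.} Apply \cite[Theorem 4.1]{Pinkus99} with activation $\psi$ to each component of $\hat g$ on the compact set $K_x \times K_y$. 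This gives single-layer networks approximating the values and all first partial derivatives uniformly within $\epsilon/\sqrt{d_g d_y}$, or a similar constant.
    \item \emph{Stacking outputs.} Stack the $d_g$ scalar networks into one network $g_\theta(x,y) = \sum_{i=1}^{n} c_i\, \psi(w_i^T(x,y) + b_i)$ with $c_i \in \bR^{d_g}$. Converting componentwise errors into $\|\cdot\|_2$ and $\|\cdot\|_F$ errors yields the two sup bounds.
\end{itemize}

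\textbf{Step 3 (global bounds).} For all $(x,y)$,
\[
\|g_\theta(x,y)\|_2 \le \sum_i \|c_i\|_2 \sup_t|\psi(t)|, \qquad \|D_y g_\theta(x,y)\|_F \le \sum_i \|c_i\|_2\, \|w_i\|_2 \sup_t |\psi'(t)| .
\]
Both right-hand sides are finite and depend only on the network found in Step 2. Their maximum is taken as $\ub(\epsilon)$. Rewriting each $\psi$ unit as two GELU units shows $g_\theta$ is a single-layer GELU network.

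\textbf{Main obstacle.} The only delicate point is making sure the hypotheses of Pinkus' theorem are met: $C^1$ approximation on compacts of all of $\bR^{d_x+d_y}$, with a non-polynomial $C^1$ activation. The extension step handles the fact that $g$ is defined only on $\bR^{d_x}\times\openset$. Verifying the limits and boundedness of $\psi$ and $\psi'$ is routine Gaussian-tail bookkeeping. If the shifted-difference trick seemed inadequate, I would fall back on the cutoff network of \Cref{lemma:cutoff} applied coordinatewise. However, that approach would add layers, so the $\psi$ construction is preferred.
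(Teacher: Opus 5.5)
Your proposal is correct and follows essentially the same route as the paper. Both build a bounded smooth activation as a difference of shifted GELUs (the paper uses $\GELU(t+1)-\GELU(t-1)$), apply Pinkus' $C^1$ density theorem on $K_x\times K_y$, and read off the global bounds from the boundedness of the activation and its derivative, with the extension from $\bR^{d_x}\times\openset$ handled by a smooth cutoff as in the paper's subsequent remark. Your cutoff acting only in $y$ is slightly cleaner than the paper's joint cutoff in $(x,y)$, and your explicit componentwise-to-Frobenius bookkeeping fills in a detail the paper leaves implicit.
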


\begin{proof}
    First, consider the function 
    $\sigma(x) = \GELU(x + 1) - \GELU(x-1)$.
    It can be shown that $\sigma \in C^{\infty}(\mathbb{R})$ and 
    $\sup_{x \in \mathbb{R}} |\sigma^{(k)}(x)| < \infty$ for all $k \in \{0\} \cup \bN$. That is, $\sigma$ is a smooth, non-constant function with bounded output and derivatives. 

    Now, let $\xi := (x,y)$ and $K := K_x \times K_y$. 
    By standard universal approximation theorems, such as \cite[Theorem 4.1]{Pinkus99}, 
    there exists a single-layered neural network of the form 
    \begin{equation}
        g_{\theta}(\xi) = {W}_2 \sigma({W}_1 \xi + {b}_1) + {b}_2
    \end{equation}
    such that simultaneously
    \begin{equation}
        \sup_{\xi \in K} \| g(\xi) - g_{\theta}(\xi) \|_2 \leq \epsilon
    \end{equation}
    and 
    \begin{equation}
        \sup_{\xi \in K} \| D_\xi g(\xi) - D_\xi g_{\theta}(\xi) \|_F \leq \epsilon.
    \end{equation}
    We recognize that $g_{\theta}(\xi)$ can instead be written as 
    \begin{equation}
        g_{\theta} = {W}_2 ( \GELU({W}_1 \xi + {b}_1 + 1) - \GELU({W}_1 \xi + {b}_1 - 1) ) + {b}_2,
    \end{equation}
    which can be thought of as a single-layer GELU neural network. Moreover, since $\sigma$ has bounded values and derivatives, we have that $D_\xi g_{\theta}(\xi)$ is bounded for all $\xi \in \bR^{d_x} \times \bR^{d_y}.$
    Finally, it suffices to recognize that 
    \[
        \| D_y g(x,y) - D_{y} g_{\theta}(x,y) \|_{F} \leq 
        \| D_\xi g(\xi) - D_\xi g_{\theta}(\xi) \|_{F}.
    \]
    
\end{proof}
\begin{remark}
    Within the proof above, we have referenced universal approximation theorems that consider the approximation of functions defined on the entire input space,
    e.g., $g \in C^1(\bR^{d_x} \times \bR^{d_y}; \bR^{d_g})$, while we have only assumed that $g$ is defined on the open subset $\bR^{d_x} \times \openset$.
    This is largely inconsequential for the following reason. 
    Since $K \Subset \bR^{d_x} \times \openset$, where $\Subset$ denotes a compact embedding,
    there exist open sets $\openset_1, \openset_2$ such that $K \Subset \openset_1 \Subset \openset_2 \Subset \openset$. 
    Moreover, for the closure $\mathrm{cl}(\openset_1)$, we consider a smooth external approximation to the indicator function, 
    $\mathbbm{1}_{\mathrm{cl}(\openset_1)} \in C^{\infty}(\bR^{d_x} \times \bR^{d_y};\bR)$, 
    such that $\mathbbm{1}_{\mathrm{cl}(\openset_1)}|_{\openset_1} = 1$ and $\mathbbm{1}_{\mathrm{cl}(\openset_1)}|_{\openset_2^c} = 0$.
    This can be used to define $\tilde{g}(x,y) = \mathbbm{1}_{\mathrm{cl}(\openset_1)}(x,y) g(x,y)$, which we extend by $0$ to $y \notin \openset$.
    The extension $\tilde{g}$ is therefore in $C^1(\bR^{d_x} \times \bR^{d_y}; \bR^{d_g})$ and its outputs and derivatives coincide with $g$ over the set $K$.
    It suffices then to consider a neural network approximation $g_{\theta}$ of $\tilde{g}$ instead of $g$ when using the conventional universal approximation theorems.
\end{remark}

\begin{lemma}[Bounded approximation on semi-bounded sets]\label{lemma:bounded_approx_ua}
    Suppose $g \in C^1(\mathbb{R}^{d_x} \times \openset; \bR^{d_g})$ 
    where $\openset \subset \bR^{d_y}$ is open. Then, for any compact subset $K_y \subset \openset$, $\epsilon > 0, R > 1$, and $p \geq 1$, 
    there exists a three-layer GELU neural network $g_{\theta}$ such that simultaneously
    \begin{equation}\label{eq:function_error_inside}
        \sup_{\|x\|_2 \leq R, y \in K_y} \|g(x,y) - g_{\theta}(x,y) \|_2 \leq \epsilon,
    \end{equation}
    and 
    \begin{equation}\label{eq:derivative_error_inside}
        \sup_{\|x\|_2 \leq R, y \in K_y} \|D_y g(x,y) - D_y g_{\theta}(x,y) \|_F \leq \epsilon.
    \end{equation}
    Moreover, $g_{\theta}$ can be chosen such that 
    \begin{equation}\label{eq:function_growth_rate}
        \|g(x,y) - g_{\theta}(x,y) \|_2^p \leq \const_p (\|g(x,y)\|_2^p + 1)
    \end{equation}
    and 
    \begin{equation}\label{eq:derivative_growth_rate}
        \|D_y g(x,y) - D_y g_{\theta}(x,y) \|_F^p \leq \const_p (\|D_y g(x,y)\|_F^p + 1)
    \end{equation}
    for all $x \in \mathbb{R}^{d_x}$ and $y \in K_y$,
    where $\const_p > 0$ is a fixed constant that does not depend on $\epsilon$ or $R$.

\end{lemma}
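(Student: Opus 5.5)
We combine the bounded approximant of \Cref{lemma:bounded_compact} with the cutoff network of \Cref{lemma:cutoff_ell2} through a product, following the cutoff argument of \cite{yao2025derivative}. Fix $K_y$, $\epsilon$, $R$, $p$.

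\textbf{Step 1: two auxiliary networks.} Take $\funcut$ from \Cref{lemma:cutoff_ell2} with tolerance $\delta$ and radius $R$. It depends only on $x$, is bounded by a universal $\ub_c$, and satisfies $|\funcut-1|\le\delta$ on $\|x\|_2\le R$ and $|\funcut|\le\delta$ for $\|x\|_2\ge R':=\scalerad(\sqrt{d_x}+1)R$. Next, apply \Cref{lemma:bounded_compact} on the compact set $K_x\times K_y$ with $K_x=\{\|x\|_2\le R'\}$ and tolerance $\delta$. This gives $\tilde g$ that is $\delta$-accurate in value and $y$-derivative on $K_x\times K_y$, with global bound $\ub(\delta)$.

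\textbf{Step 2: the candidate and its error inside the ball.} Set $g_\theta(x,y)=\funcut(x)\,\tilde g(x,y)$. Because $\funcut$ does not depend on $y$, we have $D_yg_\theta=\funcut D_y\tilde g$, so no derivative of the cutoff enters; this is why only $D_y$ is controlled. For $\|x\|_2\le R$,
\[
\|g-g_\theta\|_2\le\|g-\tilde g\|_2+|1-\funcut|\,\|\tilde g\|_2\le\delta+\delta\big(\sup_{K}\|g\|_2+\delta\big),
\]
and the derivative bound is identical with $D_y$ in place of the value. Since $g\in C^1$, the suprema over the compact set are finite, and choosing $\delta$ small gives \eqref{eq:function_error_inside}--\eqref{eq:derivative_error_inside}.

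\textbf{Step 3: the growth bounds \eqref{eq:function_growth_rate}--\eqref{eq:derivative_growth_rate}, which are the main obstacle.} The constant $\const_p$ must not depend on $\epsilon$ or $R$, yet $\ub(\delta)$ from \Cref{lemma:bounded_compact} blows up as $\delta\to0$. The plan is to split into three regions.
\begin{itemize}
\item On $\|x\|_2\le R'$, $\tilde g$ is $\delta$-close to $g$, so $\|g_\theta\|_2\le\ub_c(\|g\|_2+1)$ and hence $\|g-g_\theta\|_2\le(1+\ub_c)(\|g\|_2+1)$.
\item On $\|x\|_2\ge R'$, $\|g_\theta\|_2\le\delta\,\ub(\delta)$. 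This is harmful because $\delta\,\ub(\delta)$ need not be small.
\end{itemize}
To fix the outer region, I would apply \Cref{lemma:cutoff_ell2} with tolerance $\delta'=\min\{\delta,1/\ub(\delta)\}$, chosen only after $\tilde g$ has been constructed. The outer radius $R'$ does not depend on the tolerance, so the ordering is consistent. Then $|\funcut|\,\|\tilde g\|_2\le1$ outside the ball, which gives $\|g-g_\theta\|_2\le\|g\|_2+1$. The same holds for $D_y$, using the derivative bound $\ub(\delta)$ for $D_y\tilde g$.

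Combining the regions with $(a+b)^p\le2^{p-1}(a^p+b^p)$ gives $\const_p=2^{p-1}(1+\ub_c)^p$, which is independent of $\epsilon$ and $R$.

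\textbf{Remaining point: depth.} The product $\funcut\cdot\tilde g$ is not literally a feedforward network. I would realize multiplication with a GELU product gadget, $ab\approx\frac{\GELU(s(a+b))-\GELU(s(a-b))-\cdots}{\cdots}$, accurate in $C^1$ on the relevant bounded ranges. Both factors are globally bounded, so these ranges are compact and the gadget adds one layer on top of the two-layer cutoff. This yields the stated three-layer network, up to absorbing the small extra errors into $\delta$.
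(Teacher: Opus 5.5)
Your proposal is correct and is essentially the paper's proof: a bounded approximant from \Cref{lemma:bounded_compact} on the ball of the cutoff's outer radius, a cutoff whose tolerance is chosen only after the bound $\ub(\delta)$ is known (the paper takes $\epsilon_{\cutoff}\le\min\{\epsilon/(3\ub_g),1/(2\ub_g)\}$), a $C^1$-accurate multiplication network, and the same three-region estimate giving $\const_p=2^{p-1}(1+\ub_{\cutoff})^p$. The one step to make explicit is the product network: its error cannot simply be absorbed into $\delta$, because $D_y g_\theta = D_2\ftimes(\funcut(x),\tilde g(x,y))\,D_y\tilde g(x,y)$ contributes a term of size $\epsilon_\times\ub(\delta)$ in every region, so its tolerance must also be fixed after $\tilde g$ and scaled by $1/\ub(\delta)$, exactly as you did for the cutoff (the paper takes $\epsilon_\times\le\min\{\epsilon/(3\ub_g),1/(2\ub_g),\dots\}$).
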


\begin{proof}
    Our strategy is as follows. We first use \Cref{lemma:bounded_compact} to construct a bounded approximation $\tilde{g}_{\theta}$ to $g$ on a compact set. We then control the growth of the approximation $\tilde{g}_{\theta}$ in the $x$ direction by (approximately) multiplying it by a cutoff function $\fcutoff$.

    To this end, let $g, \epsilon, R$ be given. 
    For any $\epsilon_{g} > 0$,
    \Cref{lemma:bounded_compact} allows us to construct $\tilde{g}_{\theta}$, a bounded approximation of $g$ over the compact sets $K_x := \{ \|x\|_2 \leq 4(\sqrt{d_x} + 1)R \}$ and $K_y$, such that 
    \[
         \sup_{(x,y) \in K_x \times K_y} \|g(x,y) - \tilde{g}_{\theta}(x,y) \|_{2} \leq \epsilon_{g}, 
         \qquad
         \sup_{(x,y) \in K_x \times K_y} \|D_y g(x,y) - D_y \tilde{g}_{\theta}(x,y) \|_{F} \leq \epsilon_{g},
    \]
    and 
    \[
         \sup_{(x,y) \in \mathbb{R}^{d_x} \times \mathbb{R}^{d_y}} \|\tilde{g}_{\theta}(x,y)\|_{2} \leq \ub_{g}, \qquad 
         \sup_{(x,y) \in \mathbb{R}^{d_x} \times \mathbb{R}^{d_y}} \|D_y \tilde{g}_{\theta}(x,y)\|_{F} \leq \ub_{g},
    \]
    noting that $\ub_g = \ub_g(\epsilon_g)$.
    For an arbitrary $\epsilon_{\cutoff} > 0$,
    we also consider the cutoff function $\funcut(x)$ from \Cref{lemma:cutoff_ell2} satisfying 
    \[
        \begin{cases}
            |\funcut(x) - 1| \leq \epsilon_{\cutoff} & \|x\|_2 \leq R  \\
            |\funcut(x)| \leq \epsilon_{\cutoff} & \|x\|_2 \geq \scalerad(\sqrt{d_x} + 1) R  \\
            |\funcut(x)| \leq \ub_{\cutoff} & x \in \mathbb{R}^{d_x},
        \end{cases}
    \]
    where $\ub_{\cutoff}$ does not depend on $\epsilon_{\cutoff}$ or $R$.
    Finally, 
    we consider a single-layered neural network approximation $\ftimes$ of the multiplication function $\times(t_1, t_2) = t_1 t_2$, such that
    \[
        \| \ftimes - \times(\cdot, \cdot) \|_{C^{1}(B)} \leq \epsilon_{\times},
    \]
    for an arbitrary $\epsilon_{\times} > 0$, where 
    $B  = [-\ub_{\cutoff}, \ub_{\cutoff}] \times [-\ub_{g}, \ub_{g}]$.
    This is again possible by standard universal approximation results \cite{Pinkus99}.
    We then take 
    \[
        g_{\theta}(x,y) := \ftimes (\funcut(x), \tilde{g}_{\theta}(x,y)) \approx \funcut(x) \tilde{g}_{\theta}(x,y),
    \]
    which is a three-layer neural network since $\fcutoff$ is a two-layer neural network.
    This has the derivative 
    \[
        D_y g_{\theta}(x,y) = D_2 \ftimes (\funcut(x), \tilde{g}_{\theta}(x,y)) D_y \tilde{g}_{\theta}(x,y),
    \]
    where $D_2$ refers to partial differentiation with respect to the second variable.
    Note that this construction where $\ftimes$ is chosen based on the output range of $\funcut(x)$ and $\tilde{g}$ ensures that 
    \[
        \|\funcut(x) \tilde{g}(x,y) - \ftimes(\funcut(x), \tilde{g}(x,y)) \|_2 \leq \epsilon_{\times}
    \]
    and 
    \[
        \| \funcut(x) I - D_2 \ftimes(\funcut(x), \tilde{g}(x,y)) \|_{F} \leq \epsilon_{\times}
    \]
    for all $x \in \bR^{d_x}$ and $y \in \bR^{d_y}$.

    We now consider the following cases for $x \in \bR^{d_x}$ given that $y \in K_y$
    \begin{enumerate}
        \item $\|x\|_2 \leq R$. By the triangle inequality, we have 
        \begin{align*}
            & \|g(x,y) - g_{\theta}(x,y) \|_2   \\
            & \qquad \leq 
            \|g(x,y) - \tilde{g}_{\theta}(x,y) \|_2  
            +  \|(1 - \funcut(x)) \tilde{g}_{\theta}(x,y) \|_2  
            +  \|\funcut(x) \tilde{g}_{\theta}(x,y) - \ftimes(\funcut(x), \tilde{g}_{\theta}(x,y)) \|_2.
        \end{align*}
        Since $\|x\|_2 \leq R$, we have $\|g(x,y) - g_{\theta}(x,y)\|_2 \leq \epsilon_g$
        and $|1 - \funcut(x)| \leq \epsilon_{\cutoff}$.
        Thus, 
        \[
            \|g(x,y) - g_{\theta}(x,y) \|_2 
            \leq \epsilon_g + \epsilon_{\cutoff} \ub_g + \epsilon_{\times}.
        \]
        By analogous reasoning, we have 
        \begin{align*}
            \|D_y g(x,y) - D_y g_{\theta}(x,y) \|_F   
            & \leq  
            \|D_y g(x,y) - D_y \tilde{g}_{\theta}(x,y) \|_F  \\
            &  \qquad +  \|(1 - \funcut(x)) D_y \tilde{g}_{\theta}(x,y) \|_F   \\
            &  \qquad +  \|\funcut(x) D_y \tilde{g}_{\theta}(x,y) - D_2 \ftimes(\funcut(x), \tilde{g}_{\theta}(x,y)) D_y \tilde{g}_{\theta}(x,y) \|_F \\
            & \leq \epsilon_{g} + \epsilon_{\cutoff} \ub_{g} + \epsilon_{\times} \ub_g.
        \end{align*}

        \item $\|x\|_2 \geq \scalerad (\sqrt{d_x} + 1) R$. We have 
        \begin{align*}
            \| g(x,y) - g_{\theta}(x,y) \|_{2} &\leq 
            \| g(x,y) \|_2  + \| g_{\theta}(x,y) \|_{2} \\
            & \leq \|g(x,y)\|_{2} 
                + |\funcut(x)| \| \tilde{g}_{\theta}(x,y) \|_2
                + \|\ftimes(\funcut(x), \tilde{g}_{\theta}(x,y)) - \funcut(x) \tilde{g}_{\theta}(x,y)\|_2 \\
            & \leq \|g(x,y)\|_2 + \epsilon_{\cutoff} \ub_g + \epsilon_{\times}
        \end{align*}
        and 
        \begin{align*}
            & \|D_y g(x,y) - D_y g_{\theta}(x,y) \|_F   
            \leq   \|D_y g(x,y) \|_F + \|D_y g_{\theta}(x,y) \|_F \\ 
            & \leq   \|D_y g(x,y) \|_F + \|\funcut(x) D_y \tilde{g}_{\theta}(x,y)\|_{F} 
            + \|\funcut(x) D_y \tilde{g}_{\theta}(x,y) - D_2 \ftimes(\funcut(x), \tilde{g}_{\theta}(x,y)) D_y \tilde{g}_{\theta}(x,y) \|_F \\
            & \leq \| D_y g(x,y) \|_F + \epsilon_{\cutoff} \ub_{g} + \epsilon_{\times} \ub_g.
        \end{align*}

        \item $\|x\|_2 < \scalerad(\sqrt{d_x} + 1) R$.  
        We have 
        \begin{align*}
            \| g(x,y) - g_{\theta}(x,y) \|_{2} &\leq 
            \| g(x,y) \|_2  + \| g_{\theta}(x,y) \|_{2} \\
            & \leq \|g(x,y)\|_{2} 
                + |\funcut(x)| \| \tilde{g}_{\theta}(x,y) \|_2
                + \|\ftimes(\funcut(x), \tilde{g}_{\theta}(x,y)) - \funcut(x) \tilde{g}_{\theta}(x,y)\|_2\\
            & \leq \|g(x,y)\|_2 + \ub_{\cutoff} (\|g(x,y)\|_{2} + \epsilon_{g}) + \epsilon_{\times},
        \end{align*}
        and 
        \begin{align*}
            & \|D_y g(x,y) - D_y g_{\theta}(x,y) \|_F   
            \leq   \|D_y g(x,y) \|_F + \|D_y g_{\theta}(x,y) \|_F \\ 
            & \leq   \|D_y g(x,y) \|_F + \|\funcut(x) D_y \tilde{g}_{\theta}(x,y)\|_{F} 
            + \|\funcut(x) D_y \tilde{g}_{\theta}(x,y) - D_2 \ftimes(\funcut(x), \tilde{g}_{\theta}(x,y)) D_y \tilde{g}_{\theta}(x,y) \|_F \\
            & \leq \| D_y g(x,y) \|_F + \ub_{\cutoff}(\|D_{y}g (x,y)\|_{F} + \epsilon_{g}) + \epsilon_{\times} \ub_g.
        \end{align*}
    \end{enumerate}
    Thus, it suffices to choose the approximations with the error tolerances 
    \[
        \epsilon_{g} \leq \min \left\{ \frac{\epsilon}{3}, \frac{1}{2 \ub_{\cutoff}} \right\}, 
        \qquad 
        \epsilon_{\cutoff} \leq \min \left\{ \frac{\epsilon}{3 \ub_g}, \frac{1}{2{\ub_g}} \right\}, 
        \qquad
        \epsilon_{\times} \leq \min \left\{ \frac{\epsilon}{3}, \frac{\epsilon}{3\ub_g}, \frac{1}{2}, \frac{1}{2\ub_g} \right\}.
    \]
    Under these choices, the errors satisfy \eqref{eq:function_error_inside}--\eqref{eq:derivative_growth_rate} with $p=1$ using the constant $\const_1 := 1 + \ub_{\cutoff}$.
    Now, for arbitrary $p \geq 1$, we have 
    \[
        \|g(x,y) - g_{\theta}(x,y)\|_{2}^p \leq \const_1^p \left(\|g(x,y)\|_2 + 1 \right)^p \leq 2^{p-1}\const_1^p \left(\|g(x,y)\|_2^p + 1 \right),
    \]
    where we have made use of the inequality $(a+b)^p \leq 2^{p-1} (a^p + b^p)$.
    Thus, for any $p \geq 1$, \eqref{eq:function_growth_rate} and \eqref{eq:derivative_growth_rate} hold with $\const_p = 2^{p-1}\const_1^p$.
\end{proof}

\subsubsection{Proof of Theorem \ref{theorem:ua_semibounded}}\label{sec:completing_the_proof}
We can now complete the proof of \Cref{theorem:ua_semibounded}.
\begin{proof}
    We consider neural network approximations $g_{\theta}$ that belong to the family of approximations satisfying \Cref{lemma:bounded_approx_ua}.
    Given such a $g_{\theta}$, consider the error for a particular $y \in K_y$, i.e.,
    \begin{align*}
        \bE_{x \sim \mu_x}[\|g(x,y) - g_{\theta}(x,y)\|_2^p]  
            &= \int \|g(x,y) - g_{\theta}(x,y)\|_2^p d\mu_x(x), \\
        \bE_{x \sim \mu_x}[\|D_y g(x,y) - D_y g_{\theta}(x,y)\|_F^p]  
            &= \int \|D_y g(x,y) - D_y  g_{\theta}(x,y)\|_F^p d\mu_x(x).
    \end{align*}
    For any $R > 1$, let $B_R := \{ \|x\|_2 \leq R \}$ and $B_R^{c} := \{ \|x\|_2 > R\}$.
    We can decompose the integrals into 
    \begin{align*}
        \int \|g(x,y) - g_{\theta}(x,y)\|_2^p d\mu_x(x) 
        = \int_{B_R} \|g(x,y) - g_{\theta}(x,y)\|_2^p d\mu_x(x)
            + \int_{B_R^{c}} \|g(x,y) - g_{\theta}(x,y)\|_2^p d\mu_x(x)
    \end{align*}
    and 
    \begin{align*}
        \int \|D_y g(x,y) - D_y g_{\theta}(x,y)\|_F^p d\mu_x(x),
        &= \int_{B_R} \|D_y g(x,y) - D_y g_{\theta}(x,y)\|_F^p d\mu_x(x)  \\
            &\quad + \int_{B_R^c} \|D_y g(x,y) - D_y g_{\theta}(x,y)\|_F^p d\mu_x(x) .
    \end{align*}
    In particular, for $g_{\theta}$ coming from \Cref{lemma:bounded_approx_ua}, we can use the global bound of the error to write this as
    \begin{align}\label{eq:decomposed_l2_error}
        \int \|g(x,y) - g_{\theta}(x,y)\|_2^p d\mu_x(x) 
        \leq \int_{B_R} \|g(x,y) - g_{\theta}(x,y)\|_2^p d\mu_x(x)
            + \const_p \int_{B_R^c} \left( \|g(x,y)\|_{2}^p + 1\right) d\mu_x(x)
    \end{align}
    and 
    \begin{align}\label{eq:decomposed_h1_error}
        \int \|D_y g(x,y) - D_y g_{\theta}(x,y)\|_F^p d\mu_x(x)
        & \leq  \int_{B_R} \|D_y g(x,y) - D_y g_{\theta}(x,y)\|_F^p d\mu_x(x) \nonumber \\
                & \qquad + \const_p \int_{B_R^{c}} \left( \|D_y g(x,y)\|_{F}^p + 1\right) d\mu_x(x),
    \end{align}
    where $\const_p$ is independent of the particular choice of $g_{\theta}$. 

    The approximation strategy is as follows. We first choose $R > 1$ to be sufficiently large such that the errors
    \begin{align*}
        e_R^{0}(y) &:= \const_p \int_{B_R^c} \left( \|g(x,y)\|_{2}^p + 1 \right) d\mu_x(x) \\
        e_R^{1}(y) &:= \const_p \int_{B_R^c} \left( \| D_y g(x,y)\|_{F}^p + 1 \right) d\mu_x(x)
    \end{align*}
    are uniformly small for $y \in K_y$.
    Subsequently, for the selected $R$, we choose $g_{\theta}$ to accurately approximate $g$ on the compact set $B_R \times K_y$ such that the overall error is bounded by $\epsilon^p$.

    We begin by showing that $e_R^{0}(y)$ and $e_R^{1}(y)$ converge uniformly to zero over $K_y$ as $R \rightarrow \infty$.
    Consider first the output error, $e_R^{0}(y)$.
    By the dominated convergence theorem, we have that $e_R^{0}(y) \downarrow 0$ as $R \rightarrow \infty$ for all $y \in \openset$,
    where $\downarrow$ indicates that the convergence is monotonic, i.e., $e_R^{0} \rightarrow 0$ and $e_{R_1}^{0}(y) \geq e_{R_2}^{(0)}(y)$ whenever $R_1 \leq R_2$.
    Moreover, $e_R^{0}(y)$ is continuous. 
    To see this, we begin by noting that $e_R^{0}(y)$ can be decomposed into two integrals,
    \[
        e_R^{0}(y) = C_p \int_{B_R^c} \|g(x,y)\|_2^p d\mu_x(x)
                    + C_p \int_{B_R^c} 1 d\mu_x(x).
    \]
    The term $C_p \int_{B_R^c} 1 d\mu_x(x) = \const_p\mu_x(B_R^c)$ is clearly continuous with respect to $y$. 
    We therefore focus on the continuity of
    \begin{equation}\label{eq:l2_norm_residual_mapping}
        y \mapsto \const_p \int_{B_R^c} \|g(x,y)\|_2^p d\mu_x(x) = \const_p \| \mathbbm{1}_{B_R^c} g(\cdot, y)\|_{L^p_{\mu_x}}^p,
    \end{equation}
    where $\mathbbm{1}_{B_R^c}$ denotes the indicator function for the set $B_R^c$.
    To this end, we consider $y_1, y_2 \in \openset$.
    The reverse triangle inequality yields
    \[
        \left| 
            \|\mathbbm{1}_{B_R^c}(\cdot) g(\cdot, y_2)\|_{L^p_{\mu_x}} - 
            \|\mathbbm{1}_{B_R^c}(\cdot) g(\cdot, y_1)\|_{L^p_{\mu_x}} 
        \right| 
        \leq 
        \| \mathbbm{1}_{B_R^c}(\cdot) \left( g(\cdot, y_2) - g(\cdot, y_1)\right) \|_{L^p_{\mu_x}}.
        \leq 
        \| g(\cdot, y_2) - g(\cdot, y_1) \|_{L^p_{\mu_x}}.
    \]
    From the assumed continuity of the mapping $\openset \ni y \mapsto g(\cdot, y_2) \in L^p_{\mu_x}$, we have that 
    \[
            \|g(\cdot, y_2) - g(\cdot, y_1)\|_{L^p_{\mu_x}} 
        \rightarrow 0  \qquad \text{as } y_2 \rightarrow y_1.
    \]
    Hence, the mapping $\openset \ni y \mapsto \mathbbm{1}_{B_R^c} g(\cdot, y) \in L^p_{\mu_x}$ is continuous, and by extension, so is the mapping \eqref{eq:l2_norm_residual_mapping}.
    Dini's theorem then implies that the convergence of $e_R^0(y) \downarrow 0$ is uniform over $y \in K_y \Subset \openset$.
    Using analogous reasoning, we can also show that the same is true of $e_R^{1}(y)$.

    Now, let $\epsilon > 0$ be arbitrary. 
    By the above results, there exists an $R > 1$ such that 
    \begin{equation}\label{eq:outside_ball_error}
        \sup_{y \in K_y} e_R^{0}(y) + e_R^{1}(y) \leq \frac{\epsilon^p}{2}.
    \end{equation}
    For this choice of $R > 1$, \Cref{lemma:bounded_approx_ua} yields the existence of a $g_{\theta}$ such that 
    \begin{equation}\label{eq:inside_ball_error}
        \sup_{x \in B_R, y \in K_y} \|g(x,y) - g_{\theta}(x,y)\|_2^p
            + \|D_y g(x,y) - D_y g_{\theta}(x,y)\|_F^p 
            \leq \frac{\epsilon^p}{2}.
    \end{equation}
    Substituting \eqref{eq:inside_ball_error} and \eqref{eq:outside_ball_error} into \eqref{eq:decomposed_l2_error} and \eqref{eq:decomposed_h1_error} yields
    \[
        \int \|g(x,y) - g_{\theta}(x,y)\|_2^p + \|D_y g(x,y) - D_y g_{\theta}(x,y)\|_{F}^p d\mu_x(x) 
        \leq \epsilon^p
    \]
    for every $y \in K_y$, which is the desired result.
    
\end{proof}
\subsection{Dimension Reduction Errors for Lipschitz Smooth Operators}
\label{appendix:dimension_red_errors}
Recall that we are interested in the approximation capabilities of the reduced basis architecture
\[
    u_{\theta}(m,z) = \mathbf{\Phi}_{\rru} g_{\theta}( \mathbf{\Psi}_{\rrm}^* (m- \shiftm), z) + \shiftu,
\]
where $\mathbf{\Phi}_{\rru}$ and $\mathbf{\Psi}_{\rrm}$
refer to the decoding onto the
reduced orthonormal bases $\{\phi_i\}_{i=1}^{\rru}$ and $\{\psi_i \}_{i=1}^{\rrm}$ of $\cU$ and $\cM$, respectively.
We first show that for any orthonormal bases $\{ \phi_i \}_{i=1}^{\infty}$ and $\{ \psi_i \}_{i=1}^{\infty}$ 
dimension reduction errors can be made arbitrarily small by choosing sufficiently large $\rru$ and $\rrm$.
We begin with the output case.
\begin{lemma}[Output dimensionality reduction error]\label{lemma:output_truncation_lipschitz}
    Suppose that $u$ satisfies \Cref{assumption:pde_properties} with the Borel probability measure $\mu_m$, compact admissible set $\cZ_{ad}$, and open set $\openset \supset \cZ_{ad}$ for some $p \geq 2$. 
    Let $\{ \phi_i \}_{i=1}^{\infty}$ be an orthonormal basis of $\cU$ and $\projru{r} = \sum_{i=1}^{r} \phi_i \phi_i^T$ be the orthogonal projection onto the reduced basis $\{\phi_i\}_{i=1}^{r}$.
    Then, for any $\epsilon > 0$, 
    there exists $r_{\epsilon} \in \bN$ such that simultaneously 
    \[
        \sup_{z \in \cZ_{ad}} \bE_{m \sim \mu_m} [ \|(I - \projru{r}) u(m,z)\|_{\cU}^p] \leq \epsilon^{p}
    \]
    and
    \[
        \sup_{z \in \cZ_{ad}} \bE_{m \sim \mu_m} [ \|(I - \projru{r}) D_z u(m,z)\|_{\HS(\cZ, \cU)}^p] \leq \epsilon^{p},
    \]
    whenever $r \geq r_{\epsilon}$.
\end{lemma}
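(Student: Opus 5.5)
The plan is a pointwise-in-$z$ dominated convergence argument, upgraded to uniformity over $\cZ_{ad}$ by Dini's theorem, in the same way as the proof of \Cref{theorem:ua_semibounded}.

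Define
\[
e_r^0(z) := \bE_{m \sim \mu_m}[\|(I - \projru{r}) u(m,z)\|_{\cU}^p], \qquad e_r^1(z) := \bE_{m \sim \mu_m}[\|(I - \projru{r}) D_z u(m,z)\|_{\HS(\cZ,\cU)}^p].
\]

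\emph{Step 1: pointwise monotone convergence.} Fix $z \in \openset$ and $m \in \cM$. Because $\{\phi_i\}$ is an orthonormal basis, $\|(I-\projru{r})u(m,z)\|_{\cU}^2 = \sum_{i>r} \langle u(m,z),\phi_i\rangle_{\cU}^2$, which decreases monotonically to $0$ as $r \to \infty$.

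For the derivative, note that $\cZ = \bR^{\dz}$ is finite dimensional. With $\{e_j\}$ the standard basis,
\[
\|(I-\projru{r})D_z u(m,z)\|_{\HS(\cZ,\cU)}^2 = \sum_{j=1}^{\dz} \|(I-\projru{r}) D_z u(m,z) e_j\|_{\cU}^2,
\]
which is a finite sum of monotonically vanishing terms.

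Both integrands are dominated by $\|u(m,z)\|_{\cU}^p$ and $\|D_z u(m,z)\|_{\HS}^p$, since $\|I - \projru{r}\| \le 1$. These dominating functions are integrable by \Cref{proposition:lipschitz_u}. Dominated convergence therefore gives $e_r^0(z) \downarrow 0$ and $e_r^1(z) \downarrow 0$ for every $z$, and the convergence is monotone in $r$.

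\emph{Step 2: continuity in $z$.} By the reverse triangle inequality in $L^p_{\mu_m}$, together with $\|I-\projru{r}\| \le 1$,
\[
\bigl| (e_r^0(z_2))^{1/p} - (e_r^0(z_1))^{1/p} \bigr| \le \|u(\cdot,z_2) - u(\cdot,z_1)\|_{L^p_{\mu_m}} \le \lip_u \|z_2 - z_1\|_{\cZ},
\]
where the last step is \Cref{proposition:lipschitz_u}. The same argument with $\lip_{u_z}$ handles $e_r^1$. Hence each $e_r^0$ and $e_r^1$ is continuous on $\openset$, and indeed $(e_r^i)^{1/p}$ is Lipschitz uniformly in $r$.

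\emph{Step 3: uniformity.} Apply Dini's theorem on the compact set $\cZ_{ad}$. A monotonically decreasing sequence of continuous functions converging pointwise to the continuous limit $0$ converges uniformly. This gives $r_\epsilon$ with $\sup_{z\in\cZ_{ad}} e_r^0(z) \le \epsilon^p$ and $\sup_{z\in\cZ_{ad}} e_r^1(z) \le \epsilon^p$ for all $r \ge r_\epsilon$. Since monotonicity holds, the same $r_\epsilon$ works for every larger $r$. Taking the maximum of the two thresholds yields the simultaneous statement.

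The main obstacle is verifying the hypotheses for Dini: monotonicity in $r$ and continuity in $z$ of the truncated errors. Both reduce to the nonexpansiveness of $I-\projru{r}$ and the Lipschitz bounds of \Cref{proposition:lipschitz_u}, so no genuine difficulty is expected.

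A more quantitative alternative would avoid Dini altogether. Because the $(e_r^i)^{1/p}$ are equi-Lipschitz, one can pick a finite $\delta$-net of $\cZ_{ad}$, choose $r$ large enough at each net point, and extend the bound to all of $\cZ_{ad}$ by the Lipschitz estimate.
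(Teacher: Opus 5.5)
Your proposal is correct and follows essentially the same route as the paper. Both arguments establish pointwise monotone convergence by dominated convergence, obtain continuity in $z$ from the nonexpansiveness of $I-\projru{r}$ together with \Cref{proposition:lipschitz_u}, and conclude with Dini's theorem on the compact set $\cZ_{ad}$. The only cosmetic difference is in the Hilbert--Schmidt norm: you expand it over the finite basis of $\cZ$, whereas the paper writes it via the adjoint as $\bigl(\sum_{i>r}\|D_z u(m,z)^*\phi_i\|_{\cZ}^2\bigr)^{p/2}$, and either form gives the needed monotonicity.
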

\begin{proof}
    Consider first the output projection error. We define
    \[
        e_{r}^{0}(z) := \bE_{m \sim \mu_m}[ \|(I - \projru{r}) u(m,z) \|_{\cU}^p].
    \]
    We claim that $e_r^{0}(z)$ is continuous over the compact set $\cZ_{ad}$ for every $r \in \bN$ 
    and converges monotonically to zero for every $z \in \cZ_{ad}$, 
    By Dini's theorem, this then implies that the convergence to zero is uniform over $\cZ_{ad}$.

    To see continuity, we consider the mapping $\openset \ni z \mapsto (I-\projru{r}) u(\cdot,z) \in L^p_{\mu_m}(\cM;\cU)$.
    Since $(I - \projru{r})$ is a projection, for every $z_1, z_2 \in \openset$ we have that 
    \begin{equation}\label{eq:pnorm_bound}
        \| (I - \projru{r}) u(\cdot,z_2) - (I - \projru{r}) u(\cdot, z_1)\|_{L^p_{\mu_m}}^p
        \leq \| u(\cdot, z_2) - u(\cdot, z_1) \|_{L^p_{\mu_m}}^p.
    \end{equation}
    By \Cref{proposition:lipschitz_u}, $\openset \ni z \mapsto u(\cdot, z) \in L^p_{\mu_m}(\cM;\cU)$ is continuous,
    and thus when combined with \eqref{eq:pnorm_bound} implies that $\openset \ni z \mapsto (I-\projru{r}) u(\cdot,z) \in L^p_{\mu_m}(\cM;\cU)$ is also continuous.
    
    To see pointwise monotonic convergence, 
    we note that for every $m \in \cM$ and $z \in \cZ_{ad}$, 
    we have that $\|(I - \projru{r}) u(m,z) \|_{\cU}^p \downarrow 0$ as $r \rightarrow \infty$, where $\downarrow$ again denotes monotonic convergence.
    This implies that the sequence of errors are monotonically decreasing, since
    \[
        e_{r_1}^{0}(z) 
        = \bE_{m \sim \mu_m}[\|(I - \projru{r_1})u(m,z)\|_{\cU}^p] 
        \leq 
        \bE_{m \sim \mu_m}[\|(I - \projru{r_2})u(m,z)\|_{\cU}^p] 
        = e_{r_2}^{0}(z)
    \]
    whenever $r_1 \geq r_2$.
    Moreover, since the integrand is bounded by 
    \[ \|(I - \projru{r}) u(m,z)\|_{\cU}^p \leq \|u(m,z)\|_{\cU}^p,\]
    we have the dominating function $\|u(\cdot, z)\|_{\cU}^p$, which is integrable given that $u(\cdot,z) \in L^p_{\mu_m}(\cM;\cU)$.
    Thus, the dominated convergence theorem implies that $e_{r}^{0}(z) \downarrow 0 $ for every $z \in \cZ_{ad}$,

    A similar procedure can be performed for the derivative error. That is, we can define
    \[
        e_{r}^{1}(z) := \bE_{m \sim \mu_m}[ \|(I - \projru{r}) D_z u(m,z) \|_{\HS(\cZ, \cU)}^p],
    \]
    which is again continuous due to \Cref{proposition:lipschitz_u}.
    Moreover, we can rewrite the Hilbert--Schmidt norm as 
    \[
        \| (I - \projru{r}) D_z u(m,z) \|_{\HS(\cZ,\cU)}^p 
        = \| D_z u(m,z)^* (I - \projru{r}) \|_{\HS(\cU,\cZ)}^p 
        = \left( \sum_{i > r}^{\infty} \| D_z u(m,z)^* \phi_i \|_{\cZ}^2 \right)^{p/2},
    \]
    which converges monotonically to zero for every $z \in \cZ_{ad}$ as $r \rightarrow \infty$ and is dominated by the integrable function $\|D_z u(m,z)\|_{\HS(\cZ,\cU)}^p$.
    Once again, the dominated convergence theorem and Dini's theorem together implies that $e_r^{1}(z) \downarrow 0$ uniformly for $z \in \openset$.

    Since the convergence of both $e_{r}^{0}$ and $e_r^{1}$ is monotonic and uniform, it remains to select $r_{\epsilon}$ such that both errors are small, i.e.,
    \[
        \sup_{z \in \cZ_{ad}} e_{r_{\epsilon}}^{0}(z) \leq \epsilon^p, \quad \sup_{z \in \cZ_{ad}} e_{r_{\epsilon}}^{1} \leq \epsilon^p.
    \]
    The error bounds then hold for any $r \geq r_{\epsilon}$.
    
\end{proof}

\begin{lemma}[Partial input dimensionality reduction]\label{lemma:partial_input_truncation_lipschitz}
    Suppose $u$ 
    satisfies \Cref{assumption:pde_properties} with the Borel probability measure $\mu_m$, compact admissible set $\cZ_{ad}$, and open set $\openset \supset \cZ_{ad}$ for some $p \geq 2$.
    Let $\{\psi_i\}_{i=1}^{\infty}$ be an orthonormal basis of $\cM$ and let $\projrm{r} = \sum_{i=1}^{r} \psi_i \psi_i^T$
    be the orthogonal projection onto the reduced basis $\{\psi_i\}_{i=1}^{r}$.
    Then for any $\epsilon > 0$, there exists $r_{\epsilon} \in \bN$ 
    such that simultaneously
    \[
        \sup_{z \in \cZ_{ad}} \bE_{m \sim \mu_m}[\| u(m,z) - u(\projrm{r} m,z)\|_{\cU}^p] \leq \epsilon^p
    \]
    and
    \[
        \sup_{z \in \cZ_{ad}} \bE_{m \sim \mu_m}[\|D_z u(m,z)- D_z u(\projrm{r} m,z)\|_{\HS(\cZ,\cU)}^p] \leq \epsilon^p
    \]
    whenever $r \geq r_{\epsilon}$
\end{lemma}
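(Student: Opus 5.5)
The argument runs parallel to the proof of \Cref{lemma:output_truncation_lipschitz}: we show pointwise monotone-free convergence in $r$ for each fixed $z$, prove continuity in $z$, and then upgrade to uniform convergence over the compact set $\cZ_{ad}$. Dini's theorem is not available here because the errors need not be monotone in $r$, so we use the Lipschitz structure directly.

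Define
\[
e_r^0(z) := \bE_{m\sim\mu_m}\bigl[\|u(m,z)-u(\projrm{r}m,z)\|_{\cU}^p\bigr],
\qquad
e_r^1(z) := \bE_{m\sim\mu_m}\bigl[\|D_z u(m,z)-D_z u(\projrm{r}m,z)\|_{\HS(\cZ,\cU)}^p\bigr].
\]

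\emph{Pointwise convergence.} Fix $z\in\cZ_{ad}$. By \eqref{eq:u_lip} we have
\[
\|u(m,z)-u(\projrm{r}m,z)\|_{\cU}\le \lip_u\|(I-\projrm{r})m\|_{\cM}.
\]
Hence
\[
e_r^0(z)\le \lip_u^p\,\bE_{\mu_m}\bigl[\|(I-\projrm{r})m\|_{\cM}^p\bigr].
\]
The right-hand side does not depend on $z$, and it tends to $0$ as $r\to\infty$. Indeed, $\|(I-\projrm{r})m\|_{\cM}\downarrow 0$ for every $m$, the integrand is dominated by $\|m\|_{\cM}^p$, and $\mu_m$ has finite $p$th moments, so dominated (or monotone) convergence applies. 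The same argument with \eqref{eq:Dzu_lip} gives
\[
e_r^1(z)\le \lip_{u_z}^p\,\bE_{\mu_m}\bigl[\|(I-\projrm{r})m\|_{\cM}^p\bigr].
\]

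\emph{Uniformity in $z$.} Both bounds are already uniform in $z$ because the Lipschitz constants in \Cref{assumption:pde_properties} are global in $(m,z)$. So no compactness or Dini argument is needed. It suffices to choose $r_\epsilon$ such that
\[
\max(\lip_u,\lip_{u_z})^p\,\bE_{\mu_m}\bigl[\|(I-\projrm{r_\epsilon})m\|_{\cM}^p\bigr]\le\epsilon^p.
\]
Since $\|(I-\projrm{r})m\|_{\cM}$ is nonincreasing in $r$ for every $m$, the bound then holds for all $r\ge r_\epsilon$.

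\emph{Main obstacle.} The only delicate point is the finiteness of the moment $\bE_{\mu_m}[\|m\|_{\cM}^p]$, which is supplied by \Cref{assumption:pde_properties}. We also need measurability of the integrands, which follows from continuity of $u$ and $D_z u$. If the Lipschitz constants had been merely local in $m$, one would instead argue as in \Cref{lemma:output_truncation_lipschitz}: combine continuity in $z$ with domination by $\lip_u^p\|m\|_{\cM}^p$, and use a compactness (equicontinuity) argument on $\cZ_{ad}$ in place of Dini. The global constants make this unnecessary.
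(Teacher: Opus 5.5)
Your proof is correct and is essentially the paper's argument: the global Lipschitz bounds in \Cref{assumption:pde_properties} reduce both errors to $\lip_u^p$ and $\lip_{u_z}^p$ times the $z$-independent quantity $\bE_{\mu_m}[\|(I-\projrm{r})m\|_{\cM}^p]$, which tends to zero by dominated convergence under the finite $p$th-moment assumption, so uniformity over $\cZ_{ad}$ is immediate. Your opening paragraph's plan to prove continuity in $z$ and upgrade to uniform convergence is never used, as you yourself note at the end, so you can drop it.
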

\begin{proof}
    By the Lipschitz continuity from \Cref{assumption:pde_properties}, we have
    \[
        \bE_{m \sim \mu_m}   [ \|u(m,z) - u(\projrm{r} m, z) \|_{\cU}^p\| \leq \lip_u^p \bE_{m \sim \mu_m} [ \|(I - \projrm{r}) m\|_{\cM}^p ].
    \]
    and 
    \[
        \bE_{m \sim \mu_m}[ \|D_z u(m,z) - D_z u(\projrm{r} m, z)\|_{\HS(\cZ, \cU)}^p] 
        \leq \lip_{u_z}^p 
         \bE_{m \sim \mu_m} [ \|(I  - \projrm{r}) m\|_{\cM}^p ].
    \]
    For the upper bounds, the integrand $\|(I - \projrm{r}) m\|_{\cM}^p \downarrow 0$ as $r \rightarrow \infty$ for every $m \in \cM$. 
    Moreover, $\|(I - \projrm{r})m\|_{\cM}^p \leq \|m\|_{\cM}^p$, where $\|m\|_{\cM}^p$ is assumed to be integrable by our moment assumption on $\mu_m$.
    Thus, the upper bound $\bE_{m \sim \mu_m} [ \|(I - \projrm{r}) m\|_{\cM}^p ] \downarrow 0$ as $r \rightarrow \infty$.
    It suffices to choose a sufficiently large $r_{\epsilon}$ such that 
    \[
       \bE_{m \sim \mu_m} [ \|(I - \projrm{r}) m\|_{\cM}^p ] \leq \epsilon^p \min \left\{\frac{1}{\lip_u^{p}}, \frac{1}{\lip_{u_z}^p} \right\},
    \]
    for which the desired error bounds hold.
\end{proof}

\begin{lemma}[Joint truncation errors]\label{lemma:joint_truncation_errors}
Suppose $u$ 
satisfies \Cref{assumption:pde_properties} with the Borel probability measure $\mu_m$, compact admissible set $\cZ_{ad}$, and open set $\openset \supset \cZ_{ad}$ for some $p \geq 2$.
Let $\{\phi_i\}_{i=1}^{\infty}$ and $\{\psi_i\}_{i=1}^{\infty}$ be orthonormal bases of $\cM$ and $\cU$, respectively. 
Then, for any $\epsilon > 0$, there exist ranks $\rrm$ and $\rru$ such that the truncated operator $u_r(m,z) := \projru{\rru} u(\projrm{\rrm} m, z)$ satisfies
\begin{equation}
    \sup_{z \in \cZ_{ad}} 
    \bE_{m \sim \mu_m} [ \| u(m,z) - u_{r}(m,z) \|_{\cU}^p ]
    + \bE_{m \sim \mu_m} [ \| D_z u(m,z) - D_z u_{r}(m,z) \|_{\HS(\cZ,\cU)}^p ]
    \leq \epsilon^p.
\end{equation}
\end{lemma}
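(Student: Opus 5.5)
We will split the error of $u_r(m,z) = \projru{\rru} u(\projrm{\rrm} m, z)$ into an output-projection part and an input-projection part, and control each with one of the two preceding lemmas. By the triangle inequality,
\[
u(m,z) - u_r(m,z) = (I - \projru{\rru}) u(m,z) + \projru{\rru}\bigl(u(m,z) - u(\projrm{\rrm} m, z)\bigr).
\]
The derivative satisfies the same decomposition,
\[
D_z u(m,z) - D_z u_r(m,z) = (I - \projru{\rru}) D_z u(m,z) + \projru{\rru}\bigl(D_z u(m,z) - D_z u(\projrm{\rrm} m, z)\bigr).
\]
This uses $D_z u_r(m,z) = \projru{\rru} D_z u(\projrm{\rrm} m, z)$, which holds because $\projru{\rru}$ is linear and bounded and $\projrm{\rrm}$ acts only on $m$, not on $z$.

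Since $\projru{\rru}$ is an orthogonal projection, $\|\projru{\rru} v\|_{\cU} \leq \|v\|_{\cU}$. Likewise $\|\projru{\rru} A\|_{\HS(\cZ,\cU)} \leq \|A\|_{\HS(\cZ,\cU)}$, because the HS norm is a sum of $\|A e_j\|_{\cU}^2$ over an orthonormal basis $\{e_j\}$ of $\cZ$. We then apply $(a+b)^p \leq 2^{p-1}(a^p + b^p)$ pointwise in $m$ and take expectations. For each $z$, this bounds the output error by
\[
2^{p-1}\Bigl(\bE\bigl[\|(I-\projru{\rru}) u\|_{\cU}^p\bigr] + \bE\bigl[\|u(m,z) - u(\projrm{\rrm} m,z)\|_{\cU}^p\bigr]\Bigr),
\]
and the derivative error analogously with $\HS(\cZ,\cU)$ norms.

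With a target $\delta > 0$, \Cref{lemma:output_truncation_lipschitz} gives $\rru$ such that both output-truncation terms are at most $\delta^p$ uniformly over $z \in \cZ_{ad}$. \Cref{lemma:partial_input_truncation_lipschitz} gives $\rrm$ such that both input-truncation terms are at most $\delta^p$ uniformly over $z \in \cZ_{ad}$. The two choices are independent, since the input lemma's bound holds for any $\rru$ after we drop $\projru{\rru}$ by the contraction property. Taking suprema, the total is at most $2^{p-1}\cdot 4\delta^p = 2^{p+1}\delta^p$. Choosing $\delta = \epsilon/2^{(p+1)/p}$ yields the stated bound $\epsilon^p$. (If the supremum in the statement is meant to cover only the first term, the bound follows a fortiori.)

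There is no real obstacle here. The only points needing care are the chain rule for $D_z u_r$ and the fact that orthogonal projection contracts the HS norm when applied on the left; both are immediate.
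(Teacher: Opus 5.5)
Your proof is correct and follows the paper's argument essentially step for step. Both split the error into the output-projection part and the input-projection part and drop $\projru{\rru}$ using its contraction in the $\cU$ and $\HS(\cZ,\cU)$ norms. Both then apply $(a+b)^p \leq 2^{p-1}(a^p+b^p)$ and choose $\rru$ and $\rrm$ independently from \Cref{lemma:output_truncation_lipschitz,lemma:partial_input_truncation_lipschitz}. The only difference is constant bookkeeping, which does not matter: you bound each of the four terms by $\delta^p$, while the paper bounds each pair by $\epsilon^p/2^p$.
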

\begin{proof}
    This is a consequence of \Cref{lemma:output_truncation_lipschitz,lemma:partial_input_truncation_lipschitz}.
    By the triangle inequality, we have
    \[
        \| u(m,z) - u_r(m,z)\|_{\cU} \leq 
        \| u(m,z) - \projru{\rru} u(m,z)\|_{\cU} +
        \| \projru{\rru} u(m,z) - \projru{\rru} u(\projrm{\rrm} m, z)\|_{\cU}.
    \]
    Since $\projru{\rru}$ is an orthogonal projection, this bound simplifies to
    \[
        \| u(m,z) - u_r(m,z)\|_{\cU} \leq 
        \| u(m,z) - \projru{\rru} u(m,z)\|_{\cU} +
        \| u(m,z) - u(\projrm{\rrm} m, z)\|_{\cU}.
    \]
    Similarly, for the derivatives, we have 
    \begin{align*}
        & 
        \| D_z u(m,z) - D_z u_r(m,z)\|_{\HS(\cZ,\cU)}  
        \\
        & \qquad 
        \leq 
        \| (I - \projru{\rru}) D_z u(m,z)\|_{\HS(\cZ,\cU)}
        + \| D_z u(m,z) - D_z u(\projrm{\rrm} m, z)\|_{\HS(\cZ,\cU)}.
    \end{align*}
    Using the inequality $(a+b)^{p} \leq 2^{p-1} (a^p + b^p)$,
    we have that for and $z \in \cZ_{ad}$,
    \begin{align}\label{eq:various_truncation_errors}
        & 
        \bE_{m \sim \mu_m} [\| u(m,z) - u_r(m,z)\|_{\cU}^p]
        + \bE_{m \sim \mu_m} [\| D_z u(m,z) - D_z u_r(m,z)\|_{\HS(\cZ,\cU)}^p]  
        \nonumber \\
        & \qquad \leq 
        2^{p-1} 
        \left( 
        \bE_{m \sim \mu_m}[\|(I - \projru{\rru}) u(m,z) \|_{\cU}^p]
        + 
        \bE_{m \sim \mu_m}[\| (I - \projru{\rru}) D_z u(m,z)\|_{\HS(\cZ,\cU)}^p]
        \right)
        \nonumber \\
        & \qquad
        + 2^{p-1} 
        \left( 
        \bE_{m \sim \mu_m}[\|u(m,z) - u(\projrm{\rrm} m,z) \|_{\cU}^p] 
        + 
        \bE_{m \sim \mu_m}[\| D_z u(m,z) - D_z u(\projrm{\rrm} m, z)\|_{\HS(\cZ,\cU)}^p]
        \right).
    \end{align}
    By \Cref{lemma:output_truncation_lipschitz}, we can choose $\rru$ such that 
    \begin{equation}\label{eq:output_rank_choice}
        \sup_{z \in \cZ_{ad}} \bE_{m \sim \mu_m}[\|(I - \projru{\rru}) u(m,z) \|_{\cU}^p] + 
        \bE_{m \sim \mu_m}[\| (I - \projru{\rru}) D_z u(m,z)\|_{\HS(\cZ,\cU)}^p]
        \leq \frac{\epsilon^p}{2^p},
    \end{equation}
    while by \Cref{lemma:partial_input_truncation_lipschitz}, we can choose $\rrm$ such that 
    \begin{equation}\label{eq:input_rank_choice}
    \sup_{z \in \cZ_{ad}} \bE_{m \sim \mu_m}[\|u(m,z) - u(\projrm{\rrm} m,z) \|_{\cU}^p] 
    + \bE_{m \sim \mu_m}[\| D_z u(m,z) - D_z u(\projrm{\rrm} m, z)\|_{\HS(\cZ,\cU)}^p]
    \leq \frac{\epsilon^p}{2^p}.
    \end{equation}
    Taking the supremum in \eqref{eq:various_truncation_errors} and substituting in \eqref{eq:output_rank_choice} and \eqref{eq:input_rank_choice}
    yields 
    \[
        \sup_{z \in \cZ_{ad}} \bE_{m \sim \mu_m} [\| u(m,z) - u_r(m,z)\|_{\cU}^p]
        + \bE_{m \sim \mu_m} [\| D_z u(m,z) - D_z u_r(m,z)\|_{\HS(\cZ,\cU)}^p]  \leq \epsilon^p.
    \]
    
\end{proof}

\subsection{Proof of Theorem \ref{theorem:rbno_ua_semibounded}}
\label{appendix:proof_rbno_ua_semibounded}
We can now combine the bounds for the dimension reduction errors \Cref{lemma:joint_truncation_errors} with the neural network approximation result \Cref{theorem:ua_semibounded} to prove \Cref{theorem:rbno_ua_semibounded}.

\begin{proof}
    (Of \Cref{theorem:rbno_ua_semibounded})
    We begin by assuming $\shiftm = \shiftu = 0$. The case for $\shiftm \neq 0$ and $\shiftu \neq 0$ follow as corollaries.
    Under this assumption, 
    let $u_r(m,z) 
    := \projru{\rru} u (\projrm{\rrm} m, z)
    = \mathbf{\Phi}_{\rru} \mathbf{\Phi}_{\rru}^* u(\mathbf{\Psi}_{\rrm} \mathbf{\Psi}_{\rrm}^*m, z) 
    $,
    We decompose the approximation error into the truncation error $u(m,z) - u_r(m,z)$ and the latent approximation error $u_r(m,z) - u_{\theta}(m,z)$.
    That is, 
    \begin{align}
        \nonumber
        & \bE_{m \sim \mu_m} [ \| u(m,z) - u_{\theta}(m,z) \|_{\cU}^p ]
        + \bE_{m \sim \mu_m} [ \| D_z u(m,z) - D_z u_{\theta}(m,z) \|_{\HS(\cZ,\cU)}^p ]  \\
        \nonumber
        & \quad 
        \leq 2^{p-1} 
        \left( 
        \bE_{m \sim \mu_m} [ \| u(m,z) - u_r(m,z) \|_{\cU}^p ]
        + 
        \bE_{m \sim \mu_m} [ \| D_z u(m,z) - D_z u_{r}(m,z) \|_{\HS(\cZ,\cU)}^p ] 
        \right)
        \\
        & 
        \quad 
        + 2^{p-1} \left( \bE_{m \sim \mu_m} [ \| u_r(m,z) - u_{\theta}(m,z) \|_{\cU}^p ] 
        + 
        \bE_{m \sim \mu_m} [ \| D_z u_r(m,z) - D_z u_{\theta}(m,z) \|_{\HS(\cZ,\cU)}^p ] \right). 
        \label{eq:combined_surrogate_error}
    \end{align}
    By \Cref{lemma:joint_truncation_errors}, there exist $\rru$ and $\rrm$ such that 
    \begin{equation}\label{eq:approx_error_truncation_def}
        \sup_{z \in \cZ_{ad}} 
        \bE_{m \sim \mu_m}[\|u(m,z) - u_r(m,z)\|_{\cU}^p]
        + \bE_{m \sim \mu_m}[\|D_z u(m,z) - D_z u_r(m,z)\|_{\HS(\cZ,\cU)}^p]
        \leq \frac{\epsilon^p}{2^p}.
    \end{equation}

    We now seek an approximation of $u_r$ for the chosen $\rru$ and $\rrm$.
    To this end,we define the mapping $g : \mathbb{R}^{\rrm} \times \openset \rightarrow \mathbb{R}^{\rru}$,
    \begin{equation}
        g(x,z)
        = \mathbf{\Phi}_{\rru}^* u( \mathbf{\Psi}_{\rrm} x, z),
    \end{equation}
    such that  
    \begin{equation}
        u_r(m,z) = \mathbf{\Phi}_{\rru} g(\mathbf{\Psi}_{\rrm}^* m, z).
    \end{equation}
    We can verify that $g$ satisfies the assumptions in \Cref{theorem:ua_semibounded}.
    First, $g$ is continuously differentiable since $u$ is continuously differentiable.
    Moreover, the mappings $\openset \ni z \mapsto g(\cdot, z) \in L^2_{\mur}(\mathbb{R}^{\rrm}; \mathbb{R}^{\rru})$ 
    and $\openset \ni z \mapsto D_z g(\cdot, z) \in L^2_{\mur}(\mathbb{R}^{\rrm}; \mathbb{R}^{\rru \times d_z})$
    are Lipschitz continuous with respect to $z$,
    where $\mur = \left( \mathbf{\Psi}_{\rrm}^{*}\right)^{\sharp} \mu_m$ denotes the pushforward measure of $\mu_m$ under the projection $\mathbf{\Psi}_{\rrm}^{*}$.
    To see this, consider the $z_1, z_2 \in \openset$, and the difference
    \[
        \| g(\cdot, z_2) - g(\cdot, z_1) \|_{L^p_{\mu_m}}^p
        = \int \| g(x, z_2) - g(x, z_1) \|_2^p d \mur (x).
    \]
    Using the definition of $g$ and the 
    change of variables $x = \mathbf{\Psi}_{r_m}^{*} m$ with the pushforward measure $\mur$, we have 
    \[
        \int \| g(x, z_2) - g(x, z_1) \|_2^p d\mur(x)
        = \int \| \mathbf{\Phi}_{\rru}^{*} u (\mathbf{\Psi}_{\rrm} \mathbf{\Psi}_{\rrm}^* m, z_2) - \mathbf{\Phi}_{\rru}^{*} u (\mathbf{\Psi}_{\rrm} \mathbf{\Psi}_{\rrm}^* m, z_2) \|_2^p d\mu_m(m).
    \]
    Since $\mathbf{\Phi}_{\rru}$ is an orthonormal basis, we have $\|\mathbf{\Phi}_{\rru}^*\|_{\Op(\cU, \bR^{\rru})} = 1$, such that
    \[
        \int \| g(x, z_2) - g(x, z_1) \|_2^p d\mur(x)
        \leq \int \| u (\mathbf{\Psi}_{\rrm} \mathbf{\Psi}_{\rrm}^* m, z_2) - u (\mathbf{\Psi}_{\rrm} \mathbf{\Psi}_{\rrm}^* m, z_2) \|_{\cU}^p d\mu_m(m)
        \leq \lip_u^{p} \|z_2 - z_1\|_{\cZ}^p.
    \]
    This implies that $z \mapsto g(\cdot, z)$ is Lipschitz continuous with respect to $z$. 
    An analogous procedure can be used to show that $z \mapsto D_z g(\cdot, z)$ is also Lipschitz continuous with respect to $z$.

    This allows us to apply the result of \Cref{theorem:ua_semibounded}. That is, given $\epsilon > 0$, there exists a neural network approximation $g_{\theta}$ of $g$ satisfying 
    \begin{equation}\label{eq:approx_error_latent_def}
        \sup_{z \in \cZ_{ad}} 
        \bE_{x \sim \mur} [\| g(x,z) - g_{\theta}(x,z) \|^p]
        + \bE_{x \sim \mur} [\| D_z g(x,z) - D_z g_{\theta}(x,z) \|_F^p] \leq \frac{\epsilon^p}{2^p}.
    \end{equation}
    Then the error between $u_r(m,z)$ and the RBNO $u_{\theta}(m,z) = \mathbf{\Phi}_{\rru} g_{\theta}(\mathbf{\Psi}_{\rrm}^{*}, z)$
    can be written directly in terms of $g$ and $g_{\theta}$. 
    That is,
    \begin{align}
        \nonumber
        & \bE_{m \sim \mu_m} \left[ 
            \| u_r(m,z) - u_{\theta}(m,z) \|_{\cU}^p
        \right] \\
        \nonumber 
        &\qquad =
        \bE_{m \sim \mu_m} \left[ 
            \| \mathbf{\Phi}_{\rru} 
            \left( g(\Psi_{\rrm}^* m,z) - g_{\theta}(\Psi_{\rrm}^* m,z) \right)
            \|_{\cU}^p
        \right] \\
        &\qquad = 
        \bE_{x \sim \mur} \left[ 
            \| g(x,z) - g_{\theta}(x,z) \|_{2}^p
        \right], \label{eq:convert_latent_error}
    \end{align}
    and 
    \begin{align}
        \nonumber
        & \bE_{m \sim \mu_m} \left[ 
            \| D_z u_r(m,z) - D_z u_{\theta}(m,z) \|_{\HS(\cZ, \cU)}^p
        \right] \\
        \nonumber
        & \qquad = 
        \bE_{m \sim \mu_m} \left[ 
            \| \mathbf{\Phi}_{\rru} \left( 
                D_z g(\mathbf{\Psi}_{\rrm}^* m,z) - 
                D_z g_{\theta}(\mathbf{\Psi}_{\rrm}^* m,z) 
                \right) \|_{\HS(\cZ, \cU)}^p
        \right] \\
        & \qquad =
        \bE_{x \sim \mur} \left[ 
            \| D_z g(x, z) - D_z g(x, z) \|_{F}^p
        \right]. \label{eq:convert_latent_derivative_error}
    \end{align}
    Combining the bounds 
    \eqref{eq:approx_error_truncation_def}
    and 
    \eqref{eq:approx_error_latent_def},
    along with \eqref{eq:convert_latent_error} and \eqref{eq:convert_latent_derivative_error}
    yields the desired approximation error when substituted into \eqref{eq:combined_surrogate_error}.

    When $\shiftm \neq 0$, 
    we have 
    \[  
        \mathbf{\Psi}^{*}_{\rrm} m = 
        \mathbf{\Psi}^{*}_{\rrm} (m - \shiftm) + \mathbf{\Psi}^*_{\rrm} \shiftm.
    \]
    Thus, for the neural network $g_{\theta}$ constructed using the procedure above, 
    we have 
    \[
        g_{\theta}(\mathbf{\Psi}^{*}_{\rrm} m, z)
        = 
        g_{\theta}(\mathbf{\Psi}^{*}_{\rrm} (m -\shiftm) + \mathbf{\Psi}^*_{\rrm} \shiftm, z)
        = \tilde{g}_{\theta}(\mathbf{\Psi}^{*}_{\rrm} (m - \shiftm), z),
    \]
    where $\tilde{g}_{\theta}$ is also a GELU neural network whose input layer bias is shifted by $\mathbf{\Psi}^*_{\rrm} \shiftm \in \bR^{\rrm}$.
    Thus, the RBNO has the desired representation
    \[
        u_{\theta}(m,z) = \mathbf{\Phi}_{\rrm} \tilde{g}_{\theta}( \mathbf{\Psi}^*_{\rrm} (m - \shiftm), z ).
    \]
    Finally, when $\shiftu \neq 0$, we can construct the approximation $\tilde{u}_{\theta}(m,z)$ for the shifted operator $u(m,z) - \shiftu$ using the same procedures above, noting that 
    $u(m,z) - \shiftu$ also satisfies \Cref{assumption:pde_properties}.
    The resulting approximation of $u(m,z)$ is then given by the affine shift, 
    \[ 
    u_{\theta}(m,z) = \tilde{u}_{\theta}(m,z) + \shiftu = \mathbf{\Phi}_{\rru} g_{\theta}(\mathbf{\Psi}_{\rrm}^* (m - \shiftm), z) + \bar{u}.
    \]
\end{proof}

\subsection{Universal Approximation for Optimization under Uncertainty}
\label{appendix:uaouu}
We can now combine \Cref{theorem:rbno_ua_semibounded} with \Cref{theorem:optimization_error} to prove \Cref{theorem:rbno_ouu_approx},
which guarantees the existence of an RBNO with sufficiently accurate output and derivatives such that when used as a surrogate in expectation minimization (assuming the objective is convex), its stationary points are accurate.

\begin{proof} (Of \Cref{theorem:rbno_ouu_approx})
    First, we note that any GELU neural network $g_{\theta}$ is smooth and its derivatives of all orders are globally bounded by the nature of the GELU activation functions.
    Thus, given any reduced bases $\mathbf{\Phi}_{\rru}$ and $\mathbf{\Psi}_{\rrm}$, the resulting RBNO is also $u_{\theta}$ constructed from $g_{\theta}$ is also smooth with bounded derivatives.
    This implies that $u_{\theta}$ satisfies \Cref{assumption:pde_properties} with the same open set $\openset$.

    Consider now any $0 < \epsilon_{u} < 1$. 
    By the universal approximation result of \Cref{theorem:rbno_ua_semibounded},
    there exists an RBNO such that 
    \begin{equation}
        \sup_{z \in \cZ_{ad}} \|u(\cdot, z) - u_{\theta}(\cdot, z)\|_{L^2_{\mu_m}} \leq \epsilon_u, 
    \end{equation}
    and 
    \begin{equation}
        \sup_{z \in \cZ_{ad}} \|D_z u(\cdot, z) - D_z u_{\theta}(\cdot, z)\|_{L^2_{\mu_m}} \leq \epsilon_u.
    \end{equation}
    Since both $u$ and $u_{\theta}$ satisfy \Cref{assumption:pde_properties} while $Q$ satisfies \Cref{assumption:qoi_properties}, \Cref{prop:surrogate_gradient_error} implies that there exists a constant $\const > 0$ independent of $u_{\theta}$ such that 
    \begin{equation}
        \sup_{z \in \cZ_{ad}} \| D_z \cJ(z) - D_z \cJ_{\theta}(z) \|_{\cZ'} \leq \const \epsilon_u,
    \end{equation}
    where the linear error term dominates for $\epsilon_u < 1$.
    By \Cref{theorem:optimization_error}, we have bounds for both the optimization error and the optimality gap, given by 
    \begin{align}
        \|z^{\star} - z^{\dagger}\|_{\cZ} &\leq \frac{\const}{\lambda} \epsilon_u, \\
        \cJ(z^{\dagger}) - \cJ(z^{\star}) &\leq \frac{\const^2}{2 \lambda} \epsilon_u^2. 
    \end{align}
    From this, we see that it suffices to construct $u_{\theta}$ using $\epsilon_u$ satisfying
    \[
        \epsilon_u \leq \min \left\{ 
            \frac{\lambda \epsilon}{\const}, 
            \frac{\sqrt{2 \lambda \epsilon}}{\const},
            1
        \right\}.
    \]
    
\end{proof}

\end{document}